\titlespacing{\section}{0pt}{0pt}{0pt}
\titlespacing{\subsection}{0pt}{0pt}{0pt}
\numberwithin{equation}{section}
\newtheorem{theorem}{Theorem}
\newtheorem{corollary}[theorem]{Corollary}
\newtheorem{lemma}[theorem]{Lemma}
\newtheorem{proposition}[theorem]{Proposition}
\newtheorem{assump}[theorem]{Assumption}
\newtheorem{definition}[theorem]{Definition}
\theoremstyle{definition}
\theoremstyle{remark}
\newtheorem{remark}[theorem]{Remark}
\numberwithin{theorem}{section}
\newcommand{\R}{\mathbb{R}}
\newcommand{\C}{\mathbb{C}}
\newcommand{\N}{\mathbb{N}}
\newcommand{\Z}{\mathbb{Z}}
\newcommand{\E}{\mathbb{E}}
\newcommand{\T}{\mathbb{T}}
\newcommand{\p}{\mathbb{P}}
\newcommand{\im}{\mathrm{Im }\,}
\newcommand{\re}{\mathrm{Re }\,}
\newcommand{\dd}{\mathrm{d}}
\newcommand{\ii}{\mathrm{i}}
\renewcommand{\a}{\mathrm{a}}
\renewcommand{\b}{\mathrm{b}}
\newcommand{\mfc}{m_{fc}}
\newcommand{\whmfc}{\wh m_{fc}}
\newcommand{\deq}{\mathrel{\mathop:}=}
\newcommand{\beq}{ \begin{equation} }
\newcommand{\eeq}{ \end{equation} }
\newcommand{\wt}{\widetilde}
\newcommand{\wh}{\widehat}
\newcommand{\ve}{\varepsilon}
\newcommand{\caD}{{\mathcal D}}
\newcommand{\caL}{{\mathcal L}}
\newcommand{\caO}{{\mathcal O}}
\newcommand{\caQ}{{\mathcal Q}}
\newcommand{\caR}{{\mathcal R}}
\newcommand{\caX}{{\mathcal X}}
\newcommand{\caY}{{\mathcal Y}}
\newcommand{\bfy}{{\mathbf y}}
\newcommand{\fb}{{\mathfrak b}}
\newcommand{\fc}{{\mathfrak c}}
\newcommand{\lone}{\mathbbm{1}}
\DeclareMathOperator{\Tr}{Tr}
\DeclareMathOperator{\supp}{supp}
\begin{document}
	\begin{minipage}{0.85\textwidth}
		\vspace{3cm}
	\end{minipage}
	\begin{center}
		\large\bf
		Extremal eigenvalues of sample covariance matrices with general population
	\end{center}

\begin{center}
	Jinwoong Kwak\footnote{Department of Mathematical Sciences, KAIST, Daejeon, South Korea. e-mail: jw-kwak@kaist.ac.kr}, Ji Oon Lee\footnote{Department of Mathematical Sciences, KAIST, Daejeon, South Korea. e-mail: jioon.lee@kaist.edu}, Jaewhi Park\footnote{Department of Mathematical Sciences, KAIST, Daejeon, South Korea. e-mail:
		jebi1991@kaist.ac.kr}
\end{center}
\vspace{1.5cm}
\begin{abstract}
	We consider the eigenvalues of sample covariance matrices of the form $\mathcal{Q}=(\Sigma^{1/2}X)(\Sigma^{1/2}X)^*$. The sample $X$ is an $M\times N$ rectangular random matrix with real independent entries and the population covariance matrix $\Sigma$ is a positive definite diagonal matrix independent of $X$. Assuming that the limiting spectral density of $\Sigma$ exhibits convex decay at the right edge of the spectrum, in the limit $M, N \to \infty$ with $N/M \to d\in(0,\infty)$, we find a certain threshold $d_+$ such that for $d>d_+$ the limiting spectral distribution of $\mathcal{Q}$ also exhibits convex decay at the right edge of the spectrum. In this case, the largest eigenvalues of $\mathcal{Q}$ are determined by the order statistics of the eigenvalues of $\Sigma$, and in particular, the limiting distribution of the largest eigenvalue of $\mathcal{Q}$ is given by a Weibull distribution. In case $d<d_+$, we also prove that the limiting distribution of the largest eigenvalue of $\caQ$ is Gaussian if the entries of $\Sigma$ are i.i.d. random variables. While $\Sigma$ is considered to be random mostly, the results also hold for deterministic $\Sigma$ with some additional assumptions.
\end{abstract}

{\textit{AMS Subject Classification (2010)}: 60B20, 62H10, 15B52
	
	\textit{Keywords}: Sample covariance matrix, deformed Marchenko--Pastur distribution, largest eigenvalue
	
	\vspace{3mm}
	\textit{\today}
	\vspace{3mm}}

\section{Introduction}
For a vector-valued, centered random variable $\bfy \in \R^M$, its population covariance matrix is given by $\Sigma\deq \E[\bfy\bfy^T]$. For $N$ independent samples $(\bfy_1,\cdots.\,\bfy_N)$ of $\bfy$, the sample covariance matrix $\frac1N \sum_{i=1}^{N}\bfy_i\bfy_i^T$ can be a simple and unbiased estimator of $\Sigma$ when $N$ is much larger than $M$. On the other hand, if the sample number $N$ is comparable to the population size $M$, the sample covariance matrix is no more a reasonable estimator for the population covariance matrix. Nevertheless, even in such a case, the characteristic of the population covariance matrix may appear in the sample covariance matrix, as we consider in this paper.

We are interested in a matrix of the form 
\begin{equation}\label{model}
\caQ=(\Sigma^{1/2}X)(\Sigma^{1/2}X)^*,
\end{equation}
where the sample $X$ is an $M\times N$ matrix whose entries are independent real random variables with variance $1/N$, and the general population covariance $\Sigma$ is an $M\times M$ real diagonal positive definite matrix. 
We focus on the case that $M$ and $N$ tend to infinity simultaneously with $\wh d\deq N/M\rightarrow d\in (0,\infty)$, as $M, N\rightarrow \infty$. 

The asymptotic behavior of the empirical spectral distribution (ESD) of sample covariance matrices was first considered by Marchenko and Pastur~\cite{mp}; they derived a core structure of the limiting spectral distribution (LSD) for a class of sample covariance matrices and the LSD is called the Marchenko--Pastur (MP) law. In the null case, $\Sigma = I$, the distribution of the rescaled largest eigenvalue converges to the Tracy--Widom law~\cite{f,kj,jm,p}. For the non-null case, i.e. $\Sigma\neq I$, the location and the distribution of the outlier eigenvalues, including the celebrated BBP transition, have been studied extensively when $\Sigma$ is a finite rank perturbation of the identity. For more detail, we refer to~\cite{jb,s2,s1,r1,paul,wd} and references therein. 

When $\Sigma$ has more complicated structure, e.g., the LSD of $\Sigma$ has no atoms, the limiting distribution of the largest eigenvalue is given by the Tracy--Widom distribution under certain conditions. It was first proved by El Karoui~\cite{k} for complex sample covariance matrices and extended to the real case~\cite{uzb,scjl,ky}. In these works, one of the key assumptions is that the LSD exhibits the ``square-root'' type behavior at the right edge of the spectrum, which also appears in the Wigner semicircle distribution or the Marchenko--Pastur distribution. It is then natural to consider the local behavior of the eigenvalues when square-root type behavior is absent. Note that if the LSD of $\Sigma$ decays concavely at the right edge the LSD of $\caQ$ exhibits the square-root behavior at the right edge~\cite{HCJi}.

\vskip5pt

\textbf{Main contribution} 

The sample covariance matrix, as $N$ gets relatively larger than $M$, approximates the population covariance matrix more accurately. Thus, it is natural to conjecture that the behavior of the largest eigenvalues of the sample covariance matrix must be similar to that of $\Sigma$ if $d$ is above a certain threshold.
Our main result of this paper establishes the conjecture rigorously. We find that there exists $d_+$ such that for $d>d_+$
\begin{itemize}
\item
The LSD of $\caQ$ is convex near the right edge of its support (Theorem~\ref{general case - large d}), and
\item
the distribution of the largest eigenvalues of $\caQ$ are determined by the order statistics of the eigenvalues of $\Sigma$ (Theorem~\ref{theorem:main}).
\end{itemize}
We also prove that the largest eigenvalue of $\caQ$ converges to a Gaussian for $d<d_+$, when the entries of $\Sigma$ are i.i.d. (Theorem~\ref{lemma:gaussian})

\vskip5pt

\textbf{Main idea of the proof}

In the first step, we prove general properties of the LSD of $\mathcal{Q}$. The proof is based on the fact that the LSD of $Q$ can be defined by a functional equation whose unique solution is the Stieltjes transform of LSD of $Q$; see also~\cite{mp}. 

In the second step, we prove a local law for the resolvents of $Q$ and $\mathcal{Q}$. The main technical difficulty of the proof stems from that it is not applicable the usual approach based on the self-consistent equation as in~\cite{ky}. Technically, this is due to the lack of the stability bounds as in equation A.8 of~\cite{ky}, which is not guaranteed when the LSD of $\Sigma$ decays convexly at the edge. Thus, we adapt the strategy of~\cite{eejl} for deformed Wigner matrices in the analysis of the self-consistent equation. For the analysis of the resolvents, we use the linearization of $Q$ whose inverse is conveniently related to the resolvents of $Q$ and $\mathcal{Q}$. Together with Schur's complement formula and other useful formulas for the resolvents of $Q$ or $\mathcal{Q}$, we prove a priori estimates for the local law. 

In the last step, we apply the ``fluctuation averaging'' argument to control the imaginary part of the resolvent of $Q$ on much smaller scale than $N^{-1/2}$. The analysis is different from other works involving the same idea such as~\cite{p,sce}, due to the unboundedness of the diagonal entries of the resolvent of $\mathcal{Q}$. Finally, by precisely controlling the imaginary part of the argument in the resolvent, we track the location of the eigenvalues at the edge.

\vskip5pt

\textbf{Related works}

In the context of Wigner matrices, the edge behavior of the LSD of a Wigner matrix can be altered by deforming it. The deformed Wigner matrix is of the form $H=W+\lambda V$ where $W$ is a Wigner matrix and $V$ is a real diagonal matrix independent of $W$. If $\lambda$ is chosen so that the spectral norm of $W$ is of comparable order with that of $V$, and the LSD of $V$ has convex decay at the edge of its spectrum, then the LSD of $H$ also exhibits the same decay at the edge if the strength of the deformation $\lambda$ is above a certain threshold. In that case, the limiting fluctuation of the largest eigenvalues is given by a Weibull distribution instead of the Tracy--Widom distribution. See~\cite{dsjl,eejl} for more precise statements.

The largest eigenvalues of sample covariance matrices are frequently used in the analysis of signal-plus-noise models. In systems biology, the largest eigenvalues derived from single-cell data sets can be used for identification of biological information~\cite{aparicio2018}. In the context of machine learning, the behavior of the largest eigenvalues indicate different phases of training in deep neural networks~\cite{mahoney2019}.

\vskip5pt

\textbf{Organization of the paper} 

The rest of the paper is organized as follows: In section~\ref{Definition and Results}, we define the model and state the main results. In section~\ref{prelim}, we introduce basic notations and tools that will be used in the analysis. In section~\ref{pf main theorem}, we prove main theorems. Section~\ref{location} is devoted to the proof of Proposition~\ref{proposition:lambda_k}, one of the key results used in the proof of main theorems. Proofs of some technical lemmas are collected in Supplementary Material.

\section{Definition and Results} \label{Definition and Results}

In this section, we define our model and state the main result.

\subsection{Definition of the model}

\begin{definition}[Sample covariance matrix with general population] \label{assumption sample}
A sample covariance matrix with general population $\Sigma$ is a matrix of the form 
\begin{align} \label{thematrix}
	\caQ \deq (\Sigma^{1/2} X)(\Sigma^{1/2} X)^*,
\end{align}
where $X$ and $\Sigma$ are given as follows:

Let $X$ be an $M\times N$ real random matrix whose entries $(x_{ij})$ are independent, zero-mean random variables with variance $1/N$ and satisfying 
\begin{equation}\label{p moment bound}
	\E[|x_{ij}|^p]\le \frac{c_p}{N^{p/2}}
\end{equation}
for some positive constants $c_p>0$ depending only on $p\in \N$.

Let $\Sigma$ be an $M\times M$ real diagonal matrix whose LSD is $\nu$, entries $(\sigma_\alpha)$ are nonnegative and independent from $X$.
Also, the measure $\nu$ has density
\begin{align} \label{jacobi measure}
	\rho_\nu(t)=Z^{-1} (1-t)^{\b} f(t)\lone_{[l, 1]}(t)\,, \qquad 0<l<1
\end{align}
where $\b>-1$, $f\in C^1 [l, 1]$ such that $f(t)>0$ for $t \in [l, 1]$, and $Z$ is a normalizing constant.

The dimensions $N\equiv N(M)$ 
and
\begin{equation}
	\wh d=\frac{N}{M}\rightarrow d\in(0,\infty),
\end{equation}
as $n\rightarrow\infty$.  (For simplicity, we assume that $\wh d$ is constant, so we use $d$ instead of $\wh d$.)

We denote the eigenvalues of $\caQ$ by $(\lambda_i)$ with the ordering $\lambda_1 \ge \lambda_2 \ge \ldots \ge \lambda_M$.
\end{definition}

The measure~$\nu$ is called a \textbf{Jacobi measure}. We remark that the measure~$\nu$ has support $[l,1]$ for some $l>0$. In this paper, we only consider the case $\b>1$ in~\eqref{jacobi measure}.

Note that in Definition~\ref{assumption sample}, we only assume the independence of the entries $(x_{ij})$ and do not assume that $(x_{ij})$ are identically distributed. We mainly assume that $\Sigma$ is random.

\begin{remark}\label{MNM}
In the sequel, we often interchange $N$ to $CM$ in the middle of several inequalities for some absolute constant $C$ reasoning that $M$ and $N$ have the same order.
\end{remark}

\begin{remark}
With the assumption on the Jacobi measure, we have that $\liminf \sigma_M \ge l $ and $\limsup \sigma_1 \le 1$, which were also assumed in~\cite{k}. 
\end{remark}

\begin{remark}\label{Q and wt Q}
Let $Q\deq X^*\Sigma X$, then $\caQ$ is an $M\times M$ matrix and $Q$ is an $N\times N$. The eigenvalues of $Q$ can be described as the following; $Q$ shares the nonzero eigenvalues with $\caQ$ and has $0$ eigenvalue with multiplicity $N-M$ when $N\geq M$.
Thus, we denote the eigenvalues of $Q$ by $(\lambda_i)_{i=1}^N$ where $\lambda_i=0$ for $M+1\le i\le N$.
\end{remark}
\subsection{Assumptions on $\Sigma$}

For our main result, Theorem~\ref{theorem:main}, to hold, it requires that the gaps between the largest eigenvalues $(\sigma_\alpha)$, $\alpha\in \llbracket1,n_0 \rrbracket$, of $\Sigma$ must not be too small. Heuristically, when $\b>1$ in \eqref{jacobi measure}, the Jacobi measure has convex decay at the edge so that we can regard that the distance of immediate eigenvalues is typically large near the edge. Due to the distance, a few largest eigenvalues of $\Sigma$ significantly affect the edge of LSD of $\mathcal{Q}$ more than any other small eigenvalues. In order to describe the condition mathematically, we introduce the following event $\Omega$, which is a ``good configuration'' of the largest eigenvalues of $\Sigma$.

Denote by $\fb$ the constant
\begin{align} \label{fb}
\fb \deq \frac{1}{2} - \frac{1}{\b +1} = \frac{\b -1}{2(\b + 1)} = \frac{\b}{\b +1} - \frac{1}{2}\,,
\end{align}
which depends only on $\b$ in~\eqref{jacobi measure}. Fix some (small) $\phi > 0$ satisfying
\begin{align}\label{phi condition}
\phi < \left( 10 + \frac{\b +1}{\b -1} \right) \fb\,,
\end{align}
and define the domain $\caD_{\phi}$ of the spectral parameter $z$ by 
\begin{align} \label{domain}
\caD_{\phi} \deq \{ z = E + \ii \eta\in\C^+\, :\, l \le E \le 2 + \tau_+, \; M^{-1/2 - \phi} \le \eta \le M^{-1/(\b+1) + \phi} \}\,.
\end{align}
Further, we define $N$-dependent constants $\kappa_0$ and $\eta_0$ by
\begin{align}\label{definition of kappa0}
\kappa_0 \deq M^{-1/(\b+1)}, \qquad\quad \eta_0 \deq \frac{M^{-\phi}}{\sqrt M}\,.
\end{align}
In the following, typical choices for $z \equiv L_+ - \kappa + \ii \eta$ will be $\kappa$ and $\eta$ with $\kappa \le M^{\phi} \kappa_0$ and $\eta \ge \eta_0$.

We are now ready to give the definition of a ``good configuration'' $\Omega$. Let $\mu_{fc}$ be the limiting spectral measure of $\caQ$ and $m_{fc}$ the Stieltjes transform of $\mu_{fc}$. (See section~\ref{subsec:deformed_mp} for the precise definition.) Without loss of generality, we assume that the entries of $\Sigma$ satisfy the following inequality,
\begin{align}\label{the ordering}
\sigma_1 \ge \sigma_2 \ge \ldots \ge \sigma_M\ge 0\,.
\end{align}
\begin{definition} \label{sigma assumptions}
Let $n_0 > 10$ be a fixed positive integer independent of $M$. We define $\Omega$ to be the event on $\Sigma$ which for any $\gamma \in \llbracket 1, n_0 -1 \rrbracket$, the following conditions:

\begin{enumerate}
	\item The $\gamma$-th largest eigenvalue $\sigma_\gamma$ satisfies, for all $\beta\in\llbracket 1,n_0\rrbracket$ with $\beta \neq \gamma$,
	\begin{align}\label{eq4.3}
		M^{-\phi} \kappa_0 < |\sigma_\beta - \sigma_\gamma| < (\log M) \kappa_0\,.
	\end{align}
	In addition, for $\gamma=1$, we have
	\begin{align}\label{eq4.4}
		M^{-\phi}\kappa_0 < |1 - \sigma_1| < (\log M)\kappa_0\,,
	\end{align}
	hence for $\alpha\in \llbracket n_0+1, M \rrbracket$,
	\begin{align}
		M^{-\phi}\kappa_0 < |\sigma_\alpha - \sigma_\gamma|.
	\end{align}

	\item There exists a constant $\mathfrak{c} <1$ independent of $M$ such that for any $z \in \caD_{\phi}$ satisfying
	\begin{align} \label{assumption near sigma_k}
		\min_{\alpha \in \llbracket 1, M \rrbracket} \left|\re \left(1+\frac{1}{\sigma_\alpha \mfc}\right)\right| = \left|\re \left(1+\frac{1}{\sigma_\gamma \mfc}\right)\right|\,,
	\end{align}
	we have
	\begin{align}\label{assumption_CLT_2}
		\frac{1}{N} \sum_{\substack{\alpha=1 \\ \alpha\neq \gamma}}^{M} \frac{\sigma_\alpha^2 \lvert \mfc\rvert^2}{|1+\sigma_\alpha \mfc|^2} <\mathfrak{c} < 1\,.
	\end{align}
	We remark that, together with~\eqref{eq4.3} and~\eqref{eq4.4},~\eqref{assumption near sigma_k} implies
	\begin{align} \label{farfromhome}
		\left|\re \left(1+\frac{1}{\sigma_\alpha \mfc}\right)\right| \geq \frac{ M^{-\phi}\kappa_0}{2}\,,
	\end{align}
	for all $\alpha \neq \gamma$.
	
	\item For any $\epsilon>0$, there exists $C_\epsilon>0$ and $M_\epsilon$ (large) such that for any $z \in \caD_{\phi}$ and $M\geq M_\epsilon$,
	\begin{align} \label{assumption_CLT_1}
		\left| \frac{1}{N} \sum_{\alpha=1}^M \frac{\sigma_\alpha}{\sigma_\alpha \mfc+1} - d^{-1}\int \frac{t\dd \nu(t)}{t \mfc+1} \right| \leq \frac{C_\epsilon M^{\phi+\epsilon}}{\sqrt M}\,.
	\end{align}
	
\end{enumerate}
\end{definition}
Throughout the paper, we assume that $\Sigma$ satisfy Definition~\ref{sigma assumptions}, and ESD of $\Sigma$ converges weakly to a Jacobi measure with $\b>1$.  
\begin{assump}\label{assumption1}
Let $\Sigma$ be $M\times M$ real diagonal random matrix satisfying the conditions in Definition \ref{assumption sample} with $\b>1$. We assume that:
\begin{enumerate}
	\item [i.] When $\Sigma$ is deterministic, $\Sigma$ satisfies 
	$\Omega$ in Definition~\ref{sigma assumptions}.
	\item [ii.] When $\Sigma$ is random, $\mathbb{P}(\Omega) \rightarrow 1$ as $M\rightarrow \infty$.
\end{enumerate}
\end{assump}

We remark that if $\Sigma$ is a diagonal random matrix whose entries are i.i.d Jacobi measure $\nu$ with $\b>1$, the Glivenko--Cantelli theorem asserts that the LSD of $\Sigma$ converges to $\nu$ itself. Furthermore, in Appendix~\ref{app:omega} we show that 
\begin{align} \label{Omega}
\p (\Omega) \ge 1 - C (\log M)^{1 + 2\b} M^{-\phi}, 
\end{align}
thus the ``bad configuration'' $\Omega^c$ occurs rarely. In other words, when $\Sigma$ has i.i.d. entries with law $\nu$, it automatically satisfies the properties in Definition~\ref{sigma assumptions} with high probability. For the non i.i.d random or deterministic $\Sigma$, we assume Assumption~\ref{assumption1}.

\subsection{Main results}
Our first main result is about the behavior of the limiting spectral measure of $\mathcal{Q}$, $\mu_{fc}$, near its right edge. The following theorem establishes not only the explicit location of the right edge of $\mu_{fc}$ but also the local behavior of $\mu_{fc}$ near the right edge. In the sequel, we denote by $L_+$ the right endpoint of the support of $\mu_{fc}$ and $\kappa \equiv \kappa(E) \deq |E-L_+|$ where $z=E+\ii\eta$.

\begin{theorem}\label{general case - large d}
Suppose that $\caQ$ is a sample covariance matrix with general population $\Sigma$ defined in Definition~\ref{assumption sample}. Let $\nu$ be a Jacobi measure defined in~\eqref{jacobi measure} with $\b>1$. Define
\begin{align}\label{definition of lambda+}
	d_+ \deq \int_{l}^1 \frac{t^2\dd\nu(t)}{(1-t)^2}, \qquad \tau_+ \deq d^{-1}\int_{l}^1 \frac{t \dd\nu(t) }{1-t}\,.
\end{align}
If $d>d_+$, then $L_+=1+\tau_+$. Moreover, for $0 \le \kappa \le L_+$, there exists a constant $C>1$ such that
\begin{align} \label{exponent beta}
	C^{-1}{\kappa}^{\b} \le \mu_{fc}(L_+-\kappa) \le C \kappa^{\b}.
\end{align}	
\end{theorem}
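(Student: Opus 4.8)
The plan is to work entirely with the self-consistent (Marchenko–Pastur) functional equation for $\mfc$, exploiting the fact that the edge behavior of $\mu_{fc}$ is governed by the analytic structure of this equation near $L_+$. Recall that the Stieltjes transform $\mfc$ of $\mu_{fc}$ satisfies the equation
\[
\frac{1}{\mfc} = -z + d^{-1} \int \frac{t\,\dd\nu(t)}{1 + t\mfc}\,,
\]
or, more conveniently, introducing the auxiliary transform $\whmfc$ of the companion measure so that $z\,\whmfc$ and $\mfc$ are linked by the standard linear relation, one writes $z$ as an explicit function of $m \deq \mfc$:
\[
z = f(m) \deq -\frac{1}{m} + d^{-1}\int_l^1 \frac{t\,\dd\nu(t)}{1 + tm}\,.
\]
The right edge $L_+$ is the image under $f$ of the critical point $m_+$ where $f'(m_+) = 0$, provided such a critical point exists on the relevant branch of the negative real axis; the local exponent of $\mu_{fc}$ at $L_+$ is then read off from the order of vanishing of $f'$ (or higher derivatives) at $m_+$, via the standard inversion $\im m(z) \sim \kappa^{1/k}$ when $f$ has a zero of order $k$, which corresponds to $\mu_{fc}(L_+ - \kappa) \sim \kappa^{1+1/k}$ after integration. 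Matching this to the claimed exponent $\b$ forces $1 + 1/k = \b$, i.e. the singularity type of $f$ at $m_+$ must be of the fractional order $1/(\b-1)$; this is where the convex decay hypothesis $\b > 1$ on $\nu$ enters and where the condition $d > d_+$ becomes decisive.

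First I would analyze the candidate critical point. On the branch of interest, as $E \uparrow L_+$ the transform $\mfc(E)$ increases toward a value $m_+ \in (-1, 0)$ (recall $\supp\nu \subseteq [l,1]$, so $1 + tm$ stays positive for $m > -1$). Differentiating, $f'(m) = \frac{1}{m^2} - d^{-1}\int \frac{t^2\,\dd\nu(t)}{(1+tm)^2}$. As $m \to -1^+$ the integral $\int \frac{t^2\,\dd\nu(t)}{(1+tm)^2}$ behaves like $\int_l^1 \frac{t^2 \rho_\nu(t)}{(1-tm)^2}\,\dd t$; since $\rho_\nu(t) \sim Z^{-1} f(1)(1-t)^{\b}$ near $t = 1$ with $\b > 1$, this integral converges at $m = -1$ to exactly $d_+ = \int_l^1 \frac{t^2\,\dd\nu(t)}{(1-t)^2}$. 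Hence $f'(-1^+) = 1 - d^{-1} d_+$, which is positive precisely when $d > d_+$. Combined with the fact that $f'(m) \to -\infty$ or becomes negative as $m$ leaves the physical branch (more precisely, a sign analysis of $f'$ on $(-1, 0)$ using monotonicity of each term), this shows that for $d > d_+$ the critical point $m_+$ occurs at the boundary $m_+ = -1$ itself: the edge is reached as $\mfc \to -1$, and $L_+ = f(-1) = 1 + d^{-1}\int_l^1 \frac{t\,\dd\nu(t)}{1-t} = 1 + \tau_+$, which is the first claim. (One must check that the integral defining $\tau_+$ converges, which again uses $\b > 1$; and that this is genuinely the right endpoint of the support, not an interior point, by verifying $\im\mfc \to 0$ there and that the equation has no real solution for $z$ slightly larger.)

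Next I would extract the exponent. Near $m = -1$, write $m = -1 + w$ with $w \to 0^+$. Split the integral $d^{-1}\int_l^1 \frac{t\,\dd\nu(t)}{1+tm}$ into a part bounded and smooth in $w$ plus the singular contribution from $t$ near $1$; substituting $\rho_\nu(t) = Z^{-1}(1-t)^{\b} f(t)\lone_{[l,1]}$ and computing $\int_0^{\epsilon} \frac{s^{\b}}{c + s}\,\dd s$ type integrals (with $s = 1-t$, $c \asymp w$) yields an expansion of the form $z = f(m) = L_+ + a_1 w + \cdots + c_{\b}\, w^{\b} + (\text{higher order})$, where $c_{\b} \neq 0$ is an explicit constant proportional to $f(1)$ and the coefficient $a_1$ of the linear term vanishes exactly because $m = -1$ is the critical point (i.e. $f'(-1) = 0$ in the limiting sense, using $1 - d^{-1}d_+ > 0$ cancels against... — here one must be careful: $f'(-1^+) = 1 - d^{-1}d_+ > 0$, so the linear term does \emph{not} vanish, and the singular $w^{\b}$ term with $\b > 1$ is \emph{subdominant}). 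I would therefore instead invert directly: $\kappa = L_+ - E$ and one solves $E = f(m)$ for the \emph{imaginary part} of $m$ as $E \uparrow L_+$ along the real axis approached from within the support — actually the correct route is to note $\mfc$ is real on $(L_+, \infty)$ and acquires an imaginary part for $E < L_+$; writing $\mfc = -1 + w$ with $w$ now complex, the equation $E = \re f(\mfc)$ together with $\im f(\mfc) = 0$ forces $\im w \asymp (\re w)^{?}$, and the density $\mu_{fc}(E) = \pi^{-1}\lim_{\eta\to 0}\im \mfc(E+\ii\eta)$. The upshot of this local inversion, using the $w^{\b}$ singularity as the leading \emph{non-analytic} term, is $\im\mfc(L_+ - \kappa) \asymp \kappa^{\b - 1}$, whence integrating gives $\mu_{fc}(L_+ - \kappa) = \int_{L_+-\kappa}^{L_+} \rho_{fc}(s)\,\dd s \asymp \kappa^{\b}$, with matching upper and lower constants coming from $0 < \inf_{[l,1]} f \le \sup_{[l,1]} f < \infty$ and the $C^1$ regularity of $f$.

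The main obstacle will be the third step: making the local inversion of the functional equation rigorous and \emph{two-sided}, i.e. proving both the upper and lower bounds $C^{-1}\kappa^{\b} \le \mu_{fc}(L_+ - \kappa) \le C\kappa^{\b}$ uniformly for $0 \le \kappa \le L_+$, not merely $\kappa \to 0$. This requires (a) carefully controlling the remainder in the expansion of $\int \frac{t\,\dd\nu(t)}{1+tm}$ around $m = -1$, separating the genuinely non-analytic $w^{\b}$ term from the analytic part, which hinges on the precise hypothesis $f \in C^1[l,1]$ and $f > 0$; (b) handling the competition between the linear term $(1 - d^{-1}d_+)w$ and the singular term $c_{\b}w^{\b}$ — since $\b > 1$ the linear term dominates in $\re w$ but the \emph{imaginary} part of $w$ is entirely produced by the branch cut of $w^{\b}$, so the density is controlled by the singular term while the location is controlled by the linear term, and disentangling these cleanly is the delicate point; and (c) upgrading a neighborhood-of-$L_+$ estimate to the full range $\kappa \le L_+$, which should follow from compactness and the fact that $\rho_{fc}$ is real-analytic and strictly positive on the interior of its support (standard for deformed MP laws), but needs to be stated. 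I expect the technical heart to be a careful asymptotic analysis of the integral $\int_l^1 \frac{t^{j}(1-t)^{\b} f(t)}{(1+tm)^{k}}\,\dd t$ for $j,k \in \{1,2\}$ as $m \to -1^+$, extracting leading terms of orders $1$, $w^{\b-1}$, $w^{\b}$ as appropriate; everything else is bookkeeping around the functional equation.
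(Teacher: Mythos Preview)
Your overall strategy---invert the self-consistent equation and analyze $z = f(m) = -1/m + d^{-1}\int t(1+tm)^{-1}\,\dd\nu(t)$ near $m = -1$---is the right one, and for the first claim ($L_+ = 1+\tau_+$) it agrees in spirit with the paper. The paper changes variables to $\tau = 1/\mfc$ and, rather than hunting for critical points, takes the imaginary part of the equation and shows that for $\re\tau < -1$ the quantity $d^{-1}\int t^2\,|t+\tau|^{-2}\,\dd\nu$ is bounded above by $d^{-1}d_+ < 1$, so no bulk solution exists; this cleanly identifies $\tau = -1$ as the edge. Your route also works, but your language is misleading: $m = -1$ is \emph{not} a critical point of $f$ (you compute $f'(-1^+) = 1 - d^{-1}d_+ > 0$ yourself). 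It is the \emph{boundary of the real domain} of $f$, and the edge sits there because $f$ is strictly increasing on all of $(-1,0)$---a fact you need but do not verify (it follows since $\int t^2(1+tm)^{-2}\,\dd\nu$ is decreasing in $m$ on $(-1,0)$ with value $d_+$ at $m=-1$, whence $f'(m) \ge 1/m^2 - d^{-1}d_+ > 0$).

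For the exponent there is a genuine error: your expansion is correct but your inversion is off by one power. Writing $w = m+1$, you correctly obtain $z - L_+ = (1-d^{-1}d_+)\,w + c_\b\,w^{\b} + O(w^2)$ with $c_\b \neq 0$. Now set $z = L_+ - \kappa$: the \emph{linear} term dominates and forces $\re w \asymp -\kappa$; the imaginary part of $w$ is then determined by balancing $(1-d^{-1}d_+)\,\im w$ against $\im(c_\b w^\b)$, and since $|w| \asymp \kappa$ this yields $\im w \asymp \kappa^{\b}$, not $\kappa^{\b-1}$. Hence $\rho_{fc}(L_+-\kappa) = \pi^{-1}\im\mfc(L_+-\kappa) \asymp \kappa^\b$. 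The symbol $\mu_{fc}(L_+-\kappa)$ in the statement denotes the density (cf.\ the hypothesis of Lemma~\ref{im mfc bound}), not a cumulative mass, so no further integration is needed; your extra integration step, combined with an inconsistent reading of $\mu_{fc}$ as both density and CDF in the same sentence, is what produced the mismatch. Once this is corrected, the remainder of your outline (controlling the error terms via $f \in C^1$ and $f>0$, and extending to the full range $0\le\kappa\le L_+$ by compactness and interior analyticity of $\rho_{fc}$) is sound; the paper itself omits these details, referring to the analogous computation in~\cite{dsjl}.
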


We prove Theorem~\ref{general case - large d} in Section~\ref{pf edge}.

Our second result concerns the locations of the largest eigenvalues of $\mathcal{Q}$ in the supercritical case, which are determined by the order statistics of the eigenvalues of $\Sigma$.
In the following, we fix some $n_0\in\N$ independent of $M$ and consider the largest eigenvalues $(\lambda_\alpha)_{\alpha=1}^{n_0}$ of~$\caQ$.

\begin{theorem} \label{theorem:main}
Suppose that Assumption~\ref{assumption1}, assumptions in Theorem~\ref{general case - large d} and $d>d_+$ hold. Let $n_0>10$ be a fixed constant independent of $M$ and let $1\le \gamma<n_0$. Then the joint distribution function of the $\gamma$ largest rescaled eigenvalues,
\begin{align}\label{converging expression 1}
	\p \left( M^{1/(\b+1)} (L_+ - \lambda_1)\le s_1,\, M^{1/(\b+1)} (L_+ - \lambda_2)\le s_2,\, \ldots,\, M^{1/(\b+1)} (L_+ - \lambda_\gamma)\le s_\gamma \right)\,,
\end{align}
converges to the joint distribution function of the $\gamma$ largest rescaled order statistics of $(\sigma_\alpha)$,
\begin{align}\label{converging expression}
	\p \left(C_d M^{1/(\b+1)} (1-\sigma_1)\le s_1,\,C_d M^{1/(\b+1)} (1-\sigma_2)\le s_2,\, \ldots,\,C_d M^{1/(\b+1)}  (1-\sigma_\gamma)\le s_\gamma \right)\,,
\end{align}
as $N \to \infty$, where $C_d= \frac{d-d_+}{d}$.
In particular, when $\Sigma$ has i.i.d. entries with law $\nu$, the cumulative distribution function of the rescaled largest eigenvalue $M^{1/(\b+1)} (L_+ - \lambda_1)$ converges to the cumulative distribution function of the Weibull distribution,
\begin{align}\label{Weibull}
	G_{\b+1}(s)\deq 1 - \exp \left( -\frac{C_\nu s^{\b+1}}{(\b+1)} \right)\,,
\end{align} 
where
\begin{align}
	C_\nu \deq \left( \frac{d}{d-d_+} \right)^{\b+1} \lim_{t \to 1} \frac{\rho_\nu(t)}{(1-t)^{\b}}\,.
\end{align}
\end{theorem}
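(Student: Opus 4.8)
The plan is to reduce Theorem~\ref{theorem:main} to the eigenvalue--location estimate of Proposition~\ref{proposition:lambda_k}, which I will use in the form: on the good-configuration event $\Omega$, with probability $1 - o(1)$, each of the top $n_0$ eigenvalues of $\caQ$ sticks to the corresponding order statistic of $\Sigma$, namely
\begin{align*}
\bigl| (L_+ - \lambda_k) - C_d (1 - \sigma_k) \bigr| \;=\; o\bigl( M^{-1/(\b+1) - \phi} \bigr), \qquad 1 \le k \le n_0 ,
\end{align*}
with $C_d = (d - d_+)/d$ and $L_+ = 1 + \tau_+$ as in Theorem~\ref{general case - large d}; one in fact expects a power-saving error of order $M^{-1/(\b+1) - c}$. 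The key feature is that this error is far below the typical near-edge spacing $\kappa_0 = M^{-1/(\b+1)}$ of the $\sigma_\alpha$, and on $\Omega$ also below the minimal separation $M^{-\phi}\kappa_0$ enforced by \eqref{eq4.3}--\eqref{eq4.4}, so that the pairing $\lambda_k \leftrightarrow \sigma_k$ is order preserving --- which is exactly why the same index $\gamma$ appears on both sides of \eqref{converging expression 1}--\eqref{converging expression}.

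Given this, the convergence of \eqref{converging expression 1} to \eqref{converging expression} is soft. Since $\p(\Omega) \to 1$ by Assumption~\ref{assumption1}, every probability in \eqref{converging expression 1}--\eqref{converging expression} changes by only $o(1)$ upon intersecting with $\Omega$. On $\Omega$, Proposition~\ref{proposition:lambda_k} shows that with probability $1 - o(1)$ the random vectors $\bigl( M^{1/(\b+1)} (L_+ - \lambda_\alpha) \bigr)_{\alpha=1}^{\gamma}$ and $\bigl( C_d M^{1/(\b+1)} (1 - \sigma_\alpha) \bigr)_{\alpha=1}^{\gamma}$ coincide up to an error $\ve_M \to 0$ in the supremum norm. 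A routine sandwiching then yields, for every $(s_1, \dots, s_\gamma)$,
\begin{align*}
\p\Bigl( \bigcap_{\alpha \le \gamma} \bigl\{ C_d M^{1/(\b+1)} (1 - \sigma_\alpha) \le s_\alpha - \ve_M \bigr\} \Bigr) - o(1) \;\le\; \p\Bigl( \bigcap_{\alpha \le \gamma} \bigl\{ M^{1/(\b+1)} (L_+ - \lambda_\alpha) \le s_\alpha \bigr\} \Bigr)
\end{align*}
and the matching upper bound with $s_\alpha + \ve_M$ in place of $s_\alpha - \ve_M$. Hence the distribution functions in \eqref{converging expression 1} and \eqref{converging expression} differ by at most $o(1)$ plus the oscillation of the distribution function in \eqref{converging expression} over the window $[s_\alpha - \ve_M, s_\alpha + \ve_M]$; this vanishes at every continuity point of the (subsequential) limit --- automatic in the i.i.d.\ case, where the limit is continuous, and in general this is precisely the sense in which the convergence in \eqref{converging expression 1}--\eqref{converging expression} is to be read. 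This proves the convergence claim.

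It remains to identify the limit in the i.i.d.\ case. There the $(\sigma_\alpha)$ are i.i.d.\ with density $\rho_\nu$, and writing $c_\nu \deq \lim_{t \to 1} \rho_\nu(t) / (1 - t)^{\b}$ we have $\nu\bigl( [1 - x, 1] \bigr) = \tfrac{c_\nu}{\b + 1} x^{\b + 1} (1 + o(1))$ as $x \downarrow 0$. Classical extreme-value theory then gives that the point process $\sum_{\alpha} \delta_{M^{1/(\b+1)}(1 - \sigma_\alpha)}$ converges weakly on $(0, \infty)$ to a Poisson point process with intensity $c_\nu x^{\b} \, \dd x$; in particular $\p\bigl( M^{1/(\b+1)} (1 - \sigma_1) > x \bigr) \to \exp\bigl( -\tfrac{c_\nu}{\b + 1} x^{\b + 1} \bigr)$, and the rescaled top $\gamma$ order statistics of $(\sigma_\alpha)$ jointly converge to the $\gamma$ leftmost points of this process. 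Taking $x = s / C_d$ and using $C_\nu = C_d^{-(\b+1)} c_\nu$ turns the one-dimensional statement into the Weibull form $G_{\b+1}(s)$; together with the previous paragraph this completes the proof.

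The genuine difficulty lies entirely in Proposition~\ref{proposition:lambda_k} (proved in Section~\ref{location}), not in the reduction above. Locating each extremal $\lambda_k$ to the accuracy stated --- well below both the eigenvalue spacing $M^{-1/(\b+1)}$ and the naive fluctuation scale $N^{-1/2}$ --- is where the local law for the resolvents of $Q$ and $\caQ$, the linearization of $Q$, and above all the fluctuation-averaging bound on $\im$ of the resolvent of $Q$ on scales below $N^{-1/2}$ are needed; this last point is made delicate both by the unboundedness of the diagonal resolvent entries of $\caQ$ and by the failure of the usual stability bounds when $\nu$ decays convexly at its edge. These inputs are what allow one to solve, to the required precision, the master equation whose extremal roots are the $\lambda_k$, and to read off the root lying near $L_+ - C_d(1 - \sigma_k)$; everything downstream, the present theorem included, is comparatively routine.
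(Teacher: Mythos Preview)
Your reduction is correct and follows essentially the same route as the paper: locate each $\lambda_k$ near $L_+-C_d(1-\sigma_k)$ to accuracy $o(\kappa_0)$, then sandwich and invoke extreme-value theory. One labeling point: the estimate you attribute to Proposition~\ref{proposition:lambda_k} is not what that proposition literally says---it gives the implicit relation $\re(1/\whmfc(\lambda_\gamma+\ii\eta_0))=-\sigma_\gamma+O(M^{-1/2+3\phi})$, and the paper then feeds this through Lemma~\ref{hat bound} (to pass from $\whmfc$ to $\mfc$) and Lemma~\ref{mfc estimate} (approximate linearity of $1/\mfc$) to obtain the explicit location bound, packaged as Proposition~\ref{proposition:main}. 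Your phrase ``which I will use in the form'' hides exactly this step; it is short but not free, so you should either cite Proposition~\ref{proposition:main} or spell out the two-line derivation. For the Weibull identification the paper appeals to Fisher--Tippett--Gnedenko while you go via Poisson point process convergence; the two are equivalent here and your version has the advantage of handling the joint law of the top $\gamma$ points in one stroke.
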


Our third result states that the largest eigenvalue of $\caQ$ exhibits Gaussian fluctuation when $d<d_+$ and the eigenvalues of $\Sigma$ are i.i.d. random variables. 
\begin{theorem}[Gaussian fluctuation for the regime $d<d_+$] \label{lemma:gaussian}
Suppose that assumptions in Theorem~\ref{general case - large d} hold except that $d<d_+$. Further, assume that the eigenvalues of $\Sigma$ are i.i.d. random variables. Then, the rescaled fluctuation $M^{1/2} (\lambda_1 - L_+)$ converges in distribution as $N \to \infty$ to a centered Gaussian random variable with variance 
\begin{equation}
	(d^2 M)^{-1} \left( \int\left|\frac{t\tau}{t+\tau}\right|^2\dd\nu(t)- \left(\int\frac{t\tau}{t+\tau}\dd\nu(t)\right)^2 \right) 
\end{equation}
where $\tau$ and $L_+$ are defined in the proof.
\end{theorem}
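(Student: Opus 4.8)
The plan is to exploit the following picture: for $d<d_+$ the largest eigenvalue $\lambda_1$ of $\caQ$ does not detach from the bulk as an outlier, but attaches to the right edge $L_+(\widehat\nu)$ of the deformed Marchenko--Pastur law associated with the \emph{random} empirical spectral measure $\widehat\nu\deq M^{-1}\sum_{\alpha}\delta_{\sigma_\alpha}$ of $\Sigma$, and the leading-order fluctuation of $\lambda_1$ is that of this random edge about its deterministic limit $L_+$, which is Gaussian on the scale $M^{-1/2}$ by the classical central limit theorem for the i.i.d.\ entries $(\sigma_\alpha)$. The randomness of $X$ contributes only on the much finer Tracy--Widom scale $N^{-2/3}$ and is washed out; this is why the limit is Gaussian rather than Tracy--Widom, and why the variance involves only $\nu$.

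First I would analyse the self-consistent equation in the subcritical regime. Writing the self-consistent equation for the right edge of $\supp\mu_{fc}$ in the form $z=z(\tau)\deq-\tau+d^{-1}\int_l^1 \frac{t\tau}{t+\tau}\,\dd\nu(t)$, $\tau\in(-\infty,-1)$ (so that $z(-1)=1+\tau_+$), one has $z'(\tau)=-1+d^{-1}\int_l^1 \frac{t^2}{(t+\tau)^2}\,\dd\nu(t)$, so the right edge is the critical value of $z(\tau)$ at the unique solution of $\int_l^1 t^2(t+\tau)^{-2}\,\dd\nu(t)=d$. Since $\b>1$, the left-hand side is continuous and increasing in $\tau$, tending to $0$ as $\tau\to-\infty$ and to $d_+$ as $\tau\uparrow-1$; hence this equation has a solution in $(-\infty,-1)$ --- necessarily bounded away from $-1$ --- \emph{precisely} when $d<d_+$. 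One then defines $\tau$ to be this solution and $L_+\deq z(\tau)$. Because the critical point is strictly interior and $z''(\tau)\neq0$, $L_+$ is a \emph{regular} square-root edge of $\mu_{fc}$, and since $\sigma_1\le1$ lies strictly below the would-be outlier threshold (which, for $d<d_+$, is a value $>1$), no eigenvalue of $\Sigma$ creates an outlier. The same computation applied to $\widehat\nu$ shows that, on a high-probability event on which $\widehat\nu$ is sufficiently close to $\nu$, the edge $L_+(\widehat\nu)$ is likewise a regular square-root edge with uniformly controlled constants.

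Then, conditioning on such a $\Sigma$, the local law for the resolvents of $Q$ and $\caQ$ together with the usual edge-rigidity argument at a regular square-root edge (as in~\cite{k,ky,uzb,scjl}) yields $\lambda_1=L_+(\widehat\nu)+O(N^{-2/3+\epsilon})$ with high probability for every $\epsilon>0$; since $M^{1/2}N^{-2/3+\epsilon}\to0$, Slutsky's theorem reduces the claim to identifying the limit of $M^{1/2}\bigl(L_+(\widehat\nu)-L_+\bigr)$. By the implicit function theorem, on the good event the critical point $\tau(\widehat\nu)$ exists near $\tau$ with $\tau(\widehat\nu)-\tau=O(M^{-1/2+\epsilon})$, being the zero of a bounded smooth linear statistic of the $\sigma_\alpha$. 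Differentiating $L_+(\widehat\nu)$ along the interpolation from $\nu$ to $\widehat\nu$ and using that the $\tau$-derivative of $z(\cdot)$ vanishes at the critical point (the envelope principle), only the explicit dependence on the measure contributes at first order, and replacing $\tau(\cdot)$ by $\tau$ costs $O(M^{-1})$; hence, with high probability,
\[
L_+(\widehat\nu)-L_+=\frac1d\left(\frac1M\sum_{\alpha=1}^M \frac{\sigma_\alpha\tau}{\sigma_\alpha+\tau}-\int_l^1 \frac{t\tau}{t+\tau}\,\dd\nu(t)\right)+O\!\left(M^{-1}\right).
\]
Since $g(t)\deq t\tau(t+\tau)^{-1}$ is bounded and $C^1$ on $[l,1]$ ($t+\tau$ being bounded away from $0$ there), the classical CLT for $(\sigma_\alpha)$ i.i.d.\ with law $\nu$ gives that $M^{1/2}(\lambda_1-L_+)$ converges in distribution to a centred Gaussian; equivalently, $\lambda_1-L_+$ is asymptotically Gaussian with variance $(d^2M)^{-1}\bigl(\int|g|^2\,\dd\nu-(\int g\,\dd\nu)^2\bigr)$, which is the variance in the statement.

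The hard part will be the edge estimate $\lambda_1=L_+(\widehat\nu)+O(N^{-2/3+\epsilon})$: one must verify that the square-root regularity of $L_+(\widehat\nu)$, and the scale entering the rigidity bound, survive for the \emph{random} empirical measure $\widehat\nu$, even though $\widehat\nu$ places eigenvalues within distance $\sim M^{-1/(\b+1)}$ of the right endpoint of $\supp\nu$. The assumption $d<d_+$ is exactly what keeps the critical point $\tau$ bounded away from $-1$, hence away from the pole $-1/\sigma_1\approx-1$ of the self-consistent equation, so these near-extreme eigenvalues do not destabilise it; still, the uniformity over the good event of the square-root behaviour and of the Tracy--Widom-type scaling factor has to be established, and this is where the bulk of the work lies. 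The remaining ingredients --- the analysis of the self-consistent equation, the perturbation around a nondegenerate critical point, and the central limit theorem --- are routine.
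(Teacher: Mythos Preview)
Your proposal is correct and follows essentially the same approach as the paper: decompose $\lambda_1-L_+$ into $(\lambda_1-\widehat L_+)+(\widehat L_+-L_+)$, control the first piece by the $N^{-2/3}$ edge rigidity at a regular square-root edge (the paper invokes Condition~1.1 and Theorem~4.1 of~\cite{uzb} after checking $1+\sigma_\alpha\whmfc\sim 1$), and show the second piece is a CLT-type sum via the cancellation of the $\widehat\tau-\tau$ contribution at the critical point. What you call the envelope principle is exactly the paper's observation that $N^{-1}\sum_\alpha\sigma_\alpha^2(\tau+\sigma_\alpha)^{-2}=1+O(M^{-1/2})$, which makes the $(\widehat\tau-\tau)$ terms cancel in the expansion of $\widehat L_+-L_+$.
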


We prove Theorems~\ref{theorem:main} and~\ref{lemma:gaussian} in Section~\ref{pf main theorem}.

If $X$ is Gaussian, our main results still hold for general, non-diagonal $\Sigma$ satisfying Definition~\ref{sigma assumptions}.

\begin{corollary}\label{cor:Gaussian}
Suppose that assumptions in Theorem~\ref{general case - large d} hold with the following changes: the entries of $X$ are Gaussian, and $\Sigma$ is not necessarily diagonal, with eigenvalues $(\sigma_{\alpha})$. Then, the results in Theorems~\ref{general case - large d},~\ref{theorem:main}, and~\ref{lemma:gaussian} hold without any change.
\end{corollary}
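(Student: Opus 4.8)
\textbf{Proof proposal for Corollary~\ref{cor:Gaussian}.}
The plan is to reduce the non-diagonal statement to the diagonal statements already established in Theorems~\ref{general case - large d},~\ref{theorem:main} and~\ref{lemma:gaussian}, exploiting the orthogonal invariance of the Gaussian ensemble. Since $\Sigma$ is real symmetric positive definite, I would start from its spectral decomposition $\Sigma = O D O^T$ with $O\in\mathrm{O}(M)$ and $D=\diag(\sigma_1,\ldots,\sigma_M)$ ordered as in~\eqref{the ordering}; then $\Sigma^{1/2}=O D^{1/2} O^T$ and
\begin{align}
	\caQ = \Sigma^{1/2} X X^* \Sigma^{1/2} = O\, D^{1/2}(O^T X)(O^T X)^* D^{1/2}\, O^T .
\end{align}
Hence $\caQ$ is orthogonally conjugate — and in particular isospectral — to $(D^{1/2}\wt X)(D^{1/2}\wt X)^*$ with $\wt X\deq O^T X$.

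Next I would invoke that when the entries of $X$ are i.i.d.\ centered Gaussians of variance $1/N$, the law of $X$ is invariant under left multiplication by any fixed element of $\mathrm{O}(M)$, so $\wt X = O^T X \overset{d}{=} X$. This is the one place where genuine Gaussianity, rather than merely the moment bound~\eqref{p moment bound}, is used. Consequently the joint law of the eigenvalues $(\lambda_i)$ of $\caQ$ coincides with the joint law of the eigenvalues of $(D^{1/2}X)(D^{1/2}X)^*$, which is a sample covariance matrix of exactly the type in Definition~\ref{assumption sample} with diagonal population $D$, driven by a matrix with the same distribution as $X$ (so the independence, zero mean, variance $1/N$, and~\eqref{p moment bound} are all preserved).

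It then remains only to observe that all hypotheses and conclusions of the three theorems depend on $\Sigma$ exclusively through its spectrum $(\sigma_\alpha)$: the LSD $\nu$, the limiting measure $\mu_{fc}$ and its Stieltjes transform $\mfc$, the edge $L_+=1+\tau_+$, the constants $d_+,\tau_+,C_d,C_\nu$ and the Weibull law~\eqref{Weibull} are functions of $\nu$ and $(\sigma_\alpha)$ only, while the good-configuration event $\Omega$ of Definition~\ref{sigma assumptions} and Assumption~\ref{assumption1} are likewise formulated purely in terms of $(\sigma_\alpha)$ and $\mfc$; the i.i.d.\ sub-case corresponds exactly to $D$ having i.i.d.\ entries with law $\nu$. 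Thus $D$ satisfies Definition~\ref{assumption sample} and Assumption~\ref{assumption1} whenever $\Sigma$ does, the diagonal theorems apply verbatim to $(D^{1/2}X)(D^{1/2}X)^*$, and the distributional identity of the previous paragraph transports the conclusions back to $\caQ$ without any change. I do not expect a genuine analytic obstacle: the corollary is essentially a change of variables, and the only point demanding (minor) care is the bookkeeping just described — confirming that no hypothesis of the cited theorems covertly uses the diagonal form of $\Sigma$ beyond its spectral data.
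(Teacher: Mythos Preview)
Your argument is correct and is exactly the intended one: the paper states the corollary without proof, and the only mechanism available is precisely the orthogonal invariance of the real Gaussian ensemble together with the observation that every object in Theorems~\ref{general case - large d},~\ref{theorem:main},~\ref{lemma:gaussian} depends on $\Sigma$ only through its eigenvalues. The single point you might make explicit is that when $\Sigma$ is random one conditions on $\Sigma$ (hence on $O$) and uses the independence of $X$ from $\Sigma$ before invoking $O^T X\overset{d}{=}X$; otherwise nothing is missing.
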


\subsection{Numerical experiment} \label{subsec:simul}

We conduct numerical simulations to observe the local behavior of the empirical spectral distribution of deformed sample covariance matrices. In each simulation done with MATLAB, we generate 10 sample covariance matrices of the form
\begin{align}
Q=\frac{1}{N} X^*\Sigma X
\end{align} 
under fixed $\Sigma$ and plot the histograms of non-zero eigenvalues of $Q$ to find the behavior of the ESD of $Q$ at the right edge.

\subsubsection{Convex, super-critical case, $\b>1$ } \label{subsec:case1}
We first generate $4000 \times 6000$ matrices $X$ with i.i.d standard Gaussian entries and a $4000 \times 4000$ diagonal matrix $\Sigma$ with i.i.d. entries sampled from the density function $f_1$ given by
\begin{align}
f_1(t) = \mathcal{Z}_1^{-1}e^t(1-t)^3 \lone_{[1/10,1]}(t)
\end{align}
with a normalization constant $\mathcal{Z}_1$. In this setting, $d = N/M = 1.5$ and $d_+\approx 0.703908$. The histogram of nonzero eigenvalues of $Q$ can be seen from Figure~\ref{fig:image1}, which shows that the ESD exhibits convex decay at the right edge.

\subsubsection{Concave, sub-critical case, $\b>1$} \label{subsec:case2}

We next generate $4000 \times 2000$ matrices $X$ with i.i.d. standard Gaussian entries. The diagonal matrix $\Sigma$ is the same as in Section~\ref{subsec:case1}. In this setting, $d < d_+$, and the ESD exhibits concave decay at the right edge, as can be seen from Figure~\ref{fig:image12}.

\subsubsection{Concave case, $\b<1$} \label{subsec:case3}

In this setting, we generate $4000 \times 6000$ matrices $X$ with i.i.d standard Gaussian entries again as in Section~\ref{subsec:case1}, but we use a $4000 \times 4000$ diagonal matrix $\Sigma$ with i.i.d. entries sampled from the density function $f_1$ given by
\begin{align}
f_2(t) = \mathcal{Z}_2^{-1}e^t(1-t)^{1/2} \lone_{[1/10,1]}(t)
\end{align}
with a normalization constant $\mathcal{Z}_2$. Formally, $d_+ = \infty$ in this case, and the ESD exhibits concave decay at the right edge as in Figure~\ref{fig:image13}. \\

\begin{figure}[h]
\centering
\begin{subfigure}{.32\linewidth}
	\centering
	\includegraphics[width=1\linewidth]{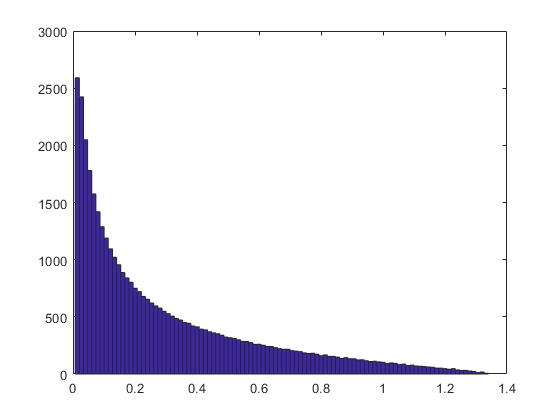}
	\caption{$d>d_+$}\label{fig:image1}
\end{subfigure}
\hfill
\begin{subfigure}{.32\linewidth}
	\centering
	\includegraphics[width=1\linewidth]{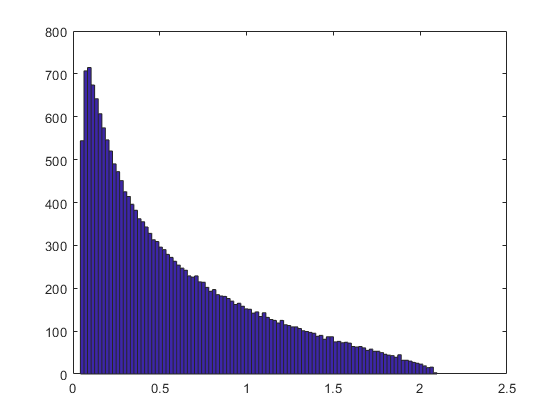}
	\caption{$d<d_+$}\label{fig:image12}
\end{subfigure}
\hfill
\begin{subfigure}{.32\linewidth}
	\centering
	\includegraphics[width=1\linewidth]{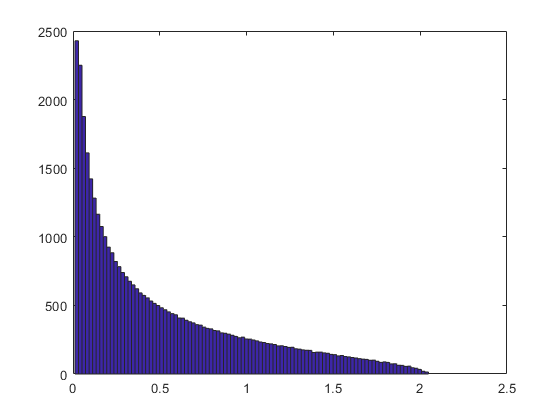}
	\caption{$\b<1$}\label{fig:image13}
\end{subfigure}

\RawCaption{\caption{The limiting ESDs of $Q$}
	\label{fig:images}}
\end{figure}

\section{Preliminaries} \label{prelim}

In this section, we collect some basic notations and identities.

\subsection{Notations}
We adopt the following shorthand notation introduced in~\cite{afe} for high-probability estimates:

\begin{definition}[Stochastic dominance]
Let 
\begin{align}
	X=(X^{(N)}(u):N\in\N,u\in U^{(N)}).\qquad Y=(Y^{(N)}(u):N\in\N,u\in U^{(N)})
\end{align}  be two families of nonnegative random variables where $U^{(N)}$ is a (possibly $N$-dependent) parameter set. We say $X$ is stochastically dominated by $Y$, uniformly in $u$, if for all (small) $\epsilon>0$ and (large) $D>0$, \begin{equation}\label{stochastic dominance}
	\sup_{u\in U^{(N)}} \p[X^{(N)}(u)>N^{\epsilon} Y^{(N)}(u)]\le N^{-D},
\end{equation} for sufficiently large $N\ge N_0(\epsilon, D)$. If $X$ is stochastically dominated by $Y$, uniformly in $u$, we write $X\prec Y$. If for some complex family $X$, we have $|X|\prec Y$ we also write $X=\caO_\prec(Y)$.
\end{definition}

We remark that the relation $\prec$ is a partial ordering with the aritheoremetic rules of an order relation; e.g., if $X_1\prec Y_1$ and $X_2\prec Y_2$ then $X_1+X_2\prec Y_1+Y_2$ and $X_1X_2 \prec Y_1Y_2$.

\begin{definition}[high probability event] We say an event $\Omega$ occurs with high probability if for given $D>0$, $\p(\Omega)\ge 1-N^{-D}$ whenever $N\ge N_0(D)$. Also, we say an event $\Omega_2$ occurs with high probability on $\Omega_1$ if for given $D>0$, $\p(\Omega_2\mid \Omega_1)\ge 1-N^{-D}$ whenever $N\ge N_0(D)$.
\end{definition}
Equivalently, $\Omega$ holds with high probability if $1-\lone(\Omega)\prec 0$.

For convenience, we use double brackets to denote the index set, i.e., for $n_1, n_2 \in \R$,
\begin{align}
\llbracket n_1, n_2 \rrbracket \deq [n_1, n_2] \cap \Z\,.
\end{align}

Throughout the paper, we use lowercase Latin letters $a,b,\cdots$ for indices in $\llbracket 1,N \rrbracket$, uppercase letters $A,B,\cdots$ for indices in $\llbracket 1,N+M \rrbracket$, and Greek letters $\alpha,\beta,\cdots$ for indices in $\llbracket 1,M \rrbracket$. If necessary, we use Greek letters with tilde for indices in $\llbracket N+1,N+M \rrbracket$, e.g., $\wt\alpha=N+\alpha$.

We use the symbols $O(\,\cdot\,)$ and $o(\,\cdot\,)$ for the standard big-O and little-o notation. The notations $O$, $o$, $\ll$, $\gg$, refer to the limit $N \to \infty$ unless stated otherwise, where the notation $a \ll b$ means $a=o(b)$. We use $c$ and $C$ to denote positive constants that are independent on $N$. Their values may change line by line but in general we do not track the change. We write $a \sim b$, if there is $C \ge 1$ such that $C^{-1}|b| \le |a| \le C |b|$.

\subsection{Deformed Marchenko--Pastur law}\label{subsec:deformed_mp}

As shown in~\cite{mp}, if the empirical spectral distribution (ESD) of $\Sigma$, $\nu_N$, converges in distribution to some probability measure $\nu$, then the ESD of $\mathcal{Q}$ converges weakly in probability to a certain deterministic distribution $\mu_{fc}$ which is called the \textbf{deformed Marchenko--Pastur law}. It was also proved in~\cite{mp} that $\mu_{fc}$ can be expressed in terms of its Stieltjes transform as follows:

For a (probability) measure $\omega$ on $\R$, its Stieltjes transform is defined by
\begin{align}
m_{\omega}(z)\deq\int_{\R}\frac{\dd\omega(x)}{x-z}\,,\quad\quad\quad (z\in\C^+)\,.
\end{align}
Notice that $m_{\omega}(z)$ is an analytic function in the upper half plane and $\im m_{\omega}(z)\ge 0$ for $z \in \C^+$.

Let $\mfc$ be the Stieltjes transform of $\mu_{fc}$. It was proved in~\cite{mp} that $\mfc$ satisfies the self-consistent equation
\begin{align}\label{mfc equation}
\mfc(z)=\left\{-z+d^{-1}\int_\R\frac{t\dd\nu(t)}{1+t \mfc(z)}\right\}^{-1},\quad\quad \im \mfc(z)\ge 0\,,\quad\quad (z\in\C^+)\,,
\end{align}
where $\nu$ is the limiting spectral distribution (LSD) of $\Sigma$. It was also shown that~\eqref{mfc equation} has a unique solution. Moreover, $\limsup_{\eta\searrow 0} \im \mfc(E+\ii\eta)<\infty$, and $\mfc(z)$ determines an absolutely continuous probability measure $\mu_{fc}$ whose density is given by
\begin{align}\label{stieltjes inversion}
\rho_{fc}(E)=\frac{1}{\pi}\lim_{\eta\searrow 0} \im \mfc(E+\ii\eta)\,,\quad\quad (E\in\R)\,.
\end{align}
For the properties of $\mu_{fc}$, we refer to~\cite{ss}. We remark that the density $\rho_{fc}$ is analytic inside its support.

\begin{remark}
The measure $\mu_{fc}$ is identified with the multiplicative free convolution of the Marchenko--Pastur measure $\mu_{MP}$ and the measure $\nu$ and is denoted by $\mu_{fc} \deq \nu \boxtimes \mu_{MP}$.
\end{remark}

\subsection{Resolvent and Linearization of $Q$}\label{linearization}
We define the resolvent, or Green function, $G_Q(z)$, and its normalized trace, $m_Q(z)$, of $Q$ by
\begin{align}
G_Q(z) = ((G_Q)_{AB}(z)) \deq (Q-z)^{-1}\,,\quad m_Q(z) \deq \frac{1}{N} \Tr G_Q(z)\,, \quad( z \in \C^{+})\,.
\end{align}
We refer to $z$ as the spectral parameter and set $z=E+\ii\eta$, $E\in\R$, $\eta>0$. 

For the analysis of the resolvent $G_Q(z)$, we use the following linearization trick as in~\cite{scjl}. Define a partitioned $(N+M)\times (N+M)$ matrix 
\begin{equation}\label{linear}
H(z):=\begin{bmatrix} -zI_N & X^*\\X & -\Sigma^{-1} \end{bmatrix},\quad z\in \C^+ \,
\end{equation} 
where $I_N$ is the $N\times N$ identity matrix. Note that $H$ is invertible, as proved in~\cite{scjl}.
Set $G(z):=H(z)^{-1}$ and define the normalized (partial) traces, $m$ and $\wt m$, of $G$ by 
\begin{equation} 
m(z):=\dfrac{1}{N}\sum_{a=1}^N G_{aa}(z), \qquad \wt m(z):=\dfrac{1}{M}\sum_{\wt\alpha=N+1}^{N+M}G_{\wt\alpha\wt\alpha}. 
\end{equation}
With abuse of notation, when we use Greek indices with tilde such as $G_{\wt\alpha \wt\alpha} = G_{N+\alpha, N+\alpha}$, we omit the tilde and set $G_{\alpha \alpha} \equiv G_{\wt\alpha \wt\alpha}$ if it does not causes any confusion.

Frequently, we abbreviate $G \equiv G(z)$, $m\equiv m(z)$, etc. In addition, $m(z)=m_Q(z)$ holds as a consequence of the Schur complement formula, see~\cite{scjl}. Furthermore, from (4.1) of~\cite{ky} and Remark~\ref{Q and wt Q}, we have
\begin{align}\label{rel between m and wt m}
m(z)=\frac{1}{Nz}\sum_\alpha \sigma_\alpha^{-1}G_{\alpha\alpha}-\frac{N-M}{Nz}.
\end{align}

\subsection{Minors}
For $\T\subset \llbracket1, N+M \rrbracket$, the matrix minor $H^{(\T)}$ of $H$ is defined as \begin{equation}(H^{(\T)})_{AB}=\lone(A\notin\T)\lone(B\notin\T)H_{AB},\end{equation} i.e., the entries in the $\T$-indexed columns/rows are replaced by zeros. We define the resolvent $G^{(\T)}(z)$ of $H^{(\T)}$ by
\begin{equation} 
G_{AB}^{(\T)}(z):=\left(\frac{1}{H^{(\T)}-z}\right)_{AB}.
\end{equation}
For simplicity, we use the notations
\begin{align}
\sum_{a}^{(\T)}\deq\sum_{\substack{a=1\\a\not\in\T}}^N\,\,,\qquad \sum_{a\not=b}^{(\T)}\deq\sum_{\substack{a=1,\, b=1\\ a\not=b\,,\,a,b\not\in\T}}^N\,,\qquad\sum_{\alpha}^{(\T)}\deq\sum_{\substack{\alpha=1\\\alpha\not\in\T}}^{M}\,\,,\qquad\sum_{\alpha\not=\beta}^{(\T)}\deq\sum_{\substack{\alpha=1,\, \beta=1\\ \alpha\not=\beta\,,\,\alpha,\beta\not\in\T}}^{M}
\end{align}
and abbreviate $(A)=(\{A\})$, $(\T A)=(\T\cup\{A\})$. In Green function entries $(G_{AB}^{(\T)})$ we refer to $\{A,B\}$ as lower indices and to $\T$ as upper indices.

Finally, we set
\begin{align}
m^{(\T)}\deq\frac{1}{N}\sum_{a}^{(\T)}G_{aa}^{(\T)}\,,\qquad \wt m^{(\T)}\deq\frac{1}{M}\sum_{\alpha}^{(\T)}G_{\alpha\alpha}^{(\T)}.
\end{align}
Note that we use the normalization $N^{-1}$ instead of $(N-|\T|)^{-1}$.

\subsection{Resolvent identities}
The next lemma collects the main identities between the matrix elements of $G$ and its minor $G^{(\T)}$.

\begin{lemma}\label{res id}
Let $G(z)=H^{-1}(z),\; z\in\C^+$ be a Green function defined by~\eqref{linear} and $\Sigma$ is diagonal. For $a,b\in\llbracket 1,N\rrbracket$, $\alpha,\beta\in\llbracket 1,M \rrbracket$, $A,B,C\in\llbracket 1,N+M \rrbracket$, the following identities hold:
\begin{itemize}
	\item[-] {\it Schur complement/Feshbach formula:}\label{feshbach} For any $a$ and $\alpha$,
	\begin{align}\label{schur} 
		G_{aa}=\frac{1}{-z-\sum_{\alpha,\beta}{x_{\alpha a} G_{\alpha\beta}^{(a)}}x_{\beta a}},\qquad
		G_{\alpha \alpha}=\frac{1}{-\sigma_\alpha^{-1}-\sum_{a,b}{x_{\alpha a} G_{ab}^{(\alpha)}}x_{\alpha b}}\,.
	\end{align}
	\item[-] For $a\not=b$,
	\begin{align}\label{roman}
		G_{ab}=-G_{aa}\sum_{\alpha} x_{\alpha a}G_{\alpha b}^{(a)}=-G_{bb}\sum_{\beta} G_{a\beta}^{(b)}x_{\beta b}\,.
	\end{align}
	\item[-] For $\alpha\not=\beta$,
	\begin{align}\label{greek}
		G_{\alpha\beta}=-G_{\alpha\alpha}\sum_{a}x_{\alpha a}G_{a\beta}^{(\alpha)}=-G_{\beta\beta}\sum_{b} G_{\alpha b}^{(\beta)} x_{\beta b}\,.
	\end{align}
	
	\item[-] For any $a$ and  $\alpha$,
	\begin{align}\label{intersect}
		G_{a\alpha}=-G_{aa}\sum_{\beta}x_{\beta a}G_{\beta\alpha}^{(a)}=-G_{\alpha\alpha}\sum_{b} G_{ab}^{(\alpha)} x_{\alpha b}\,.
	\end{align}
	
	\item[-] For $A,B\not=C$,
	\begin{align}\label{basic resolvent}
		G_{AB}=G_{AB}^{(C)}+\frac{G_{AC}G_{CB}}{G_{CC}}\,.
	\end{align}
	
	\item[-]{\it Ward identity:} For any $a$,
	\begin{align}\label{ward}
		\sum_{b}|G_{ab}|^2=\frac{\im G_{aa}}{\eta}\,,
	\end{align}
	where $\eta=\im z$.
\end{itemize}
\end{lemma}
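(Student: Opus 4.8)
The plan is to derive all six families of identities from two elementary facts of linear algebra --- the Schur complement formula and the block-inverse (cofactor) formula --- combined with the sparsity of the linearization $H$ in~\eqref{linear}. Write indices in the Latin range $\llbracket 1,N\rrbracket$ and in the tilded Greek range $\llbracket N+1,N+M\rrbracket$. Since $X$ is real, $X^*=X^T$, and the only nonzero entries of $H$ are $H_{aa}=-z$, $H_{\wt\alpha\wt\alpha}=-\sigma_\alpha^{-1}$, and $H_{a\wt\alpha}=H_{\wt\alpha a}=x_{\alpha a}$; in particular the Latin--Latin and Greek--Greek off-diagonal blocks vanish because the former equals $-zI_N$ and the latter equals $-\Sigma^{-1}$, which is diagonal as $\Sigma$ is diagonal. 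Recall also that, by definition, $G^{(\T)}$ is the inverse of the principal submatrix of $H$ indexed by the complement of $\T$ (extended by zero on $\T$); this submatrix is invertible by~\cite{scjl}, so every Schur complement below is well defined. This structural bookkeeping is the only thing one needs to set up; there is no genuine obstacle in the proof.

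Fix $A\in\llbracket1,N+M\rrbracket$ and partition $H$ into a $2\times2$ block matrix by isolating the single index $A$; the complementary block is the submatrix of $H$ on indices $\ne A$, whose inverse is $G^{(A)}$. The Schur complement formula gives $G_{AA}=\big(H_{AA}-\sum_{B,C\ne A}H_{AB}\,G^{(A)}_{BC}\,H_{CA}\big)^{-1}$, and the block-inverse formula gives, for $B\ne A$,
\begin{align*}
G_{AB}=-G_{AA}\sum_{C\ne A}H_{AC}\,G^{(A)}_{CB}\,,\qquad G_{BA}=-G_{AA}\sum_{C\ne A}G^{(A)}_{BC}\,H_{CA}\,.
\end{align*}
Taking $A=a$: the row $(H_{aC})_{C\ne a}$ is supported only on Greek indices with $H_{a\wt\alpha}=x_{\alpha a}$, which yields at once the first formula in~\eqref{schur}, the first equality in~\eqref{roman} (take $B=b$), and the first equality in~\eqref{intersect} (take $B=\wt\alpha$). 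Taking $A=\wt\alpha$: the row $(H_{\wt\alpha C})_{C\ne\wt\alpha}$ is supported only on Latin indices with $H_{\wt\alpha a}=x_{\alpha a}$, giving the second formula in~\eqref{schur}, the first equality in~\eqref{greek} (take $B=\wt\beta$), and, via the $G_{BA}$ form with $B=a$, the second equality in~\eqref{intersect}. The mirrored equalities in~\eqref{roman} and~\eqref{greek} follow identically by isolating the other index ($b$, resp.\ $\wt\beta$) and using the $G_{BA}$ version of the block-inverse formula.

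For~\eqref{basic resolvent}, fix $C$ and isolate it; for $A,B\ne C$ the entry $G_{AB}$ equals the $(A,B)$ entry of the inverse of the Schur complement $H^{(C)}_{\mathrm{rest}}-H_{\cdot C}H_{CC}^{-1}H_{C\cdot}$, where $H^{(C)}_{\mathrm{rest}}$ is the submatrix on indices $\ne C$. Since the subtracted term has rank one, the Woodbury identity expresses this inverse as $G^{(C)}_{AB}$ plus a rank-one correction whose numerator is $\big(\sum_{D}G^{(C)}_{AD}H_{DC}\big)\big(\sum_{E}H_{CE}G^{(C)}_{EB}\big)$ and whose denominator equals $H_{CC}-\sum_{D,E}H_{CD}G^{(C)}_{DE}H_{EC}=G_{CC}^{-1}$; comparing this correction with the product of the formulas $G_{AC}=-G_{CC}\sum_{D}G^{(C)}_{AD}H_{DC}$ and $G_{CB}=-G_{CC}\sum_{E}H_{CE}G^{(C)}_{EB}$ established in the previous step gives exactly $G_{AB}-G^{(C)}_{AB}=G_{AC}G_{CB}/G_{CC}$. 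Finally, for the Ward identity~\eqref{ward} note that $H-H^*=(\bar z-z)\,\Pi=-2\ii\eta\,\Pi$, where $\Pi$ is the orthogonal projection onto the Latin block, so $G-G^*=H^{-1}(H^*-H)(H^*)^{-1}=2\ii\eta\,G\Pi G^*$, i.e.\ $\im G=\eta\,G\Pi G^*$; reading off the $(a,a)$ entry yields $\im G_{aa}=\eta\sum_{b=1}^{N}G_{ab}\overline{G_{ab}}=\eta\sum_{b=1}^{N}|G_{ab}|^2$.

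Every step above is routine once the sparsity pattern is recorded; the only points demanding care are keeping track of which index is isolated (hence which of the two adjoint forms of the block-inverse formula applies) and the rank-one algebra behind~\eqref{basic resolvent}, neither of which presents a real difficulty beyond the invertibility of the relevant principal submatrices of $H$, which is taken from~\cite{scjl}.
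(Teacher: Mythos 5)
Your proposal is correct. Note, however, that the paper does not actually prove Lemma~\ref{res id}: it simply cites Lemma 4.2 of~\cite{bue}, Lemma 6.10 of~\cite{sse}, and equation (3.31) of~\cite{re}, where the analogous identities are established for Wigner-type matrices by exactly the tools you use (Schur complement, block inversion, a rank-one/Woodbury step for~\eqref{basic resolvent}, and the resolvent identity for the Ward identity). So your route is the standard one behind those citations, but written out self-contained and adapted to the specific linearization $H$ of~\eqref{linear}; what your version buys is that the two model-specific points are made explicit rather than imported: (i) the vanishing of the Latin--Latin and Greek--Greek off-diagonal blocks (the latter using that $\Sigma$ is diagonal) is what collapses the Schur-complement sums in~\eqref{schur}--\eqref{intersect} to sums over the opposite index type only, and (ii) since only the Latin block of $H$ carries the spectral parameter, $H-H^*=-2\ii\eta\,\Pi$ with $\Pi$ the projection onto the Latin block, which is precisely why the Ward identity~\eqref{ward} sums only over Latin indices $b$ rather than over all of $\llbracket 1,N+M\rrbracket$. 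The invertibility of the minors, which you defer to~\cite{scjl}, is also immediate from the same Schur-complement argument that gives invertibility of $H$ itself, since each minor has the same block structure with a smaller $X$ and $\Sigma$. One small caveat: you implicitly read $G^{(\T)}$ as the inverse of the principal submatrix of $H$ on the complement of $\T$; this is indeed the intended meaning (the paper's displayed definition with ``$H^{(\T)}-z$'' is a typo inherited from the Wigner setting, as $H$ already contains $z$), and it is the reading under which all the stated identities hold.
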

For the proof of Lemma~\ref{res id}, we refer to Lemma 4.2 in~\cite{bue}, Lemma 6.10 in~\cite{sse}, and equation (3.31) in~\cite{re}. 

Denote by $\E_A$ the partial expectation with respect to the $A$-th column/row of $H$ and set 
\begin{equation}\label{schur using Z}
Z_a\deq(1-\E_a)(X^*G^{(a)}X)_{aa},\qquad Z_\alpha\deq (1-\E_\alpha) (XG^{(\alpha)}X^*)_{\alpha\alpha}.
\end{equation}
Using $Z_A$, we can rewrite $G_{AA}$ as $G_{aa}^{-1}=-z-d^{-1}\wt m^{(a)}-Z_a$ and $G_{\alpha\alpha}^{-1}=-\sigma_\alpha^{-1}-m^{(\alpha)}-Z_\alpha$.

\begin{lemma}\label{cauchy interlacing} There is a constant $C$ such that, for any $z \in \C^+$, $A\in\llbracket 1,N+M\rrbracket$, we have
\begin{align}\label{m minus m^A}
	|m(z)-m^{(A)}(z)|\le \frac{C}{N\eta}\,.
\end{align}
Furthermore, since $C^{-1}N\leq M\leq CN$, for some constant $C>0$, we also have
\begin{align}
	|m(z)-m^{(A)}(z)|\le \frac{C}{M\eta}\,.
\end{align}
\end{lemma}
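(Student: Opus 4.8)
\textbf{Proof plan for Lemma~\ref{cauchy interlacing}.}

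The plan is to exploit the fact that removing a single row/column from $H$ is a rank-one (or rank-two, depending on how one accounts for the symmetric pair) perturbation of the linearized matrix, and to combine this with the explicit relation~\eqref{rel between m and wt m} between $m$ and the diagonal entries $G_{\alpha\alpha}$. The starting point is that $m(z) = m_Q(z) = \frac1N \Tr G_Q(z)$, so $m$ is $\frac1N$ times the trace of the resolvent of the $N\times N$ matrix $Q = X^*\Sigma X$. First I would recall that deleting the $A$-th row and column of $H$ has the effect of passing from $Q$ to a matrix $Q^{(A)}$ which, as an operator, is a rank-at-most-one modification of $Q$ (when $A = a \in \llbracket 1,N\rrbracket$ this amounts to deleting the $a$-th coordinate, i.e. $Q^{(a)}$ is a principal $(N-1)\times(N-1)$ submatrix of $Q$; when $A = \widetilde\alpha$ it amounts to deleting the $\alpha$-th column of $\Sigma^{1/2}X$, which changes $Q$ by a rank-one matrix). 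In either case the eigenvalues of $Q^{(A)}$ interlace those of $Q$.

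The key step is then the Cauchy interlacing inequality: if $\mu_1 \ge \mu_2 \ge \cdots$ denote the eigenvalues of $Q$ and $\mu_1^{(A)} \ge \mu_2^{(A)} \ge \cdots$ those of $Q^{(A)}$ (padded with zeros so both have $N$ entries, consistently with Remark~\ref{Q and wt Q}), then interlacing gives $\mu_{i+1} \le \mu_i^{(A)} \le \mu_i$ for all $i$. Consequently, for $z = E + \ii\eta$,
\begin{align}
\left| m(z) - m^{(A)}(z) \right|
= \frac{1}{N}\left| \sum_i \left( \frac{1}{\mu_i - z} - \frac{1}{\mu_i^{(A)} - z} \right) \right|
\le \frac{1}{N}\sum_i \left| \frac{1}{\mu_i - z} - \frac{1}{\mu_{i+1} - z} \right| + \frac{C}{N\eta}\,,
\end{align}
and the telescoping sum on the right is bounded, since each summand is $\int$ of a monotone rearrangement, by $\frac{1}{N}\cdot\frac{2}{\eta}$ plus lower-order terms; more cleanly, one writes the difference of the two Stieltjes transforms as the integral against the difference of two Stieltjes measures whose cumulative distribution functions differ by at most $1$ pointwise (this is precisely interlacing), and integration by parts gives
\begin{align}
\left| m(z) - m^{(A)}(z) \right| \le \frac{1}{N}\int \frac{\left| F(x) - F^{(A)}(x)\right|}{|x - z|^2}\,\dd x \le \frac{1}{N}\int \frac{\dd x}{|x-z|^2} = \frac{\pi}{N\eta}\,,
\end{align}
where $F, F^{(A)}$ are the (unnormalized) eigenvalue counting functions. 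One small bookkeeping point: since $m^{(A)}$ is defined with the normalization $N^{-1}$ rather than $(N-1)^{-1}$, and in the case $A = \widetilde\alpha$ the deleted matrix still has $N$ (nonzero plus zero) eigenvalues, the counting functions genuinely differ by at most $1$, so no extra correction term appears beyond what is absorbed into $C$. This yields the first bound $|m(z) - m^{(A)}(z)| \le C/(N\eta)$.

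The second bound is immediate: by Remark~\ref{MNM} and the hypothesis $\wh d = N/M \to d \in (0,\infty)$, there is an absolute constant $C$ with $C^{-1}N \le M \le CN$, so $\frac{C}{N\eta} \le \frac{C'}{M\eta}$, and relabelling the constant finishes the proof. I do not expect any serious obstacle here; the only point requiring a little care is the correct identification of which rank-one modification of $Q$ corresponds to deleting a tilde-index versus a Latin-index row/column of $H$, and checking that in both cases the padded spectra interlace so that the counting functions differ by at most one — but this is standard and follows from the Schur complement structure of $H$ together with the identity $m = m_Q$ noted after~\eqref{linear}.
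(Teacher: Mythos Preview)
Your proposal is correct and takes essentially the same approach as the paper: the paper simply states that the lemma follows from Cauchy's interlacing property of the eigenvalues of $H$ and its minor $H^{(A)}$ and refers to~\cite{uwe} for details, while you supply those details explicitly via the counting-function/integration-by-parts argument applied to $Q$ and $Q^{(A)}$. Working directly with $Q$ rather than the linearized $H$ is equivalent (since $m = m_Q$) and arguably cleaner, and your handling of the two cases $A=a$ versus $A=\widetilde\alpha$ (principal submatrix versus rank-one perturbation) together with the normalization bookkeeping is correct.
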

The lemma follows from Cauchy's interlacing property of eigenvalues of $H$ and its minor $H^{(A)}$. For a detailed proof we refer to~\cite{uwe}. For $\T\subset\llbracket 1,N+M\rrbracket$ with, say, $|\T|\le 10$, we obtain $|m-m^{(\T)}|\le \frac{C}{N\eta}$.

\subsection{Concentration estimates}
For $i\in \llbracket 1,N\rrbracket$, let $(X_i)$ and $(Y_i)$, be two families of random variables that 
\begin{equation}\label{moment condition} \E \caR_i=0,\quad \E |\caR_i|^2=1,\quad \E|\caR_i|^p\le c_p\quad(p\ge3),\end{equation}
$\caR_i=X_i,Y_i$, for all $p\in \N$ and some constants $c_p$, uniformly in $i\in\llbracket 1,N\rrbracket$.
We collect here some useful concentration estimates.

\begin{lemma}\label{lemma.LDE}
Let $(X_i)$ and $(Y_i)$ be independent families of random variables and let $(a_{ij})$ and $(b_i)$, $i,j\in\llbracket 1,N\rrbracket$, be families of complex numbers. Suppose that all entries $(X_i)$ and $(Y_i)$ are independent and satisfy~\eqref{moment condition}. Then we have the bounds

\begin{align}
	\left|\sum_i b_iX_i \right|&\prec \left(\sum_i |b_i|^2\right)^{1/2},\label{LDE1}\\[1mm]
	\left|\sum_i\sum_j a_{ij}X_iY_j \right|&\prec \left(\sum_{i,j} |a_{ij}|^2\right)^{1/2},\label{LDE2}\\[1mm]
	\left|\sum_i\sum_j a_{ij}X_iX_j - \sum_i a_{ii}X_i^2 \right|&\prec \left(\sum_{i,j} |a_{ij}|^2\right)^{1/2}.\label{LDE3}
\end{align}
If the coefficients $a_{ij}$ and $b_i$ are depend on an additional parameter $u$, then all of the above estimates are uniform in $u$, that is, the threshold $N_0=N_0(\ve,D)$ in the definition of $\prec$ depends only on the family $(c_p)$ from~\eqref{moment condition}; in particular, $N_0$ does not depend on $u$.
\end{lemma}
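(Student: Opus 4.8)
The plan is to establish all three bounds by the standard high-moment (Markov) method, reducing stochastic domination to a moment estimate with constants that do not see the coefficients. Fix $\epsilon>0$ and $D>0$. It suffices to prove that for every positive integer $k$ there is a constant $C_k$, depending only on the family $(c_p)$ in~\eqref{moment condition} (in particular, independent of $N$ and of the coefficients $a_{ij}, b_i$ and the parameter $u$), such that
\[
\E\Bigl|\sum_i b_i X_i\Bigr|^{2k}\le C_k\Bigl(\sum_i|b_i|^2\Bigr)^{k},
\]
and analogously for the bilinear expressions in~\eqref{LDE2} and~\eqref{LDE3} with $\bigl(\sum_{i,j}|a_{ij}|^2\bigr)^k$ on the right. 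Once this is available, Markov's inequality gives $\p\bigl[|\sum_i b_iX_i|>N^{\epsilon}(\sum_i|b_i|^2)^{1/2}\bigr]\le C_kN^{-2k\epsilon}$, and choosing $k$ with $2k\epsilon>D$ yields the required $N^{-D}$ decay for all $N\ge N_0$; since $C_k$ is independent of the coefficients, the threshold $N_0=N_0(\epsilon,D)$ is uniform in $u$, which is the last assertion of the lemma.

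For~\eqref{LDE1} I would expand $\E|\sum_i b_iX_i|^{2k}$ as a sum over $2k$-tuples of indices of products of the $b_i$ and their conjugates, each multiplied by a joint moment of the $X$'s. By independence this joint moment factorizes over the distinct index values, and by the mean-zero hypothesis it vanishes unless every index value occurs at least twice among the $2k$ slots; hence at most $k$ distinct values appear. For each surviving index pattern, summing the coefficient product over the values and using the elementary inequality $\sum_i|b_i|^{m}\le(\sum_i|b_i|^2)^{m/2}$ for $m\ge 2$ (which holds because $\max_i|b_i|^2\le\sum_i|b_i|^2$) produces precisely a factor $(\sum_i|b_i|^2)^{k}$; the number of index patterns and the product of the moment bounds $c_m$, $m\le 2k$, are absorbed into $C_k$.

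For~\eqref{LDE2} and~\eqref{LDE3} I would run the same scheme, noting first that~\eqref{LDE3} is the off-diagonal bound $\bigl|\sum_{i\ne j}a_{ij}X_iX_j\bigr|\prec(\sum_{i,j}|a_{ij}|^2)^{1/2}$, since $\sum_{i,j}a_{ij}X_iX_j-\sum_i a_{ii}X_i^2=\sum_{i\ne j}a_{ij}X_iX_j$. Expanding the $2k$-th moment now leaves, after using independence and the mean-zero property (and the independence of the two families $(X_i),(Y_i)$ in~\eqref{LDE2}), only configurations of index pairs in which every index value is repeated; such a configuration is encoded by a (bipartite, respectively general) multigraph with $2k$ edges all of whose vertices have degree $\ge 2$, hence with at most $2k$ vertices. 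The role of the constraint $i_\ell\ne j_\ell$ in~\eqref{LDE3} is exactly to exclude the purely diagonal configurations, whose presence would otherwise reinstate the deterministic term $\sum_i a_{ii}$ of order up to $\sqrt N(\sum_{i,j}|a_{ij}|^2)^{1/2}$. For each admissible graph, a standard graph-counting estimate — provable by induction on the number of edges, repeatedly applying Cauchy--Schwarz and using that every vertex has degree at least two — bounds the sum of coefficient products over index values by $(\sum_{i,j}|a_{ij}|^2)^{k}$, and the combinatorial and moment factors are again absorbed into $C_k$.

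I expect the main obstacle to be the combinatorial bookkeeping in the bilinear cases: grouping the surviving index configurations into multigraphs, verifying that all of them have minimum degree at least two, and proving the uniform estimate $\sum_{\text{values}}\prod_\ell|a_{i_\ell j_\ell}|\le(\sum_{i,j}|a_{ij}|^2)^{k}$ for every such graph. The remaining ingredients — passing from moment bounds to $\prec$ via Markov, and the uniformity in $u$ — are routine once these moment estimates are in hand.
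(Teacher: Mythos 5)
The paper does not prove this lemma at all: it is quoted as a standard concentration estimate from the local-law literature (it is, verbatim, the large deviation bounds of Erd\H{o}s--Knowles--Yau--Yin, e.g.\ \cite{sce}), so there is no in-paper argument to compare against. Your moment-method proof is exactly the standard route behind that citation, and the outline is sound: the reduction of $\prec$ to $2k$-th moment bounds via Markov with $2k\epsilon>D$, the uniformity in $u$ coming from the coefficient-independence of $C_k$, the identity $\sum_{i,j}a_{ij}X_iX_j-\sum_i a_{ii}X_i^2=\sum_{i\ne j}a_{ij}X_iX_j$, and the observation that mean-zero plus independence kills every configuration in which some index occurs exactly once, leaving only multigraphs of minimum degree two, are all correct. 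For \eqref{LDE1} your estimate $\sum_i|b_i|^{m}\le(\sum_i|b_i|^2)^{m/2}$ closes the argument completely. The one step you have correctly flagged as the real work is the graph-counting bound $\sum_{\mathrm{values}}\prod_\ell|a_{i_\ell j_\ell}|\le(\sum_{i,j}|a_{ij}|^2)^{k}$ for every minimum-degree-two multigraph with $2k$ edges; this is true and provable by the peeling/Cauchy--Schwarz induction you indicate (note also that since $\prec$ already tolerates $N^{\epsilon}$ factors, you only need this bound without any $N$-dependent loss, not with sharp constants). A slightly cleaner alternative for \eqref{LDE2}, worth knowing, is to condition on $(Y_j)$ and apply the linear bound \eqref{LDE1} twice: first in $X$ with coefficients $\sum_j a_{ij}Y_j$, then control $\E_Y\bigl(\sum_i|\sum_j a_{ij}Y_j|^2\bigr)^k$ by splitting the quadratic form $Y^*A^*AY$ into its trace, its diagonal fluctuation (again \eqref{LDE1} applied to $Y_j^2-1$), and its off-diagonal part; this decoupling reduces the combinatorics but is not essential.
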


We also remark that
\begin{align}\label{bound on xij}
|x_{ij}|\prec \frac{1}{\sqrt{N}}\,, \qquad |x_{ij}|\prec \frac{1}{\sqrt{M}} \,,
\end{align}
which can be easily obtained from~\eqref{p moment bound} and remark~\ref{MNM}. 

\section{Proof of Main Results} \label{pf main theorem}

We begin this section by briefly outlining the idea of the proof. 

\begin{itemize}
\item To prove Theorem~\ref{general case - large d}, we follow the strategy in~\cite{dsjl}. Instead of directly analyzing the self-consistent equation~\eqref{mfc equation}, we convert it into an equation of $z$. Then, the location of the right edge of $\mu_{fc}$ and its local behavior can be proved by analyzing the behavior of $z$, which is considered as a function of $m_{fc}$, the Stieltjes transform of $\mu_{fc}$.

\item To prove Theorem~\ref{theorem:main}, we approximate $m$, the normalized trace of the resolvent, by $\mfc$ (Lemma~\ref{hat bound} and Proposition~\ref{proposition:step 2_4}). In the approximation, we introduce an intermediate random object $\whmfc$, which can be used to locate the extremal eigenvalues (Proposition~\ref{proposition:lambda_k}). Combining it with the approximate linearity of $m_{fc}$ (Lemma~\ref{mfc estimate}), we can prove Theorem~\ref{theorem:main}.

\item To prove Theorem~\ref{lemma:gaussian}, we first show that the location of the right edge of the spectrum exhibits a Gaussian fluctuation of order $M^{-1/2}$ by applying the central limit theorem for a function of the eigenvalues of $\Sigma$. We conclude the proof by showing that the distance between the largest eigenvalue and the right edge is of order $N^{-2/3}$ and hence negligible.
\end{itemize}

\subsection{Proof of Theorem~\ref{general case - large d}}\label{pf edge}
\begin{proof}[Proof of Theorem~\ref{general case - large d}]
Recall~\eqref{mfc equation}, which we rewrite as follows:
\begin{align}\label{mfc equation_z}
	z=-\frac{1}{\mfc(z)}+d^{-1}\int_\R\frac{t\dd\nu(t)}{1+t \mfc(z)}.
\end{align}
Let $\tau\deq 1/\mfc$, and consider $z$ as a function of $\tau$, which we call $F(\tau)$. We then have
\begin{align}
	F(\tau) \deq -\tau +d^{-1}\int_\R\frac{t \tau \dd\nu(t)}{\tau+t}.
\end{align}
Taking imaginary part on the both sides, then
\begin{align}
	\im F(\tau)=-\im\tau\left\{1-d^{-1}\int_\R\frac{t^2\dd\nu(t)}{(\re\tau+t)^2+(\im\tau)^2}\right\} \,.
\end{align}
Let 
\begin{align}
	H(\tau)\deq d^{-1}\int_\R\frac{t^2\dd\nu(t)}{(\re\tau+t)^2+(\im\tau)^2}\,.
\end{align}
For any fixed $\re\tau\in(-1,0)$,
$H(\tau) \to 0$ as $\lvert \im\tau \rvert \to \infty$, and $H(\tau) \to \infty$ as $\lvert \im\tau \rvert \to 0$. By monotonicity, there is a unique $\im\tau >0$ such that $H(\tau)=1$ so that $\im F(\tau)=0$, which corresponds to the bulk of the spectrum. On the other hand, for any fixed $\re\tau\in(-\infty,-1)$, $H(\tau)$ is monotone decreasing function of $\lvert \im\tau\rvert$, which implies 
\begin{align}
	\sup_{\re\tau\in(-\infty,-1)}\limits H(\tau)=H(-1)=d^{-1}\int_{l}^1 \frac{t^2\dd\nu(t)}{(-1+t)^2}=\frac{d_+}{d}<1 \,,
\end{align}
where $l=\inf \{x\in\mathbb{R} : x\in\supp \nu\}$.
We thus find that there is no solution of $\im F(\tau)=0$ when $\re\tau \in (-\infty,-1)$, which corresponds to the outside of the spectrum. This shows that $\tau=-1$ at the right edge of the spectrum. It is immediate from~\eqref{mfc equation_z} that $F(-1) = 1+\tau_+$, which is the end point we denoted by $L_+$. This proves the first part of Theorem~\ref{general case - large d}.

The proof of second part is analogous to Lemma A.4 of~\cite{dsjl} and we omit the detail.
\end{proof}
\subsection{Definition of $\whmfc$}
In this subsection, we introduce $\whmfc$, which will be used as an intermediate random object in the comparison between $m$ and $m_{fc}$. The key property of $\whmfc$ is that it directly depends on $\Sigma$ unlike $m_{fc}$, but it does not depend on $X$. 

Let~$\wh \nu$ be the ESD of $\Sigma$, i.e.,
\begin{align} \label{def hat nu}
\wh \nu \deq \frac{1}{M} \sum_{\alpha=1}^M \delta_{\sigma_\alpha}\,.
\end{align}
We define $\whmfc \equiv \whmfc(z)$ as a solution to the self-consistent equation
\begin{align} \label{eq:hat mfc}
\whmfc(z) =\left\{-z+ \frac{1}{N} \sum_{\alpha=1}^M \frac{\sigma_\alpha}{\sigma_\alpha \whmfc(z)+1}\ \right\}^{-1},\qquad \im\whmfc(z)\ge 0\,,\qquad (z\in\C^+)\,.
\end{align}
Similarly to~\eqref{mfc equation}, equation~\eqref{eq:hat mfc} also has the unique solution, which is the Stieltjes transform of a probability measure, $\wh\mu_{fc}$, which is absolutely continuous. The random measure  $\wh\nu \boxtimes \mu_{MP}$, which is the multiplicative free convolution between $\wh\nu$ and the Marchenko--Pastur law $\mu_{MP}$, and it can be recovered from $\whmfc$ by using the Stieltjes inversion formula~\eqref{stieltjes inversion}.

\subsection{Properties of $\mfc$ and $\whmfc$} \label{subsec:properties}

Recall the definitions of $\mfc$ and $\whmfc$. Let
\begin{align}\label{definition of R2 without hat}
R_2 (z) \deq d^{-1}\int \frac{t^2 |\mfc|^2\dd \nu(t)}{|t \mfc(z)+1|^2}, \quad \wh R_2 (z) \deq \frac{1}{N} \sum_{\alpha=1}^M \frac{\sigma_\alpha^2 |\whmfc|^2}{|\sigma_\alpha  \whmfc(z)+1|^2}\,,\quad (z\in\C^+) \,.
\end{align}
Recall from~\eqref{mfc equation} that 
\begin{equation}
\frac{1}{\mfc}=-z+d^{-1} \int \frac{t\dd\nu(t)}{t \mfc+1}.\end{equation} 
Taking imaginary part and rearranging, we have that
\begin{equation}
1=\im z \cdot \frac{|\mfc|^2}{\im \mfc}+d^{-1}\int \frac{t^2 |\mfc|^2\dd \nu(t)}{|t \mfc(z)+1|^2} \,.
\end{equation}
This in particular shows that $0\le R_2 (z)<1$, and by similar manner we also find that $0\le \wh R_2 (z) < 1$. We also note that the self-consistent equation~\eqref{mfc equation} implies $|\mfc|\sim1$.

In the following lemma, we show that $1/\mfc$ is approximately a linear function of $z$ near the right edge. 

\begin{lemma} \label{mfc estimate}
Let $z = L_+ - \kappa + \ii \eta \in \caD_{\phi}$. Then,
\begin{align}
	\frac{1}{\mfc(z)} = -1 + \frac{d}{d - d_+} (L_+ - z) + O \left( (\log M) (\kappa + \eta)^{\min \{ \b, 2 \} } \right)\,.
\end{align}
Similarly, if $z, z' \in \caD_{\phi}$, then
\begin{align}
	\frac{1}{\mfc(z)} - \frac{1}{\mfc(z')} = -\frac{d}{d - d_+} (z-z') + O \left( (\log M)^2 (N^{-1/(\b+1)})^{\min \{ \b-1, 1 \} } |z-z'|  \right)\,. 
\end{align}
\end{lemma}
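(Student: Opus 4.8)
The plan is to prove both statements by analyzing the inverse function $F(\tau) = -\tau + d^{-1}\int \frac{t\tau\,\dd\nu(t)}{\tau+t}$, which was introduced in the proof of Theorem~\ref{general case - large d}, and Taylor-expanding it around the edge value $\tau = -1$. Recall that $F(-1) = L_+ = 1 + \tau_+$ and that $z = F(1/\mfc(z))$. Writing $\tau \deq 1/\mfc(z)$, the first estimate is equivalent to inverting the relation $z = F(\tau)$ near $\tau = -1$, i.e. showing $\tau = -1 + \frac{d}{d-d_+}(L_+ - z) + O((\log M)(\kappa+\eta)^{\min\{\b,2\}})$.

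First I would compute $F'(-1)$. Differentiating under the integral sign, $F'(\tau) = -1 + d^{-1}\int \frac{t^2\,\dd\nu(t)}{(\tau+t)^2}$, so $F'(-1) = -1 + d^{-1}\int_l^1 \frac{t^2\,\dd\nu(t)}{(t-1)^2} = -1 + d_+/d = -\frac{d-d_+}{d}$, which is a nonzero constant since $d > d_+$. The leading linear term in $F(\tau) - F(-1)$ is thus $-\frac{d-d_+}{d}(\tau+1)$, and inverting gives the claimed linear coefficient $\frac{d}{d-d_+}$. The error term comes from the nonlinear remainder $F(\tau) - F(-1) - F'(-1)(\tau+1)$. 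Here the key point is that although $F$ is not smooth at $\tau = -1$ (because $\rho_\nu(t) \sim (1-t)^\b$ forces the second derivative $F''(\tau) = -2d^{-1}\int \frac{t^2\,\dd\nu(t)}{(\tau+t)^3}$ to diverge like $|\tau+1|^{\b-2}$ when $\b < 2$), one still has a Hölder-type bound: $|F(\tau) - F(-1) - F'(-1)(\tau+1)| \le C|\tau+1|^{\min\{\b,2\}}$ for $\tau$ near $-1$ on the relevant domain. I would establish this by splitting the integral over $t \in [l, 1)$ into the region $|1-t| \ge |\tau+1|$, where one integrates the bound on $F''$, and the region $|1-t| < |\tau+1|$, where one estimates $\frac{t\tau}{\tau+t}$ and its first-order Taylor error directly using $|\tau + t| \gtrsim |\im\tau| $ together with $\rho_\nu(t) \le C(1-t)^\b$. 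The factor $(\log M)$ absorbs logarithmic losses arising from the case $\b = 2$, and from relating $|\tau+1|$ to $\kappa + \eta$ (which requires the a priori estimate $|\mfc| \sim 1$ and the fact that $\tau+1 = -\frac{d}{d-d_+}(z - L_+)(1 + o(1))$ on $\caD_\phi$, so $|\tau+1| \lesssim \kappa + \eta$).

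For the second (two-point) estimate, I would write $\frac{1}{\mfc(z)} - \frac{1}{\mfc(z')} = \tau - \tau'$ where $z = F(\tau), z' = F(\tau')$, and use $z - z' = F(\tau) - F(\tau') = F'(-1)(\tau - \tau') + [\text{remainder}]$. The remainder is $\int_{\tau'}^\tau (F'(s) - F'(-1))\,\dd s$, which by the bound $|F'(s) - F'(-1)| \le C|s+1|^{\min\{\b-1,1\}}$ (obtained analogously, now using $F''(s) = O(|s+1|^{\b-2})$ for $\b<2$ and boundedness for $\b \ge 2$) is at most $C(\log M)(\kappa+\eta)^{\min\{\b-1,1\}}|\tau-\tau'|$ on $\caD_\phi$; since $\kappa + \eta \lesssim N^{-1/(\b+1)+\phi}$ on $\caD_\phi$ but the sharp statement wants $N^{-1/(\b+1)}$, one more logarithmic factor is spent and $|\tau - \tau'| \sim |z-z'|$ follows from $F'(-1)$ being bounded away from $0$. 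Then dividing by $F'(-1) = -\frac{d-d_+}{d}$ gives the stated coefficient $-\frac{d}{d-d_+}$ and the error $O((\log M)^2 (N^{-1/(\b+1)})^{\min\{\b-1,1\}}|z-z'|)$.

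The main obstacle I anticipate is the non-smoothness of $F$ at the edge when $1 < \b < 2$: one cannot simply Taylor-expand to second order, and the Hölder bounds on $F - F'(-1)$ and $F' - F'(-1)$ must be derived by a careful dyadic decomposition of the defining integral, keeping track of the interplay between the distance $|\tau+1|$, the size of $\im\tau$ (which controls the denominators $|\tau+t|$ away from $t = 1$), and the vanishing rate $(1-t)^\b$ of the Jacobi density. Controlling that $|\tau+1| \sim \kappa+\eta$ uniformly on $\caD_\phi$ — rather than merely $\lesssim$ — may also require invoking the structure of $\mu_{fc}$ near $L_+$ from Theorem~\ref{general case - large d}. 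Everything else is a routine, if slightly tedious, bookkeeping of error terms, and the argument closely parallels the corresponding lemma for deformed Wigner matrices in~\cite{dsjl}.
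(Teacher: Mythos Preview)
Your approach is correct and at the algebraic level coincides with the paper's: writing $\tau = 1/\mfc(z)$, your Taylor remainder $F(\tau)-F(-1)-F'(-1)(\tau+1)$ equals exactly $(\tau+1)\bigl(T(z)-d_+/d\bigr)$ where the paper sets
\[
T(z)\deq d^{-1}\int \frac{t^2\,\mfc(z)\,\mfc(L_+)}{(1+t\mfc(z))(1+t\mfc(L_+))}\,\dd\nu(t),
\]
and your integral splitting $\{|1-t|\gtrless |\tau+1|\}$ is the same as the paper's decomposition into $B=\{t<-1-2\re\tau\}$ and $B^c$. The one organisational difference worth noting is that the paper does not Taylor-expand and then invert; instead it records the \emph{exact} identity
\[
\frac{1}{\mfc(z)}+1=\frac{L_+-z}{1-T(z)},
\]
and then shows $|T(z)|\le\sqrt{R_2(z)}\sqrt{d_+/d}<1$ by Cauchy--Schwarz. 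This immediately gives the a priori bound $|\tau+1|\le C|L_+-z|$ that you flagged as a possible obstacle (and which in your framework looked slightly circular); after that, estimating $T(z)-d_+/d$ via the $B/B^c$ split finishes the job without any inversion step. Your route works too, but you would need to supply that lower bound $|F(\tau)-F(-1)|\ge c|\tau+1|$ independently---which is precisely the content of $|T(z)|<1$---before you can control the remainder in terms of $\kappa+\eta$.
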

\begin{proof}
Since
$\mfc(L_+)=-1=\frac{1}{-L_+ +\tau_+}$ (see theorem~\ref{general case - large d}), we have
\begin{align}\label{linear1} \begin{split}
		\frac{1}{\mfc(z)} - \frac{1}{\mfc(L_+)} &= L_+-z+ d^{-1}\int \frac{t\dd \nu (t)}{1+t \mfc(z)} - d^{-1}\int \frac{t\dd \nu (t)}{1 +t \mfc(L_+)} \\
		&=L_+ -z + d^{-1} \int \frac{t^2(\mfc(L_+) - \mfc(z))}{(1+t \mfc(z))(1+t\mfc(L_+))} \dd \nu (t)\\
		&=L_+-z +\left(\frac{1}{\mfc(z)}-\frac{1}{\mfc(L_+)}\right)T(z)
\end{split} \end{align}
where we set
\begin{align} \label{definition T}
	T(z) \deq d^{-1}\int \frac{t^2 \mfc(z) \mfc(L_+)}{(1+t \mfc(z))(1+t \mfc(L_+))} \dd \nu (t)\,.
\end{align}
Then we have
\begin{align}\label{definition of T1}
	\begin{split}
		|T(z)| &\le \left( d^{-1}\int \frac{t^2|\mfc|^2\dd \nu (t)}{|1+ t \mfc(z)|^2} \right)^{1/2} \left( d^{-1}\int \frac{t^2|\mfc(L_+)|^2\dd \nu (t)}{|1+ t \mfc(L_+)|^2} \right)^{1/2}\\ &\le \sqrt{R_2(z)} \sqrt{\frac{d_+}{d}} <\sqrt{\frac{d_+}{d}} < 1\,.
	\end{split}
\end{align}
Hence, for $z \in \caD_{\phi}$, we can rewrite~\eqref{linear1} as
\begin{align}
	\frac{1}{\mfc(z)} - \frac{1}{\mfc(L_+)} = L_+-z+ T(z) \left[ \frac{1}{\mfc(z)} - \frac{1}{\mfc(L_+)} \right] \,.
\end{align}
Since $\mfc(L_+)=-1$,
\begin{align} \label{Lipschitz estimate}
	\frac{1}{\mfc(z)}+1=\frac{1}{1-T(z)}(L_+-z)\,.
\end{align}
We thus obtain from~\eqref{definition of T1} and~\eqref{Lipschitz estimate} that
\begin{align}
	\left|\frac{1}{\mfc(z)}+1\right| \le \frac{1}{1-T(z)} |L_+ -z|\le \frac{\sqrt{d}}{\sqrt{d}-\sqrt{d_+}}|L_+-z|\,.
\end{align}

We now estimate the difference $T(z)-{d_+}/{d}\,$: Let $\tau \deq 1/\mfc(z)$. We have
\begin{align} \begin{split} \label{eq:estimate T}
		T(z) - \frac{d_+}{d} &= d^{-1}\int \frac{t^2 \mfc(z) \mfc(L_+)\dd \nu (t)}{(t \mfc(z)+1)(t \mfc(L+)+1)} -d^{-1} \int \frac{t^2\dd \nu (t)}{(1 - t)^2}\\ &= d^{-1} \int \frac{-t^2 (\mfc(z)+1)\dd \nu (t)}{(t \mfc(z)+1)(1-t)^2}
		=-(1+\tau)d^{-1} \int \frac{t^2 \dd \nu(t)}{(t+\tau)(1-t)^2} \,.
\end{split} \end{align}
To find an upper bound of such integral, we consider the following two cases: 
\begin{itemize}
	\item[Case 1)] $\b \ge 2$: It is not hard to see that
	\begin{align}
		\left|   \int \frac{t^2 \dd \nu(t)}{(t+\tau)(1-t)^2} \right| \le C \int \frac{\dd t}{|t+ \tau|} \le C \log M\,.
	\end{align}
	
	\item[Case 2)] $\b < 2$: We define a subset $B$ of $[0, 1]$ as 
	\begin{align}
		B \deq \{ t \in [0, 1] : t<-1-2\re\tau \}\,,
	\end{align}
	and let $B^c\equiv [0, 1] \backslash B$. Then, by estimating the integral in~\eqref{eq:estimate T} on $B$, we find that
	\begin{align}
		\left|  \int_{B} \frac{t^2 \dd \nu(t)}{(t+\tau)(1-t)^2} \right| \le C \int_{B}  \frac{t^2 \dd \nu(t)}{|1-t|^3} \le C |1 + \tau|^{\b-2}\,,
	\end{align}
	where used that, for $t \in B$,
	\begin{align} |1-t| < 2|t+\re\tau|<2|t+\tau| \,.\end{align}
	On $B^c$, we have
	\begin{align}\label{estimate T1}
		\left| \int_{B^c} \frac{t^2 \dd \nu(t)}{(t+\tau)(1-t)} \right| \le C \int_{B^c} \frac{ t^2(1-t)^{b-1} \dd t}{|t+\tau|} \le C \int_{B^c} \frac{(1-t)^{b-1} \dd t}{|t+\tau|}\le C |1+\tau|^{b-1}\log M\,, 
	\end{align}
	where we have used that, for $t \in B^c$,
	\begin{align} |1-t| \le 2|1+\re\tau|\le 2|1+\tau|.\end{align}
	We also have
	\begin{align}\label{estimate T2}
		\left| \int_{B^c} \frac{t^2\dd \nu (t)}{(1 - t)^2} \right| \le C \int_{B^c} |1 - t|^{\b -2} \dd t \le C |1 + \tau|^{\b -1}\,.
	\end{align}
	Thus, from~\eqref{eq:estimate T}, ~\eqref{estimate T1}, and~\eqref{estimate T2}, we obtain that
	\begin{align}
		\left| \int  \frac{t^2 \dd \nu(t)}{(t+\tau)(1-t)^2} \right| \le C |1 + \tau|^{\b-2} \log M\,.
	\end{align}
\end{itemize}
From the continuity of $T(z)$ and the compactness of $\caD_\phi$, it is easy to see that we can choose the constants uniformly in $z$. We thus have that
\begin{align}
	T(z) = \frac{d_+}{d} + O\left((\log M)|L_+ - z|^{\min \{ \b-1, 1 \} } \right)\,.
\end{align}
Combined with~\eqref{Lipschitz estimate}, it proves the first part of the desired lemma. The second one can be proved analogously; we omit the detail.
\end{proof}

\begin{remark} \label{remark:kappa}
Lemma~\ref{mfc estimate} reveals the local behavior of $1/m_{fc}$ at the right edge. For $z_\alpha\deq L_+-\frac{d-d_+}{d}(1-\sigma_\alpha)+\ii\eta $, we obtain
\begin{align}
	\frac{1}{ \mfc(z_\alpha)} =-\sigma_\alpha+  \ii\frac{d}{d - d_+} \eta + O\left((\log M) M^{-\min \{ \b, 2 \} / (\b+1) + 2 \phi} \right)\,.
\end{align}
\end{remark}

We consider the following subset of $\caD_{\phi}$ to estimate the difference $|\whmfc - \mfc|$.
\begin{definition}
Let $A\deq\llbracket n_0,M\rrbracket$. We define the domain $\caD_{\phi}'$ of the spectral parameter $z$ as
\begin{align}\label{a index assumption}
	\caD_{\phi}'=\left\{ z\in\caD_{\phi}\,:\, \left|1+\frac{1}{\sigma_\alpha \mfc}\right| > \frac{1}{2} M^{-1 / (\b + 1)-\phi},\,\forall \alpha\in A\right\}\,.
\end{align}
\end{definition}
In the sequel, we show that $\caD_{\phi}'$ contains $z=\lambda_\alpha+\ii \eta_0\in\mathbb{C}^+$ for $\alpha\in\llbracket 1, n_0-1\rrbracket$ with high probability. See Remark~\ref{remark:lambda_k}.

Recall that $\sigma_1 > \sigma_2 > \ldots > \sigma_M$. We now show that $\whmfc(z)$ approximates $\mfc(z)$ well for $z$ in $\caD_\phi'$. For technical reason, we compare the reciprocals of $\mfc$ and $\whmfc$, which makes the estimate more convenient when compared to estimating $|m-\whmfc|$ directly.
\begin{lemma} \label{hat bound}
For any $z \in \caD_{\phi}'$, 
\begin{align} \label{eq:hat bound}
	\left|\frac{1}{\whmfc (z)}-\frac{1}{\mfc(z)} \right| \prec \frac{1}{ M\eta_0}=M^{-1/2+\phi}\,.
\end{align}
\end{lemma}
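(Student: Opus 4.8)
The plan is to compare the two self-consistent equations
\beq
\frac{1}{\mfc(z)} = -z + d^{-1}\int \frac{t\,\dd\nu(t)}{t\mfc(z)+1}, \qquad
\frac{1}{\whmfc(z)} = -z + \frac{1}{N}\sum_{\alpha=1}^M \frac{\sigma_\alpha}{\sigma_\alpha\whmfc(z)+1},
\eeq
by subtracting them and isolating the difference $\Delta \deq 1/\whmfc - 1/\mfc$. Writing $\sigma_\alpha/(\sigma_\alpha\whmfc+1) - \sigma_\alpha/(\sigma_\alpha\mfc+1) = -\sigma_\alpha^2(\whmfc-\mfc)/[(\sigma_\alpha\whmfc+1)(\sigma_\alpha\mfc+1)]$ and using $\whmfc-\mfc = -\mfc\whmfc\,\Delta$, the empirical sum contributes a term of the form $\whmfc\mfc\,\Delta \cdot \wh S(z)$ where $\wh S(z) \deq \frac1N\sum_\alpha \sigma_\alpha^2/[(\sigma_\alpha\whmfc+1)(\sigma_\alpha\mfc+1)]$. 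What remains is the purely deterministic discrepancy between the empirical average $\frac1N\sum_\alpha \sigma_\alpha/(\sigma_\alpha\mfc+1)$ and the integral $d^{-1}\int t\,\dd\nu(t)/(t\mfc+1)$, which on the event $\Omega$ is controlled by~\eqref{assumption_CLT_1} to be $O_\prec(M^{-1/2+\phi})$ (absorbing the $M^\epsilon$ into $\prec$). This yields the fixed-point relation $\Delta(1 - \whmfc\mfc\,\wh S) = O_\prec(M^{-1/2+\phi})$.

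The key structural step is then to show the stability factor $1 - \whmfc\mfc\,\wh S(z)$ is bounded below in modulus on $\caD_\phi'$, so that the relation can be inverted. I would bound $|\whmfc\mfc\,\wh S|$ by Cauchy--Schwarz: $|\wh S| \le (\frac1N\sum_\alpha \sigma_\alpha^2|\whmfc|^2/|\sigma_\alpha\whmfc+1|^2)^{1/2}\cdot(\text{same with }\mfc)^{1/2} \cdot |\whmfc|^{-1}|\mfc|^{-1}$, so that $|\whmfc\mfc\,\wh S| \le \sqrt{\wh R_2(z)}\sqrt{R_2(z)}$. Since both $R_2$ and $\wh R_2$ are strictly less than $1$ (as established just before Lemma~\ref{mfc estimate}), one has $|1 - \whmfc\mfc\,\wh S| \ge 1 - \sqrt{R_2 \wh R_2} > 0$. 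To get a bound uniform in $z$ — and in particular to handle $z$ near the edge where $\mfc \approx -1$ and the factors $\sigma_\alpha\mfc+1$ can be small — I would use~\eqref{assumption_CLT_2} from Definition~\ref{sigma assumptions}: after separating out the single index $\gamma$ realizing the minimum in~\eqref{assumption near sigma_k}, the tail sum is bounded by $\mathfrak{c}<1$, while the $\gamma$-th term, though possibly large individually, contributes a factor that combines with $\whmfc\mfc$ in a way that keeps the stability factor away from zero on $\caD_\phi'$ thanks to the lower bound $|1 + 1/(\sigma_\alpha\mfc)| > \frac12 M^{-1/(\b+1)-\phi}$ built into the definition of $\caD_\phi'$. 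This is the delicate point, since it requires tracking how the near-resonant term $\sigma_\gamma\mfc+1$ appears in both $\wh S$ and in the relation between $\whmfc$ and $\mfc$, and showing the potentially singular contributions cancel rather than reinforce.

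Once the stability factor is bounded below uniformly on $\caD_\phi'$, we conclude $|\Delta(z)| \prec M^{-1/2+\phi} = 1/(M\eta_0)$, which is exactly~\eqref{eq:hat bound}. I would also need to verify that $\whmfc$ is well-defined and satisfies $|\whmfc| \sim 1$ on $\caD_\phi'$ (the analogue of the bound $|\mfc|\sim 1$ noted in the text), which follows from the same argument used for $\mfc$ applied to the empirical measure $\wh\nu$, using that $\liminf\sigma_M \ge l$ and $\limsup\sigma_1 \le 1$ on $\Omega$; this guarantees the Cauchy--Schwarz and fixed-point manipulations above are valid. The main obstacle, as indicated, is the uniform lower bound on $|1 - \whmfc\mfc\,\wh S|$ near the edge — everywhere else the strict inequalities $R_2, \wh R_2 < 1$ suffice, but at the edge one must exploit precisely the good-configuration assumptions~\eqref{eq4.3}--\eqref{eq4.4} and~\eqref{assumption_CLT_2} together with the defining inequality of $\caD_\phi'$ to prevent the resonant index from destabilizing the iteration.
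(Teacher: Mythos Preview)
Your overall architecture matches the paper: subtract the two self-consistent equations, use condition~\eqref{assumption_CLT_1} to control the deterministic discrepancy, and isolate a stability factor multiplying $\Delta$. Where your proposal breaks down is the treatment of the resonant index $\gamma$.

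First, your appeal to the defining inequality of $\caD_\phi'$ for the $\gamma$-th term is misplaced: that inequality in~\eqref{a index assumption} is imposed only for $\alpha\in A=\llbracket n_0,M\rrbracket$, whereas the resonant $\gamma$ lies in $\llbracket 1,n_0-1\rrbracket$. So there is no a priori lower bound on $|1+1/(\sigma_\gamma\mfc)|$ beyond the trivial imaginary-part bound $\gtrsim\eta\ge\eta_0$. With only that, the $\gamma$-th summand of $\whmfc\mfc\,\wh S$ is of size at most $\frac{C}{N\eta_0^2}\sim M^{2\phi}\gg 1$, so $|1-\whmfc\mfc\,\wh S|$ is \emph{not} uniformly bounded below on $\caD_\phi'$, and your fixed-point inversion cannot be carried out directly. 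Your statement that the singular contributions ``cancel rather than reinforce'' is not substantiated and is in fact the crux of the difficulty.

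The paper resolves this via a contradiction argument, which is the missing idea. One assumes $|\Delta|\ge M^\epsilon/(M\eta_0)$; then by the triangle inequality
\[
\left|\frac{1}{\sigma_\gamma\whmfc}+1\right|+\left|\frac{1}{\sigma_\gamma\mfc}+1\right|\ge\left|\frac{1}{\sigma_\gamma\whmfc}-\frac{1}{\sigma_\gamma\mfc}\right|\ge|\Delta|\ge\frac{M^{\phi+\epsilon}}{\sqrt M},
\]
so at least one of the two factors is $\ge M^{\phi+\epsilon}/(2\sqrt M)$. Combined with the trivial lower bound $\eta_0$ on the other factor, the $\gamma$-th term is then $\le CM^{-\epsilon}$, and the remaining sum over $\alpha\neq\gamma$ is handled by~\eqref{assumption_CLT_2} and $\wh R_2<1$ to give a coefficient strictly below $1$. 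This yields $|\Delta|<CM^{-1/2+\phi+\epsilon/2}$, contradicting the assumption. The point is that the very largeness of $|\Delta|$ is what supplies the missing lower bound on the resonant denominator; a direct stability estimate is not available.
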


\begin{proof}
For a given $z \in \caD_{\phi}'$, choose $\gamma \in \llbracket 1, n_0 -1 \rrbracket$ satisfying~\eqref{assumption near sigma_k} so that $\re (1/(\sigma_\gamma \mfc))$ is the closest (among $\re (1/(\sigma_\alpha \mfc))$) to $-1$. Suppose to contrary that~\eqref{eq:hat bound} does not hold. Our goal is to derive a self-consistent equation of the difference from which we obtain a contradiction. In other words, for any (small) $\epsilon>0$, we consider the event on which $|\whmfc(z)^{-1}-m_{fc}^{-1}|\geq M^{\epsilon}\frac{1}{M\eta_0}$ hold.
Using the definitions of $\mfc$ and $\whmfc$, we obtain the following equation:
\begin{align} \begin{split} \label{mfc difference}
		&\left|\frac{1}{\mfc}-\frac{1}{\whmfc}\right|= \left|d^{-1}\int \frac{t \dd \nu(t)}{t \mfc+1}-\frac{1}{N} \sum_{\alpha=1}^M \left( \frac{\sigma_\alpha}{\sigma_\alpha \whmfc+1}  \right)\right|\\
		\le &\left|d^{-1}\int \frac{t \dd \nu(t)}{t \mfc+1}-\frac{1}{N} \sum_{\alpha=1}^M \left( \frac{\sigma_\alpha}{\sigma_\alpha \mfc+1}  \right)\right|+ \left| \frac{1}{N} \sum_{\alpha=1}^M \left( \frac{\sigma_\alpha}{\sigma_\alpha \mfc+1}  \right)-\frac{1}{N} \sum_{\alpha=1}^M \left( \frac{\sigma_\alpha}{\sigma_\alpha \whmfc+1}  \right)\right| \\
		\le &\left|d^{-1}\int \frac{t \dd \nu(t)}{t \mfc+1}-\frac{1}{N} \sum_{\alpha=1}^M \left( \frac{\sigma_\alpha}{\sigma_\alpha \mfc+1}  \right)\right|+\left| \frac{1}{m_{fc}}-\frac{1}{\whmfc} \right| \left| \frac{1}{N} \sum_{\alpha=1}^M \frac{\sigma_\alpha m_{fc}\sigma_\alpha\whmfc}{(\sigma_\alpha m_{fc}+1)(\sigma_\alpha\whmfc+1)} \right| \,.
\end{split} \end{align}
From the condition~\eqref{assumption_CLT_1}, the first term in the right hand side of~\eqref{mfc difference} is bounded by $C M^{-1/2+\phi+\epsilon/2}$ for some $C>0$. 

Now we estimate the second term in the right hand side of~\eqref{mfc difference}. We decompose it into the critical term $\alpha = \gamma$ and the other terms.
When $\alpha=\gamma$, we have
\begin{align}
	\left|\frac{1}{\sigma_\gamma \whmfc}+1\right| + \left|-\frac{1}{\sigma_\gamma \mfc}-1\right| \ge \left|\frac{1}{\sigma_\gamma \whmfc} -\frac{1}{\sigma_\gamma \mfc}\right| >\left|\frac{1}{ \whmfc} -\frac{1}{ \mfc}\right| >\frac{M^{\epsilon}}{M\eta_0}\,,
\end{align}
which implies
\begin{align}
	\left|\frac{1}{\sigma_\gamma \whmfc}+1\right| \ge \frac{M^{\phi+\epsilon}}{2\sqrt M} \qquad \text{or} \qquad \left|\frac{1}{\sigma_\gamma \mfc}+1\right| \ge \frac{M^{\phi+\epsilon}}{2\sqrt M}\,.
\end{align}
In the former case, by considering the imaginary part, we find
\begin{align}\left|1+\frac{1}{\sigma_\gamma\mfc}\right|\ge \left|\im\frac{1}{\mfc}\right|\ge \eta+d^{-1}\int \frac{t^2\im\mfc}{|t\mfc+1|^2}\ge \eta, \end{align}
and hence we have
\begin{align}
	\frac{1}{N} \left| \frac{\sigma_\gamma \whmfc \sigma_\gamma \mfc}{(\sigma_\gamma \whmfc+1)(\sigma_\gamma \mfc+1)} \right| \le \frac{1}{N} \frac{2\sqrt M}{M^{\phi+\epsilon}} \frac{1}{\eta_0} \le C M^{-\epsilon}\,,\qquad\quad (z\in\caD_{\phi}')\,.
\end{align}
The latter case can be handled in a similar manner.
For the other terms with $\alpha\neq \gamma$, we use
\begin{align}
	\frac{1}{N} \left| \sum_{\alpha}^{(\gamma)} \frac{\sigma_\alpha \whmfc \sigma_\alpha \mfc}{(\sigma_\alpha \whmfc+1)(\sigma_\alpha \mfc+1)} \right| \le \frac{1}{2N} \sum_{\alpha}^{(\gamma)} \left( \frac{\sigma_\alpha^2 |\whmfc|^2}{|\sigma_\alpha \whmfc+1|^2} + \frac{\sigma_\alpha^2 |\mfc|^2}{|\sigma_\alpha \mfc+1|^2} \right)\,.
\end{align}
From~\eqref{eq:hat mfc}, we have that
\begin{align}\label{R2 hat less than 1}
	\frac{1}{N} \sum_{\alpha=1}^M \frac{\sigma_\alpha^2 |\whmfc|^2}{|\sigma_\alpha \whmfc+1|^2} = 1-\eta\frac{|\whmfc|^2}{\im \whmfc} < 1\,.
\end{align}
We also assume in~\eqref{assumption_CLT_2} that
\begin{align}
	\frac{1}{N} \sum_{\alpha}^{(\gamma)} \frac{\sigma_\alpha^2 |\mfc|^2}{|\sigma_\alpha \mfc+1|^2} < \mathfrak{c} < 1\,,
\end{align}	for some constant $\fc$. Thus, we get
\begin{align}
	\left|\frac{1}{\mfc}-\frac{1}{\whmfc}\right| < \frac{1+\mathfrak{c}}{2} \left|\frac{1}{\mfc}-\frac{1}{\whmfc}\right| + M^{-1/2 + \phi+\epsilon/2}\,,\quad\qquad (z\in\caD_{\phi}')\,,
\end{align}
which implies that
\begin{align}
	\left|\frac{1}{\mfc}-\frac{1}{\whmfc}\right| < C M^{-1/2 +\phi+\epsilon/2}\,,\qquad\quad (z\in\caD_{\phi}')\,,
\end{align}
which contradicts the assumption that~\eqref{eq:hat bound} does not hold. This concludes the proof of the desired lemma.
\end{proof}	

The following lemma provides priori estimate for imaginary part of $\whmfc$ with general $\eta$.
\begin{lemma}\label{weakboundwhmfc}
For $z=E+\ii\eta \in \caD_{\phi}'$, the following hold on $\Omega$:
\begin{align}
	\im \whmfc(z) = O(\max\{\eta,\frac{1}{N\eta}\})=O(\max\{\eta,\frac{1}{M\eta}\}) \,.
\end{align}
\end{lemma}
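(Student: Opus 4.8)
The plan is to bound $\im\whmfc$ directly from the defining self-consistent equation~\eqref{eq:hat mfc}, by taking imaginary parts and exploiting the sign structure, exactly as one does in the standard analysis of the Marchenko--Pastur equation but keeping careful track of the fact that the diagonal terms $1/(\sigma_\alpha\whmfc+1)$ can be large. Write $\whmfc = a + \ii b$ with $b = \im\whmfc > 0$. Taking the imaginary part of the reciprocal of~\eqref{eq:hat mfc} gives
\begin{align}\label{im recip whmfc}
	\frac{b}{|\whmfc|^2} = \im\left(-\frac{1}{\whmfc}\right) = \eta + \frac{b}{N}\sum_{\alpha=1}^M \frac{\sigma_\alpha^2}{|\sigma_\alpha\whmfc+1|^2}\,.
\end{align}
Since $|\whmfc|\sim 1$ (which follows for $\whmfc$ by the same argument used for $\mfc$ in Section~\ref{subsec:properties}, using that $0\le\wh R_2<1$), dividing~\eqref{im recip whmfc} by $b$ yields $|\whmfc|^{-2} = \eta/b + \wh R_2(z)$, so that
\begin{align}\label{b formula}
	b = \frac{\eta\, |\whmfc|^2}{1 - \wh R_2(z)}\,.
\end{align}
Thus everything reduces to a lower bound on $1-\wh R_2(z)$, i.e.\ an upper bound on $\wh R_2(z)$ bounded away from $1$ in the right way, since $b\le \eta/(1-\wh R_2)$ up to the $O(1)$ factor $|\whmfc|^2$.

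The key step is therefore to show that on $\Omega$, for $z\in\caD_\phi'$, one has $1-\wh R_2(z) \gtrsim \min\{1, N\eta^2\}$ (equivalently $1-\wh R_2 \gtrsim \min\{1, M\eta^2\}$ by Remark~\ref{MNM}); combined with~\eqref{b formula} this gives $\im\whmfc = O(\eta) + O(1/(N\eta))$, which is the claim. To prove the lower bound on $1-\wh R_2$ we split the sum in $\wh R_2 = \frac1N\sum_\alpha \sigma_\alpha^2|\whmfc|^2/|\sigma_\alpha\whmfc+1|^2$ into the critical index $\gamma$ (the one realizing the minimum in~\eqref{assumption near sigma_k}, now with $\whmfc$ in place of $\mfc$, or equivalently with $\mfc$ up to the negligible error from Lemma~\ref{hat bound}) and the rest. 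For $\alpha\ne\gamma$, the ``far from home'' bound~\eqref{farfromhome} together with $|\sigma_\alpha\whmfc+1| = \sigma_\alpha|\whmfc|\,|1 + 1/(\sigma_\alpha\whmfc)| \gtrsim M^{-\phi}\kappa_0$, combined with assumption~\eqref{assumption_CLT_2}, shows $\frac1N\sum_{\alpha\ne\gamma}\sigma_\alpha^2|\whmfc|^2/|\sigma_\alpha\whmfc+1|^2 \le \fc + o(1) < 1$. For the single term $\alpha=\gamma$: on $\caD_\phi'$ and using the identity $\im(1/\whmfc) \le -\eta$ we get $|\sigma_\gamma\whmfc+1|^2 \gtrsim \sigma_\gamma^2(\eta^2 + (\re(1+1/(\sigma_\gamma\whmfc)))^2) \gtrsim \eta^2$, so the $\gamma$-term contributes $\frac1N\cdot\frac{\sigma_\gamma^2|\whmfc|^2}{|\sigma_\gamma\whmfc+1|^2} = O\big(\frac{1}{N\eta^2}\big)$. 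Adding the two pieces, $\wh R_2(z) \le \fc + o(1) + O(1/(N\eta^2))$; hence $1-\wh R_2(z) \gtrsim \min\{1-\fc,\, N\eta^2\} \gtrsim \min\{1, N\eta^2\}$, as needed.

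I expect the main obstacle to be the bookkeeping around the critical index $\gamma$: one must first transfer the selection condition~\eqref{assumption near sigma_k} and the consequences~\eqref{farfromhome},~\eqref{assumption_CLT_2} from $\mfc$ to $\whmfc$, which requires invoking Lemma~\ref{hat bound} to say $|1/\whmfc - 1/\mfc|\prec M^{-1/2+\phi}\ll M^{-\phi}\kappa_0$ on $\caD_\phi'$ (so the geometry of the $\sigma_\alpha$'s relative to $-1/\whmfc$ is the same as relative to $-1/\mfc$ up to negligible error), and then carefully verifying that the lower bound $|\sigma_\gamma\whmfc+1|\gtrsim\sigma_\gamma\eta$ really does survive on all of $\caD_\phi'$ rather than only near an eigenvalue. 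A secondary point is the circularity-avoidance: since Lemma~\ref{hat bound} is used, one should check that its proof does not rely on the present lemma — it does not, as it only uses~\eqref{assumption_CLT_1},~\eqref{assumption_CLT_2} and the trivial bound~\eqref{R2 hat less than 1}. Once these are in place the estimate~\eqref{b formula} makes the conclusion immediate.
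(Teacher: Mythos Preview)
Your strategy---take imaginary parts of~\eqref{eq:hat mfc}, isolate the critical index $\gamma$, and use that the remaining sum is bounded by $\fc<1$---is exactly what the paper does. But your final step contains a genuine gap. From $\wh R_2(z)\le \fc+o(1)+O(1/(N\eta^2))$ you conclude $1-\wh R_2(z)\gtrsim\min\{1,N\eta^2\}$; this implication does not hold. In the regime $N\eta^2\ll 1$ (which is precisely the regime of interest, e.g.\ $\eta=\eta_0=M^{-1/2-\phi}$), your upper bound on $\wh R_2$ exceeds $1$ and tells you nothing beyond the trivial $\wh R_2<1$. In fact, since $1-\wh R_2=\eta\,|\whmfc|^2/\im\whmfc$ exactly by~\eqref{b formula}, the lower bound $1-\wh R_2\gtrsim N\eta^2$ is \emph{equivalent} to the conclusion $\im\whmfc\lesssim 1/(N\eta)$, so the argument is circular at this point.

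The fix is the paper's small reorganisation: do not fold the $\gamma$-term into $\wh R_2$. Separate it already in the equation for $-1/\whmfc$ (or equivalently in~\eqref{im recip whmfc} before dividing by $b$), writing
\[
\bigl(1-\wh R_2^{(\gamma)}\bigr)\,\frac{\im\whmfc}{|\whmfc|^2}
\;=\;\eta\;+\;\frac{1}{N}\,\im\!\left(\frac{-1}{\sigma_\gamma^{-1}+\whmfc}\right),
\]
and bound the last term in absolute value by $C/(N\eta)$ via $|\sigma_\gamma^{-1}+\whmfc|\ge\im\whmfc\ge\eta\,|\whmfc|^2\gtrsim\eta$ (the last inequality uses $|\whmfc|\sim1$, available from Lemma~\ref{hat bound} and $\mfc\sim1$, so there is no circularity). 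Since $1-\wh R_2^{(\gamma)}\ge 1-\fc>0$ uniformly, this yields $\im\whmfc=O(\eta)+O(1/(N\eta))$ directly, with no case split. Everything else in your outline (transferring~\eqref{assumption_CLT_2} from $\mfc$ to $\whmfc$ via Lemma~\ref{hat bound}, the lower bound $|\im(1/\whmfc)|\ge\eta$, the non-circularity check) is fine and matches the paper.
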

\begin{proof}
By the definition of $\whmfc$,
\begin{align}
	\begin{split}
		-\frac{1}{\whmfc}&=z-\frac{1}{N}\sum_{\alpha=1}^{M} \frac{1}{\sigma_\alpha^{-1}+\whmfc}=z-\frac{1}{N} \frac{1}{\sigma_\gamma^{-1}+\whmfc} - \frac{1}{N}\sum_{\alpha}^{(\gamma)}\frac{1}{\sigma_\alpha^{-1}+\whmfc}\\
		&=z+O(\frac{1}{N\eta}) -\frac{1}{N}\sum_{\alpha}^{(\gamma)}\frac{1}{\sigma_\alpha^{-1}+\whmfc} \,,
	\end{split}	
\end{align}
where $\gamma$ satisfies~\eqref{assumption near sigma_k} and we have used the trivial bound $|\frac{1}{\sigma_\alpha^{-1}+\whmfc}|\le \eta^{-1}$.\\
Taking imaginary part gives
\begin{align}
	\begin{split}
		\frac{\im \whmfc}{|\whmfc|^2}=\eta +O(\frac{1}{N\eta})+\frac{1}{N}\sum_\alpha^{(\gamma)}\frac{\im \whmfc}{|\sigma_\alpha^{-1}+\whmfc|^2} \,, \\
		\im \whmfc =\eta |\whmfc|^2 +O(\frac{|\whmfc|^2}{N\eta}) +\frac{1}{N}\sum_\alpha^{(\gamma)} \frac{|\whmfc|^2\im\whmfc}{|\sigma_\alpha^{-1}+\whmfc|^2} \,.
	\end{split}
\end{align}
Recalling $\wh{R}_2^{(\gamma)}$ from Lemma~\ref{lemma:step 1} and using $\whmfc\sim 1$,
then we have
\begin{align}
	|\im\whmfc |\le C\eta +\frac{C'}{N \eta} \,.
\end{align}
\end{proof}

\begin{remark}
The estimate on $|\mfc-\whmfc|$ easily follows from Lemma~\ref{hat bound}. To see this, we first observe that $\mfc\sim1$ implies $\mfc^{-1}\sim1$. Combining with Lemma~\ref{hat bound} above, we also find that $\whmfc\sim1$. Since $|\mfc-\whmfc|\prec M^{-1/2+\phi}$, we get the estimate 
\begin{align}\label{mfcwhmfc}
	|\mfc-\whmfc|\le C M^{-1/2+2\phi} \,,
\end{align}
with high probability.
\end{remark}
\subsection{Proof of Theorem~\ref{theorem:main}}
In this subsection, we prove Proposition~\ref{proposition:main}, which would directly imply Theorem~\ref{theorem:main}. The key idea is that we can approximate $(\lambda_\gamma)$ in terms of $(\sigma_\gamma)$ by applying the properties of $\whmfc$ in Section~\ref{subsec:properties} and hence we can estimate the locations of the largest eigenvalues $(\lambda_\gamma)$ of $\caQ$ by $(\sigma_\gamma)$. The precise statement for the idea is the following proposition.

\begin{proposition} \label{proposition:lambda_k}
Let $n_0>10$ be a fixed integer independent of $M$ and $\gamma\in\llbracket 1, n_0-1\rrbracket$. 
Suppose that the assumptions in Theorem~\ref{theorem:main} hold. Then, with $\eta_0$ defined in~\eqref{definition of kappa0}, the following holds with high probability:
\begin{align}\label{implicit equation}
	\re \frac{1}{\whmfc (\lambda_\gamma + \ii \eta_0)} = -\sigma_\gamma + O(M^{-1/2 + 3\phi})\,,
\end{align}
\end{proposition}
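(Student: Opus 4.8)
\textbf{Proof proposal for Proposition~\ref{proposition:lambda_k}.}

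The plan is to exploit the fact that, on the one hand, $\lambda_\gamma$ is an eigenvalue of $\caQ$, so the resolvent $m_Q$ has a pole there, and on the other hand, the local law identifies $m_Q$ with $\mfc$ (equivalently $\whmfc$) up to an error of order $M^{-1/2+\phi}$, while $\im\whmfc$ stays small on $\caD_\phi'$. The key structural input is the relation~\eqref{rel between m and wt m} together with the Schur-complement representation $G_{\alpha\alpha}^{-1}=-\sigma_\alpha^{-1}-m^{(\alpha)}-Z_\alpha$, which shows that $G_{\alpha\alpha}$ blows up precisely when $\re(1+\sigma_\gamma\,m_{fc})$ passes through $0$. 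First I would record, from the a priori local law estimates (Lemma~\ref{hat bound}, the fluctuation-averaging bound, and Proposition~\ref{proposition:step 2_4}, which I am allowed to cite), that for $z=\lambda_\gamma+\ii\eta_0$ one has $m(z)=\whmfc(z)+\caO_\prec(M^{-1/2+\phi})$ and $Z_\alpha=\caO_\prec(M^{-1/2+\phi})$; these put all stochastic errors at the target scale. Second, I would show $\lambda_\gamma+\ii\eta_0\in\caD_\phi'$ with high probability: this is where the ``good configuration'' $\Omega$ from Definition~\ref{sigma assumptions} enters, since~\eqref{eq4.3}--\eqref{eq4.4} guarantee that the eigenvalues $\sigma_\beta$, $\beta\le n_0$, are separated at scale $M^{-\phi}\kappa_0$, and Remark~\ref{remark:kappa} converts the location $\lambda_\gamma\approx L_+-\frac{d-d_+}{d}(1-\sigma_\gamma)$ into the statement that $|1+1/(\sigma_\alpha\mfc)|\gtrsim M^{-1/(\b+1)-\phi}$ for all $\alpha\in A$. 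This is essentially the content of the forward-referenced Remark~\ref{remark:lambda_k}.

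Next, the analytic core. Since $\lambda_\gamma$ is an eigenvalue of $\caQ$ (hence of $Q$), $\im m_Q(\lambda_\gamma+\ii\eta_0)\gtrsim \frac{1}{N\eta_0}=M^{-1/2+\phi}$ by the spectral decomposition of $m_Q$ (the $\delta$-peak contribution). Feeding this into the local law $m_Q=\whmfc+\caO_\prec(M^{-1/2+\phi})$ would give only $\im\whmfc\gtrsim M^{-1/2+\phi}$, which is too weak by itself; the improvement comes from isolating the $\alpha=\gamma$ diagonal entry. Writing $m(z)=\frac{1}{Nz}\sum_\alpha\sigma_\alpha^{-1}G_{\alpha\alpha}-\frac{N-M}{Nz}$ and using $G_{\alpha\alpha}^{-1}=-\sigma_\alpha^{-1}-m^{(\alpha)}-Z_\alpha=-\sigma_\alpha^{-1}(1+\sigma_\alpha\whmfc)+\caO_\prec(M^{-1/2+\phi})$ for $\alpha\ne\gamma$ (using $\caD_\phi'$ to control the denominators, exactly as in the proof of Lemma~\ref{hat bound}), the sum over $\alpha\ne\gamma$ is $\whmfc+\caO_\prec(M^{-1/2+\phi})$ by the very self-consistent equation~\eqref{eq:hat mfc}. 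Subtracting, the $\gamma$-term alone must carry the pole: $\frac{1}{N\lambda_\gamma}\sigma_\gamma^{-1}G_{\gamma\gamma}=m(z)-\whmfc(z)+\caO_\prec(M^{-1/2+\phi})$, and since the left side is large (because $\lambda_\gamma$ is an eigenvalue, $|G_{\gamma\gamma}|$ is as large as $\eta_0^{-1}$ up to the overlap of the eigenvector with $\bfe_\gamma$), the denominator $-\sigma_\gamma^{-1}-m^{(\gamma)}-Z_\gamma$ must be small: $|\,\sigma_\gamma^{-1}+\whmfc(z)+\caO_\prec(M^{-1/2+\phi})\,|$ is of order at most $M^{-1/2+2\phi}$ or so. Multiplying by $\sigma_\gamma\sim1$ and taking the real part yields $\re(1+\sigma_\gamma\whmfc(z))=\caO_\prec(M^{-1/2+3\phi})$, i.e. $\re\frac{1}{\whmfc(z)}=-\sigma_\gamma+\caO_\prec(M^{-1/2+3\phi})$ after dividing by $|\whmfc|^2\sim1$ and using $\whmfc\sim1$ together with $\im\whmfc$ small from Lemma~\ref{weakboundwhmfc}. (The slack in the exponent, $\phi$ vs.\ $3\phi$, is exactly to absorb the $|\whmfc|^2$ conversion, the $\sigma_\gamma\sim1$ conversion, and the $m^{(\gamma)}$-vs-$m$ replacement via Lemma~\ref{cauchy interlacing}.)

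The step I expect to be the main obstacle is making rigorous the claim that ``the $\gamma$-diagonal entry carries a macroscopically large contribution at $z=\lambda_\gamma+\ii\eta_0$''. A priori, the eigenvector $\bfu_\gamma$ of $\caQ$ belonging to $\lambda_\gamma$ need not have non-negligible overlap with the coordinate direction $\bfe_\gamma$, so $G_{\gamma\gamma}(\lambda_\gamma+\ii\eta_0)$ need not be of order $\eta_0^{-1}$; one only knows $\sum_\alpha|\langle\bfe_\alpha,\bfu_\gamma\rangle|^2=1$. The correct way around this is \emph{not} to assert a lower bound on $|G_{\gamma\gamma}|$ directly, but to run the argument by contradiction on the quantity $\re(1+\sigma_\gamma\whmfc)$: if $|\re(1+\sigma_\gamma\whmfc(\lambda_\gamma+\ii\eta_0))|\ge M^{-1/2+3\phi}$, then \emph{every} denominator $-\sigma_\alpha^{-1}-m^{(\alpha)}-Z_\alpha$, $\alpha\in\llbracket1,M\rrbracket$, is bounded below by $cM^{-1/2+3\phi}$ (combining this assumption for $\alpha=\gamma$ with~\eqref{farfromhome} and $\caD_\phi'$ for $\alpha\ne\gamma$), hence every $|G_{\alpha\alpha}|\le CM^{1/2-3\phi}$, hence $\im m_Q(\lambda_\gamma+\ii\eta_0)\le \frac{C}{N}\sum_\alpha|G_{\alpha\alpha}|\,(\ldots)$ can be shown to be $\ll \frac{1}{N\eta_0}$ via the Ward identity~\eqref{ward} and~\eqref{rel between m and wt m}, contradicting that $\lambda_\gamma$ is an eigenvalue (which forces $\im m_Q\ge\frac{\eta_0}{\sum_i(\lambda_i-\lambda_\gamma)^2+\eta_0^2}\gtrsim\frac{1}{N\eta_0}$). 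Thus the real obstacle is organising the bookkeeping so that the stability/resolvent bounds on $\caD_\phi'$ (which come from Lemma~\ref{hat bound} and the fluctuation averaging, citable here) feed cleanly into a Ward-identity bound on $\im m_Q$; the rest is the routine conversions listed above. I would carry the steps out in the order: (i) $\lambda_\gamma+\ii\eta_0\in\caD_\phi'$ w.h.p.; (ii) local law + $Z_\alpha$ bounds at this point; (iii) contradiction argument via Ward identity controlling $\im m_Q$; (iv) real-part extraction and exponent bookkeeping.
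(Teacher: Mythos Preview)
Your proposal has two genuine gaps. First, and most seriously, your contradiction argument---even if it closed---would only show that at $z=\lambda_\gamma+\ii\eta_0$ the quantity $\re(1+\sigma_{\gamma'}\whmfc)$ is small for the index $\gamma'$ that minimises $|\re(1+1/(\sigma_\alpha\mfc))|$ (the one for which~\eqref{farfromhome} and~\eqref{assumption_CLT_2} are formulated). It does \emph{not} show $\gamma'=\gamma$. Establishing the correct pairing $\lambda_1\leftrightarrow\sigma_1$, $\lambda_2\leftrightarrow\sigma_2$, \dots\ is a separate combinatorial step; the paper does it by a monotonicity argument for $\gamma=1$ (if $\lambda_1<\wh E_1-M^{-1/2+3\phi}$ then $E\mapsto\im m(E+\ii\eta_0)$ is decreasing to the right of $\lambda_1$, contradicting the existence of a peak near $\wh E_1$), and then by Cauchy interlacing with the minor $H^{(1)}$ plus induction for $\gamma\ge2$. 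Your sketch is missing this entirely. Second, your step~(i) is circular: you justify $\lambda_\gamma+\ii\eta_0\in\caD_\phi'$ via ``$\lambda_\gamma\approx L_+-\tfrac{d-d_+}{d}(1-\sigma_\gamma)$'', but that location is precisely the output of Proposition~\ref{proposition:lambda_k} (through Proposition~\ref{proposition:main}); Remark~\ref{remark:lambda_k} is a \emph{consequence}, not an input. The paper avoids this by first proving, uniformly on $\caD_\phi'$, that $\im m(z)\sim\eta_0$ whenever $z$ is $M^{-1/2+3\phi}$-far from every $\wh z_{\gamma'}$ (Lemma~\ref{lemma:step 5}), and only afterwards locating $\lambda_\gamma$.

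A secondary point: your step~(iii) as written does not close. Bounding every $|G_{\alpha\alpha}|\le CM^{1/2-3\phi}$ and invoking the Ward identity does not yield $\im m\ll(N\eta_0)^{-1}$; the crude sum $\frac{1}{N}\sum_\alpha|G_{\alpha\alpha}|^2$ is of order $M^{1-6\phi}$, far too large. The paper's Lemma~\ref{lemma:step 5} obtains $\im m\sim\eta_0$ by a genuine self-consistent argument: one shows $-\im(1/m)=\eta+\im d^{-1}\wt m+o(\eta)$ via the fluctuation-averaging Lemmas~\ref{lemma:step 3}--\ref{lemma:step 4}, and separately $\im d^{-1}\wt m=K_m^{(\gamma)}\cdot(-\im 1/m)+o(\eta)$ with $K_m^{(\gamma)}<c<1$, whence $-\im(1/m)\sim\eta$. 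That machinery, together with Lemma~\ref{lemma:step 6_2} (existence of a peak near each $\wh z_\gamma$), is what actually pins down the eigenvalue locations; your outline should invoke those lemmas rather than try to shortcut them with Ward.
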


We postpone the proof of Proposition~\ref{proposition:lambda_k} to Section~\ref{location}.

\begin{remark} \label{remark:lambda_k}
Since $|\sigma_\alpha - \sigma_\gamma| \geq M^{-\phi} \kappa_0 \gg M^{-1/2 + 3\phi}$ for all $\alpha \neq \gamma$ by \eqref{eq4.3}, Proposition~\ref{proposition:lambda_k} implies that
\begin{align}
	\begin{split}
		&\left|1+\re \frac{1}{\sigma_\alpha\widehat m_{fc}(\lambda_\gamma + \ii \eta_0)}\right| \\&\geq \left|\re \frac{1}{\sigma_\alpha\widehat m_{fc}(\lambda_\gamma + \ii \eta_0)}-\re \frac{1}{\sigma_\gamma\widehat m_{fc}(\lambda_\gamma + \ii \eta_0)}\right| - \left|1+\re \frac{1}{\sigma_\gamma\widehat m_{fc}(\lambda_\gamma + \ii \eta_0)}\right|\\
		&\geq \frac{ \kappa_0}{2}\,.
	\end{split}
\end{align}
Hence, we find that $\lambda_\gamma + \ii \eta_0 \in \caD_{\phi}'$, $\gamma\in\llbracket 1,n_0-1\rrbracket$ with high probability.
\end{remark}

We now prove Theorem~\ref{theorem:main} by proving the following proposition.

\begin{proposition} \label{proposition:main}
Suppose that the assumptions in Proposition ~\ref{proposition:lambda_k} hold. 
Then there exists a constant $C$ such that with high probability
\begin{align}
	\left| \lambda_\gamma - \left( L_+ - \frac{d - d_+ }{d} (1-\sigma_\gamma) \right) \right| \le \frac{C}{M^{1/(\b+1)}} \left(\frac{M^{3\phi}}{ M^{\fb}}  + \frac{(\log M)^2}{M^{1/(\b+1)}}  \right)\,.
\end{align}
\end{proposition}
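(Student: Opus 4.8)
The plan is to combine Proposition~\ref{proposition:lambda_k}, which locates $\lambda_\gamma$ implicitly via $\re\,\whmfc(\lambda_\gamma+\ii\eta_0)^{-1}=-\sigma_\gamma+O(M^{-1/2+3\phi})$, with the approximate linearity of $1/\mfc$ near the right edge (Lemma~\ref{mfc estimate}, sharpened in Remark~\ref{remark:kappa}) and the closeness $|\mfc-\whmfc|\prec M^{-1/2+\phi}$ on $\caD_\phi'$ from Lemma~\ref{hat bound}. By Remark~\ref{remark:lambda_k}, $\lambda_\gamma+\ii\eta_0\in\caD_\phi'$ with high probability, so all these tools apply at the point $z=\lambda_\gamma+\ii\eta_0$. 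First I would set $z_\gamma\deq L_+-\frac{d-d_+}{d}(1-\sigma_\gamma)+\ii\eta_0$, the candidate location (shifted up by $\ii\eta_0$), and use Remark~\ref{remark:kappa} to get $\re\,\mfc(z_\gamma)^{-1}=-\sigma_\gamma+O((\log M)M^{-\min\{\b,2\}/(\b+1)+2\phi})$; note $\kappa=\kappa(\re z_\gamma)\sim(1-\sigma_\gamma)\sim\kappa_0$ up to logs by~\eqref{eq4.4}, so the error term is indeed of the stated size $M^{-1/(\b+1)}(M^{3\phi}/M^{\fb})$ after noting $\fb=\frac12-\frac1{\b+1}$ and comparing exponents.

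The core of the argument is a stability/inversion step: both $\lambda_\gamma+\ii\eta_0$ and $z_\gamma$ produce (almost) the same value of $\re\,\whmfc^{-1}$, namely $-\sigma_\gamma$ up to the above errors, so I want to conclude that $\lambda_\gamma$ and $\re z_\gamma$ are close. I would first replace $\whmfc$ by $\mfc$ throughout using Lemma~\ref{hat bound}: since $|\whmfc^{-1}-\mfc^{-1}|\prec M^{-1/2+\phi}$ on $\caD_\phi'$, Proposition~\ref{proposition:lambda_k} gives $\re\,\mfc(\lambda_\gamma+\ii\eta_0)^{-1}=-\sigma_\gamma+O(M^{-1/2+3\phi})$. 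Then both $z=\lambda_\gamma+\ii\eta_0$ and $z=z_\gamma$ satisfy $\re\,\mfc(z)^{-1}=-\sigma_\gamma+(\text{error})$. Applying the second part of Lemma~\ref{mfc estimate} to the pair $z=\lambda_\gamma+\ii\eta_0$, $z'=z_\gamma$ (both in $\caD_\phi$), we get
\begin{align}
\frac{1}{\mfc(\lambda_\gamma+\ii\eta_0)}-\frac{1}{\mfc(z_\gamma)}=-\frac{d}{d-d_+}\big((\lambda_\gamma+\ii\eta_0)-z_\gamma\big)+O\big((\log M)^2 (M^{-1/(\b+1)})^{\min\{\b-1,1\}}|\lambda_\gamma-\re z_\gamma|\big)\,,
\end{align}
and taking real parts, the left side is $O(M^{-1/2+3\phi})+O((\log M)M^{-\min\{\b,2\}/(\b+1)+2\phi})$ while the right side is $-\frac{d}{d-d_+}(\lambda_\gamma-\re z_\gamma)$ plus a term that is lower order in $|\lambda_\gamma-\re z_\gamma|$ (for large $M$ it can be absorbed). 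Solving for $\lambda_\gamma-\re z_\gamma$ and bounding $\frac{d}{d-d_+}$ by a constant yields the claimed estimate, after checking that each error contributes at most $\frac{C}{M^{1/(\b+1)}}\big(\frac{M^{3\phi}}{M^{\fb}}+\frac{(\log M)^2}{M^{1/(\b+1)}}\big)$: the $M^{-1/2+3\phi}$ term equals $M^{-1/(\b+1)}\cdot M^{-\fb+3\phi}$, the $(\log M)M^{-\min\{\b,2\}/(\b+1)+2\phi}$ term is of the same or smaller order (and for $\b\ge2$ becomes the second summand $(\log M)^2 M^{-2/(\b+1)}$), and the self-referential term is negligible once $|\lambda_\gamma-\re z_\gamma|$ is known to be $o(1)$.

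I expect the main obstacle to be bookkeeping the exponents carefully enough to land exactly on $\frac{M^{3\phi}}{M^{\fb}}+\frac{(\log M)^2}{M^{1/(\b+1)}}$ rather than something weaker: one must track whether $\b\gtrless 2$ (which changes $\min\{\b,2\}$ and $\min\{\b-1,1\}$ in Lemma~\ref{mfc estimate}), use $\fb=\frac{\b-1}{2(\b+1)}$ together with $\frac1{\b+1}+\fb=\frac12$ repeatedly to convert the scattered $M^{-1/2}$, $M^{-1/(\b+1)}$, $M^{-\fb}$ factors into a common form, and verify that the constraint~\eqref{phi condition} on $\phi$ guarantees all the ``error $\ll$ gap'' comparisons (in particular that $M^{3\phi}/M^{\fb}$ is genuinely small so that the a priori bound $|\lambda_\gamma-\re z_\gamma|=o(1)$ closes the loop and justifies absorbing the self-referential error). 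A minor secondary point is making sure the $\ii\eta_0$ shift does not interfere: since $\eta_0=M^{-1/2-\phi}\ll M^{-1/(\b+1)}$ under~\eqref{phi condition}, the imaginary displacement is already smaller than the target accuracy and drops out of the final real-part estimate. Once the arithmetic is organized this way the proof is essentially a one-line inversion of Lemma~\ref{mfc estimate}.
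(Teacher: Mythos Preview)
Your proposal is correct and uses the same three ingredients as the paper (Proposition~\ref{proposition:lambda_k}, Lemma~\ref{hat bound}, and Lemma~\ref{mfc estimate}), but the paper's execution is more direct: rather than introducing the target point $z_\gamma$ and invoking the \emph{second} (difference) part of Lemma~\ref{mfc estimate} to compare $\lambda_\gamma+\ii\eta_0$ with $z_\gamma$, the paper simply applies the \emph{first} part of Lemma~\ref{mfc estimate} at $z=\lambda_\gamma+\ii\eta_0$ to get
\[
\re\frac{1}{\mfc(\lambda_\gamma+\ii\eta_0)}=-1+\frac{d}{d-d_+}(L_+-\lambda_\gamma)+O\big((\log M)^2\kappa_0^{\min\{\b,2\}}\big),
\]
sets this equal to $-\sigma_\gamma+O(M^{-1/2+3\phi})$ (from Proposition~\ref{proposition:lambda_k} combined with Lemma~\ref{hat bound}), and solves linearly for $\lambda_\gamma$. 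This avoids your intermediate point $z_\gamma$ and the self-referential error term entirely; the exponent bookkeeping you anticipate then reduces to the single observation $\tfrac{1}{\b+1}+\fb=\tfrac12$, so $M^{-1/2+3\phi}=M^{-1/(\b+1)}\cdot M^{3\phi-\fb}$, while for $1<\b<2$ the $(\log M)^2\kappa_0^{\b}$ term is absorbed by the first summand and for $\b\ge 2$ it gives the second.
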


\begin{proof}[Proof of Theorem~\ref{theorem:main} and Proposition~\ref{proposition:main}]
From Lemma~\ref{hat bound} and Proposition~\ref{proposition:lambda_k}, with high probability
\begin{align}
	\re \left( \frac{1}{\mfc(\lambda_\gamma+\ii\eta_0)} \right)=-\sigma_\gamma+O(M^{-\frac{1}{2}+3\phi})\,.
\end{align}
Recall we have proved in Lemma~\ref{mfc estimate} that
\begin{align}
	\frac{1}{\mfc(\lambda_\gamma+\ii\eta_0)} =-1+\frac{d}{d-d_+}(L_+-\lambda_\gamma-\ii\eta_0)+O(\kappa_0^{\min\{b,2\}}(\log M)^2)\,.
\end{align}
Thus,
\begin{align}
	\re \frac{1}{\mfc(\lambda_\gamma+\ii\eta_0)}= -1+\frac{d}{d-d_+}(L_+-\lambda_\gamma)+O(\kappa_0^{\min\{b,2\}}(\log M)^2)\,.
\end{align}
We now have with high probability that
\begin{align}
	\lambda_\gamma = -(1-\sigma_\gamma)\frac{d-d_+}{d}+L_+ + O(\kappa_0^{\min\{b,2\}}(\log M)^2)+ O(M^{-1/2+3\phi})
	\,,
\end{align}
which completes the proof of Proposition~\ref{proposition:main}.

To prove Theorem~\ref{theorem:main}, we notice that the distribution of the largest eigenvalue of $\Sigma$ is given by the order statistics of $(\sigma_\alpha)$. The Fisher--Tippett--Gnedenko theorem asserts that the limiting distribution of the largest eigenvalue of $\Sigma$ is a member of either Gumbel, Fr\`echet or Weibull family, and in our case it is the Weibull distribution. This completes the proof of Theorem~\ref{theorem:main}.
\end{proof}

The following corollary provides an estimate on the speed of the convergence

\begin{corollary} \label{corollary:main}
Suppose that the assumptions in Proposition ~\ref{proposition:lambda_k} hold. Then, there exists a constant $C_1$ such that for $s\in\R^+$
\begin{align} \begin{split}
		&\p \left( M^{1/(\b+1)} \frac{d - d_+}{d} (1-\sigma_\gamma) \leq s - C_1 \left( \frac{M^{3\phi}}{M^{\fb}} + \frac{ (\log M)^2}{M^{1/(\b+1)}} \right) \right) - C_1 \frac{(\log M)^{1+2\b}}{ M^{\phi}} \\
		&\leq \p \left( M^{1/(\b+1)} (L_+ - \lambda_\gamma) \leq s \right) \\
		&\leq \p \left( M^{1/(\b+1)} \frac{d - d_+}{d} (1-\sigma_\gamma) \leq s + C_1 \left( \frac{M^{3\phi}}{M^{\fb}} + \frac{ (\log M)^2}{M^{1/(\b+1)}} \right) \right) + C_1 \frac{(\log M)^{1+2\b}}{ M^{\phi}}\,,
\end{split} \end{align}
for any sufficiently large $N$.
\end{corollary}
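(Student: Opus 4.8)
The plan is to read Corollary~\ref{corollary:main} directly off Proposition~\ref{proposition:main} by a routine event-transfer (coupling) argument at the level of distribution functions, so that no new estimate is needed; the only thing requiring care is the bookkeeping of exceptional probabilities.

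First I would fix the error scale $\Delta_M \deq C\big(\frac{M^{3\phi}}{M^{\fb}} + \frac{(\log M)^2}{M^{1/(\b+1)}}\big)$, with $C$ the constant of Proposition~\ref{proposition:main}, and let $\caE$ denote the event on which the conclusion of that proposition holds. Multiplying its bound through by $M^{1/(\b+1)}$ gives, on $\caE$, the inequality $\big| M^{1/(\b+1)}(L_+-\lambda_\gamma) - M^{1/(\b+1)}\,\frac{d-d_+}{d}\,(1-\sigma_\gamma)\big| \le \Delta_M$. Since Proposition~\ref{proposition:main} holds with high probability on $\Omega$, I would combine this with the bound $\p(\Omega^c)\le C(\log M)^{1+2\b}M^{-\phi}$ from~\eqref{Omega} (valid in the i.i.d.\ case; in general with whatever rate Assumption~\ref{assumption1} supplies) and absorb the residual $N^{-D}$ term into the polynomial tail for $N$ large, to conclude $\p(\caE^c)\le C'(\log M)^{1+2\b}M^{-\phi}$ for some $C'$ and all large $N$.

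Next I would transfer the two inequalities: the inclusions $\{M^{1/(\b+1)}(L_+-\lambda_\gamma)\le s\}\cap\caE\subseteq\{M^{1/(\b+1)}\,\frac{d-d_+}{d}\,(1-\sigma_\gamma)\le s+\Delta_M\}$ and $\{M^{1/(\b+1)}\,\frac{d-d_+}{d}\,(1-\sigma_\gamma)\le s-\Delta_M\}\cap\caE\subseteq\{M^{1/(\b+1)}(L_+-\lambda_\gamma)\le s\}$ both follow at once from the bound defining $\caE$. Taking probabilities and using $\p(A)\le\p(A\cap\caE)+\p(\caE^c)$ for the first inclusion and $\p(A)\ge\p(A\cap\caE)$ for the second then yields precisely the claimed two-sided estimate, with $C_1\deq\max\{C,C'\}$ and $N$ large enough.

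I do not anticipate any real obstacle here; the subtle point is purely probabilistic bookkeeping, namely tracking whether the ``high probability'' statement of Proposition~\ref{proposition:main} is conditional on $\Omega$ and combining it correctly with the explicit rate~\eqref{Omega}, and checking $N^{-D}\le(\log M)^{1+2\b}M^{-\phi}$ (for $D\ge1$, $N$ large) so that the former can be absorbed into the latter.
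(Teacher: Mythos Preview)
Your proposal is correct and is precisely the argument the paper has in mind: the paper does not give a separate proof of Corollary~\ref{corollary:main}, treating it as an immediate consequence of Proposition~\ref{proposition:main} together with the probability bound~\eqref{Omega} on $\Omega^c$, and your event-transfer writeup fills in exactly that routine step.
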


\begin{remark}
The constants in Proposition~\ref{proposition:main} and Corollary~\ref{corollary:main} depend only on $d$, the measure $\nu$, and the constant $c_p$ in~\eqref{p moment bound}; in particular, they do not depend on the detailed structure of the sample $X$. 
\end{remark}

\subsection{Proof of Theorem~\ref{lemma:gaussian}} \label{Gaussian fluctuation}
In this subsection, we prove Theorem~\ref{lemma:gaussian} that holds in the case $d < d_+$ and the entries of $\Sigma$ are i.i.d. random variables. Recall that $\wh \mu_{fc} \deq \wh \nu \boxtimes \mu_{MP}$ and $L_+$ is the right edge of the support of $\mu_{fc}$.
\begin{proof}
Following the proof in~\cite{k, dsjl}, we find that $L_+$ is the solution of the equations
\begin{align}
	\frac{1}{\mfc ( L_+)}=-L_++d^{-1}\int\frac{t\dd\nu(t)}{1+t\mfc(L_+)}\,, \quad d^{-1}\int\left|\frac{t\mfc(L_+)}{1+t\mfc(L_+)}\right|^2\dd\nu(t) = 1\,
\end{align}
and similarly $\wh L_+$ is the solution of the equations
\begin{align}
	\frac{1}{\whmfc (\wh L_+)} =-\wh L_+ +\frac{1}{N} \sum_{\alpha=1}^M \frac{\sigma_\alpha}{1+\sigma_\alpha\whmfc(\wh L_+)}\,, \qquad
	\frac{1}{N} \sum_{\alpha=1}^M \left|\frac{\sigma_\alpha\whmfc(\wh L_+)}{1+\sigma_\alpha\whmfc(\wh L_+)}\right|^2 = 1\,.
\end{align}

Let $\tau = 1/\mfc(L_+)$ and $\wh \tau \deq 1/\whmfc(\wh L_+)$. Since $d < d_+$, we assume that
\begin{align}
	d^{-1}\int \frac{t^2\dd \nu(t)}{(1-t)^2} > 1 + \delta\,, \qquad \frac{1}{N} \sum_{\alpha=1}^M \frac{\sigma_\alpha^2}{(1-\sigma_\alpha)^2} > 1 + \delta
\end{align}
for some $\delta > 0$, where the second inequality holds with high probability.
From the assumption, we find that $\tau, \wh \tau < -1$. Thus,
\begin{align}\label{equation C3}
	0 &= \frac{1}{N} \sum_{\alpha=1}^M \frac{\sigma_\alpha^2}{(\wh \tau+\sigma_\alpha)^2} - 1 = \frac{1}{N} \sum_{\alpha=1}^M \frac{\sigma_\alpha^2}{(\wh \tau+\sigma_\alpha)^2} -\frac{1}{N} \sum_{\alpha=1}^M\frac{\sigma_\alpha^2}{(\tau+\sigma_\alpha)^2} +O( M^{-1/2}) \nonumber \\
	&= \frac{1}{N} \sum_{\alpha=1}^M \frac{(2\sigma_\alpha + \tau + \wh \tau)(\tau - \wh \tau)}{(\wh\tau+\sigma_\alpha)^2 (\tau+\sigma_\alpha)^2} + O(M^{-1/2})\,. 
\end{align}
We also notice that $2 \sigma_\alpha + \tau + \wh \tau < 0$.
Further, with high probability, $|\{ \sigma_\alpha : \sigma_\alpha < 1/2 \}| > cN$ for some constant $c > 0$ independent of $N$. Hence,
\begin{align}
	-\frac{1}{N} \sum_{\alpha=1}^M \frac{2\sigma_\alpha+\tau+\wh\tau}{(\wh\tau+\sigma_\alpha)^2 (\tau+ \sigma_\alpha)^2} > c' > 0\,,
\end{align}

for some constant $c'$ independent of $N$. Together with~\eqref{equation C3}, we thus find that
\begin{align}
	\tau - \wh \tau = O(M^{-1/2})\,.
\end{align}

We now have that
\begin{align}
	\wh \tau + \wh L_+ &= \frac{1}{N} \sum_{\alpha=1}^M \frac{\wh\tau\sigma_\alpha}{\wh \tau +\sigma_\alpha } =\frac{1}{N} \sum_{\alpha=1}^M \frac{\tau\sigma_\alpha}{\tau +\sigma_\alpha } + \frac{1}{N} \sum_{\alpha=1}^M \frac{\sigma_\alpha^2}{(\tau +\sigma_\alpha)^2}(\wh\tau-\tau) + O(M^{-1}) \nonumber \\
	&=L_+ + \tau +Y + (\wh \tau - \tau) + O(M^{-1})\,,
\end{align}
where the random variable $Y$ is defined by
\begin{align}
	Y \deq \frac{1}{N} \sum_{\alpha=1}^M \frac{\tau\sigma_\alpha}{\tau+\sigma_\alpha} - d^{-1}\int \frac{t\tau}{t+\tau}\dd \nu(t) = \frac{1}{N} \sum_{\alpha=1}^M \left( \frac{\tau\sigma_\alpha}{\tau+\sigma_\alpha} - \E \left[ \frac{\tau\sigma_\alpha}{\tau+\sigma_\alpha} \right] \right)\\
	=\frac{d^{-1}}{M} \sum_{\alpha=1}^M \left( \frac{\tau\sigma_\alpha}{\tau+\sigma_\alpha} - \E \left[ \frac{\tau\sigma_\alpha}{\tau+\sigma_\alpha} \right] \right)
\end{align}
By the central limit theorem, $Y$ converges to a centered Gaussian random variable with variance 
\begin{equation}
	(d^{2}M)^{-1} \left\{ \int\left|\frac{t\tau}{t+\tau}\right|^2\dd\nu(t)- \left(\int\frac{t\tau}{t+\tau}\dd\nu(t)\right)^2 \right\}. 
\end{equation}
Since $	\wh L_+ - L_+ = Y + O(M^{-1})$, this completes the proof of the desired lemma.
\end{proof}
With Lemma~\ref{hat bound}, adapting the idea of the proof of Lemma A.4 in~\cite{dsjl}, we find that $1+tm_{fc}(z)\sim 1$ and hence $1+\sigma_\alpha \whmfc(z) \sim 1$. Thus, our model satisfies Condition 1.1 in~\cite{uzb} so that Theorem 4.1 therein holds and we get
\begin{align}
|L_+ - \lambda_1| \prec M^{-2/3}.
\end{align}
From $|\wh L_+ - L_+| \sim M^{-1/2}$, we find that the fluctuation of the largest eigenvalue is dominated by the Gaussian distribution in Theorem~\ref{lemma:gaussian}. Furthermore, we also have proved the sharp transition between the Gaussian limit and Weibull limit as $d$ crosses $d_+$.

\section{Estimates on the Location of the Eigenvalues} \label{location}
In this section, our main object is the proof of Proposition~\ref{proposition:lambda_k}. Let $\wh{E}_\gamma\in\mathbb{R}$ be a solution $E=\wh{E}_\gamma$ to the equation
\begin{align}\label{definition of hatzk}
1+\re \frac{1}{\sigma_\gamma \whmfc(E+\ii\eta_0)}=0
\end{align}
where $\gamma \in \llbracket 1, n_0-1 \rrbracket$ and $\eta_0$ is defined in~\eqref{definition of kappa0}.
Considering Lemma~\ref{mfc estimate} and Lemma~\ref{hat bound}, it is easy to check that there is at least one such $\wh{E}_\gamma$. If there are multiple solutions to~\eqref{definition of hatzk}, we choose the largest one as $\wh{E}_\gamma$ and set $\wh{z}_\gamma \deq \wh{E}_\gamma+\ii\eta_0$.

The key argument in the proof of Proposition~\ref{proposition:lambda_k} is similar to that of section 5 of~\cite{eejl}. The main idea is that when $\mu_{fc}$ has a convex decay (see Theorem~\ref{general case - large d}.), the imaginary part of $m(z)$ has a peak if and only if 
\begin{align}
\im\left(\frac{\sigma_\gamma\whmfc(z)}{1+\sigma_\gamma\whmfc(z)}\right) \,, \quad (z\in\mathbb{C}^+) \,,
\end{align}

becomes large enough for some $\gamma\in \llbracket 1,n_0-1 \rrbracket$. Furthermore, since the locations of the eigenvalues $\lambda_\gamma$ are correspond to the positions of the peaks of $\im m$, we are able to estimate the location of the $\gamma$-th largest	eigenvalue in terms of $\sigma_\gamma$.


\subsection{Properties of $\whmfc$ and $m$}\label{Properties of widehat \mfc and m}

In order to prove Proposition~\ref{proposition:lambda_k}, we need an prior estimate on the difference between $m(z)$ and $\whmfc(z)$ so-called ``local law" where $z$ is close to the edge. However, it is more convenient to consider the difference between their reciprocal rather than dealing with $|m(z)-\whmfc(z)|$ directly. After that, we can use the fact that $|\whmfc|$ is bounded away from zero to recover the order of $|m(z)-\whmfc(z)|$. Recall the constant $\phi>0$ in~\eqref{phi condition} and the definition of the domain $\caD_{\phi}'$ in~\eqref{a index assumption}.
In the proof of Proposition~\ref{proposition:lambda_k}, we will use the following local law as an a priori estimate. 
\begin{proposition}{\emph{[Local law near the edge]}}\label{proposition:step 2_4}
We have on $\Omega$ that
\begin{align}
	\left|\frac{1}{m(z)}-\frac{1}{\whmfc(z)}\right| \prec \frac{1}{ M\eta_0} \,,
\end{align}
for all $z \in \caD_{\phi}'$.
\end{proposition}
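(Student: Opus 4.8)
\textbf{Proof proposal for Proposition~\ref{proposition:step 2_4}.}

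The plan is to run a bootstrap (continuity) argument in $\eta$ over the domain $\caD_{\phi}'$, comparing the reciprocals $1/m(z)$ and $1/\whmfc(z)$ rather than $m$ and $\whmfc$ themselves, exactly because $|\whmfc|\sim 1$ is under control while the individual diagonal resolvent entries $G_{\alpha\alpha}$ are not uniformly bounded near the edge. First I would record the self-consistent equation satisfied by $m$: from the Schur complement identities~\eqref{schur} together with $G_{\alpha\alpha}^{-1} = -\sigma_\alpha^{-1} - m^{(\alpha)} - Z_\alpha$ and the relation~\eqref{rel between m and wt m} between $m$ and $\wt m$, one derives that $1/m(z)$ equals $-z + \tfrac1N\sum_\alpha \tfrac{\sigma_\alpha}{\sigma_\alpha m(z)+1}$ up to an error term $\caE(z)$ built from the fluctuation quantities $Z_a, Z_\alpha$, the off-diagonal Green function entries, and the small differences $m - m^{(\T)}$ controlled by Lemma~\ref{cauchy interlacing}. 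Subtracting the defining equation~\eqref{eq:hat mfc} for $\whmfc$ then gives, in parallel with~\eqref{mfc difference},
\begin{align*}
	\left|\frac{1}{m}-\frac{1}{\whmfc}\right| \le |\caE(z)| + \left|\frac{1}{m}-\frac{1}{\whmfc}\right| \cdot \left| \frac{1}{N}\sum_{\alpha=1}^M \frac{\sigma_\alpha m \, \sigma_\alpha \whmfc}{(\sigma_\alpha m + 1)(\sigma_\alpha \whmfc + 1)} \right|\,.
\end{align*}

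Next I would control the two pieces. For the self-improving factor, I split off the critical index $\gamma$ (the one realizing~\eqref{assumption near sigma_k}) exactly as in the proof of Lemma~\ref{hat bound}: the $\alpha=\gamma$ term is handled using $\im(1/\whmfc)\ge \eta$ (and the analogous bound for $1/m$, valid once we are on the bootstrap event where $m\approx\whmfc$), giving a contribution $O_\prec(N^{-\epsilon})$ by the choice $\eta\ge\eta_0$; the remaining sum $\alpha\ne\gamma$ is bounded by $\tfrac12(\wh R_2^{(\gamma)} + R_2^{(\gamma)})$ plus a Cauchy--Schwarz cross term, and assumption~\eqref{assumption_CLT_2} together with~\eqref{R2 hat less than 1} makes this strictly less than $\tfrac{1+\fc}{2}<1$. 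So the self-consistent inequality closes, yielding $|m^{-1}-\whmfc^{-1}| \prec |\caE(z)|$. It then remains to show $|\caE(z)| \prec (M\eta_0)^{-1} = M^{-1/2+\phi}$ on $\caD_{\phi}'$. This is where the a priori estimates referenced in the "main idea of the proof" enter: bounding $Z_a \prec (\im m^{(a)}/(N\eta))^{1/2}$ and $Z_\alpha \prec |G_{\alpha\alpha}|\,(\ldots)$ via the large deviation bounds of Lemma~\ref{lemma.LDE} and the Ward identity~\eqref{ward}, and converting $\im m$ into something small using Lemma~\ref{weakboundwhmfc} ($\im\whmfc = O(\max\{\eta, 1/(N\eta)\})$) on the bootstrap event. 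Because $\eta\ge\eta_0 = M^{-\phi}/\sqrt M$, one gets $1/(N\eta)\le M^{-1/2+\phi}$ and $\eta\le M^{-1/(\b+1)+\phi}$, so $\im m \prec M^{-1/(\b+1)+\phi}$, which feeds back to give $|\caE(z)|\prec M^{-1/2+\phi}$ after keeping careful track of the unbounded factors $|G_{\alpha\alpha}|$ and using the fact that on $\caD_{\phi}'$ we have $|1+\sigma_\alpha^{-1}\whmfc^{-1}|\gtrsim M^{-1/(\b+1)-\phi}$ for $\alpha\in A$ by~\eqref{a index assumption}.

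The continuity argument itself proceeds as usual: at the top of the domain, $\eta = M^{-1/(\b+1)+\phi}$, a direct (non-bootstrapped) estimate gives the bound since $1/(N\eta)$ is then comparable to $(M\eta)^{-1}$ and the error is crude; then decreasing $\eta$ in steps of size $N^{-10}$, one shows that the set of $z\in\caD_{\phi}'$ where the bound holds is both open and closed, using Lipschitz continuity of $m$ and $\whmfc$ in $z$ (with Lipschitz constant $O(\eta^{-2}) = O(N^{C})$) to go from a bound holding on a fine grid to a bound holding everywhere, and using the self-consistent inequality above to upgrade a weak a priori bound to the strong one at each fixed $z$.

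\textbf{Main obstacle.} I expect the principal difficulty to be the control of $|\caE(z)|$, specifically the terms in the self-consistent equation that carry the \emph{unbounded} diagonal entries $G_{\alpha\alpha}$ (equivalently $1/(1+\sigma_\alpha\whmfc)$ can be as large as $M^{1/(\b+1)+\phi}$). One cannot afford to bound these by their worst-case size uniformly over $\alpha$; instead the argument must exploit that only $O(1)$ indices $\alpha$ (those among the top $n_0$ eigenvalues of $\Sigma$, and in fact essentially only $\alpha=\gamma$) are genuinely dangerous, isolate them as in~\eqref{assumption near sigma_k}--\eqref{farfromhome}, and absorb their contribution either into the already-small $1/(N\eta)$ term or via the good-configuration event $\Omega$. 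The interplay between the fluctuation-averaging-type cancellation (needed to beat the naive $N^{-1/2}$ size of $Z_\alpha$ and reach the scale $\eta_0 M^{-\phi}$ claimed elsewhere, though here we only need $(M\eta_0)^{-1}$) and the separation estimates on the $\sigma_\alpha$'s is the technical heart, and this is precisely the step the authors flag as not reducible to the standard approach of~\cite{ky}.
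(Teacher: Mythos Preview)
Your overall architecture---self-consistent equation for $1/m-1/\whmfc$, splitting off the critical index $\gamma$, bounding the multiplicative factor by a constant $<1$, and a discrete continuity argument in $\eta$---matches the paper's strategy (Lemmas~\ref{lemma:step 2_1}--\ref{lemma:step 2_3} followed by the bootstrap in the proof of Proposition~\ref{proposition:step 2_4}). But your framing of the key step as ``bound $|\caE(z)|$ directly, then the self-consistent inequality closes'' hides a genuine gap.

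The issue is the $\alpha=\gamma$ contribution to the multiplicative factor. You propose to control
\[
\frac{1}{N}\left|\frac{\sigma_\gamma^2\, m\,\whmfc}{(1+\sigma_\gamma m)(1+\sigma_\gamma\whmfc)}\right|
\]
using only $\bigl|\im(1/m)\bigr|\ge\eta$ and $\bigl|\im(1/\whmfc)\bigr|\ge\eta$. That yields at best $C/(N\eta^2)$, which for $\eta=\eta_0=M^{-1/2-\phi}$ is of order $M^{2\phi}\gg 1$, not $O_\prec(N^{-\epsilon})$. Both in Lemma~\ref{hat bound} and in Lemmas~\ref{lemma:step 2_1}, \ref{lemma:step 2_3} the paper obtains the $M^{-\epsilon}$ bound \emph{only under the contradiction hypothesis} $|m^{-1}-\whmfc^{-1}|>M^{\epsilon}/(M\eta_0)$: that assumption forces $\bigl|1+\tfrac{1}{\sigma_\gamma m}\bigr|+\bigl|1+\tfrac{1}{\sigma_\gamma\whmfc}\bigr|\ge M^{\phi+\epsilon}/\sqrt{M}$, and then one of the two denominators is large enough to beat the trivial $\eta_0^{-1}$ bound on the other. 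Likewise, showing $|Z_\alpha|=o(|m-\whmfc|)$ (needed to absorb the error into the multiplicative structure) uses $|m-\whmfc|>M^{\epsilon}/(M\eta_0)$ together with the AM--GM trick~\eqref{fluctuation estimate}; a direct bound $\Psi\prec(M\eta_0)^{-1}$ is not available a priori because $\Psi$ depends on $\im m$, which is what you are trying to control.

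So the argument must be organised as a contradiction at each fixed $z$ (assume the bound fails by $M^{\epsilon}$, deduce the factor is $<1$, contradiction), then carried along $\eta$ by the Lipschitz/lattice argument exactly as you describe. With that change your sketch coincides with the paper's proof; no fluctuation averaging is needed at this stage---that enters only later for the finer estimates in Lemmas~\ref{lemma:step 3}--\ref{lemma:step 4}.
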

\begin{remark}
Since we have $\whmfc \sim 1$, the Proposition~\ref{proposition:step 2_4} implies 
\begin{align}
	|m(z)-\whmfc(z)|\prec \frac{1}{M\eta_0}  \,.
\end{align}
\end{remark}
The proof of Proposition~\ref{proposition:step 2_4} is the content of the rest of this subsection.
In the rest of this section, we gather some properties of $\whmfc(z)$ and estimate $\im m(z)$ when $z=E+\ii\eta_0\in\caD_\phi'$.

Recall the definitions of $(\wh z_\gamma)$ in~\eqref{definition of hatzk}. We begin by deriving a basic property of $\whmfc(z)$ near $(\wh z_\gamma)$. Recall the definition of $\eta_0$ in~\eqref{definition of kappa0}.
\begin{lemma} \label{lemma:step 1}
For $z = E + \ii \eta_0 \in \caD_{\phi}'$, the following hold on $\Omega$:
\begin{enumerate}
	\item[$(1)$]  if $|z- \wh z_\gamma| \ge M^{-1/2 + 3\phi}$ for all $\gamma \in\llbracket 1,n_0-1\rrbracket$, then there exists a constant $C >1$ such that
	\begin{align}
		C^{-1} \eta_0 \le -\im \frac{1}{\whmfc (z)} \le C \eta_0\,.
	\end{align}
	\item[$(2)$] if $z= \wh z_\gamma$  for some $\gamma \in\llbracket 1,n_0-1\rrbracket$, then there exists a constant $C >1$ such that
	\begin{align}
		C^{-1} M^{-1/2} \le -\im \frac{1}{\whmfc (z)} \le C M^{-1/2}\,.
	\end{align}
\end{enumerate}
\end{lemma}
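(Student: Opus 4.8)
The plan is to analyze the self-consistent equation~\eqref{eq:hat mfc} for $\whmfc$ near the special points $\wh z_\gamma$, isolating the contribution of the critical index $\gamma$ from the rest. Writing $\wh\tau \deq 1/\whmfc(z)$, equation~\eqref{eq:hat mfc} reads
\begin{align}
-\wh\tau = z - \frac{1}{N}\sum_{\alpha=1}^M \frac{1}{\sigma_\alpha^{-1}+\whmfc} = z - \frac{1}{N}\cdot\frac{1}{\sigma_\gamma^{-1}+\whmfc} - \frac{1}{N}\sum_{\alpha}^{(\gamma)}\frac{1}{\sigma_\alpha^{-1}+\whmfc}\,.
\end{align}
Taking imaginary parts gives $\im\wh\tau = -\eta_0 - (\im\whmfc)\big/(N|\sigma_\gamma^{-1}+\whmfc|^2) - (\im\whmfc)\,\wh R_2^{(\gamma)}\big/|\whmfc|^2$ with an obvious notation, where $\wh R_2^{(\gamma)}$ is the sum in~\eqref{assumption_CLT_2}, and $\im\whmfc = |\whmfc|^2(-\im\wh\tau)$. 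Using $\whmfc\sim 1$ (from Lemma~\ref{hat bound} together with $\mfc\sim1$) and the assumption~\eqref{assumption_CLT_2} that $\wh R_2^{(\gamma)} \le \fc < 1$ on $\Omega$, the term $(\im\whmfc)\wh R_2^{(\gamma)}/|\whmfc|^2$ can be absorbed, so that
\begin{align}\label{plan:key identity}
(-\im\wh\tau)\left(1 - \frac{\wh R_2^{(\gamma)}}{|\whmfc|^2}\cdot|\whmfc|^2 + O(\cdot)\right) = \eta_0 + \frac{|\whmfc|^2(-\im\wh\tau)}{N|\sigma_\gamma^{-1}+\whmfc|^2}\cdot\text{(sign)}\,,
\end{align}
i.e. $-\im\wh\tau$ is comparable to $\eta_0$ plus a correction governed entirely by the size of $|\sigma_\gamma^{-1}+\whmfc| = |\sigma_\gamma|^{-1}|1+\sigma_\gamma\whmfc|$. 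This reduces everything to controlling $|1+\sigma_\gamma\whmfc(z)|$ in the two regimes.

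For part $(1)$, the plan is: from Lemma~\ref{mfc estimate} and Lemma~\ref{hat bound} (which gives $|\whmfc^{-1}-\mfc^{-1}|\prec M^{-1/2+\phi}$ on $\caD_\phi'$) together with Remark~\ref{remark:kappa}, one has $\re(1/(\sigma_\gamma\whmfc(z))) = -1 + c_d\,\re(L_+ - z)/\text{(stuff)} + \text{errors}$, so that $|1 + \re(1/(\sigma_\gamma\whmfc(z)))|$ is of order $|z - \wh z_\gamma|$ up to $M^{-1/2+2\phi}$-errors, by definition~\eqref{definition of hatzk} of $\wh z_\gamma$ and the approximate linearity. When $|z-\wh z_\gamma|\ge M^{-1/2+3\phi}$ for all such $\gamma$, this lower bound dominates the error terms, so $|1+\sigma_\gamma\whmfc|\gtrsim |\sigma_\gamma|\,M^{-1/2+3\phi} \gg \eta_0 = M^{-1/2-\phi}$; combined with~\eqref{farfromhome} for $\alpha\neq\gamma$, we get $|\sigma_\gamma^{-1}+\whmfc|$ bounded below by a quantity much larger than $\eta_0$. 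Feeding this into~\eqref{plan:key identity}, the correction term $|\whmfc|^2(-\im\wh\tau)/(N|\sigma_\gamma^{-1}+\whmfc|^2)$ is negligible compared to the left side, hence $-\im\wh\tau \sim \eta_0$, which is the claim (up to checking the imaginary-part argument has the right sign via $\im\whmfc \ge 0$ and~\eqref{weakboundwhmfc}).

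For part $(2)$, at $z = \wh z_\gamma$ we have $\re(1/(\sigma_\gamma\whmfc)) = -1$ exactly, so $|1+\sigma_\gamma\whmfc| = |\im(1/(\sigma_\gamma\whmfc))| = |\sigma_\gamma|^{-1}|\im\whmfc|/|\whmfc|^2 \sim |\im\whmfc| \sim -\im\wh\tau$ (using $\whmfc\sim 1$), hence $|\sigma_\gamma^{-1}+\whmfc| \sim -\im\wh\tau$. Then the correction term in~\eqref{plan:key identity} becomes $(-\im\wh\tau)^{-1}/N$ up to constants, and the identity turns into a quadratic-type balance $(-\im\wh\tau) \sim \eta_0 + c/(N(-\im\wh\tau))$; since $\eta_0 = M^{-\phi}/\sqrt M \ll 1/\sqrt M$, the $c/(N(-\im\wh\tau))$ term dominates, forcing $(-\im\wh\tau)^2 \sim 1/N \sim 1/M$, i.e. $-\im\wh\tau \sim M^{-1/2}$. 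The main obstacle I anticipate is the bookkeeping in part $(1)$: one must make sure the various error terms — the $M^{-1/2+\phi}$ error from Lemma~\ref{hat bound}, the $(\log M)\kappa_0^{\min\{\b,2\}}$ error from Lemma~\ref{mfc estimate}, and the contributions of $\alpha\neq\gamma$ controlled only through~\eqref{eq4.3}, ~\eqref{farfromhome}, and~\eqref{assumption_CLT_2} — are all genuinely smaller than the main term $|z-\wh z_\gamma|\ge M^{-1/2+3\phi}$, which is exactly why the exponent $3\phi$ (rather than $\phi$ or $2\phi$) appears; this requires carefully invoking the constraint~\eqref{phi condition} on $\phi$. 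The sign of the imaginary part (ensuring $-\im\wh\tau > 0$, not merely $|\im\wh\tau|$ comparable to $\eta_0$) should follow from $\im\whmfc \ge 0$ and the structure of~\eqref{eq:hat mfc}, but needs to be stated.
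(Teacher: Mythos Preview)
Your proposal is correct and follows essentially the same route as the paper: take the imaginary part of the self-consistent equation~\eqref{eq:hat mfc}, isolate the critical index $\gamma$, use $\wh R_2^{(\gamma)}<\mathfrak{c}<1$ (transferred from~\eqref{assumption_CLT_2} via Lemma~\ref{hat bound}) to absorb the non-critical sum, and then in case~$(1)$ show the $\gamma$-term is $o(1)$ while in case~$(2)$ solve the resulting quadratic for $-\im\wh\tau$. The paper packages this slightly differently by writing $-\im(1/\whmfc)=\eta_0/(1-\wh R_2)$ and bounding $\wh R_2$ directly, but the content is identical; your anticipated need for the constraint~\eqref{phi condition} is over-caution here, since the paper only uses Lemmas~\ref{mfc estimate} and~\ref{hat bound} to get~\eqref{eq:step 1_1}.
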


\begin{proof}
Recall that
\begin{align}
	\wh R_2 (z) = 1-\eta_0\frac{|\whmfc|^2}{\im \whmfc} = \frac{1}{N} \sum_{\alpha=1}^M \frac{\sigma_\alpha^2 |\whmfc|^2}{|\sigma_\alpha\whmfc(z)+1|^2}<1\,,\qquad\quad( z\in\C^+)\,,
\end{align}
c.f.,~\eqref{definition of R2 without hat}. For given $z \in \caD_{\phi}'$ with $\im z=\eta_0$, choose $\gamma \in \llbracket 1, n_0 -1 \rrbracket$ such that~\eqref{assumption near sigma_k} is satisfied. In the first case, where $|z- \wh z_\gamma| \gg M^{-1/2 + 2\phi}$, we find from Lemma~\ref{mfc estimate} and Lemma~\ref{hat bound} that 
\begin{align} \label{eq:step 1_1}
	\left|1+\re\frac{1}{\sigma_\gamma \whmfc}\right| \gg M^{-1/2 + 2\phi}.
\end{align}
Since $z=E+\ii\eta_0$ satisfies~\eqref{assumption near sigma_k}, we also find that 
\begin{align}
	\wh R_2^{(\gamma)}(z) \deq \frac{1}{N} \sum_{\alpha}^{(\gamma)} \frac{\sigma_\alpha^2 |\whmfc|^2}{|\sigma_\alpha\whmfc(z)+1|^2} = \frac{1}{N} \sum_{\alpha}^{(\gamma)}\frac{\sigma_\alpha^2 |\mfc|^2}{|\sigma_\alpha \mfc(z)+1|^2} + o(1) < \mathfrak{c} < 1\,,
\end{align}
for some constant $\mathfrak{c}$. Thus,
\begin{align}
	\wh R_2(z) = \frac{1}{N} \frac{\sigma_\gamma^2 |\whmfc|^2}{|\sigma_\gamma\whmfc(z)+1|^2} + \frac{1}{N} \sum_{\alpha}^{(\gamma)} \frac{\sigma_\alpha^2 |\whmfc|^2}{|\sigma_\alpha\whmfc(z)+1|^2} < \mathfrak{c}' < 1\,,
\end{align}
for some constant $\mathfrak{c}'$. Recalling that
\begin{align}
	\eta_0\frac{|\whmfc|^2}{\im \whmfc}=1-\wh R_2(z)\,,
\end{align}	
\begin{align}
	-\im\frac{1}{\whmfc}=\frac{\eta_0}{1-\wh R_{2}(z)} \,,
\end{align}
hence the statement $(1)$ of the lemma follows.

Next, we consider the second case: $z=\wh z_\gamma=\wh E_\gamma+\ii\eta_0$, for some $\gamma\in\llbracket 1,n_0-1\rrbracket$. We have
\begin{align} \label{mfc quadratic}
	-\im\frac{1}{\whmfc(\wh z_\gamma)}&=\eta_0+\frac{1}{N}\sum_{\alpha} \frac{\sigma_\alpha^2\im \whmfc(\wh z_\gamma)}{|\sigma_\alpha\whmfc(\wh z_\gamma)+1|^2}\\
	&=\eta_0+\frac{1}{N}\sum_{\alpha} \frac{\sigma_\alpha^2|\whmfc(\wh z_\gamma)|^2}{|\sigma_\alpha\whmfc(\wh z_\gamma)+1|^2}\frac{\im \whmfc(\wh z_\gamma)}{|\whmfc(\wh z_\gamma)|^2}
	\,,
\end{align}
then by	solving the quadratic equation above for $\im\whmfc(\wh z_\gamma)$, we obtain
\begin{align}
	C^{-1} M^{-1/2} \le -\im \frac{1}{\whmfc (\wh z_\gamma)} \le C M^{-1/2}\,,
\end{align}
which completes the proof of the lemma.
\end{proof}

From now on we prove the local law, Proposition~\ref{proposition:step 2_4}.
Define a $z$-dependent parameter
\begin{align}
\Psi\equiv \Psi(z)\deq \sqrt{\frac{\im m(z)}{M\eta}}+\frac{1}{M\eta} \,.
\end{align}

Now we estimate the imaginary part of $m(z)$ for the smallest $\eta=\eta_0$.
\begin{lemma} \label{lemma:step 2_1}
We have on $\Omega$ that, for all $z = E + \ii \eta_0 \in \caD_{\phi}'$, 
\begin{align}
	\im m(z) \prec \frac{1}{M\eta_0}\,.
\end{align}
\end{lemma}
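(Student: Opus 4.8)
\textbf{Proof proposal for Lemma~\ref{lemma:step 2_1}.}

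The plan is to run the standard self-consistent-equation/fluctuation argument for $m(z)$, but to feed into it the a priori control on $\whmfc$ at the scale $\eta_0$ that we already have from Lemma~\ref{lemma:step 1} and Lemma~\ref{weakboundwhmfc}. First I would recall, from the Schur complement identities in Section~\ref{linearization}, that $G_{aa}^{-1} = -z - d^{-1}\wt m^{(a)} - Z_a$ and $G_{\alpha\alpha}^{-1} = -\sigma_\alpha^{-1} - m^{(\alpha)} - Z_\alpha$, together with the relation~\eqref{rel between m and wt m} between $m$ and $\wt m$. Using the large deviation estimates of Lemma~\ref{lemma.LDE} and the Ward identity~\eqref{ward}, one shows the standard bound $|Z_a| + |Z_\alpha| \prec \Psi(z)$ on the fluctuating terms, and $|m - m^{(A)}|, |\wt m - \wt m^{(A)}| \le C/(M\eta)$ from Lemma~\ref{cauchy interlacing}. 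Combining these, $m(z)$ satisfies a perturbed version of the equation defining $\whmfc$, namely $m^{-1} = -z + \tfrac1N\sum_\alpha \sigma_\alpha/(\sigma_\alpha m + 1) + \caO_\prec(\Psi)$, with the same (random) coefficients $\sigma_\alpha$.

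Next I would convert this into a closed inequality for $\im m$. Subtracting the equation for $\whmfc$ and taking imaginary parts, the key is the contraction estimate: on $\caD_\phi'$, the stability factor $\wh R_2(z) = \tfrac1N\sum_\alpha \sigma_\alpha^2|\whmfc|^2/|\sigma_\alpha\whmfc+1|^2$ is bounded away from $1$ when $z$ is away from all $\wh z_\gamma$ (case (1) of Lemma~\ref{lemma:step 1}), while near a resonance $\wh z_\gamma$ one splits off the single critical term $\alpha = \gamma$ and uses assumption~\eqref{assumption_CLT_2} for the rest, exactly as in the proof of Lemma~\ref{hat bound}. In either regime one arrives at an inequality of the shape $\im m \le C\,\im\whmfc + C\,\Psi \le C(\max\{\eta_0, (M\eta_0)^{-1}\} + \Psi)$, using Lemma~\ref{weakboundwhmfc} and $\whmfc\sim 1$. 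Since $\eta_0 = M^{-\phi}M^{-1/2} \le (M\eta_0)^{-1} = M^{\phi - 1/2}\cdots$ — more precisely $\eta_0 \ll (M\eta_0)^{-1}$ is false; rather $M\eta_0^2 = M^{-2\phi} \ll 1$, so $\eta_0 \le (M\eta_0)^{-1}$ — the first term is dominated by $(M\eta_0)^{-1}$, and recalling $\Psi = \sqrt{\im m/(M\eta_0)} + (M\eta_0)^{-1}$, a self-improving (bootstrap) argument on the quadratic inequality $\im m \le C\sqrt{\im m/(M\eta_0)} + C/(M\eta_0)$ yields $\im m \prec 1/(M\eta_0)$.

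The bootstrap needs a starting point: for $\eta$ of order one the trivial bound $\im m \le 1$ holds, and $\Psi$ is controlled; one then decreases $\eta$ along $\caD_\phi'$ in steps (using continuity of $m$ in $\eta$ and the Lipschitz bound $|\partial_\eta m| \le \eta^{-2}$) down to $\eta = \eta_0$, at each scale feeding the previous bound into $\Psi$. The main obstacle is the absence of the usual uniform stability bound — precisely the point flagged in the introduction as the failure of ``equation A.8 of~\cite{ky}'' — so the contraction cannot be taken for granted globally; it must be re-derived at each scale by the case split around the resonances $\wh z_\gamma$, and one must check that the ``good configuration'' hypotheses~\eqref{eq4.3}--\eqref{assumption_CLT_1} are exactly what guarantees $\wh R_2^{(\gamma)} < \fc < 1$ uniformly on $\caD_\phi'$. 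Handling the critical term near $z = \wh z_\gamma$, where $\im\whmfc$ itself swells to order $M^{-1/2}$, and verifying that this swelling is still $\prec (M\eta_0)^{-1} = M^{-1/2+\phi}$, is the delicate book-keeping step; everything else is the routine resolvent-expansion machinery.
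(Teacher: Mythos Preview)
Your overall strategy---self-consistent equation plus a contraction/stability estimate---is the right one, but the mechanism the paper uses is different from yours, and your version has a real gap at the critical term. The paper proves Lemma~\ref{lemma:step 2_1} directly at $\eta=\eta_0$ by \emph{contradiction}: one assumes $\im m(z) > M^\epsilon/(M\eta_0)$ and shows this forces the stability factor $T_m<\mathfrak c'<1$, which is then inconsistent with the perturbed self-consistent equation. The continuity/bootstrap in $\eta$ that you sketch is carried out only afterwards (Lemmas~\ref{lemma:step 2_2}--\ref{lemma:step 2_3} and Proposition~\ref{proposition:step 2_4}), and each of those steps is \emph{also} run by contradiction at the given scale, not by establishing a direct inequality of the shape $\im m\le C\,\im\whmfc+C\,\Psi$.

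The reason that direct inequality is not available is the single summand at $\alpha=\gamma$ in
\[
T_m \;=\; \frac1N\sum_\alpha\left|\frac{m\,\whmfc}{(\sigma_\alpha^{-1}+m^{(\alpha)}+Z_\alpha)(\sigma_\alpha^{-1}+\whmfc)}\right|\,.
\]
You correctly split it off and invoke~\eqref{assumption_CLT_2} for the remaining sum, but a priori the only bounds on the $\gamma$-term are the trivial resolvent estimates $|G_{\gamma\gamma}|\le\eta_0^{-1}$ and $|\whmfc/(\sigma_\gamma^{-1}+\whmfc)|\le C\eta_0^{-1}$, giving a contribution of order $N^{-1}\eta_0^{-2}\sim M^{2\phi}\gg1$; so the contraction simply fails. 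Feeding in a prior bound $|m-\whmfc|\prec (M\eta_0)^{-1}$ from the bootstrap does not help either: near a resonance $\im\whmfc\sim M^{-1/2}$ while $(M\eta_0)^{-1}=M^{-1/2+\phi}$, so you cannot conclude $|\sigma_\gamma^{-1}+m|$ is bounded below. The contradiction hypothesis is exactly what rescues this: if $\im m>M^\epsilon/(M\eta_0)$ then $\Psi\ll\im m$, hence $|\sigma_\gamma^{-1}+m^{(\gamma)}+Z_\gamma|\ge\im m\,(1+o(1))\ge M^\epsilon/(M\eta_0)$, and the $\gamma$-term drops to $\le CM^{-\epsilon}$. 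The same hypothesis delivers $m\sim1$ and $\im\whmfc\ll\im m$ (so that $|m-\whmfc|\gtrsim\im m$), which are the other ingredients needed to close the argument. Your remark that ``$\im\whmfc$ swells to order $M^{-1/2}$ but is still $\prec(M\eta_0)^{-1}$'' is true but targets the wrong quantity; the obstruction sits in $|\sigma_\gamma^{-1}+m|^{-1}$ inside $T_m$, not in $\im\whmfc$.
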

\begin{proof}
Fix $\eta = \eta_0$. For given $z=E+\ii\eta_0 \in \caD_{\phi}'$, choose $k \in \llbracket 1, n_0 -1 \rrbracket$ such that~\eqref{assumption near sigma_k} is satisfied. Let $\epsilon>0$ be given.
Assume that 
\begin{align}
	\im m(z)>M^{\epsilon}\frac{1}{M\eta} \,.
\end{align}
\\
We define events 
\begin{align}
	\Omega_{1}\deq \bigcap_{\alpha} \{|Z_\alpha| \leq M^{\epsilon/6}\Psi \} \,, \\
	\Omega_2 \deq \bigcap_a \{ |Z_a|\leq M^{\epsilon/6}\Psi \} 
	\,,\\
	\Omega_3 \deq \bigcap_{i,j} \{|X_{i,j}|\leq \frac{M^{\epsilon/6}}{\sqrt{M}}\} \,.
\end{align}
Note that the concentration estimates in Lemma~\ref{lemma.LDE} implies 
\begin{align}
	Z_a \prec \Psi  \,,\quad Z_\alpha\prec \Psi \,,
\end{align}
so that $\Omega_1, \Omega_2$ and $\Omega_3$ holds with high probability.
Let $\Omega_\epsilon \deq \Omega_{1} \cap \Omega_2 \cap \Omega_3 $, by the concentration estimates and definition of stochastic dominance, there exists $N_0(\epsilon,D) \in \N$ such that
\begin{align}
	\mathbb{P}(\Omega_{\epsilon}) \geq 1-N^{-D}
\end{align}
for any $N\geq N_0(\epsilon, D)$. We assume that $\Omega_\epsilon$ holds for the rest of the proof.\\
First, considering the relation~\eqref{rel between m and wt m} and (Cauchy interlacing) Lemma~\ref{cauchy interlacing},
\begin{align}
	zm=\frac{1}{N}\sum_\alpha \frac{-\sigma_\alpha^{-1}}{\sigma_\alpha^{-1}+m^{(a)}+Z_a} -\frac{N-M}{N}=\frac{1}{N}\sum_\alpha\frac{-\sigma_\alpha^{-1}}{\sigma_\alpha^{-1}+m+\caO_\prec(\Psi)}-\frac{N-M}{N} \,.
\end{align}
In addition, we have
\begin{align} \begin{split} \label{Z_a neg wrt imm}
		|Z_a|\leq M^{\epsilon/6}\Psi&= M^{\epsilon/6} \sqrt{ \frac{ \im m(z)}{M \eta}}+\frac{M^{\epsilon/6}}{M\eta} \,.\\
	\end{split}
\end{align}
Applying the arithmetic geometric mean on the first term of the right hand side, we obtain
\begin{align}  \label{fluctuation estimate}
	|Z_a| \le M^{-\epsilon/6}\im m+M^{\epsilon/2}(M\eta)^{-1}+C\frac{M^{\epsilon/6}}{M\eta} \ll \im m \,. \end{align}
Thus we have $\Psi \ll \im m$.
Hence, we can get
\begin{align}\label{mbound}
	zm=\frac{1}{N}\sum_\alpha \frac{-\sigma_\alpha^{-1}}{\sigma_\alpha^{-1}+m+o(\im m)} -\frac{N-M}{N}\,.
\end{align}

We claim that $m\sim 1$. \\
If $m \ll 1$, since $\sigma_\alpha=O(1)$, the LHS of~\eqref{mbound} tends to $0$ while its RHS goes to $-1$ as $N$ goes to infinity. Similarly, we can derive a contradiction when $m \gg 1$ hence we can conclude that $m\sim 1$.\\
Taking imaginary part on~\eqref{rel between m and wt m}, then we obtain

\begin{align}
	E\im m +\eta \re m=\frac{1}{N}\sum_\alpha\frac{\sigma_\alpha^{-1}(\im m+o(\im m))}{|\sigma_\alpha^{-1}+m^{(\alpha)}+Z_\alpha|^2} \,,
\end{align}
\begin{align}
	E+\eta\frac{\re m}{\im m}=\frac{1}{N}\sum_\alpha\frac{\sigma_\alpha^{-1}(1+o(1))}{|\sigma_\alpha^{-1}+m^{(\alpha)}+Z_\alpha|^2} \,.
\end{align}
Since $E=O(1)$, $\re m =O(1)$ and $\im m \geq C\eta$ 
\begin{align}	
	\frac{1}{N}\sum_\alpha\frac{\sigma_\alpha^{-1}(1+o(1))}{|\sigma_\alpha^{-1}+m^{(\alpha)}+Z_\alpha|^2}=O(1) \,.
\end{align}
We claim that
\begin{align}
	\frac{1}{N}\sum_\alpha\frac{\sigma_\alpha^{-1}}{|\sigma_\alpha^{-1}+m^{(\alpha)}+Z_\alpha|^2}=O(1) \,.
\end{align}
Assume that the claim is not hold so that the summation diverges to infinity. For large enough $N$, we have
\begin{align}
	\frac{1}{N}\sum_\alpha\frac{\sigma_\alpha^{-1}(1/2)}{|\sigma_\alpha^{-1}+m^{(\alpha)}+Z_\alpha|^2} \leq
	\frac{1}{N}\sum_\alpha\frac{\sigma_\alpha^{-1}(1+o(1))}{|\sigma_\alpha^{-1}+m^{(\alpha)}+Z_\alpha|^2} \leq 	\frac{1}{N}\sum_\alpha\frac{\sigma_\alpha^{-1}(3/2)}{|\sigma_\alpha^{-1}+m^{(\alpha)}+Z_\alpha|^2} \,,
\end{align}
then we have a contradiction since the first and the last terms goes to infinity while the middle term is bounded.\\
Hence we have

\begin{align}
	0\leq \frac{1}{N}\sum_\alpha \frac{1}{|\sigma_\alpha^{-1}+m^{(\alpha)}+Z_\alpha|^2}\leq \frac{1}{N}\sum_\alpha \frac{\sigma_\alpha^{-1}}{|\sigma_\alpha^{-1}+m^{(\alpha)}+Z_\alpha|^2}=O(1) \,.
\end{align}

Recalling the equation~\eqref{mbound}, we can derive
\begin{align}\begin{split}
		zm+1&=\frac{1}{N}\sum_\alpha\frac{-\sigma_\alpha^{-1}}{\sigma_\alpha^{-1}+m^{(\alpha)}+Z_\alpha}+\frac{M}{N}=\frac{1}{N}\sum_\alpha\left(\frac{-\sigma_\alpha^{-1}}{\sigma_\alpha^{-1}+m^{(\alpha)}+Z_\alpha}+1 \right)\\
		&=\frac{1}{N}\sum_\alpha\frac{m^{(\alpha)}+Z_\alpha}{\sigma_\alpha^{-1}+m^{(\alpha)}+Z_\alpha} =\frac{1}{N}\sum_\alpha\frac{m+m^{(\alpha)}-m+Z_\alpha}{\sigma_\alpha^{-1}+m^{(\alpha)}+Z_\alpha} \,.
	\end{split}
\end{align}

Since 
\begin{align}
	\frac{1}{N}\sum_\alpha\frac{1}{|\sigma_\alpha^{-1}+m^{(\alpha)}+Z_\alpha|^2} =O(1),
\end{align}
we can observe that
\begin{align*}
	\begin{split}
		\left|\frac{1}{N}\sum_\alpha\frac{m^{(\alpha)}-m+Z_\alpha}{(\sigma_\alpha^{-1}+m^{(\alpha)}+Z_\alpha)}\right|
		&\leq \frac{1}{N}\sum_\alpha \frac{|m^{(\alpha)}-m+Z_\alpha|}{|\sigma_\alpha^{-1}+m^{(\alpha)}+Z_\alpha|} \\
		&\leq \left(\frac{1}{N}\sum_\alpha \frac{1}{|\sigma_\alpha^{-1}+m^{(\alpha)}+Z_\alpha|^2}\right)^{\frac{1}{2}}  \left(\frac{1}{N}\sum_\alpha \left| m^{(\alpha)}-m+Z_\alpha \right|^2\right)^{\frac{1}{2}}\\& \ll o(\im m)
	\end{split}
\end{align*}

where we have used Cauchy inequality. \\
Hence we have
\begin{align}
	zm+1 =md^{-1}\wt{m}+o(\im m) \,,
\end{align}
so that
\begin{align}\label{minverse}
	z+\frac{1}{m}=-d^{-1}\wt{m}+o(\im m) \,.
\end{align}

Reasoning as in the proof of Lemma~\ref{hat bound}, we find the following equation for $m -\whmfc$ :
\begin{align} \begin{split} \label{m difference}
		|m - \whmfc| &=|m \whmfc|\left|\frac{1}{m}-\frac{1}{\whmfc}\right| \\&=|m\whmfc| \left| -d^{-1}\wt{m}-z+o(\im m) -\left( -z+\frac{1}{N}\sum_\alpha\frac{1}{\sigma_\alpha^{-1}+\whmfc}\right) \right| \\
		&=|m\whmfc|\left| \frac{1}{N}\sum_\alpha\frac{1}{\sigma_\alpha^{-1}+m^{(\alpha)}+Z_\alpha}-\frac{1}{N}\sum_\alpha\frac{1}{\sigma_\alpha^{-1}+\whmfc} +o(\im m) \right|
\end{split} \end{align}

Note that the assumption $\im m > M^{\epsilon}(M\eta)^{-1}$, Lemma~\ref{lemma:step 1} and boundedness of $m,\whmfc$ imply that 
\begin{align}\im \whmfc \ll \im m \,.\end{align}
Thus we have	
\begin{align}
	| m-\whmfc | \ge | \im m - \im\whmfc |=|\im m - o(\im m)| > CM^{\epsilon}\frac{1}{M\eta} \,. 
\end{align}
So we can conclude that $o(\im m)=o(|m-\whmfc|)$ and
\begin{align} \begin{split} 
		|m - \whmfc|&=|m\whmfc|\left| \frac{1}{N}\sum_\alpha\frac{1}{\sigma_\alpha^{-1}+m^{(\alpha)}+Z_\alpha}-\frac{1}{N}\sum_\alpha\frac{1}{\sigma_\alpha^{-1}+\whmfc} +o(\im m) \right| \\
		&\leq|m\whmfc|\left| \frac{1}{N}\sum_\alpha\frac{1}{\sigma_\alpha^{-1}+m^{(\alpha)}+Z_\alpha}-\frac{1}{N}\sum_\alpha\frac{1}{\sigma_\alpha^{-1}+\whmfc}  \right| +|m\whmfc|o(|m-\whmfc|) \\
		&\leq|m\whmfc|\left| \frac{1}{N}\sum_\alpha\frac{1}{\sigma_\alpha^{-1}+m^{(\alpha)}+Z_\alpha}-\frac{1}{N}\sum_\alpha\frac{1}{\sigma_\alpha^{-1}+\whmfc}  \right| +o(|m-\whmfc|)
\end{split} \end{align}
where we have used $m\sim 1$ and $\whmfc \sim 1$. \\

Abbreviate
\begin{align} \label{T_m}
	T_m\equiv T_m(z) \deq \frac{1}{N} \sum_{\alpha} \left| \frac{m \whmfc}{(m^{(\alpha)}+Z_{\alpha}+\sigma_\alpha^{-1})( \whmfc+\sigma_{\alpha}^{-1})} \right|\,.
\end{align}

We notice that 
\begin{align}
	z+\frac{1}{m}+o(\im m)=-d^{-1}\wt{m}=\frac{1}{N}\sum_\alpha \frac{1}{m^{(\alpha)}+Z_{\alpha}+\sigma_\alpha^{-1}} \,.
\end{align}
Taking imaginary part,
\begin{align}
	\eta-\frac{\im m}{|m|^2} +o(\im m) =\frac{1}{N}\sum_\alpha\frac{-\im m(1+o(1))}{|m^{(\alpha)}+Z_{\alpha}+\sigma_\alpha^{-1}|^2} \,,
\end{align}
\begin{align}
	1-\eta\frac{|m|^2}{\im m}+o(1)=\frac{1}{N}\frac{|m|^2}{|m^{(\alpha)}+Z_{\alpha}+\sigma_\alpha^{-1}|^2} 
\end{align}
thus
\begin{align}
	\frac{1}{N}\sum_\alpha\frac{|m|^2}{|m^{(\alpha)}+Z_{\alpha}+\sigma_\alpha^{-1}|^2} \leq 1 \,.
\end{align}
We get from Lemma~\ref{hat bound} that on $\Omega$,
\begin{align}
	\frac{1}{N} \sum_\alpha^{(\gamma)} \frac{|\whmfc|^2}{|\sigma_\alpha^{-1}+\whmfc|^2} = \frac{1}{N} \sum_\alpha^{(\gamma)} \frac{(1+o(1))|\mfc|^2}{|\sigma_\alpha^{-1}+\mfc|^2} < \mathfrak{c} < 1\,,
\end{align}
for some constant $\mathfrak{c} > 0$, and
\begin{align}
	\frac{1}{N} \left| \frac{m \whmfc}{(\sigma_\gamma^{-1} + m^{(\gamma)} +Z_\gamma) (\sigma_\gamma^{-1} + \whmfc)} \right| \le C \frac{1}{N} \left(\frac{M^\epsilon}{M\eta}\right)^{-1}\frac{1}{\eta} \le CM^{-\epsilon}\,.
\end{align}
Hence, we find that $T_m < \mathfrak{c}' < 1$ for some constant $c'$. Now, if we let
\begin{align}
	M \deq\max_{\alpha} |m^{(\alpha)} - m + Z_{\alpha}|\,,
\end{align}
then $M \ll |m - \whmfc|$. Thus, from~\eqref{m difference}, we get
\begin{align}
	|m - \whmfc| \le T_m (|m - \whmfc| + M)+o(1)|m-\whmfc| = \left( T_m + o(1) \right) |m - \whmfc|\,,
\end{align}
contradicting $T_m < c' < 1$.

Thus on $\Omega$, we have shown that for fixed $z\in\caD_\phi'$,
\begin{align}
	\im m(z) \le M^{\epsilon}\frac{1}{M\eta_0} \,,
\end{align}
with high probability.

Now it remains to prove the bound holds uniformly on $z$. We use the lattice argument which appears in~\cite{eejl}. For any fixed $z$ at which the assumption of the lemma satisfied, we construct a lattice $\caL$ from $z'=E'+\ii\eta_0 \in \caD_\phi'$ with $|z-z'|\leq M^{-3}$. It is obvious that the bound holds uniformly on $\caL$. For any $z=E+\ii\eta_0\notin \caL $, note that if $z'\in \caL$ and $|z-z'|\leq M^{-3}$, $|m(z)-m(z')|\leq \eta_0^{-2}|z-z'|$. Therefore, we conclude the proof.

\end{proof}
As a corollary of above lemma, we have a bound for $Z_a$ and $Z_\alpha$ in \eqref{schur using Z}. The concentration estimate implies that 
\begin{align}
|Z_\alpha| \prec \sqrt{\frac{\im m^{(\alpha)}}{N\eta}} \,, \quad
|Z_a|\prec\sqrt{\frac{\im \wt{m}^{(a)}}{M\eta}} \,.
\end{align}
The relation~\eqref{rel between m and wt m} and Lemma~\ref{cauchy interlacing} (the Cauchy interlacing property) implies that
\begin{align}
|Z_\alpha|\prec \sqrt{\frac{\im m}{N\eta}}+\frac{1}{N\eta} \,, \quad |Z_a| \prec \sqrt{\frac{\im m}{M\eta}}+\frac{1}{M\eta} \,.
\end{align}
Hence, as a corollary of Lemma~\ref{lemma:step 2_1}, we obtain:
\begin{corollary} \label{corollary:step 2_1}
We have on $\Omega$ that for all $z = E + \ii \eta_0 \in \caD_{\phi}'$,
\begin{align}
	\max_A |Z_A(z)| \prec \frac{1}{M\eta_0}\,, \qquad \max_{A} |Z_{A}^{(B)}(z)| \prec \frac{1}{M\eta_0}\,,\qquad\quad (B\in\llbracket 1,N+M\rrbracket)\,.
\end{align}
\end{corollary}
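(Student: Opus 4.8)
The plan is to read the corollary off Lemma~\ref{lemma:step 2_1} by inserting the bound $\im m \prec (M\eta_0)^{-1}$ into the large-deviation estimates for the $Z$'s recorded just above, and then to upgrade the result to minors and to a statement that is uniform in $z$. In the first step I fix $z = E + \ii\eta_0 \in \caD_\phi'$. The two displays immediately preceding the corollary already establish, via the concentration inequalities \eqref{LDE2}--\eqref{LDE3}, the Ward identity \eqref{ward}, the relation \eqref{rel between m and wt m}, and Cauchy interlacing (Lemma~\ref{cauchy interlacing}), that
\[
|Z_\alpha(z)| \prec \sqrt{\frac{\im m(z)}{N\eta_0}} + \frac{1}{N\eta_0}\,, \qquad |Z_a(z)| \prec \sqrt{\frac{\im m(z)}{M\eta_0}} + \frac{1}{M\eta_0}\,.
\]
Plugging in $\im m(z) \prec (M\eta_0)^{-1}$ from Lemma~\ref{lemma:step 2_1} and using $M \sim N$ (Remark~\ref{MNM}), the square-root terms are themselves $\prec (M\eta_0)^{-1}$, so $|Z_A(z)| \prec (M\eta_0)^{-1}$ for each fixed $A$; since the threshold $N_0(\epsilon,D)$ in the definition of $\prec$ does not depend on the index, a union bound over the $N+M = O(M)$ indices gives $\max_A |Z_A(z)| \prec (M\eta_0)^{-1}$.

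Next I would handle the minors $Z_A^{(B)}$ by repeating the same computation with $H$ replaced by its minor $H^{(B)}$: $Z_\alpha^{(B)}$ is the centered quadratic form defining $Z_\alpha$ in~\eqref{schur using Z} but with $G^{(\alpha)}$ replaced by $G^{(B\alpha)}$, so \eqref{LDE3} and the Ward identity for $G^{(B\alpha)}$ give $|Z_\alpha^{(B)}(z)| \prec \sqrt{\im m^{(B\alpha)}(z)/(N\eta_0)} + (N\eta_0)^{-1}$, and similarly for $Z_a^{(B)}$. Applying Cauchy interlacing twice (Lemma~\ref{cauchy interlacing} gives $|m - m^{(\T)}| \le C/(N\eta_0)$ for $|\T| \le 10$) yields $\im m^{(B\alpha)}(z) = \im m(z) + O((N\eta_0)^{-1}) \prec (M\eta_0)^{-1}$, again by Lemma~\ref{lemma:step 2_1}. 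Hence $|Z_A^{(B)}(z)| \prec (M\eta_0)^{-1}$ for each fixed pair $(A,B)$, and a union bound over the $O(M^2)$ pairs gives $\max_{A,B}|Z_A^{(B)}(z)| \prec (M\eta_0)^{-1}$.

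Finally, to obtain uniformity in $z$ I would reuse the lattice argument from the end of the proof of Lemma~\ref{lemma:step 2_1}: cover $\caD_\phi'$ by an $M^{-3}$-net $\caL$ of polynomial size, apply the previous two steps together with one more union bound over $z' \in \caL$, and extend to a general $z = E + \ii\eta_0 \in \caD_\phi'$ using the deterministic Lipschitz bound $|Z_A^{(B)}(z) - Z_A^{(B)}(z')| \le C\eta_0^{-2}|z - z'|$, which follows from $\|(H^{(B)} - z)^{-1}\| \le \eta_0^{-1}$ together with $|x_{ij}| \prec M^{-1/2}$ (cf.~\eqref{bound on xij}); taking $|z - z'| \le M^{-3}$ makes the correction negligible. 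I do not expect a genuine obstacle here: the only substantive input is the a priori bound $\im m \prec (M\eta_0)^{-1}$, which is exactly Lemma~\ref{lemma:step 2_1} and whose proof carries all the difficulty; given that lemma, the corollary is mechanical, and the main care needed is in bookkeeping the minor indices and the net.
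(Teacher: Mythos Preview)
Your proposal is correct and follows exactly the paper's approach: the paragraph immediately preceding the corollary already records the large-deviation bounds $|Z_\alpha|\prec \sqrt{\im m/(N\eta)}+1/(N\eta)$ and $|Z_a|\prec \sqrt{\im m/(M\eta)}+1/(M\eta)$ (via concentration, \eqref{rel between m and wt m}, and Cauchy interlacing), after which one simply inserts $\im m\prec (M\eta_0)^{-1}$ from Lemma~\ref{lemma:step 2_1}. Your additional remarks on the minor case $Z_A^{(B)}$ and on the lattice argument for uniformity in $z$ are details the paper leaves implicit, but they are carried out correctly and in the same spirit.
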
 

Now, we prove the local law. To estimate the difference $\Lambda(z) \deq |m(z)-\whmfc(z)|$, we consider the imaginary part of $z$, $\eta$, to be large. Lemma~\ref{lemma:step 2_2} shows that $\Lambda$ satisfies local law for such $\eta$. After that, we prove that if $\Lambda$ has slightly bigger upper bound than our local law, we can improve the upper bound to the local law level (see Lemma~\ref{lemma:step 2_3}). Moreover, the Lipschitz continuity of the Green function and $\whmfc$ lead us to obtain that if $z$ satisfies our local law, then for any $z'$ close enough to $z$ also satisfies the bound. Applying the argument repetitively, we finally prove Proposition~\ref{proposition:step 2_4}.

\begin{lemma} \label{lemma:step 2_2}
We have on $\Omega$ that for all $z = E + \ii \eta \in \caD_{\phi}'$ with $M^{-1/2 + \phi} \le \eta \le M^{-1/(b+1)+\phi}$,
\begin{align} \label{eq:boot}
	| m(z)-\whmfc(z) | \prec \frac{1}{M\eta_0}\,.
\end{align}
\end{lemma}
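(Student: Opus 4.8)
## Proof Proposal for Lemma~\ref{lemma:step 2_2}

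\textbf{The plan} is to establish the local law on the larger scales $\eta \in [M^{-1/2+\phi}, M^{-1/(b+1)+\phi}]$ by a self-consistent-equation argument analogous to the one used in Lemma~\ref{lemma:step 2_1}, but now exploiting that $\eta$ is comparatively large so that the fluctuation terms $Z_\alpha, Z_a$ are automatically smaller than $1/(M\eta_0)$. First I would fix $z = E + \ii\eta \in \caD_\phi'$ and choose $\gamma \in \llbracket 1, n_0-1\rrbracket$ realizing the minimum in~\eqref{assumption near sigma_k}. By the concentration estimates (Lemma~\ref{lemma.LDE}) together with the a priori bound $\im m \prec \eta + (M\eta)^{-1}$ that follows from the Stieltjes-transform representation (or by a preliminary crude bound), one gets $\max_A |Z_A| \prec \Psi(z) = \sqrt{\im m/(M\eta)} + (M\eta)^{-1}$, and since $\eta \ge M^{-1/2+\phi} = M^{2\phi}\eta_0$ we have $(M\eta)^{-1} \le M^{-2\phi}(M\eta_0)^{-1} \ll (M\eta_0)^{-1}$. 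Thus on the good event $\Omega_\epsilon$ the fluctuation terms are negligible relative to the target precision $1/(M\eta_0)$.

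\textbf{The main steps} are then: (i) starting from $z + 1/m = -d^{-1}\wt m + \caO_\prec(\Psi)$ (derived exactly as in~\eqref{minverse}, using~\eqref{rel between m and wt m}, Schur's complement, Cauchy interlacing, and the bound $\frac1N\sum_\alpha |\sigma_\alpha^{-1}+m^{(\alpha)}+Z_\alpha|^{-2} = O(1)$ coming from $\frac1N\sum_\alpha \frac{|m|^2}{|\sigma_\alpha^{-1}+m^{(\alpha)}+Z_\alpha|^2} \le 1$), rewrite it as
\begin{align}
	z + \frac{1}{m} = \frac{1}{N}\sum_\alpha \frac{1}{\sigma_\alpha^{-1} + m^{(\alpha)} + Z_\alpha} + \caO_\prec(\Psi)\,;
\end{align}
(ii) subtract the defining equation~\eqref{eq:hat mfc} for $\whmfc$, written as $z + 1/\whmfc = \frac1N\sum_\alpha (\sigma_\alpha^{-1}+\whmfc)^{-1}$, to obtain
\begin{align}
	\frac{1}{\whmfc} - \frac{1}{m} = \frac{1}{N}\sum_\alpha \frac{m^{(\alpha)} + Z_\alpha - \whmfc}{(\sigma_\alpha^{-1}+m^{(\alpha)}+Z_\alpha)(\sigma_\alpha^{-1}+\whmfc)} + \caO_\prec(\Psi)\,;
\end{align}
(iii) replace $m^{(\alpha)}$ by $m$ at the cost of $|m - m^{(\alpha)}| \prec (M\eta)^{-1}$ (Lemma~\ref{cauchy interlacing}), so the numerator becomes $(m - \whmfc) + \caO_\prec((M\eta)^{-1} + \Psi)$, giving $\Lambda \deq |m - \whmfc| \le T_m \Lambda + \caO_\prec(\Psi + (M\eta)^{-1})$ with $T_m$ as in~\eqref{T_m}; (iv) bound $T_m < \fc' < 1$ by isolating the critical index $\gamma$ — using~\eqref{farfromhome}/\eqref{eq:step 1_1} to control the $\alpha=\gamma$ term by $C M^{-2\phi+\epsilon'} \ll 1$ when $|z - \wh z_\gamma| \ge M^{-1/2+3\phi}$, and using Lemma~\ref{hat bound} plus~\eqref{assumption_CLT_2} to bound $\frac1N\sum_\alpha^{(\gamma)} \frac{|\whmfc|^2}{|\sigma_\alpha^{-1}+\whmfc|^2} < \fc < 1$ for the remaining indices (and $\frac1N\sum_\alpha^{(\gamma)}\frac{|m|^2}{|\sigma_\alpha^{-1}+m^{(\alpha)}+Z_\alpha|^2} \le 1$ for the $m$-factor, by Cauchy–Schwarz on the product). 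This yields $\Lambda(1 - T_m) \prec \Psi + (M\eta)^{-1}$, hence $\Lambda \prec \sqrt{\im m/(M\eta)} + (M\eta)^{-1}$; feeding back $\im m \le \im \whmfc + \Lambda$ and $\im \whmfc = O(\max\{\eta, (M\eta)^{-1}\})$ from Lemma~\ref{weakboundwhmfc}, a short self-improving (bootstrap) iteration of $\Lambda \prec \sqrt{(\im\whmfc + \Lambda)/(M\eta)} + (M\eta)^{-1}$ collapses to $\Lambda \prec \sqrt{\eta/(M\eta)} + (M\eta)^{-1} = (M)^{-1/2} + (M\eta)^{-1} \le (M\eta_0)^{-1}$, since on this range $\eta \le M^{-1/(b+1)+\phi}$ forces $\sqrt{1/M} \ll (M\eta_0)^{-1} = M^{-1/2+\phi}$ — wait, more carefully $\sqrt{\eta/(M\eta)} = M^{-1/2} = M^{-\phi}(M\eta_0)^{-1} \ll (M\eta_0)^{-1}$, and $(M\eta)^{-1} \le M^{-2\phi}(M\eta_0)^{-1}$, so indeed $\Lambda \prec (M\eta_0)^{-1}$. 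Finally a lattice/Lipschitz argument (as at the end of the proof of Lemma~\ref{lemma:step 2_1}, using $|m(z) - m(z')| \le \eta^{-2}|z-z'|$ and the analogous bound for $\whmfc$) upgrades the pointwise bound to a uniform one over the stated $\eta$-range.

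\textbf{The hard part} will be step (iv): establishing $T_m < \fc' < 1$ uniformly, because near the special points $\wh z_\gamma$ the critical denominator $|\sigma_\gamma^{-1} + \whmfc|$ becomes as small as $M^{-1/2}$ and one must verify that the prefactor $1/N$ together with the bound $|\sigma_\gamma^{-1} + m^{(\gamma)} + Z_\gamma| \gtrsim \eta$ (or $\gtrsim \im m$) still makes the single term $o(1)$; this is exactly the place where the quantitative separation conditions~\eqref{eq4.3}–\eqref{eq4.4} and the bound~\eqref{assumption_CLT_2} on the sum over $\alpha \neq \gamma$ are essential, and where one cannot simply invoke the standard stability bound of~\cite{ky}. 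A secondary subtlety is circularity in the a priori input $\im m \prec \Psi$-type bounds versus the crude bound from Lemma~\ref{lemma:step 2_1}: one should either restrict first to $\eta = \eta_0$ and then extend upward (so that Lemma~\ref{lemma:step 2_1} is genuinely available as a starting point for $\eta$ slightly above $\eta_0$), or run the whole self-consistent argument for general $\eta$ in this range with only the trivial bound $\im m \le \eta^{-1}\|\text{probability mass}\|$, absorbing the resulting weaker error through the bootstrap; I would take the former, cleaner route.
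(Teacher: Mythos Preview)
Your proposal is correct and uses the same essential ingredients as the paper: the self-consistent equation derived from~\eqref{rel between m and wt m} and Schur's formula, the splitting $\im m \le |\im m - \im\whmfc| + \im\whmfc$ combined with Lemma~\ref{weakboundwhmfc}, and the bound $T_m<\mathfrak c'<1$ coming from~\eqref{assumption_CLT_2} plus control of the single critical index~$\gamma$.

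The structural difference is that the paper argues by contradiction whereas you run a direct bootstrap. The paper assumes $|m-\whmfc|>M^\epsilon(M\eta_0)^{-1}$, then shows $|Z_\alpha|,|m-m^{(\alpha)}|=o(|m-\whmfc|)$ (using $\sqrt{\im\whmfc/(M\eta)}=O((M\eta)^{-1})$ on this $\eta$-range together with the AM--GM trick for the $\sqrt{|\im m-\im\whmfc|/(M\eta)}$ piece), and obtains $|m-\whmfc|\le (T_m+o(1))|m-\whmfc|$, a contradiction. The advantage of the contradiction route is that the critical $\gamma$-term in $T_m$ is handled for free: from the hypothesis one gets $|\sigma_\gamma^{-1}+m^{(\gamma)}+Z_\gamma|+|\sigma_\gamma^{-1}+\whmfc|\ge \tfrac12 M^\epsilon(M\eta_0)^{-1}$, so $\tfrac1N|G_{\gamma\gamma}||(\sigma_\gamma^{-1}+\whmfc)^{-1}|\le CM^{-\epsilon}$ exactly as in Lemmas~\ref{hat bound} and~\ref{lemma:step 2_1}, with no case split on $|z-\wh z_\gamma|$. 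In your direct approach you instead need the deterministic bounds $|G_{\gamma\gamma}|\le C\eta^{-1}$ (which follows from $G_{\wt\gamma\wt\gamma}=z\sigma_\gamma(\caQ-z)^{-1}_{\gamma\gamma}$) and $|\sigma_\gamma^{-1}+\whmfc|\ge\im\whmfc\ge c\eta$, giving the critical term $\le C(N\eta^2)^{-1}\le CM^{-2\phi}=o(1)$; this works, so your route is a genuine alternative, just slightly less streamlined at that step. Both proofs finish with the same lattice argument for uniformity.
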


\begin{proof}
We mimic the proof of Lemma~\ref{lemma:step 2_1}. Fix $z\in\caD_{\phi}'$ and $\epsilon>0$ be given. Similar to proof of Lemma~\ref{lemma:step 2_1},	
suppose that $|m(z)-\whmfc(z) | > M^{\epsilon}(M\eta_0)^{-1}$. Recall the definition of $\Omega_\epsilon$ from proof of Lemma~\ref{lemma:step 2_1} and assume that $\Omega_{\epsilon}$ holds. Consider the self-consistent equation~\eqref{m difference} and define $T_m$ as in~\eqref{T_m}.

Since $\im m(E+\ii\eta)\ge C\eta$, for $z\in\caD_{\phi}'$ and on $\Omega$, we have
\begin{align}
	\frac{1}{M\eta} \le M^{-1/2-\phi} \ll M^{-1/2+\phi}\le \eta \le C\im m \,.
\end{align}
Thus we eventually get the equation~\eqref{minverse},
\begin{align}
	z+\frac{1}{m}=-d^{-1}\wt{m} +o(\im m) \,.
\end{align}
However, in this lemma, $o(\im m)$ is not enough to proceed further. Thus we need more optimal order of $|m-m^{(\alpha)}|$ and $|Z_\alpha|$.\\
We already have
\begin{align}
	\frac{1}{M\eta}  \ll M^{\epsilon}\frac{1}{M\eta_0} < |m-\whmfc| \,,
\end{align}
hence by the Cauchy interlacing property, $|m-m^{(\alpha)}|=o(|m-\whmfc|)$.\\

Considering the definition of $\Omega_\epsilon$ from the proof of Lemma~\ref{lemma:step 2_1}, Concentration esimate implies that 
\begin{align}
	\begin{split}
		|Z_\alpha|&\leq M^{\epsilon/6}\Psi=M^{\epsilon/6}\sqrt{\frac{|\im m-\im \whmfc +\im\whmfc|}{M\eta}}+\frac{M^{\epsilon/6}}{M\eta}\\&\leq M^{\epsilon/6}\sqrt{\frac{|\im m-\im \whmfc|}{M\eta}}+M^{\epsilon/6}\sqrt{\frac{\im\whmfc}{M\eta}}+\frac{M^{\epsilon/6}}{M\eta} \,.
	\end{split}
\end{align}
The first term is $o(|m-\whmfc|)$ by assumption and the arithmetic geometric mean. For the second term, we use the prior bound for $\im \whmfc$ from Lemma~\ref{weakboundwhmfc} which implies
\begin{align}
	\sqrt{\frac{\im \whmfc}{M\eta}}=O\left(\max\left\{ \sqrt{\frac{1}{M}},\frac{1}{M\eta} \right\}\right) =O\left(\frac{1}{M\eta}\right)\,,
\end{align}
hence we have $|Z_\alpha|\ll|m-\whmfc|$ on $\Omega_\epsilon$. Hence we have 
\begin{align}
	z+\frac{1}{m}=-d^{-1}\wt{m}+o(|m-\whmfc|).
\end{align}
\\	
Then argue analogously as the proof of Lemma~\ref{lemma:step 2_1}, it contradicts to the assumption $|m(z)-\whmfc(z)| > M^\epsilon(M\eta_0)^{-1}$. To get a uniform bound, we again use the lattice argument as in the proof of Lemma~\ref{lemma:step 2_1}. This completes the proof of the lemma.
\end{proof}

\begin{lemma} \label{lemma:step 2_3}
Let $z \in \caD_{\phi}'$. Assume that $|m(z)-\whmfc(z)| \prec M^\phi (M\eta_0)^{-1} $,then we have on $\Omega$ that
\begin{align}\label{lemma:step 2_3_2}
	|m(z)-\whmfc(z)|\prec \frac{1}{M\eta_0} \,.
\end{align}
\end{lemma}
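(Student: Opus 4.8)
The plan is to run a self-improving (bootstrap) estimate modelled on the proofs of Lemmas~\ref{lemma:step 2_1} and~\ref{lemma:step 2_2}. Write $\Lambda(z) := |m(z) - \whmfc(z)|$ and argue by contradiction: fix a small $\epsilon > 0$ and suppose that, on $\Omega$, the event $\Lambda(z) > M^\epsilon (M\eta_0)^{-1}$ occurs with non-negligible probability; we then derive a contradiction on the intersection of $\Omega$ with the concentration event $\Omega_\epsilon = \Omega_1 \cap \Omega_2 \cap \Omega_3$ from the proof of Lemma~\ref{lemma:step 2_1}, together with the event that the a priori bound holds. The role of $\Lambda \prec M^\phi (M\eta_0)^{-1}$ is exactly the role that ``$\eta$ large'' played in Lemma~\ref{lemma:step 2_2}: combined with Lemma~\ref{weakboundwhmfc} it gives the quantitative control $\im m \le \im\whmfc + \Lambda = O(M^{-1/2 + 2\phi})$, and this is what lets us close the self-consistent equation at scales $\eta$ all the way down to $\eta_0$, where $\im m$ is no longer automatically comparable to $\eta$.

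First I would control the fluctuation parameter $\Psi = \sqrt{\im m/(M\eta)} + (M\eta)^{-1}$ by splitting $\im m = (\im m - \im\whmfc) + \im\whmfc$: the piece $\sqrt{|\im m - \im\whmfc|/(M\eta)} \le \sqrt{\Lambda/(M\eta_0)}$ is $\ll \Lambda$ because the contradiction hypothesis gives $\Lambda \gg (M\eta_0)^{-1}$, while $\sqrt{\im\whmfc/(M\eta)}$ and $(M\eta)^{-1}$ are $O(M^{-1/2+\phi})$ by Lemma~\ref{weakboundwhmfc}, hence also $\ll \Lambda$. It follows that $Z_\alpha = \caO_\prec(\Psi)$ and the interlacing errors $m - m^{(\alpha)} = O((M\eta)^{-1})$ are all $o(\Lambda)$, and in particular $m \sim 1$, hence $\whmfc \sim 1$. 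Next, exactly as in Lemma~\ref{lemma:step 2_1}, I would insert these bounds into the Schur expansion of $zm+1$ via~\eqref{rel between m and wt m} and~\eqref{schur using Z} to obtain $z + 1/m = -d^{-1}\wt m + o(\Lambda)$, and then expand $\Lambda = |m\whmfc|\,|m^{-1} - \whmfc^{-1}|$ against~\eqref{eq:hat mfc} to reach a closed inequality $\Lambda \le T_m(\Lambda + \mathcal{M}) + o(\Lambda)$, with $T_m$ as in~\eqref{T_m} and $\mathcal{M} = \max_\alpha |m^{(\alpha)} - m + Z_\alpha| = o(\Lambda)$. The final step is the contraction bound $T_m \le \fc' < 1$ on $\Omega$: the off-critical contribution is controlled by $\frac{1}{N}\sum_{\alpha}^{(\gamma)} \frac{|\whmfc|^2}{|\sigma_\alpha^{-1}+\whmfc|^2} < \fc < 1$ via Lemma~\ref{hat bound} and assumption~\eqref{assumption_CLT_2}, and the single critical term $\alpha = \gamma$ is shown to be $o(1)$. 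Then $\Lambda \le (T_m + o(1))\Lambda$ with $T_m$ bounded away from $1$, which is impossible. Since the statement is for a fixed $z \in \caD_\phi'$, no lattice argument is needed here; uniformity in $z$ is supplied later in Proposition~\ref{proposition:step 2_4}.

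The hard part is the uniform bound $T_m < \fc' < 1$, and within it the critical term $\alpha = \gamma$. In Lemma~\ref{lemma:step 2_1} the working hypothesis was that $\im m$ itself is large, which directly lower-bounds $|\sigma_\gamma^{-1} + m^{(\gamma)} + Z_\gamma| = |G_{\gamma\gamma}|^{-1}$; here we only control $\Lambda$, so instead I would use the a priori bound together with a Corollary~\ref{corollary:step 2_1}-type concentration estimate to write $\sigma_\gamma^{-1} + m^{(\gamma)} + Z_\gamma = \sigma_\gamma^{-1} + \whmfc + o(\Lambda)$, and then keep $|\sigma_\gamma^{-1} + \whmfc|$ away from $0$ relative to $(N\eta_0)^{-1}$ using the separation encoded in $\caD_\phi'$ and the edge estimates of Lemma~\ref{lemma:step 1} at $\eta = \eta_0$ (covering both the case $|z - \wh z_\gamma|$ large and $z = \wh z_\gamma$), so that the $N^{-1}$ prefactor renders the $\gamma$ term negligible. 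Making this balance work forces a careful accounting of the powers of $M$ — which is exactly why the a priori bound is allowed to be suboptimal by a factor $M^\phi$ rather than being tight — and this bookkeeping, rather than any conceptually new ingredient, is where the difficulty lies.
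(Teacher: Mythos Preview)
Your overall architecture matches the paper's proof closely: assume $M^{\epsilon}(M\eta_0)^{-1} < \Lambda \le M^{\phi+\epsilon}(M\eta_0)^{-1}$, show $\Psi = o(\Lambda)$ by splitting $\im m$, derive the self-consistent inequality $\Lambda \le (T_m + o(1))\Lambda$, and conclude from $T_m < \fc' < 1$. The off-critical part of $T_m$ is handled exactly as you describe.

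The gap is in the critical term $\alpha = \gamma$. Your claimed identity
\[
\sigma_\gamma^{-1} + m^{(\gamma)} + Z_\gamma = \sigma_\gamma^{-1} + \whmfc + o(\Lambda)
\]
is wrong: one has $m^{(\gamma)} + Z_\gamma = m + o(\Lambda)$ by interlacing and concentration, but $m = \whmfc + (m-\whmfc)$ and $|m-\whmfc| = \Lambda$, not $o(\Lambda)$. So the right-hand side is $\sigma_\gamma^{-1} + \whmfc + O(\Lambda)$ with a genuinely $\Lambda$-sized error. Your proposed remedy --- lower-bounding $|\sigma_\gamma^{-1}+\whmfc|$ via $\caD_\phi'$ and Lemma~\ref{lemma:step 1} --- does not work either: the separation in $\caD_\phi'$ is stated only for indices $\alpha \ge n_0$, whereas $\gamma < n_0$, and Lemma~\ref{lemma:step 1} at $z=\wh z_\gamma$ gives only $|\sigma_\gamma^{-1}+\whmfc| \sim M^{-1/2}$, which can be swamped by the $O(\Lambda)$ correction (up to $M^{-1/2+2\phi+\epsilon}$) and in any case yields only a borderline $N^{-1}\cdot M$ bound on the critical term.

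The paper handles this differently, exploiting the contradiction hypothesis itself rather than any separation of $\sigma_\gamma$: from
\[
\bigl|(\sigma_\gamma^{-1}+m^{(\gamma)}+Z_\gamma) - (\sigma_\gamma^{-1}+\whmfc)\bigr| \;\ge\; |m-\whmfc| - |m-m^{(\gamma)}| - |Z_\gamma| \;\ge\; \tfrac12 M^{\epsilon}(M\eta_0)^{-1},
\]
at least one of the two denominator factors is $\ge \tfrac14 M^{\epsilon}(M\eta_0)^{-1}$; the other satisfies the trivial bound $\ge c\eta_0$ (equivalently $|G_{\gamma\gamma}|,\,|\whmfc/(\sigma_\gamma^{-1}+\whmfc)| \le C\eta_0^{-1}$). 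Multiplying and dividing by $N$ gives the critical term $\le C M^{-\epsilon}$. This triangle-inequality trick is the missing ingredient; once you insert it, the rest of your outline goes through verbatim.
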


\begin{proof}
Since the proof closely follows the proof of Lemma~\ref{lemma:step 2_1}, we only check the main steps here. Fix $z\in\caD_{\phi}'$, $\epsilon>0$ be given and choose $\gamma \in \llbracket 1, n_0 -1 \rrbracket$ such that~\eqref{assumption near sigma_k} is satisfied.
Assume that  $M^\epsilon(M\eta_0)^{-1}  < |m(z)-\whmfc(z)| \le M^\epsilon M^\phi(M\eta_0)^{-1}$ and $\Omega_{\epsilon}$ hold.
Since $\whmfc \sim 1$, by the assumption, we can get $m\sim 1$.\\		
First, we estimate $|Z_\alpha|$. By the assumption, $|\im m -\im \whmfc|\leq M^{\phi+\epsilon}(M\eta_0)^{-1}$, and the definition of $\Omega_\epsilon$ we obtain
\begin{align}\begin{split}
		|Z_\alpha|\leq M^{\epsilon/6}\Psi&=M^{\epsilon/6}\sqrt{\frac{\im m -\im \whmfc +\im \whmfc}{M\eta}}+\frac{M^{\epsilon/6}}{M\eta}\\
		&\le M^{\epsilon/6}\sqrt{\frac{\im m-\im \whmfc}{M\eta}}+M^{\epsilon/6}\sqrt{\frac{\im\whmfc}{M\eta}}+\frac{M^{\epsilon/6}}{M\eta}\\
		&\ll |\im m -\im\whmfc|\,.
	\end{split}
\end{align}

Now we consider the self-consistent equation~\eqref{m difference} and define $T_m$ as in~\eqref{T_m}. We now estimate $T_m$. For $\alpha \neq \gamma$, $\alpha\in\llbracket 1,M\rrbracket$, we need to compare
\begin{align}
	A\deq\frac{m}{\sigma_{\alpha}^{-1}+m^{(\alpha)}+Z_\alpha} \quad\text{and} \quad B\deq\frac{\whmfc}{\sigma_{\alpha}^{-1}+\whmfc}.
\end{align}
Considering, 
\begin{align}
	\left| \frac{B}{A} \right| =\left| \frac{\whmfc}{m} \left( \frac{\sigma_{\alpha}^{-1}+m^{(\alpha)}+Z_\alpha}{\sigma_{\alpha}^{-1}+\whmfc} \right) \right|.
\end{align}
In addition, Lemma~\ref{cauchy interlacing}, Lemma~\ref{weakboundwhmfc} and the assumption imply that 
\begin{align} \begin{split}
		|m^{(\alpha)}-\whmfc +Z_\alpha| &\le  |m - m^{(\alpha)}| + |m - \whmfc| + |Z_\alpha| \\
		&\le  \frac{1}{M \eta} + M^{\phi+\epsilon}\frac{1}{M\eta_0} +  M^{\epsilon/6}\Psi \\
		&\ll M^{-\epsilon}\kappa_0\leq |\sigma_{\alpha} - \sigma_{\gamma}|\,,
\end{split} \end{align}
which holds for large enough $M$ on $\Omega$.
Also by the assumption,
\begin{align}
	\frac{\whmfc}{m}=1+o(1)\left|\frac{1}{m}\right|.
\end{align}
Hence,
\begin{align} \begin{split}
		\left| \frac{B}{A} \right| & = \left| \frac{\whmfc}{m} \right| \left| \left( \frac{\sigma_{\alpha}^{-1}+m^{(\alpha)}+Z_\alpha}{\sigma_{\alpha}^{-1}+\whmfc} \right) \right|=\left| \frac{\whmfc}{m} \right| \left| \frac{\sigma_{\alpha}^{-1}+\whmfc+o(M^{-\phi}\kappa_0) }{\sigma_{\alpha}^{-1}+\whmfc} \right|\\
		&=\left| \frac{\whmfc}{m} + \frac{o(M^{-\phi}\kappa_0)\whmfc }{m (\sigma_{\alpha}^{-1}+\whmfc)}   \right| =\left| 1+o(1)\frac{1}{m} \right| \,, \end{split} 
\end{align}
where we have used~\eqref{farfromhome}. Furthermore, by the fact $ \whmfc \sim 1$, we have $m \sim 1$ so that \begin{align}\left| \frac{B}{A} \right|=1+o(1) .\end{align}
Thus \begin{align}\frac{1}{N}\sum\limits_\alpha^{(\gamma)}\frac{m\whmfc}{(\sigma_{\alpha}^{-1}+m^{(\alpha)}+Z_\alpha)(\sigma_{\alpha}^{-1}+\whmfc)} =\frac{1}{N}\sum\limits_\alpha^{(\gamma)} AB=\frac{1}{N}\sum\limits_\alpha^{(\gamma)} B^2(1+o(1))<c<1.\end{align}\\
For $\alpha=\gamma$, we have
\begin{align}
	|\sigma_{\gamma}^{-1}+m^{(\gamma)}+Z_{\gamma}| + |\sigma_{\gamma}^{-1}+\whmfc|
	\ge \left| |m - \whmfc| - |m - m^{(\gamma)}| - |Z_\gamma| \right|
	\ge \frac{1}{2}M^\epsilon(M\eta_0)^{-1}\,,
\end{align}
thus, as in the proofs of Lemma~\ref{hat bound} and Lemma~\ref{lemma:step 2_1},
\begin{align}
	\frac{1}{N} \left| \frac{m\whmfc}{(\sigma_{\gamma}^{-1}+m^{(\gamma)}+Z_{\gamma}) (\sigma_{\gamma}^{-1}+\whmfc)} \right| \le C M^{-\epsilon}\,,
\end{align}
where we used trivial bounds $|G_{\gamma\gamma}|\,,|\frac{\whmfc}{\sigma_\gamma^{-1}+\whmfc}|\le \eta_0^{-1}$.

We now have that
\begin{align}
	T_m = \wh R_2^{(k)} + o(1) = R_2 + o(1)\,,
\end{align}
and, in particular, $T_m < c < 1$, with high probability on $\Omega$. Now we also apply the argument from Lemma~\ref{lemma:step 2_1} again to obtain the desired lemma.
\end{proof}

We now prove Proposition~\ref{proposition:step 2_4} using a discrete continuity argument.

\begin{proof}[Proof of Proposition~\ref{proposition:step 2_4}]
Fix $E$ such that $z=E+\ii\eta_0\in\caD_{\phi}'$. Consider a sequence $(\eta_j)$ defined by $\eta_j= \eta_0+j M^{-2}$. Let $K$ be the smallest positive integer such that $\eta_K \ge M^{-1/2 + \phi}$. We use mathematical induction to prove that for $z_j = E + \ii \eta_j$, we have on $\Omega$ that
\begin{align}
	|m(z_j)-\whmfc(z_j)  | \prec \frac{1}{M\eta_0}\,,
\end{align}
which implies that for any $\epsilon>0$, $\p(|m(z_j)-\whmfc(z_j)|\le\frac{M^{\epsilon}}{M\eta_0})\ge1-M^{-D}$ for large enough $M$. On this event, the case $j=K$ is already proved in Lemma~\ref{lemma:step 2_2}. For any $z = E + \ii \eta$, with $\eta_{j-1} \le \eta \le \eta_j$, we have
\begin{align}
	|m(z_j) - m(z)| \le \frac{|z_j - z|}{\eta_{j-1}^2} \le \frac{M^{2 \phi}}{M}\,, \qquad\quad |\whmfc(z_j) - \whmfc(z)| \le \frac{|z_j - z|}{\eta_{j-1}^2} \le \frac{M^{2 \phi}}{M}\,.
\end{align}
Thus, we find that if $|\whmfc(z_j) - m(z_j)| \prec (M\eta_0)^{-1}$ then
\begin{align}
	|m(z)-\whmfc(z) | \le |\whmfc(z_j)-m(z_j)| + \frac{2 M^{2 \phi}}{M} \prec M^\phi(M\eta_0)^{-1}\,.
\end{align}
We now refer Lemma~\ref{lemma:step 2_3} to obtain that $|m(z)-\whmfc(z) | \prec (M\eta_0)^{-1}$. This proves the desired lemma for any $z = E + \ii \eta$, with $\eta_{j-1} \le \eta \le \eta_j$. By induction on $j$, the desired lemma can be proved. Uniformity can be obtained by lattice argument.
\end{proof}

\subsection{Estimates on $|\wt m - \wt m^{(\alpha)}|$}\label{aux estimate 1}
Since we need a more precise estimate on the difference $|\im m(z) - \im\whmfc(z)|$, we construct tighter estimates on $|\wt m - \wt m^{(\alpha)}|$ and $N^{-1}\sum Z_A$. In this section, we provide enhanced bound on the difference $|\wh{m}-\wh{m}^{(\alpha)}|$. 


\begin{lemma} \label{lemma:step 3}
The following bounds hold on $\Omega$ for all $z = E + \ii \eta_0 \in \caD_{\phi}'$: For given $z$, choose $\gamma \in \llbracket 1, n_0 -1 \rrbracket$ such that~\eqref{assumption near sigma_k} is satisfied. Then, for any $\alpha \neq \gamma$, $\alpha\in\llbracket 1,M\rrbracket$,
\begin{align}
	|\wt{m}-\wt{m}^{(\gamma)}|\prec \frac{1}{M\eta_0}=M^{-1/2+\phi} \,,
\end{align}
\begin{align}
	|\wt m(z) - \wt m^{(\alpha)}(z)|\prec  M^{-1+1/(b+1)+4\phi} \,,
\end{align}
and
\begin{align}
	|\wt m^{(\gamma)}(z) - \wt m^{(\gamma\alpha)}(z)|\prec M^{-1+1/(b+1)+4\phi} \,,
\end{align}

\end{lemma}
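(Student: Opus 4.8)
The plan is to exploit the resolvent identity~\eqref{basic resolvent} together with the Schur-type formulas in Lemma~\ref{res id} to express each difference $\wt m - \wt m^{(\alpha)}$ through a single Green-function column, and then to bound that column using the a priori estimates established so far (Corollary~\ref{corollary:step 2_1} for the $Z$-terms, Lemma~\ref{lemma:step 2_1} for $\im m$, Proposition~\ref{proposition:step 2_4} for $m\approx\whmfc$, and Remark~\ref{remark:lambda_k} plus the ``far from home'' bound~\eqref{farfromhome} for the size of $\sigma_\alpha^{-1}+\whmfc$). Concretely, from~\eqref{basic resolvent} applied with $C=\alpha$ one has $\wt m - \wt m^{(\alpha)} = \tfrac1M\sum_{\beta}(G_{\beta\alpha}G_{\alpha\beta})/G_{\alpha\alpha}$, and by the Ward identity~\eqref{ward} this equals $\tfrac{1}{M}\,\tfrac{\im G_{\alpha\alpha}}{\eta_0\, |G_{\alpha\alpha}|}\cdot|G_{\alpha\alpha}|$-type expression; more precisely $\sum_\beta |G_{\alpha\beta}|^2 = \eta_0^{-1}\im G_{\alpha\alpha}$, so $|\wt m - \wt m^{(\alpha)}| \le \tfrac{1}{M\eta_0}\,\tfrac{\im G_{\alpha\alpha}}{|G_{\alpha\alpha}|}$. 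Thus the whole estimate reduces to controlling $\im G_{\alpha\alpha}$ and $|G_{\alpha\alpha}|$ separately.

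For the first bound (the case of the critical index $\alpha=\gamma$), I would use the trivial bound $\im G_{\gamma\gamma}/|G_{\gamma\gamma}| \le 1$, which immediately gives $|\wt m - \wt m^{(\gamma)}| \le (M\eta_0)^{-1} = M^{-1/2+\phi}$, so this part is essentially free. For the non-critical indices $\alpha\neq\gamma$ the point is that $G_{\alpha\alpha}$ is \emph{not} large: from the Schur formula $G_{\alpha\alpha}^{-1} = -\sigma_\alpha^{-1} - m^{(\alpha)} - Z_\alpha$, combined with $m^{(\alpha)} = m + O(1/(M\eta_0)) = \whmfc + O_\prec(1/(M\eta_0))$ (Cauchy interlacing + Proposition~\ref{proposition:step 2_4}), $Z_\alpha \prec 1/(M\eta_0)$ (Corollary~\ref{corollary:step 2_1}), and the lower bound $|\sigma_\alpha^{-1}+\whmfc| = \sigma_\alpha^{-1}|1+\sigma_\alpha\whmfc| \gtrsim M^{-1/(\b+1)-\phi}$ valid for $\alpha\neq\gamma$ on $\caD_\phi'$ (by~\eqref{a index assumption} for $\alpha\in A$ and by~\eqref{farfromhome}/\eqref{eq4.3} for $\alpha\in\llbracket1,n_0\rrbracket\setminus\{\gamma\}$), one gets $|G_{\alpha\alpha}|^{-1} \gtrsim M^{-1/(\b+1)-\phi}$, i.e. $|G_{\alpha\alpha}| \prec M^{1/(\b+1)+\phi}$. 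For $\im G_{\alpha\alpha}$ one writes $\im G_{\alpha\alpha} = |G_{\alpha\alpha}|^2\,\im(\sigma_\alpha^{-1}+m^{(\alpha)}+Z_\alpha) = |G_{\alpha\alpha}|^2\,(\im m^{(\alpha)} + \im Z_\alpha) \prec M^{2/(\b+1)+2\phi}\cdot(1/(M\eta_0)) = M^{2/(\b+1)+2\phi}\cdot M^{-1/2+\phi}$, using $\im m^{(\alpha)} \le \im m + O(1/(M\eta_0)) \prec 1/(M\eta_0)$ from Lemma~\ref{lemma:step 2_1}. Feeding these into $|\wt m - \wt m^{(\alpha)}| \le (M\eta_0)^{-1}\im G_{\alpha\alpha}/|G_{\alpha\alpha}|$ gives, after collecting exponents and using $1/(\b+1)\le 1$, the bound $M^{-1+1/(\b+1)+4\phi}$ (the exponent bookkeeping: $-1/2+\phi$ from $(M\eta_0)^{-1}$, plus $2/(\b+1)+2\phi - 1/(\b+1)-\phi = 1/(\b+1)+\phi$ from $\im G_{\alpha\alpha}/|G_{\alpha\alpha}|$, times another $(M\eta_0)^{-1}=M^{-1/2+\phi}$, yielding $M^{-1+1/(\b+1)+4\phi}$).

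The third bound, for the twice-removed minor $\wt m^{(\gamma)} - \wt m^{(\gamma\alpha)}$, is handled identically: apply~\eqref{basic resolvent} inside the $(\gamma)$-minor, so that $|\wt m^{(\gamma)} - \wt m^{(\gamma\alpha)}| \le (M\eta_0)^{-1}\,\im G^{(\gamma)}_{\alpha\alpha}/|G^{(\gamma)}_{\alpha\alpha}|$, and then reprove the two pointwise bounds on $G^{(\gamma)}_{\alpha\alpha}$ exactly as above, using the Schur formula for the minor ($(G^{(\gamma)}_{\alpha\alpha})^{-1} = -\sigma_\alpha^{-1} - m^{(\gamma\alpha)} - Z^{(\gamma)}_\alpha$), the minor versions of the $Z$-bounds in Corollary~\ref{corollary:step 2_1}, and the separation $|1+\sigma_\alpha\whmfc| \gtrsim M^{-1/(\b+1)-\phi}$, which does not involve the Green function at all and hence is unaffected by removing rows. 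The main obstacle, and the place that requires genuine care rather than bookkeeping, is establishing the lower bound $|\sigma_\alpha^{-1}+\whmfc(z)| \gtrsim M^{-1/(\b+1)-\phi}$ uniformly for \emph{all} $\alpha\neq\gamma$ once $z = E+\ii\eta_0 \in \caD_\phi'$ with $E$ near some $\wh E_{\gamma'}$ or $\lambda_{\gamma'}$: for indices $\alpha$ in the bulk ($\alpha\in A=\llbracket n_0,M\rrbracket$) this is precisely the defining property~\eqref{a index assumption} of $\caD_\phi'$, but for the finitely many top indices $\alpha\in\llbracket1,n_0\rrbracket\setminus\{\gamma\}$ one must invoke the good-configuration event $\Omega$ — specifically the gap condition~\eqref{eq4.3} together with Lemma~\ref{mfc estimate} and Remark~\ref{remark:kappa}, which translate the $\kappa_0$-scale separation of the $\sigma_\alpha$'s into a $\kappa_0$-scale separation of $\re(1+\sigma_\alpha^{-1}\whmfc)$, i.e. into~\eqref{farfromhome}. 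I would make sure that the choice of $\gamma$ via~\eqref{assumption near sigma_k} is the one that makes $\re(1+\sigma_\gamma^{-1}\mfc)$ closest to zero, so that all other $\alpha$ are automatically $\kappa_0/2$-far, and then the argument goes through uniformly; the lattice/continuity argument is not needed here since $\eta_0$ is fixed and the claimed bounds are for a single scale.
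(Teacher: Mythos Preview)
Your plan contains a genuine gap at its very first step. You invoke the Ward identity~\eqref{ward} in the form $\sum_{\beta}|G_{\alpha\beta}|^2=\eta_0^{-1}\im G_{\alpha\alpha}$ with Greek indices, but~\eqref{ward} is stated (and valid) only for Latin indices. The reason is structural: in the linearization $H(z)$ of~\eqref{linear}, the spectral parameter $z$ sits only in the upper-left $N\times N$ block, so the resolvent computation $G-G^*=2\ii\eta\,G\bigl(\begin{smallmatrix}I_N&0\\0&0\end{smallmatrix}\bigr)G^*$ yields $\im G_{AA}=\eta\sum_{b=1}^N|G_{Ab}|^2$ for \emph{any} index $A$, with the sum running over the Latin block only. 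There is no direct identity giving $\sum_\beta|G_{\alpha\beta}|^2$ in terms of $\im G_{\alpha\alpha}$.

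Your approach can be repaired, but it requires an extra argument: using the Schur-complement relation $G_{\alpha\beta}=z\,\sigma_\alpha^{1/2}\sigma_\beta^{1/2}(G_{\caQ})_{\alpha\beta}$ (with $G_\caQ=(\caQ-z)^{-1}$), the standard Ward identity for $G_\caQ$, and the bound $\sigma_\beta\le 1$, one obtains $\sum_\beta|G_{\alpha\beta}|^2\le C\eta^{-1}\im G_{\alpha\alpha}+C|G_{\alpha\alpha}|$, after which your exponent bookkeeping goes through (the extra $|G_{\alpha\alpha}|/M$ term is harmless). The paper sidesteps the issue altogether: it expands each $G_{\alpha\beta}$ via~\eqref{greek} and~\eqref{intersect} down to $G_{\alpha\beta}=G_{\alpha\alpha}G_{\beta\beta}^{(\alpha)}\sum_{a,b}x_{\alpha a}G_{ab}^{(\alpha\beta)}x_{\beta b}$, applies the large-deviation estimate~\eqref{LDE2} and the Latin Ward identity to the inner sum, and then sums the resulting bound $|G_{\beta\beta}-G_{\beta\beta}^{(\alpha)}|\le C|G_{\alpha\alpha}|\,|G_{\beta\beta}^{(\alpha)}|^2\,\im m^{(\alpha\beta)}/(N\eta)$ over $\beta$, using that $\tfrac1N\sum_\beta|G_{\beta\beta}^{(\alpha)}|^2\prec M^{2\phi}$ (the critical term $|G_{\gamma\gamma}^{(\alpha)}|^2/N\le\eta_0^{-2}/N$ dominates). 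For the first bound the paper argues differently again, comparing the two Schur-formula representations of $\wt m$ and $\wt m^{(\gamma)}$ term by term.
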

\begin{proof}
Recall $\eta = \eta_0$. In order to prove the first estimate, we consider that the following holds:
\begin{align}
	\begin{split}
		|d^{-1}\wt{m}-d^{-1}\wt{m}^{(\gamma)}|&=\left|\frac{1}{N}\sum_\alpha \frac{1}{\sigma_\alpha^{-1}+m^{(\alpha)}+Z_\alpha} -\frac{1}{N}\sum_\alpha \frac{1}{\sigma_\alpha^{-1}+m^{(\alpha \gamma)}+Z_\alpha^{(a)}}\right| \\
		&=\left| \frac{1}{N}\sum_\alpha\frac{1}{\sigma_\alpha^{-1}+m^{(\alpha)}+Z_\alpha}-\frac{1}{N}\sum_\alpha\frac{1}{\sigma_\alpha^{-1}+m^{(\alpha )}+Z_\alpha+\caO_\prec ((M\eta_0)^{-1})} \right|\,,
	\end{split}
\end{align}
where we have used the definition of $d^{-1}\wt{m}$ and the Cauchy's interlacing property \eqref{cauchy interlacing}.
From proposition~\ref{proposition:step 2_4}, we have
\begin{align}
	\frac{1}{N}\sum_\alpha\frac{1}{\sigma_\alpha^{-1}+m^{(\alpha)}+Z_\alpha}=\frac{1}{N}\sum_\alpha \frac{1}{\sigma_\alpha^{-1}+\whmfc+\caO_\prec((M\eta_0)^{-1})}=O(1) \,,
\end{align}
hence we obtain $|\wt{m}-\wt{m}^{(\gamma)}|\prec (M\eta_0)^{-1}$ .

For $\alpha \neq \gamma$, since $|\sigma_{\alpha}^{-1}+m^{(\alpha)}+Z_\alpha|\ge |\sigma_{\alpha}^{-1}+\mfc|-|m^{(\alpha)}-\mfc| -|Z_\alpha|$, we have on $\Omega$ that
\begin{align}
	|G_{\alpha\alpha}| = \frac{1}{|\sigma_{\alpha}^{-1}+Z_\alpha+m^{(\alpha)}|} \prec \left|1+\frac{1}{\sigma_{\alpha}\mfc}\right|^{-1} \leq C M^\phi\kappa_0^{-1}\,,
\end{align}
where we have used \eqref{farfromhome}.
Recall~\eqref{definition of R2 without hat} and the trivial bound $|G_{\gamma\gamma}|\le \eta_0^{-1}$ to observe that 
\begin{align}
	\frac{1}{N}\sum_{\beta} |G_{\beta\beta}^{(\alpha)}|^2=\frac{|G_{\gamma\gamma}^{(\alpha)}|^2}{N}+\frac{1}{N}\sum_{\beta}^{(\gamma)} |G_{\beta\beta}^{(\alpha)}|^2 \prec M^{2\phi}\,.
\end{align}

From the Schur complement formula, we have
\begin{align}
	G_{\alpha\beta} = - G_{\alpha \alpha}\sum_a x_{\alpha a}G_{a\beta}^{(\alpha)}\quad\text{and}\quad G_{a\beta}^{(\alpha)} = - G_{\beta \beta}^{(\alpha)}\sum_b G_{ab}^{(\alpha\beta)}x_{\beta b}^{(\alpha)}
	\,,\end{align}
hence we find from the concentration estimates in Lemma~\ref{lemma.LDE} and the Ward identity~\eqref{ward} that on $\Omega$,
\begin{align}
	|G_{\beta \beta} - G_{\beta \beta}^{(\alpha)}| = \left| \frac{G_{\alpha \beta} G_{\beta \alpha}}{G_{\alpha \alpha}} \right| \leq C|G_{\alpha\alpha}||G_{\beta\beta}^{(\alpha)}|^2 \frac{\im m^{(\alpha\beta)}}{N \eta} \,.
\end{align}

Thus, we obtain that on $\Omega$,
\begin{align} \begin{split}
		|\wt m(z) -\wt m^{(\alpha)}(z)| &\le \frac{|G_{\alpha\alpha}|}{M} + \frac{1}{M} \sum_{\beta}^{(\alpha)} |G_{\beta\beta} - G_{\beta\beta}^{(\alpha)}|\\ &\leq \frac{|G_{\alpha\alpha}|}{M} + C\frac{M^{2\phi} \kappa_0^{-1}}{M} \sum_{\beta}^{(\alpha)} |G_{\beta\beta}^{(\alpha)}|^2 \frac{\im m^{(\alpha\beta)}}{N \eta}\prec M^{1/(\b+1)} \frac{M^{4\phi}}{M} 
\end{split} \end{align}

In order to show that the inequalities hold uniformly on $z\in\caD_{\phi}'$, we apply the lattice argument as in the proof of Lemma~\ref{lemma:step 2_1}.
\end{proof}

\subsection{Estimates on $N^{-1}\sum Z_a$ and $N^{-1}\sum Z_{\alpha}$}\label{aux estimate 2}
Recall that $n_0>10$ is an integer independent of $M$. In the following lemmas, we control the fluctuation averages $\frac{1}{N} \sum_{a=1}^N Z_a$ , $\frac{1}{N} \sum_{\alpha=n_0}^{M} Z_\alpha $ and other weighted average sums. 

\begin{lemma} \label{lemma:FAZA}
For all $z=E+\ii \eta\in \caD_{\phi}'$, the follwing bound holds on $\Omega$:
\begin{align}
	\left| \frac{1}{N}\sum_{a} Z_a \right| \prec \left(\frac{1}{M\eta_0}\right)^2 \,.
\end{align}
\end{lemma}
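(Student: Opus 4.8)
The plan is to establish the bound by the fluctuation-averaging (high-moment) method, in the form used for sample covariance matrices in~\cite{p,sce,ky} and adapted to the convex edge in~\cite{eejl}. We condition on a realization of $\Sigma$ lying in $\Omega$; all probabilistic statements below refer to the randomness of $X$, and we abbreviate $\psi_0 \deq (M\eta_0)^{-1} = M^{-1/2+\phi}$. The a priori inputs, valid on $\caD_{\phi}'$, are: $\Psi(z) \prec \psi_0$ — for $\eta = \eta_0$ this is Lemma~\ref{lemma:step 2_1}, while for general $\eta$ it follows from Lemma~\ref{lemma:step 2_2} and Lemma~\ref{weakboundwhmfc} since $\eta \ge \eta_0$ and $(M\eta_0)^2 \le M$ — and hence, via the concentration estimates of Lemma~\ref{lemma.LDE}, the Ward identity~\eqref{ward}, and~\eqref{rel between m and wt m}, also $\max_A |Z_A(z)| \prec \psi_0$ and $\max_{A,B} |Z_A^{(B)}(z)| \prec \psi_0$ (this is Corollary~\ref{corollary:step 2_1} when $\eta = \eta_0$, and the general case is identical). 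Crucially $\psi_0 \gg N^{-1/2}$ because $\phi > 0$. By Markov's inequality it then suffices to prove, for every fixed $p \in \N$ and $\ve > 0$, that $\E\big|\tfrac1N \sum_a Z_a\big|^{2p} \le M^{\ve}\psi_0^{4p}$ for all large $M$.

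We expand
\[
\E \Big| \frac{1}{N} \sum_a Z_a \Big|^{2p} = \frac{1}{N^{2p}} \sum_{a_1, \dots, a_{2p}} \E\Big[ \prod_{i=1}^{p} Z_{a_i} \prod_{i = p+1}^{2p} \overline{Z_{a_i}} \Big],
\]
and to each tuple $\mathbf a = (a_1, \dots, a_{2p})$ attach the partition $\Gamma = \Gamma(\mathbf a)$ of $\llbracket 1, 2p \rrbracket$ that merges $i \sim j$ whenever $a_i = a_j$; write $b(\Gamma)$ for its number of blocks and $s(\Gamma)$ for its number of singletons. The decisive estimate is the refined moment bound
\[
\Big| \E\Big[ \prod_{i=1}^{2p} Z_{a_i}^{(\pm)} \Big] \Big| \prec \psi_0^{\, 2p + s(\Gamma)},
\]
where $Z^{(\pm)}$ denotes $Z$ or $\overline Z$. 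It is obtained by the standard iterated partial-expectation scheme: recalling from~\eqref{schur using Z} that $Z_a = Q_a\big[(X^* G^{(a)} X)_{aa}\big]$ with $Q_a \deq 1 - \E_a$, one processes each singleton index $a_i$ by repeatedly applying~\eqref{basic resolvent} to every $G^{(a_j)}$, $j \ne i$, so as to remove the $a_i$-th row and column and thereby split off a contribution not depending on the $a_i$-th column of $X$; since $\E_{a_i} Z_{a_i} = 0$, this contribution is annihilated by $\E_{a_i}$, and each of the remaining fully expanded terms carries an extra off-diagonal Green function factor, which after resummation by the Ward identity~\eqref{ward} yields the gain $\psi_0^{s(\Gamma)}$ over the naive bound $\psi_0^{2p}$; the surviving monomials are estimated using Lemma~\ref{lemma.LDE}, Corollary~\ref{corollary:step 2_1}, and~\eqref{farfromhome}.

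For the counting step, there are at most $N^{b(\Gamma)}$ tuples with a prescribed partition $\Gamma$, and since each non-singleton block has size $\ge 2$ we have $2p \ge 2(b(\Gamma) - s(\Gamma)) + s(\Gamma)$, i.e. $b(\Gamma) - 2p \le -\tfrac12 (2p - s(\Gamma))$. Writing $r \deq 2p - s(\Gamma) \ge 0$,
\[
\frac{N^{b(\Gamma)}}{N^{2p}}\, \psi_0^{\, 2p + s(\Gamma)} = \psi_0^{4p}\, N^{\, b(\Gamma) - 2p}\, \psi_0^{-r} \le \psi_0^{4p} \, \big( N^{-1/2} \psi_0^{-1} \big)^{r} \le \psi_0^{4p},
\]
where the last step uses $\psi_0 \gg N^{-1/2}$. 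Summing over the at most $C_p$ partitions $\Gamma$ and relabelling $\ve$ to absorb the implicit $M^{C_p\ve}$ losses from $\prec$ gives the required moment bound; uniformity in $z \in \caD_{\phi}'$ then follows from a lattice argument based on the Lipschitz bound $|m(z) - m(z')| \le \eta_0^{-2} |z - z'|$, exactly as in the proof of Lemma~\ref{lemma:step 2_1}.

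The main obstacle is the refined moment bound of the second step. Unlike in the Wigner or null sample-covariance settings of~\cite{p,sce}, the diagonal Green function entries $G_{\alpha\alpha}$ attached to the critical index $\gamma \in \llbracket 1, n_0 - 1 \rrbracket$ realizing~\eqref{assumption near sigma_k} are not $O(1)$ but only $O(\eta_0^{-1})$ near the edge, so a naive power count in the diagrammatic expansion fails. The remedy is to isolate the $\alpha = \gamma$ contribution in every sum over a $\Sigma$-block index: such a term always appears either with a prefactor $N^{-1}$ — as does the subtracted piece $N^{-1} G^{(a)}_{\gamma\gamma}$ inside $Z_a$ — or, after one application of~\eqref{greek} or~\eqref{intersect}, with additional factors $x_{\gamma \cdot}$, and since $N^{-1} \eta_0^{-1} \sim \psi_0$ these a priori dangerous terms turn out to be no larger than the generic ones. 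Making this bookkeeping systematic — tracking how many critical-index Green function entries can survive in a maximally expanded monomial and checking that each one is compensated — is the technical heart of the argument, and is where we follow~\cite{eejl} most closely.
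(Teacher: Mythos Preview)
Your high-moment strategy with partition/singleton counting is the right one and matches the paper's approach; the difference is in how the factors are packaged. You keep $Z_a = Q_a\big[(X^*G^{(a)}X)_{aa}\big]$ and then, when removing a Latin index $a_i$ from $G^{(a_j)}_{\alpha\beta}$, generate mixed and Greek-indexed entries, which forces you to confront the potentially large $G_{\gamma\gamma}$ and invoke the critical-index bookkeeping of~\cite{eejl}. The paper avoids this entirely: since $-Z_a = Q_a(1/G_{aa})$ by~\eqref{donkey a}, one first establishes (Lemma~\ref{gabound} and the following lemma) that on $\caD_\phi'$ the \emph{Latin}-indexed entries satisfy $G_{aa}\sim 1$ and $|G_{ab}|\prec (M\eta_0)^{-1}$, and then the standard fluctuation averaging of~\cite{sce,p} applies verbatim, because every resolvent identity used to decouple a singleton index $a_i$ from $1/G_{a_j a_j}$ produces only Latin-indexed factors $G_{a_j a_i}G_{a_i a_j}/(G_{a_j a_j}G_{a_i a_i})$, all of which are bounded. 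No Greek-indexed entry ever enters the expansion, so the ``obstacle'' you describe is an artifact of your chosen representation rather than a genuine difficulty. Your route via~\cite{eejl} should work, but it is the harder argument that the paper reserves for $\frac{1}{N}\sum_\alpha Z_\alpha$ (Lemma~\ref{lemma:step 4}), where the Greek diagonal entries are unavoidable; for $\frac{1}{N}\sum_a Z_a$ the paper explicitly notes that ``we do not have such difficulty \ldots\ and we can apply the method from~\cite{p}.'' A minor point: for $\eta_0\le\eta< M^{-1/2+\phi}$ you should cite Proposition~\ref{proposition:step 2_4} rather than Lemma~\ref{lemma:step 2_2} for the input $\Psi\prec\psi_0$.
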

\begin{lemma} \label{lemma:step 4}
For all  $z\in\caD_\phi'$, the following bounds hold on $\Omega$:
\begin{align}\label{statement Zlemma 1}
	\left| \frac{1}{N} \sum_{\alpha=n_0}^{M} Z_\alpha(z) \right| \prec M^{-1/2-\fb/2+2\phi}\,,
\end{align}
and, for $\gamma\in\llbracket 1,n_0-1\rrbracket$,
\begin{align}\label{statement Zlemma 2}
	\left| \frac{1}{N} \sum_{\substack{\alpha=n_0\\ \alpha\not=\gamma}}^{M} Z_\alpha^{(\gamma)}(z) \right| \prec M^{-1/2-\fb/2+2\phi}\,.
\end{align}
\end{lemma}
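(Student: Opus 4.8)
The plan is to prove the two bounds in Lemma~\ref{lemma:step 4} by the standard fluctuation-averaging machinery, adapted to the present setting where the diagonal resolvent entries $G_{\alpha\alpha}$ are \emph{not} uniformly bounded (they can be as large as $M^{\phi}\kappa_0^{-1}$), following the strategy already used in Lemma~\ref{lemma:FAZA} and in~\cite{eejl,sce}. The key input is the a priori local law from Proposition~\ref{proposition:step 2_4} together with its Corollary~\ref{corollary:step 2_1}, which gives $\max_A |Z_A|\prec (M\eta_0)^{-1}$ and the analogous bound for minors, and the interlacing estimates of Lemma~\ref{lemma:step 3}. The essential point is that the naive bound $|Z_\alpha|\prec (M\eta_0)^{-1}$ summed over $\alpha$ only gives $|N^{-1}\sum_\alpha Z_\alpha|\prec (M\eta_0)^{-1}=M^{-1/2+\phi}$, whereas averaging the (nearly) independent fluctuations $Z_\alpha$ should gain a further factor roughly $M^{-1/2}$, and one has to quantify the loss coming from the few large terms near $\gamma$ and from the size of $|G_{\alpha\alpha}|$.

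First I would fix $z=E+\ii\eta_0\in\caD_\phi'$ and choose $\gamma\in\llbracket1,n_0-1\rrbracket$ realizing~\eqref{assumption near sigma_k}, so that for all $\alpha\ge n_0$ (hence $\alpha\neq\gamma$) the bound~\eqref{farfromhome} gives $|G_{\alpha\alpha}|\prec M^\phi\kappa_0^{-1}=M^{1/(\b+1)+\phi}$, and by Lemma~\ref{lemma:step 3}, $|\wt m-\wt m^{(\alpha)}|$ and $|\wt m^{(\gamma)}-\wt m^{(\gamma\alpha)}|$ are $\prec M^{-1+1/(\b+1)+4\phi}$. The standard approach is to estimate high moments $\E\big|\frac1N\sum_{\alpha=n_0}^M Z_\alpha\big|^{2p}$ by expanding, and to exploit that $Z_\alpha=(1-\E_\alpha)(XG^{(\alpha)}X^*)_{\alpha\alpha}$ so that terms with an ``unmatched'' index vanish under the corresponding partial expectation. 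Each maximally-matched contribution then carries a product of resolvent minor entries controlled via Ward's identity~\eqref{ward} and the concentration estimates of Lemma~\ref{lemma.LDE}, and the counting gives the extra factor $M^{-1/2}$ per pair relative to the pointwise bound. Concretely, one expects a bound of the shape $|\frac1N\sum_{\alpha\ge n_0}Z_\alpha|\prec \big(\frac1N\sum_{\alpha\ge n_0}|G_{\alpha\alpha}|^2 \cdot \frac{\im m}{N\eta_0}\big)^{1/2}\cdot(\text{small})$; plugging $\im m\prec (M\eta_0)^{-1}$ (Lemma~\ref{lemma:step 2_1}) and $\frac1N\sum|G_{\alpha\alpha}|^2\prec M^{2\phi}$ (as in Lemma~\ref{lemma:step 3}) yields roughly $M^{\phi}\cdot M^{-1/2}\cdot\big(\frac{1}{M^2\eta_0^2}\big)^{1/2}\sim M^{-1/2}\cdot M^{-1/2+\phi}\cdot M^{\phi}$, which after the bookkeeping with $\kappa_0=M^{-1/(\b+1)}$ and $\fb=\frac12-\frac1{\b+1}$ produces the target $M^{-1/2-\fb/2+2\phi}$. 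For the second statement~\eqref{statement Zlemma 2} I would run the same argument verbatim with $G$ replaced by $G^{(\gamma)}$, using that removing one index changes all the relevant quantities by at most $\caO_\prec((M\eta_0)^{-1})$ by Lemma~\ref{lemma:step 3} and Proposition~\ref{proposition:step 2_4}, so that $|G_{\alpha\alpha}^{(\gamma)}|\sim|G_{\alpha\alpha}|$ and the same moment bound goes through.

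The technical heart is the fluctuation-averaging moment estimate itself: one writes $\E\big|\frac1N\sum_\alpha Z_\alpha\big|^{2p}=\frac1{N^{2p}}\sum_{\alpha_1,\dots,\alpha_{2p}}\E\prod_i Z_{\alpha_i}$ (with the appropriate conjugates), and classifies the index tuples by their partition structure; a lone index in a block forces, after applying $\E_{\alpha_i}$, a gain because $(1-\E_{\alpha_i})$ kills the leading term, and one iterates resolvent expansions~\eqref{basic resolvent} to expose the needed off-diagonal smallness. This is exactly the scheme of~\cite[Section~5]{eejl} and of the fluctuation-averaging lemmas in~\cite{sce,p}; the only genuinely new bookkeeping is tracking the factors $|G_{\alpha\alpha}|\le\eta_0^{-1}$ that are no longer $\caO(1)$, which is why the exponent degrades by $\fb/2$ rather than giving the clean $(M\eta_0)^{-2}$ of Lemma~\ref{lemma:FAZA} (there the weights are bounded since $Z_a$ involves $G_{aa}$, which are $\caO_\prec(1)$).

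\textbf{Main obstacle.} I expect the main difficulty to be controlling, within the high-moment expansion, the contribution of resolvent entries with indices in the ``critical'' block $\llbracket1,n_0\rrbracket$ — in particular $G^{(\cdots)}_{\gamma\gamma}$, which is only bounded by $\eta_0^{-1}=M^{1/2+\phi}$ — and showing that these large but $O(1)$-many terms do not spoil the averaging gain. The resolution is that such indices appear only boundedly often (since $n_0$ is fixed), each occurrence is compensated by the prefactor $1/N$ from the sum and by an off-diagonal $G$ that is small by~\eqref{farfromhome} and Ward's identity, and the restriction to $\alpha\ge n_0$ in the statement means these indices enter only through minor superscripts, not as summation variables; a careful accounting of the powers of $M^\phi$ then keeps the final exponent at $2\phi$, which is harmless by the choice~\eqref{phi condition} of $\phi$. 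A secondary nuisance is the uniformity over $z\in\caD_\phi'$, handled as elsewhere in the paper by a lattice/Lipschitz argument using $|\partial_z G|\le\eta_0^{-2}$.
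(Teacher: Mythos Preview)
Your general framework is right---high-moment expansion of $\frac1N\sum_{\alpha\ge n_0}Q_\alpha(1/G_{\alpha\alpha})=-\frac1N\sum_{\alpha\ge n_0}Z_\alpha$, classify index partitions, use that lone indices force extra smallness via $(1-\E_\alpha)$, and follow~\cite{eejl}. But you have misidentified the core mechanism and the main obstacle, and as written your argument would stall.

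The sentence ``one iterates resolvent expansions~\eqref{basic resolvent} to expose the needed off-diagonal smallness'' is the problem: for Greek indices the off-diagonal entries $G_{\alpha\beta}$ are \emph{not} small. Since $|G_{\alpha\alpha}|$ can be of order $M^{\phi}\kappa_0^{-1}=M^{1/(\b+1)+\phi}\gg 1$, the usual scheme from~\cite{sce,p} (which relies on $|G_{\alpha\alpha}|\sim 1$ and $|G_{\alpha\beta}|\prec\Lambda_o\ll 1$) breaks down; your heuristic bound involving $\frac1N\sum|G_{\alpha\alpha}|^2$ does not correspond to any step of the actual expansion. What the paper (following~\cite{eejl}) does instead is to organize the entire expansion in terms of the \emph{ratios}
\[
F_{\alpha\beta}^{(\T,\T')}\deq \frac{G_{\alpha\beta}^{(\T)}}{G_{\beta\beta}^{(\T')}}\,,\qquad \frac{F_{\alpha\beta}^{(\emptyset,\alpha)}}{G_{\alpha\alpha}}=(XG^{(\alpha\beta)}X^*)_{\alpha\beta}\,,
\]
and to prove (Lemma~\ref{firstbound}, Corollary~\ref{Zlemma 1}) that for $\alpha,\beta\in\llbracket n_0,M\rrbracket$ one has $|F_{\alpha\beta}^{(\T,\T')}|\prec M^{-\fb/2+\phi}$ and $|F_{\alpha\beta}^{(\T',\T'')}/G_{\alpha\alpha}^{(\T)}|\prec (M\eta_0)^{-1}$. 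The resolvent identities~\eqref{basic resolvent} translate into closed algebraic relations among the $F$'s (Lemma~\ref{lemma: 7.1}), so that every monomial appearing in the moment expansion is a product of one factor of type $F/G_{\alpha\alpha}$ and a string of $F$'s; this is what produces the exponent $-\fb/2$ per extra link and ultimately~\eqref{estimate in Zlemma}. Without this ratio formalism you cannot get a small control parameter at all.

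Your ``main obstacle''---indices in $\llbracket1,n_0\rrbracket$---is not actually an obstacle: the sum is over $\alpha\ge n_0$, and the decoupling in the moment expansion only introduces upper indices from $\{\alpha_1,\ldots,\alpha_{2p}\}\subset\llbracket n_0,M\rrbracket$, so the critical block never enters. The genuine difficulty is exactly the unboundedness of $G_{\alpha\alpha}$ for \emph{all} $\alpha\ge n_0$, and its resolution is the $F$-calculus above, not a finitely-many-bad-terms argument.
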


\begin{corollary} \label{corollary:step 4}
For all $z\in\caD_\phi'$, the following bounds hold on $\Omega$:
\begin{align}
	\left| \frac{1}{N} \sum_{\alpha=n_0}^{M} \frac{\whmfc(z)^2 Z_\alpha(z)}{(\sigma_\alpha^{-1}+ \whmfc(z))^2} \right|  \prec M^{-1/2-\fb/2+2\phi}\,,
\end{align}
and, for $\gamma\in\llbracket 1,n_0-1\rrbracket$,
\begin{align}
	\left| \frac{1}{N} \sum_{\substack{\alpha=n_0\\ \alpha\not=\gamma}}^{M} \frac{ \whmfc(z)^2 Z_\alpha^{(\gamma)}(z)}{(\sigma_\alpha^{-1} + \whmfc(z))^2} \right|\prec  M^{-1/2-\fb/2+2\phi} \,.
\end{align}

\end{corollary}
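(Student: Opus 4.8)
\textbf{Proof plan for Corollary~\ref{corollary:step 4}.} The plan is to deduce the corollary from Lemma~\ref{lemma:step 4} by re-running that lemma's fluctuation-averaging argument with the deterministic weights $w_\alpha \deq \whmfc(z)^2(\sigma_\alpha^{-1}+\whmfc(z))^{-2}$ inserted into the sums. The crucial structural fact is that $w_\alpha$ is a function of $\Sigma$ alone and does not depend on $X$; hence it commutes with every partial expectation $\E_\beta$, so the combinatorial (graph/lace) expansion of $\E\bigl|\tfrac1N\sum_{\alpha\ge n_0} w_\alpha Z_\alpha\bigr|^{2p}$ has exactly the same structure as in the proof of Lemma~\ref{lemma:step 4}; the only difference is that each factor $Z_\alpha$ now carries an extra scalar $w_\alpha$. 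Thus the task reduces to checking that tracking these scalars does not weaken the final bound.

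To that end I would first record the relevant a priori control of the weights, all valid on $\Omega$. By Lemma~\ref{hat bound} and $|\mfc|\sim1$ we have $\whmfc\sim1$, and combining the definition of $\caD_\phi'$ with~\eqref{farfromhome} gives $|\sigma_\alpha^{-1}+\whmfc| = \sigma_\alpha^{-1}|1+\sigma_\alpha\whmfc| \gtrsim |1+\sigma_\alpha\mfc| \gtrsim M^{-\phi}\kappa_0$ for $\alpha\in\llbracket n_0,M\rrbracket$, so that $\max_{\alpha\ge n_0}|w_\alpha| \le C M^{2\phi}\kappa_0^{-2} = CM^{2/(\b+1)+2\phi}$; simultaneously~\eqref{R2 hat less than 1} yields the $\ell^1$-type bound $\tfrac1N\sum_{\alpha}|w_\alpha| = \wh R_2(z) < 1$. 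Moreover, by Proposition~\ref{proposition:step 2_4}, Corollary~\ref{corollary:step 2_1} and Lemma~\ref{weakboundwhmfc} one has $G_{\alpha\alpha}(z) = -(\sigma_\alpha^{-1}+\whmfc(z))^{-1}(1+o(1))$ on $\caD_\phi'$, hence $|w_\alpha| \sim |\whmfc G_{\alpha\alpha}|^2 \sim |G_{\alpha\alpha}|^2$. These are precisely the bounds on the $G_{\alpha\alpha}$-factors already exploited in the proof of Lemma~\ref{lemma:step 4} (the trivial bound $|G_{\gamma\gamma}|\le\eta_0^{-1}$ for the critical index, $|G_{\alpha\alpha}|\lesssim M^{\phi}\kappa_0^{-1}$ for the rest), so every sum of the form $\tfrac1N\sum_\alpha(\cdots)$ appearing in the moment bound is controlled exactly as before: the ``diagonal'' contributions by $\tfrac1N\sum|w_\alpha|<1$, and the $O(1)$ exceptional indices near $\gamma$ (those with $1-\sigma_\alpha=O(\kappa_0)$) by the trivial bounds together with the separation estimate~\eqref{eq4.3}. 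Consequently $\E\bigl|\tfrac1N\sum_{\alpha\ge n_0}w_\alpha Z_\alpha\bigr|^{2p}$ obeys the same estimate as in Lemma~\ref{lemma:step 4}, giving $\bigl|\tfrac1N\sum_{\alpha\ge n_0}w_\alpha Z_\alpha\bigr| \prec M^{-1/2-\fb/2+2\phi}$. The bound for $\sum_{\alpha\ne\gamma}w_\alpha Z_\alpha^{(\gamma)}$ is obtained identically, replacing $G$ by the minor $G^{(\gamma)}$ throughout (removing a single fixed index $\gamma$, with $n_0=O(1)$, changes nothing), and uniformity over $z\in\caD_\phi'$ follows from the same lattice-and-Lipschitz argument used in Lemmas~\ref{lemma:step 2_1} and~\ref{lemma:step 4}.

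The main obstacle is exactly this verification that the weights are ``harmless'': one must ensure that at no node of the moment-expansion graph does a weight get raised to a power that is no longer absorbed --- that is, that the only sums one ever has to estimate are either of $\tfrac1N\sum_\alpha|w_\alpha|$-type (bounded by $\wh R_2<1$) or sums over the $O(\mathrm{polylog}\,M)$ exceptional indices handled by trivial bounds. This is precisely the bookkeeping already carried out for Lemma~\ref{lemma:step 4}; provided that argument is phrased so as to accommodate an auxiliary $\ell^1$-bounded, $\Sigma$-measurable weight, the corollary follows with no loss in the exponent.
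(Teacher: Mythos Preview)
Your proposal is correct and matches the paper's approach: the paper's own proof consists solely of the sentence ``Since the proof of Corollary~\ref{corollary:step 4} is the same with that of~\cite{eejl}, we omit the detail in this paper,'' and what you have outlined---re-running the moment expansion of Lemma~\ref{lemma:step 4} with the $\Sigma$-measurable weights $w_\alpha=\whmfc^2(\sigma_\alpha^{-1}+\whmfc)^{-2}$ carried through, using $\tfrac1N\sum_\alpha|w_\alpha|=\wh R_2<1$ and $|w_\alpha|\sim|G_{\alpha\alpha}|^2$---is precisely how the argument in~\cite{eejl} is structured. Your identification of the key point (that the weights commute with every $\E_\beta$ and are controlled by the same quantities already bounded in the unweighted proof) is exactly right.
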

Lemma~\ref{lemma:FAZA}, Lemma~\ref{lemma:step 4} and Corollary~\ref{corollary:step 4} are proved in Section~\ref{sec:Zlemma} 

\begin{remark}
The bounds we obtained in Lemma~\ref{lemma:step 3}, Lemma~\ref{lemma:FAZA}, Lemma~\ref{lemma:step 4}, and Corollary~\ref{corollary:step 4} are~$o(\eta)$. This will be used on several occasions in the next section.
\end{remark}

\subsection{Proof of Proposition~\ref{proposition:lambda_k}}\label{proofofprop}
Recall the definition of $(\wh z_\gamma)$ in~\eqref{definition of hatzk}. We first estimate $\im m(z)$ for $z = E + \ii \eta_0$ satisfying $|z - \wh z_\gamma| \ge M^{-1/2 + 3\phi}$, for all $\gamma \in \llbracket 1, n_0 -1 \rrbracket$.

\begin{lemma} \label{lemma:step 5}
There exists a constant $C > 1$ such that the following bound holds with high probability on $\Omega$: For any $z = E + \ii \eta_0 \in \caD_{\phi}'$, satisfying $|z - \wh z_\gamma| \ge M^{-1/2 + 3\phi}$ for all $\gamma \in \llbracket 1, n_0 -1 \rrbracket$, we have
\begin{align}
	C^{-1} \eta_0 \le \im m(z) \le C \eta_0\,.
\end{align}
\end{lemma}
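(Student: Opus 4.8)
This lemma is the ``$m$-analogue'' of Lemma~\ref{lemma:step 1}$(1)$, and the strategy is to deduce it from that lemma once the local law of Proposition~\ref{proposition:step 2_4} has been upgraded to the sharper estimate
\begin{align}\label{plan goal}
	\left|\frac{1}{m(z)}-\frac{1}{\whmfc(z)}\right|=o(\eta_0)
\end{align}
in the restricted regime $|z-\wh z_\gamma|\ge M^{-1/2+3\phi}$, $\gamma\in\llbracket1,n_0-1\rrbracket$. First I would record the elementary reduction: since $m,\whmfc\sim1$ and $|1/m-1/\whmfc|\prec(M\eta_0)^{-1}=o(1)$ on $\Omega$ with high probability (Proposition~\ref{proposition:step 2_4} and the Remark after Lemma~\ref{hat bound}), and $\im\whmfc\sim\eta_0$ by Lemma~\ref{lemma:step 1}$(1)$, expanding $\im(1/m)-\im(1/\whmfc)=-\im m/|m|^2+\im\whmfc/|\whmfc|^2$ shows that~\eqref{plan goal} forces $\im m=\im\whmfc+o(\eta_0)$; combined with the two-sided bound $-\im(1/\whmfc)\sim\eta_0$ of Lemma~\ref{lemma:step 1}$(1)$ (valid precisely in this regime), this yields $C^{-1}\eta_0\le\im m(z)\le C\eta_0$. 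So everything reduces to~\eqref{plan goal}.

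\textbf{Self-consistent equation for the difference.} Next I would set $\Delta\deq 1/m-1/\whmfc$ and write down an exact equation for it. Combining the Schur/Feshbach formulas~\eqref{schur} with the relation~\eqref{rel between m and wt m}, exactly as in the derivation of~\eqref{minverse} but keeping all error terms, gives
\begin{align}
	z+\frac{1}{m}=\frac{1}{N}\sum_{\alpha}\frac{1}{\sigma_\alpha^{-1}+m^{(\alpha)}+Z_\alpha}+\frac{1}{m}\cdot\frac{1}{N}\sum_{\alpha}\frac{(m^{(\alpha)}-m)+Z_\alpha}{\sigma_\alpha^{-1}+m^{(\alpha)}+Z_\alpha}\,,
\end{align}
while~\eqref{eq:hat mfc} reads $z+1/\whmfc=\frac1N\sum_\alpha(\sigma_\alpha^{-1}+\whmfc)^{-1}$; subtracting and writing $\whmfc-m=m\whmfc\Delta$ in the numerators yields $\Delta(1-S)=E_1+E_2$, with
\begin{align}
	S\deq\frac{1}{N}\sum_{\alpha}\frac{m\whmfc}{(\sigma_\alpha^{-1}+m^{(\alpha)}+Z_\alpha)(\sigma_\alpha^{-1}+\whmfc)}\,,\qquad E_1\deq\frac{1}{N}\sum_{\alpha}\frac{(m-m^{(\alpha)})-Z_\alpha}{(\sigma_\alpha^{-1}+m^{(\alpha)}+Z_\alpha)(\sigma_\alpha^{-1}+\whmfc)}\,,
\end{align}
and $E_2\deq m^{-1}\,\frac1N\sum_\alpha\frac{(m^{(\alpha)}-m)+Z_\alpha}{\sigma_\alpha^{-1}+m^{(\alpha)}+Z_\alpha}$. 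Exactly as in the proof of Lemma~\ref{lemma:step 2_3}, the stability factor is bounded away from zero: the $\alpha\ne\gamma$ contributions to $S$ add up to $\wh R_2^{(\gamma)}(z)+o(1)<\fc<1$ by~\eqref{assumption_CLT_2} and Lemma~\ref{hat bound}, while the single term $\alpha=\gamma$ is $o(1)$ because in the present regime $|\sigma_\gamma^{-1}+\whmfc|\gg M^{-1/2+2\phi}\gg\eta_0$ (by Lemma~\ref{mfc estimate}, Lemma~\ref{hat bound} and $|z-\wh z_\gamma|\ge M^{-1/2+3\phi}$); hence $|1-S|\ge1-\fc'>0$ and $|\Delta|\le C(|E_1|+|E_2|)$.

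\textbf{Bounding the error terms.} It remains to prove $|E_1|+|E_2|=o(\eta_0)$, and I would split each sum as $\sum_{1\le\alpha<n_0}+\sum_{n_0\le\alpha\le M}$. In the first, bounded, range both $|\sigma_\alpha^{-1}+\whmfc|$ and $|\sigma_\alpha^{-1}+m^{(\alpha)}+Z_\alpha|$ are $\gg M^{-1/2+2\phi}$ --- for $\alpha=\gamma$ by the regime hypothesis, for the remaining indices by~\eqref{farfromhome} --- whereas $|m-m^{(\alpha)}|\prec(M\eta_0)^{-1}$ (Lemma~\ref{cauchy interlacing}) and $|Z_\alpha|\prec(M\eta_0)^{-1}$ (Corollary~\ref{corollary:step 2_1}), so a direct count gives these contributions as $\prec M^{-1/2-3\phi}=o(\eta_0)$. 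For the range $\alpha\ge n_0$ I would first use the resolvent identity to rewrite $\frac{m-m^{(\alpha)}}{\sigma_\alpha^{-1}+m^{(\alpha)}+Z_\alpha}=-\frac1N\sum_a(G_{\alpha a})^2$ and bound it via the Ward identity~\eqref{ward}, turning the $(m-m^{(\alpha)})$-parts of $E_1,E_2$ into weighted sums of $\im G_{\alpha\alpha}$ that are $o(\eta_0)$ for $\b>1$; and I would treat the $Z_\alpha$-parts by replacing the random factor $(\sigma_\alpha^{-1}+m^{(\alpha)}+Z_\alpha)^{-1}$ by the deterministic-in-$X$ weight $-\whmfc(\sigma_\alpha^{-1}+\whmfc)^{-1}$, the replacement error being controlled through $|\whmfc-m^{(\alpha)}-Z_\alpha|\prec(M\eta_0)^{-1}$ and $\max_{\alpha\ge n_0}|G_{\alpha\alpha}|\prec M^{\phi}\kappa_0^{-1}$ (established inside the proof of Lemma~\ref{lemma:step 3}), and then invoking the fluctuation-averaging estimates of Lemma~\ref{lemma:step 4} and Corollary~\ref{corollary:step 4}. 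By the Remark following Corollary~\ref{corollary:step 4} all of these bounds are $o(\eta)$, so $|E_1|+|E_2|=o(\eta_0)$ and~\eqref{plan goal} follows. Finally, uniformity in $E$ (so that the bound holds with high probability simultaneously for all admissible $z$) is obtained by the lattice/Lipschitz-continuity argument of Lemma~\ref{lemma:step 2_1}, using that $m$ and $\whmfc$ have Lipschitz constant $\le\eta_0^{-2}$ on $\caD_\phi'$.

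\textbf{Main obstacle.} The hard part is the large-index ($\alpha\ge n_0$) estimate of $E_1$ and $E_2$: since the diagonal resolvent entries $G_{\alpha\alpha}$ of $\caQ$ are not $O(1)$ but only $O(\kappa_0^{-1})=O(M^{1/(\b+1)})$, the naive bound $|N^{-1}\sum_\alpha w_\alpha Z_\alpha|\prec\max_\alpha|Z_\alpha|\prec(M\eta_0)^{-1}$ is much larger than $\eta_0$ and useless here; one genuinely needs the refined fluctuation-averaging bounds of Section~\ref{sec:Zlemma}, and one must keep careful track of how all the resulting polynomial exponents compare with $\eta_0=M^{-1/2-\phi}$ --- this comparison (together with the bookkeeping of the replacement errors, delicate exactly because $|G_{\alpha\alpha}|$ is unbounded) is where the hypothesis $\b>1$ and the smallness of $\phi$ from~\eqref{phi condition} enter.
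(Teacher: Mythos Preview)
Your overall strategy---set up the self-consistent equation $\Delta(1-S)=E_1+E_2$, show $|1-S|\ge c>0$, and bound $|E_1|+|E_2|=o(\eta_0)$---is correct, but your organisation differs from the paper's in one substantive way, and this creates a small gap you should be aware of.

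The paper does \emph{not} prove the relation $z+1/m=-d^{-1}\wt m+o(\eta_0)$ (equivalently, your bound on $E_2$) from the Greek side. Instead it switches to the \emph{Latin} Schur identity $G_{aa}^{-1}=-z-d^{-1}\wt m^{(a)}-Z_a$, averages over $a$, and uses the entrywise law $|G_{aa}-m|\prec(M\eta_0)^{-1}$ together with the Latin fluctuation average Lemma~\ref{lemma:FAZA} (giving $|N^{-1}\sum_a Z_a|\prec(M\eta_0)^{-2}$) and Lemma~\ref{lemma:step 3} to obtain $-1/m=z+d^{-1}\wt m+o(\eta_0)$ directly. It then combines this with the Greek-side relation $d^{-1}\wt m=\frac{1}{N}\sum_\alpha^{(\gamma)}\frac{-1}{\sigma_\alpha^{-1}+m}+o(\eta_0)$ (obtained exactly as in your $E_1$-analysis, via Taylor expansion around $\whmfc$ and Corollary~\ref{corollary:step 4}) and takes imaginary parts, arriving at $(1-K_m^{(\gamma)})\bigl(-\im(1/m)\bigr)=\eta_0+o(\eta_0)$.

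Your route stays entirely on the Greek side. The $(m^{(\alpha)}-m)$-parts of $E_1,E_2$ and the $Z_\alpha$-part of $E_1$ work as you describe: after your replacement the latter carries the weight $(\sigma_\alpha^{-1}+\whmfc)^{-2}$ and is exactly Corollary~\ref{corollary:step 4}. But the $Z_\alpha$-part of $E_2$, after replacing $-G_{\alpha\alpha}$ by $(\sigma_\alpha^{-1}+\whmfc)^{-1}$, lands on $\frac{1}{N}\sum_{\alpha\ge n_0}\frac{Z_\alpha}{\sigma_\alpha^{-1}+\whmfc}$, whose weight has exponent $1$, not $2$; this is neither Lemma~\ref{lemma:step 4} nor Corollary~\ref{corollary:step 4} as stated. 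The moment argument of Section~\ref{sec:Zlemma} does extend to this milder weight (the relevant summability $\frac{1}{N}\sum_\alpha|\sigma_\alpha^{-1}+\whmfc|^{-1}\le C$ follows from $\wh R_2<1$ by Cauchy--Schwarz), so your argument can be completed, but you should flag that an additional variant is needed. The paper's detour through the Latin side buys precisely this: there only the \emph{unweighted} average $\frac{1}{N}\sum_a Z_a$ appears, and Lemma~\ref{lemma:FAZA} applies verbatim.
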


This implies that the order of the imaginary part of $m(z)$ is $\eta$ when $z$ is sufficiently far from $\wh{z}_\gamma$. 

\begin{proof}
Let $z\in\caD_{\phi}'$ with $\eta=\eta_0$ and choose $\gamma \in \llbracket 1, n_0 -1 \rrbracket$ such that~\eqref{assumption near sigma_k} is satisfied. Consider
\begin{align} \label{eq:step 5_1}
	d^{-1}\wt{m} = \frac{G_{\gamma\gamma}}{N} + \frac{1}{N} \sum_{\alpha}^{(\gamma)} \frac{-1}{\sigma_\alpha^{-1}+m^{(\alpha)}+Z_{\alpha}}\,.
\end{align}
From the assumption in~\eqref{assumption near sigma_k}, Corollary~\ref{corollary:step 2_1}, and Proposition~\ref{proposition:step 2_4}, we find that with high probability on~$\Omega$,
\begin{align} \begin{split} \label{eq:step 5_2}
		&\left| \frac{1}{N} \sum_{\alpha}^{(\gamma)} \left( \frac{-1}{\sigma_\alpha^{-1}+m^{(\alpha)}+Z_{\alpha}} + \frac{1}{\sigma_\alpha^{-1}+\whmfc} - \frac{m^{(\alpha)} - \whmfc + Z_\alpha}{(\sigma_\alpha^{-1}+\whmfc)^2} \right) \right|\\
		=&\left| \frac{1}{N} \sum_{\alpha}^{(\gamma)} \left(\frac{m^{(\alpha)}-\whmfc+Z_\alpha}{\sigma_\alpha^{-1}+\whmfc}\right)\left( \frac{m^{(\alpha)}-\whmfc+Z_\alpha}{(\sigma_\alpha^{-1}+m^{(\alpha)}+Z_{\alpha})(\sigma_\alpha^{-1}+\whmfc)} \right)  \right|\\
		\prec&\frac{1}{N} \sum_{\alpha}^{(\gamma)} \frac{M^{-1 + 2\phi}}{|\sigma_\alpha^{-1}+\whmfc|^3}\le \frac{C}{N} \sum_{\alpha}^{(\gamma)} \frac{|\whmfc|^3 M^{-1 + 2\phi}}{|\sigma_\alpha^{-1}+\whmfc|^3} \\
		\le& C \frac{M^{2 \phi}}{M} M^{\phi} M^{1/(\b+1)} \frac{1}{N} \sum_{\alpha}^{(\gamma)} \frac{|\whmfc|^2}{|\sigma_\alpha^{-1}+\whmfc|^2} \ll \eta\,.
\end{split} \end{align}
We also observe that 
\begin{align}
	\left| \frac{1}{N} \sum_{ \substack{\alpha=1\\\alpha \neq \gamma}}^{n_0} \frac{|\whmfc^2| Z_\alpha}{(\sigma_\alpha^{-1}+\whmfc)^2} \right| \prec  N^{-1} M^{-1/2 + 2\phi} M^{1/(\b+1)} \ll M^{-1} \ll \eta
\end{align}
on $\Omega$.
Thus, from Lemma~\ref{lemma:step 3} and Corollary~\ref{corollary:step 4}, we find with high probability on $\Omega$ that
\begin{align} \label{eq:step 5_3}
	\begin{split}
		\frac{1}{N}\sum_{\alpha}^{(\gamma)}\frac{m^{(\alpha)}-\whmfc+Z_\alpha}{(\sigma_\alpha^{-1}+\whmfc)^2}&=\frac{1}{N}\sum_{\alpha}^{(\gamma)}\frac{m^{(\alpha)}-\whmfc}{(\sigma_\alpha^{-1}+\whmfc)^2}+\frac{1}{\whmfc^2}\frac{1}{N}\sum_{\alpha}^{(\gamma)}\frac{(\whmfc)^2Z_\alpha}{(\sigma_\alpha^{-1}+\whmfc)^2}\\
		&=\frac{1}{N}\sum_{\alpha}^{(\gamma)}\frac{m^{(\alpha)}-\whmfc}{(\sigma_\alpha^{-1}+\whmfc)^2}+o(\eta)\,.
	\end{split}
\end{align}
Recalling~\eqref{eq:step 1_1}, i.e.,
\begin{align}
	\left|1 + \re\frac{1}{\sigma_\gamma\whmfc(z)}\right| \gg M^{-1/2 + 2\phi}\,,
\end{align}
we get $|G_{\gamma\gamma}| \le M^{1/2 - 2\phi}$. We thus obtain from~\eqref{eq:step 5_1},~\eqref{eq:step 5_2}, and~\eqref{eq:step 5_3} that with high probability on~$\Omega$,
\begin{align}
	d^{-1}\wt{m} = o(\eta)+\frac{1}{N}\sum_{\alpha}^{(\gamma)}\left( \frac{-1}{\sigma_\alpha^{-1}+\whmfc}+\frac{m^{(\alpha)}-\whmfc}{(\sigma_\alpha^{-1}+\whmfc)^2}\right)\,.
\end{align}
By Taylor expansion,
\begin{align} \begin{split}
		\frac{1}{N}\sum_{\alpha}^{(\gamma)}\frac{-1}{\sigma_\alpha^{-1}+m}&=\frac{1}{N}\sum_{\alpha}^{(\gamma)}\left(\frac{-1}{\sigma_\alpha^{-1}+\whmfc}+\frac{m-\whmfc}{(\sigma_\alpha^{-1}+\whmfc)^2}\right)+O\left(\frac{1}{N}\sum_{\alpha}^{(\gamma)}\frac{(m-\whmfc)^2}{(\sigma_\alpha^{-1}+\whmfc)^3}\right)\\&=\frac{1}{N}\sum_{\alpha}^{(\gamma)}\left(\frac{-1}{\sigma_\alpha^{-1}+\whmfc}+\frac{m-\whmfc}{(\sigma_\alpha^{-1}+\whmfc)^2}\right)+o(\eta)=d^{-1}\wt{m}+o(\eta) \,.
\end{split}\end{align}
Taking imaginary parts, we get
\begin{align}
	\frac{1}{N}\sum_{\alpha}^{(\gamma)}\frac{\im{m}}{|\sigma_\alpha^{-1}+m|^2}=\frac{\im{m}}{|m|^2}\frac{1}{N}\sum_{\alpha}^{(\gamma)}\frac{|m|^2}{|\sigma_\alpha^{-1}+m|^2}=\im{d^{-1}\wt{m}}+o(\eta)\,.
\end{align}
If we take \begin{align}K_m^{(\gamma)}=\frac{1}{N}\sum_{\alpha}^{(\gamma)}\frac{|m|^2}{|\sigma_\alpha^{-1}+m|^2},\end{align}
since by~\eqref{mfcwhmfc} and~\eqref{proposition:step 2_4},
\begin{align}
	\frac{1}{N} \sum_{\alpha}^{(\gamma)} \frac{|m|^2}{|\sigma_\alpha^{-1}+ m|^2} = \frac{1}{N} \sum_{\alpha}^{(\gamma)} \frac{|\whmfc|^2}{|\sigma_\alpha^{-1}+ \whmfc|^2} + o(1) < c < 1\,,
\end{align}
for some constant $c$, then we have
\begin{align}\label{imtildem}
	\left( -\im{\frac{1}{m}}\right)\cdot K_m^{(\gamma)}=\im{d^{-1}\wt{m}}+o(\eta).
\end{align}
Recall \eqref{dinvwhmfc}, we have that
\begin{align}
	G_{aa}=\frac{1}{-z-d^{-1}\wt{m}^{(a)}-Z_a}=\frac{1}{-z-d^{-1}\wt{m}+\caO_\prec(M^{-1/2+\phi})}=\frac{1}{\frac{1}{m}+\caO_\prec(M^{-1/2+\phi})} \,,
\end{align}
which implies
\begin{equation}
	\frac{1}{G_{aa}}=\frac{1}{m}+\caO_\prec(M^{-1/2+\phi}) \,.
\end{equation}
By using~\eqref{proposition:step 2_4}, $\whmfc\sim 1$ so that $m\sim 1$. In addition, $G_{aa}\sim 1$ and
\begin{align}\label{entrywise}
	G_{aa}=m+\caO_\prec({M^{-1/2+\phi}}) \,.
\end{align}
Considering that 
\begin{align} \begin{split}
		\left| \frac{1}{N}\sum_{a} \left( \frac{1}{G_{aa}}-\frac{1}{m}\right) \right| & =\left| \frac{1}{N}\sum_{a} \left( \frac{m-G_{aa}}{m^2}\right)+\frac{1}{N}\sum_{a} \left( \frac{(G_{aa}-m)^2}{m^2 G_{aa}}\right)\right| \\&
		=\left| \frac{1}{N}\sum_{a} \left( \frac{(G_{aa}-m)^2}{m^2 G_{aa}}\right) \right| \leq \left| \frac{C}{N}\sum_{a}  (G_{aa}-m)^2 \right|\\& \leq \left| \frac{C}{N}\sum_{a}  \caO_\prec(M^{-1+2\phi}) \right|\prec M^{-1+2\phi} \ll \eta \,,
	\end{split}
\end{align}
thus we have 
\begin{align}
	-\frac{1}{N}\sum_{a}\frac{1}{G_{aa}}=-\frac{1}{m}+o(\eta).
\end{align}
By the definition of $G_{aa}$, Lemma~\ref{lemma:step 3} and Lemma~\ref{lemma:FAZA}, the left hand side of the equation can be written as
\begin{align}
	\begin{split}
		\frac{1}{N}\sum_{a} (z+d^{-1}\wt{m}^{(a)}+Z_a)&=\frac{1}{N}\sum_{a} (z+d^{-1}\wt{m}-d^{-1}\wt{m}+d^{-1}\wt{m}^{(a)}+Z_a)\\
		&=\frac{1}{N}\sum_{a}^{(\gamma)} (z+d^{-1}\wt{m}-d^{-1}\wt{m}+d^{-1}\wt{m}^{(a)}+Z_a)\\&+\frac{1}{N}(z+d^{-1}\wt{m}-d^{-1}\wt{m}+d^{-1}\wt{m}^{(\gamma)}+Z_\gamma)\\
		&=z+d^{-1}\wt{m}+o(\eta)+\frac{1}{N}\sum_{a}Z_{a}=z+d^{-1}\wt{m}+o(\eta).
	\end{split}
\end{align}
Hence,
\begin{equation} \label{-im1/m and eta}
	-\im\frac{1}{m}=\eta+\im{d^{-1}\wt{m}}+o(\eta) \,.
\end{equation}
Applying~\eqref{imtildem},
\begin{align}
	-\im{\frac{1}{m}}=\eta+\left( -\im{\frac{1}{m}}\right)\cdot K_m^{(\gamma)}+o(\eta), 
\end{align}

\begin{equation*}
	(1-K_m^{(\gamma)})\left( -\im{\frac{1}{m}} \right)=\eta+o(\eta).
\end{equation*}
Therefore we can conclude that $C^{-1} \eta \le -\im{\frac{1}{m}} \le C \eta$ with high probability for some $C > 1$. This proves the desired lemma.
\end{proof}

As a next step, we show that $\im m^{(\gamma)}(z) \sim \eta$ even though when $z$ is close to $\wh{z}_{\gamma}$. Furthermore, we find a point $\wt{z}_\gamma$ close to $\wh{z}_\gamma$ such that the imaginary part of $m(\wt{z}_\gamma)$ is much larger than $\eta$. 

\begin{lemma} \label{lemma:step 6_1}
There exists a constant $C > 1$ such that the following bound holds with high probability on $\Omega$, for all  $z = E + \ii \eta_0 \in \caD_{\phi}'$: For given $z$, choose $\gamma \in \llbracket 1, n_0 -1 \rrbracket$ such that~\eqref{assumption near sigma_k} is satisfied. Then, we have
\begin{align}
	C^{-1} \eta_0 \le \im m^{(\gamma)}(z) \le C \eta_0\,.
\end{align}
\end{lemma}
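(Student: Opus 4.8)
The plan is to redo, for the minor $H^{(\gamma)}$, the self-consistent analysis that was carried out for $H$ in the proof of Lemma~\ref{lemma:step 5}. Fix $z = E + \ii\eta_0 \in \caD_{\phi}'$ and let $\gamma$ be the index singled out by \eqref{assumption near sigma_k}; form $H^{(\gamma)}$, i.e. delete row/column $\wt\gamma = N+\gamma$ (equivalently remove the diagonal entry $\sigma_\gamma$ of $\Sigma$ and the $\gamma$-th row of $X$). The key point is that once the critical index $\gamma$ has been removed, every remaining index is harmless, uniformly in $z$: by \eqref{farfromhome} (available since $z$ satisfies \eqref{assumption near sigma_k}) we have $|\sigma_\alpha^{-1} + \mfc(z)| \gtrsim M^{-\phi}\kappa_0$ for all $\alpha \neq \gamma$, hence by Proposition~\ref{proposition:step 2_4} and \eqref{mfcwhmfc} also $|\sigma_\alpha^{-1} + \whmfc(z)| \gtrsim M^{-\phi}\kappa_0$ and $|G^{(\gamma)}_{\alpha\alpha}| \prec M^{\phi}\kappa_0^{-1} = M^{\phi + 1/(\b+1)}$, for \emph{all} $z = E + \ii\eta_0 \in \caD_{\phi}'$, including those close to $\wh z_\gamma$. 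Thus the near/far dichotomy of Lemma~\ref{lemma:step 5} disappears and a single argument covers $\caD_{\phi}'$.

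In detail, I would first collect the minor a priori inputs, all of which follow from already established results: $|1/m^{(\gamma)} - 1/\whmfc| \prec (M\eta_0)^{-1}$ (hence $m^{(\gamma)}\sim 1$) from Proposition~\ref{proposition:step 2_4} and Cauchy interlacing (Lemma~\ref{cauchy interlacing}); $|Z^{(\gamma)}_\alpha|, |Z^{(\gamma)}_a| \prec (M\eta_0)^{-1}$ from Corollary~\ref{corollary:step 2_1}; $|\wt m^{(\gamma)} - \wt m^{(\gamma\alpha)}| \prec M^{-1+1/(\b+1)+4\phi}$ for $\alpha\neq\gamma$ from Lemma~\ref{lemma:step 3}, together with $|\wt m^{(\gamma)} - \wt m^{(\gamma a)}| \prec M^{-1+2\phi}$ for Latin indices (easy, since $|G^{(\gamma)}_{aa}|\sim 1$ and the Ward identity gives $|\wt m^{(\gamma)}-\wt m^{(\gamma a)}| \le |G^{(\gamma)}_{aa}|^{-1}\im G^{(\gamma)}_{aa}/(M\eta_0)$); $|\frac1N\sum_a Z^{(\gamma)}_a| \prec (M\eta_0)^{-2}$ from Lemma~\ref{lemma:FAZA}; and the weighted fluctuation-average bounds of Corollary~\ref{corollary:step 4} in their $(\gamma)$-minor form. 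Then, using $G^{(\gamma)}_{aa} = (-z - d^{-1}\wt m^{(\gamma a)} - Z^{(\gamma)}_a)^{-1}$ and the minor analogue of the derivation of \eqref{minverse}, write $d^{-1}\wt m^{(\gamma)} = \frac1N\sum_{\alpha\neq\gamma}\frac{-1}{\sigma_\alpha^{-1} + m^{(\gamma\alpha)} + Z^{(\gamma)}_\alpha}$, Taylor-expand each summand around $\whmfc$ to second order, and estimate the quadratic remainder exactly as in \eqref{eq:step 5_2}: pulling out one factor $M^{\phi}\kappa_0^{-1}$ and using $\frac1N\sum_{\alpha\neq\gamma}|\sigma_\alpha^{-1}+\whmfc|^{-2} \sim \wh R_2 < 1$ gives a bound $O\bigl( M^{-1+2\phi}\cdot M^{\phi+1/(\b+1)}\bigr) = O(M^{-1+3\phi+1/(\b+1)}) = o(\eta_0)$, using $\b > 1$ and that $\phi$ is small. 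Repeating the bookkeeping of \eqref{eq:step 5_3} and the closing steps of Lemma~\ref{lemma:step 5} and taking imaginary parts yields $(1 - K^{(\gamma)})\bigl(-\im \tfrac{1}{m^{(\gamma)}}\bigr) = \eta_0 + o(\eta_0)$ with $K^{(\gamma)} \deq \frac1N\sum_{\alpha\neq\gamma}\frac{|m^{(\gamma)}|^2}{|\sigma_\alpha^{-1} + m^{(\gamma)}|^2}$, which by \eqref{assumption_CLT_2} and the local law satisfies $K^{(\gamma)} < \mathfrak{c}' < 1$. Since $m^{(\gamma)}\sim 1$, this gives $C^{-1}\eta_0 \le \im m^{(\gamma)}(z) \le C\eta_0$. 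Uniformity over $z$ is then obtained, for each of the finitely many admissible $\gamma$, by the lattice argument together with the $O(\eta_0^{-2})$-Lipschitz continuity of $m^{(\gamma)}$ and $\whmfc$ on $\{E+\ii\eta_0\}\cap\caD_{\phi}'$, as in Lemma~\ref{lemma:step 2_1}.

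The only step requiring genuine care is this quadratic remainder estimate when $z$ is close to $\wh z_\gamma$: there $|\sigma_\alpha^{-1}+\whmfc|$ can drop to its minimal value $M^{-\phi}\kappa_0$ for an $O(1)$ number of $\alpha \neq \gamma$, so the cubic sum $\frac1N\sum_{\alpha\neq\gamma}|\sigma_\alpha^{-1}+\whmfc|^{-3}$ is borderline and must be handled by the above trick; this is precisely where the standing hypotheses $\b > 1$ and $\phi$ small enter quantitatively (concretely one needs $4\phi + 1/(\b+1) < \tfrac12$). Everything else is a mechanical transcription of the computations already performed for $m$ in Lemmas~\ref{lemma:step 2_1} and~\ref{lemma:step 5}, with every resolvent object replaced by its $(\gamma)$-minor counterpart.
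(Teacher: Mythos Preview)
Your proposal is correct and follows essentially the same approach as the paper: rerun the self-consistent analysis of Lemma~\ref{lemma:step 5} for the minor $H^{(\gamma)}$, using Proposition~\ref{proposition:step 2_4}, Corollary~\ref{corollary:step 2_1}, Lemma~\ref{lemma:step 3}, and Corollary~\ref{corollary:step 4} to arrive at $d^{-1}\wt m^{(\gamma)} = \frac{1}{N}\sum_{\alpha}^{(\gamma)} \frac{-1}{\sigma_\alpha^{-1}+m^{(\gamma)}} + o(\eta_0)$ and then take imaginary parts. The paper's proof is terser, but your explicit observation that removing $\gamma$ eliminates the near/far dichotomy (so a single argument covers all of $\caD_{\phi}'$) is exactly the point behind the paper's one-line reduction.
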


\begin{proof}
Reasoning as in the proof of Lemma~\ref{lemma:step 5}, we find from Proposition~\ref{proposition:step 2_4}, Corollary~\ref{corollary:step 2_1}, Lemma~\ref{lemma:step 3}, and Corollary~\ref{corollary:step 4} that, with high probability on $\Omega$,
\begin{align}
	d^{-1}\wt m^{(\gamma)}=\frac{1}{N} \sum_\alpha^{(\gamma)} \left(\frac{-1}{\sigma_\alpha^{-1}+\whmfc}+\frac{m^{(\alpha\gamma)}-\whmfc}{(\sigma_\alpha^{-1}+\whmfc)^2} \right) + o(\eta_0) = \frac{1}{N} \sum_\alpha^{(\gamma)} \frac{-1}{\sigma_\alpha^{-1}+m^{(\gamma)}} + o(\eta_0)\,.
\end{align}
Considering the imaginary part, we can prove the desired lemma as in the proof of Lemma~\ref{lemma:step 5}.
\end{proof}

\begin{corollary} \label{corollary:step 6_1}
The following bound holds on $\Omega$, for all $z = E + \ii \eta_0 \in \caD_{\phi}'$:
For given $z$, choose $\gamma \in \llbracket 1, n_0 -1 \rrbracket$ such that~\eqref{assumption near sigma_k} is satisfied. Then, we have
\begin{align}
	|Z_\gamma| \prec \frac{1}{\sqrt M}\,.
\end{align}
\end{corollary}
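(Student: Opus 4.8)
The plan is to read the bound off directly from the standard large-deviation estimate for the quadratic form $Z_\gamma$, feeding in the a priori control $\im m^{(\gamma)}(z) = O(\eta_0)$ that was already established in Lemma~\ref{lemma:step 6_1}. Fix $z = E + \ii\eta_0 \in \caD_\phi'$ and choose $\gamma \in \llbracket 1, n_0-1\rrbracket$ as in~\eqref{assumption near sigma_k}. Recall from~\eqref{schur using Z} that $Z_\gamma = (1-\E_\gamma)(XG^{(\gamma)}X^*)_{\gamma\gamma} = \sum_{a,b} x_{\gamma a}G^{(\gamma)}_{ab}x_{\gamma b} - \frac1N\sum_a G^{(\gamma)}_{aa}$, a centered quadratic form in the $\gamma$-th row of $X$, which is independent of $G^{(\gamma)}$. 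First I would apply~\eqref{LDE3} of Lemma~\ref{lemma.LDE} conditionally on $G^{(\gamma)}$ to get $|Z_\gamma| \prec N^{-1}\bigl(\sum_{a,b}|G^{(\gamma)}_{ab}|^2\bigr)^{1/2}$, and then bound the Hilbert--Schmidt norm using the Ward identity~\eqref{ward}: $\sum_{a,b}|G^{(\gamma)}_{ab}|^2 \le \sum_a \im G^{(\gamma)}_{aa}/\eta_0 = N\,\im m^{(\gamma)}(z)/\eta_0$. This yields the familiar estimate $|Z_\gamma| \prec \sqrt{\im m^{(\gamma)}(z)/(N\eta_0)}$ --- the same one recorded just before Corollary~\ref{corollary:step 2_1}, except that I keep $\im m^{(\gamma)}$ rather than replacing it by $\im m$ via Cauchy interlacing, so that no extra $(N\eta_0)^{-1}$ term enters.

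Next I would invoke Lemma~\ref{lemma:step 6_1}, which provides a constant $C>1$ with $\im m^{(\gamma)}(z) \le C\eta_0$ with high probability on $\Omega$. Substituting this into the previous display gives $|Z_\gamma| \prec \sqrt{C\eta_0/(N\eta_0)} = \sqrt{C/N} \prec N^{-1/2}$, and since $C^{-1}N \le M \le CN$ by Remark~\ref{MNM} this is exactly $|Z_\gamma| \prec M^{-1/2}$ on $\Omega$. Uniformity in $z$ over $\caD_\phi'$ is inherited from the uniformity already present in Lemma~\ref{lemma.LDE} and Lemma~\ref{lemma:step 6_1}; if one wishes to be fully careful, the lattice/Lipschitz argument used in the proof of Lemma~\ref{lemma:step 2_1} upgrades the pointwise estimate to a uniform one.

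There is no essential difficulty here --- all the real work has been done in Lemma~\ref{lemma:step 6_1} --- so the only step warranting attention is the bookkeeping for combining the two ``high probability'' statements. Concretely, for fixed $\epsilon, D > 0$ one intersects the event $\{|Z_\gamma| \le M^{\epsilon}\sqrt{\im m^{(\gamma)}/(N\eta_0)}\}$ with the event $\{\im m^{(\gamma)} \le C\eta_0\}$ supplied by Lemma~\ref{lemma:step 6_1}, each of conditional probability at least $1 - M^{-D}$ given $\Omega$, to conclude that $|Z_\gamma| \le M^{\epsilon}\sqrt{C/N}$ holds with conditional probability at least $1 - 2M^{-D}$; this is precisely the assertion $|Z_\gamma| \prec M^{-1/2}$ on $\Omega$.
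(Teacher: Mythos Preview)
Your proposal is correct and is exactly the derivation the paper has in mind: the corollary is stated without proof immediately after Lemma~\ref{lemma:step 6_1}, and the intended argument is precisely to feed the bound $\im m^{(\gamma)}(z)\le C\eta_0$ from that lemma into the concentration estimate $|Z_\gamma|\prec\sqrt{\im m^{(\gamma)}/(N\eta_0)}$ already recorded before Corollary~\ref{corollary:step 2_1}. There is nothing to add.
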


Now we are able to locate the points for which $\im m(z) \gg \eta_0$ near the edge.

\begin{lemma} \label{lemma:step 6_2}
For any $\gamma \in \llbracket 1, n_0-1 \rrbracket$, there exists $\wt E_\gamma\in\R$ such that the following holds with high probability on $\Omega$: If we let $\wt z_\gamma\deq\wt E_\gamma + \ii \eta_0$, then $|\wt z_\gamma - \wh z_\gamma| \le M^{-1/2 + 3\phi}$ and $\im m(\wt z_\gamma) \gg \eta_0$.
\end{lemma}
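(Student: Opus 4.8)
The plan is to produce $\wt E_\gamma$ (depending on the realization) as a zero, on the interval $|E-\wh E_\gamma|\le M^{-1/2+3\phi}$, of the continuous real-valued function
\begin{align*}
	g(E)\ \deq\ -\re\frac{1}{G_{\gamma\gamma}(E+\ii\eta_0)}\ =\ \re\Big(\sigma_\gamma^{-1}+m^{(\gamma)}(E+\ii\eta_0)+Z_\gamma(E+\ii\eta_0)\Big),
\end{align*}
where we used $G_{\gamma\gamma}^{-1}=-\sigma_\gamma^{-1}-m^{(\gamma)}-Z_\gamma$. If $g(\wt E_\gamma)=0$ then $G_{\gamma\gamma}(\wt z_\gamma)^{-1}$ is purely imaginary; since $\im G_{\gamma\gamma}\ge 0$ on $\C^+$ (one checks $G_{aa}=(Q-z)^{-1}_{aa}$ and $G_{\wt\alpha\wt\alpha}=z\sigma_\alpha(\caQ-z)^{-1}_{\alpha\alpha}$ from the block structure of $H$, both having nonnegative imaginary part because $Q,\caQ\succeq 0$), this forces $\im(m^{(\gamma)}+Z_\gamma)(\wt z_\gamma)\ge 0$, so that
\begin{align*}
	\im G_{\gamma\gamma}(\wt z_\gamma)\ =\ \frac{1}{\im m^{(\gamma)}(\wt z_\gamma)+\im Z_\gamma(\wt z_\gamma)}\,,
\end{align*}
whose right-hand side is, by Lemma~\ref{lemma:step 6_1} ($\im m^{(\gamma)}\sim\eta_0$) and Corollary~\ref{corollary:step 6_1} ($|Z_\gamma|\prec M^{-1/2}$), at least $M^{1/2-\epsilon}$ with high probability for every $\epsilon>0$. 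Here $\gamma$ is the index satisfying~\eqref{assumption near sigma_k} at $\wt z_\gamma$, because $g(\wt E_\gamma)=0$ renders $|\re(1+(\sigma_\gamma\whmfc)^{-1})|$ much smaller than $|\re(1+(\sigma_\alpha\whmfc)^{-1})|\gtrsim M^{-\phi}\kappa_0$ for $\alpha\neq\gamma$, by~\eqref{eq4.3}--\eqref{eq4.4}.

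To exhibit the zero, I would first note that Proposition~\ref{proposition:step 2_4}, Lemma~\ref{cauchy interlacing} and Corollary~\ref{corollary:step 6_1} give, uniformly for $E+\ii\eta_0\in\caD_{\phi}'$ (uniformity by the usual lattice argument),
\begin{align*}
	g(E)\ =\ \re\big(\sigma_\gamma^{-1}+\whmfc(E+\ii\eta_0)\big)\ +\ \caO_\prec\!\big(M^{-1/2+\phi}\big).
\end{align*}
Lemmas~\ref{mfc estimate} and~\ref{hat bound} say that $E\mapsto 1/\whmfc(E+\ii\eta_0)$ is, up to relative error $o(1)$, affine with slope $\tfrac{d}{d-d_+}$; combined with $|\whmfc|\sim1$ this yields a constant $c_\gamma\sim 1$ and
\begin{align*}
	\re\big(\sigma_\gamma^{-1}+\whmfc(E+\ii\eta_0)\big)\ =\ c_\gamma\,(E-\wh E_\gamma)\,(1+o(1))\ +\ \caO(M^{-1}),
\end{align*}
the $\caO(M^{-1})$ being $\re(\sigma_\gamma^{-1}+\whmfc(\wh z_\gamma))$, of that size by the defining relation for $\wh E_\gamma$ and $\im\whmfc(\wh z_\gamma)\sim M^{-1/2}$ (Lemma~\ref{lemma:step 1}(2)). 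Since $M^{3\phi}\gg M^{\phi}$, at $E_\pm\deq\wh E_\gamma\pm M^{-1/2+3\phi}$ the leading term $\pm c_\gamma M^{-1/2+3\phi}$ dominates both errors, so on a high-probability event on $\Omega$ we have $g(E_-)<0<g(E_+)$; there also $[E_-,E_+]+\ii\eta_0\subset\caD_{\phi}'$, since for $\alpha\ge n_0$ one has $|1+(\sigma_\alpha\whmfc)^{-1}|\gtrsim M^{-\phi}\kappa_0\gg M^{-1/2+3\phi}$ at $\wh z_\gamma$ by~\eqref{eq4.3}--\eqref{eq4.4}, changing by only $\caO(M^{-1/2+3\phi})$ along the segment. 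The intermediate value theorem then yields $\wt E_\gamma\in(E_-,E_+)$ with $g(\wt E_\gamma)=0$; set $\wt z_\gamma\deq\wt E_\gamma+\ii\eta_0$, so $|\wt z_\gamma-\wh z_\gamma|\le M^{-1/2+3\phi}$.

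It remains to transfer the bound to $\im m$. Since $d^{-1}\wt m=\tfrac1N\sum_\alpha G_{\alpha\alpha}$ with every $\im G_{\alpha\alpha}\ge 0$, we get $\im\big(d^{-1}\wt m(\wt z_\gamma)\big)\ge\tfrac1N\im G_{\gamma\gamma}(\wt z_\gamma)\ge cM^{-1/2-\epsilon}$ with high probability (recall $N\sim M$). The self-consistent equation $z+m^{-1}=-d^{-1}\wt m+o(\eta_0)$, obtained exactly as in the proof of Lemma~\ref{lemma:step 5} (whose derivation of this identity uses only $z\in\caD_{\phi}'$, Proposition~\ref{proposition:step 2_4}, Corollary~\ref{corollary:step 2_1}, Lemmas~\ref{lemma:step 3} and~\ref{lemma:FAZA}, Corollary~\ref{corollary:step 4}, the entrywise bound $G_{aa}=m+\caO_\prec(M^{-1/2+\phi})$, and $m\sim1$, and not the restriction $|z-\wh z_\gamma|\ge M^{-1/2+3\phi}$ used elsewhere in that proof), gives after taking imaginary parts $-\im m^{-1}=\eta_0+\im(d^{-1}\wt m)+o(\eta_0)$; since $|m(\wt z_\gamma)|\sim1$ (Proposition~\ref{proposition:step 2_4}) we conclude $\im m(\wt z_\gamma)\gtrsim\im(d^{-1}\wt m(\wt z_\gamma))\ge cM^{-1/2-\epsilon}$ with high probability, and choosing $\epsilon<\phi$ gives $\im m(\wt z_\gamma)\gg M^{-1/2-\phi}=\eta_0$, as claimed.

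The step I expect to be the main obstacle is the last one: verifying that the self-consistent equation error is genuinely $o(\eta_0)$ — hence negligible against $\tfrac1N\im G_{\gamma\gamma}(\wt z_\gamma)$ — at the point $\wt z_\gamma$, where the single diagonal entry $G_{\gamma\gamma}$ is anomalously large; concretely, one must check that the estimates on $\tfrac1N\sum_a(\wt m^{(a)}-\wt m)$ and $\tfrac1N\sum_a Z_a$ (Lemmas~\ref{lemma:step 3},~\ref{lemma:FAZA}, Corollary~\ref{corollary:step 4}), which isolate the resonant $\gamma$-term, continue to hold uniformly on all of $\caD_{\phi}'$. A minor secondary point is the quantitative slope bound $\partial_E\re\whmfc(E+\ii\eta_0)\asymp1$ near the edge, which follows from the Lipschitz estimate in Lemma~\ref{mfc estimate} together with $|\whmfc|\sim1$.
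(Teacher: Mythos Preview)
Your proposal is correct and follows essentially the same route as the paper: locate $\wt E_\gamma$ as a zero of $\re(1/G_{\gamma\gamma})$ on $[\wh E_\gamma-M^{-1/2+3\phi},\wh E_\gamma+M^{-1/2+3\phi}]$ via the intermediate value theorem (using the approximations $m^{(\gamma)}+Z_\gamma=\whmfc+\caO_\prec(M^{-1/2+\phi})$ and the linear behavior of $1/\whmfc$ from Lemmas~\ref{mfc estimate} and~\ref{hat bound}), then show $|\im G_{\gamma\gamma}(\wt z_\gamma)|\ge M^{1/2-\epsilon}$ from Lemma~\ref{lemma:step 6_1} and Corollary~\ref{corollary:step 6_1}, and finally feed this into the identity $-\im(1/m)=\eta_0+\im(d^{-1}\wt m)+o(\eta_0)$ from the proof of Lemma~\ref{lemma:step 5}. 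The paper does exactly this; your flagged ``main obstacle'' is dispatched in the paper by the observation that the derivations of~\eqref{eq:step 5_2},~\eqref{eq:step 5_3} and~\eqref{-im1/m and eta} nowhere used the restriction $|z-\wh z_\gamma|\ge M^{-1/2+3\phi}$, only $z\in\caD_\phi'$, which is precisely what you anticipated.

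The one small difference worth noting: in the last step you lower-bound $\im(d^{-1}\wt m)$ by $\tfrac1N\im G_{\gamma\gamma}$ using the pointwise nonnegativity $\im G_{\alpha\alpha}\ge 0$ (which you correctly verify from $G_{\wt\alpha\wt\alpha}=z\sigma_\alpha(\caQ-z)^{-1}_{\alpha\alpha}$). The paper instead uses the expansion~\eqref{eq:step 6_2} to write $d^{-1}\im\wt m=\tfrac1N\im G_{\gamma\gamma}+K_m^{(\gamma)}(-\im m^{-1})+o(\eta_0)$ and then solves for $-\im m^{-1}$ using $K_m^{(\gamma)}<\mathfrak c<1$. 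Your positivity argument is a clean shortcut that avoids invoking~\eqref{eq:step 5_2}--\eqref{eq:step 5_3} at $\wt z_\gamma$ for the lower bound; the paper's version gives in addition a matching upper bound on $\im m(\wt z_\gamma)$, which is not needed for the lemma as stated.
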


\begin{proof}
Note that the condition $|z - \wh z_\gamma| \ge M^{-1/2 + 3\phi}$ has not been used in the derivation of~\eqref{eq:step 5_2} and~\eqref{eq:step 5_3}, so although $|z - \wh z_\gamma| \le M^{-1/2 + 3\phi}$, we still attain that
\begin{align} \label{eq:step 6_2}
	d^{-1}\wt m = \frac{G_{\gamma\gamma}}{N} + \frac{1}{N} \sum_\alpha^{(\gamma)} \frac{-1}{\sigma_\alpha^{-1} + m^{(\alpha)} +Z_\alpha} =\frac{G_{\gamma\gamma}}{N} + \frac{1}{N} \sum_\alpha^{(\gamma)} \frac{-1}{\sigma_\alpha^{-1} + m} + o(\eta_0)
\end{align}
with high probability on $\Omega$. Consider
\begin{align}
	-\frac{1}{G_{\gamma\gamma}} = \sigma_{\gamma}^{-1}+m^{(\gamma)}+Z_\gamma.
\end{align}
Setting $z_\gamma^+ \deq \wh z_\gamma + N^{-1/2 + 3\phi}$, Lemma~\ref{mfc estimate} shows that
\begin{align}
	\re \frac{1}{\mfc(z_\gamma^+)} - \re \frac{1}{\mfc(\wh z_\gamma)} \le - C M^{-1/2 + 3\phi}\,,
\end{align}
on $\Omega$. Thus, from Lemma~\ref{hat bound} and the definition of $\wh z_k$, we find that
\begin{align}
	\re \frac{1}{\whmfc(z_\gamma^+)} + \sigma_\gamma \le - C M^{-1/2 + 3\phi}\,,
\end{align}
on $\Omega$. Similarly, if we let $z_\gamma^- \deq \wh z_\gamma - M^{-1/2 + 3\phi}$, we have that
\begin{align}
	\re \frac{1}{\whmfc(z_\gamma^-)} + \sigma_\gamma \ge C M^{-1/2 + 3\phi}\,,
\end{align}
on $\Omega$. Since
\begin{align}
	-\frac{1}{G_{\gamma\gamma}}=\frac{\whmfc}{\sigma_\gamma} \left(\sigma_\gamma+ \frac{1}{\whmfc}+o(M^{-1/2+3\phi})\right)\,,
\end{align}
with high probability on $\Omega$, we find that there exists $\wt z_\gamma = \wt E_\gamma + \ii \eta_0$ with $\wt E_\gamma \in (\wh E_\gamma - M^{-1/2 + 3\phi}, \wh E_\gamma + M^{-1/2 + 3\phi})$ such that $\re G_{\gamma\gamma}(\wt z_\gamma) = 0$. When $z = \wt z_\gamma$, we have from Lemma~\ref{lemma:step 6_1} and Corollary~\ref{corollary:step 6_1} that on $\Omega$, 
\begin{align}
	|\im G_{\gamma\gamma}(\wt z_\gamma)| = \frac{1}{|\im m^{(\gamma)}(\wt z_\gamma) + \im Z_\gamma(\wt z_\gamma)|} \geq M^{1/2-\phi/2}\,,\qquad\quad \re G_{\gamma\gamma}(\wt z_\gamma)=0\,.
\end{align}
From~\eqref{eq:step 6_2}, we obtain that
\begin{align} \label{imm at wt z_k}
	d^{-1} \im \wt m(\wt z_\gamma) = \frac{ {\im G_{\gamma\gamma}(\wt z_\gamma)} }{N} + \frac{1}{N} \sum_\alpha^{(\gamma)} \frac{\im m(\wt z_\gamma)}{|\sigma_\alpha^{-1} + m(\wt z_\gamma)|^2} + o(\eta_0)\,.
\end{align}
Combining with~\eqref{-im1/m and eta}, 
\begin{align}(1- K_m^{(\gamma)})\im\left\{-\frac{1}{m(\wt z_\gamma)}\right\}=\eta+\frac{ {\im G_{\gamma\gamma}(\wt z_\gamma)} }{N} + o(\eta_0). 
\end{align}
Since
$K_m^{(\gamma)}<c<1$ for some constant $c$,
with high probability on $\Omega$,  we get from~\eqref{imm at wt z_k} that
\begin{align}
	-\im \frac{1}{m(\wt z_\gamma)} \ge M^{-\phi/2}M^{-1/2} \gg \eta_0 \,, 
\end{align} 
with high probability on $\Omega$, which was to be proved.
\end{proof}

We now turn to the proof of Proposition~\ref{proposition:lambda_k}. Recall that we denote by $\lambda_\gamma$ the $\gamma$-th largest eigenvalue of $\caQ$, $\gamma\in\llbracket 1,n_0-1\rrbracket$.  Also recall that $\kappa_0=M^{-1/(\b+1)}$; see~\eqref{definition of kappa0}.
\begin{proof}[Proof of Proposition~\ref{proposition:lambda_k}]
First, we consider the case $\gamma=1$. From the spectral decomposition of $Q$, we have
\begin{align}
	\im m(E + \ii \eta_0) = \frac{1}{N} \sum_{i=1}^N \frac{\eta_0}{(\lambda_i - E)^2 + \eta_0^2}\,,
\end{align}
and $\im m(\lambda_1 + \ii \eta_0) \ge (M \eta_0)^{-1} \gg \eta_0$. Recall the definition of $\wh z_1=\wh E_1+\ii\eta_0$ in~\eqref{definition of hatzk}. Since, with high probability on $\Omega$, $\im m(z) \sim \eta_0$ for $z \in \caD_{\phi}'$ satisfying $|z - \wh z_1| \ge M^{-1/2 + 3\phi}$, as we proved in Lemma~\ref{lemma:step 5}, we obtain that $\lambda_1 < \wh E_1 + M^{-1/2 + 3\phi}$.

Recall the definitions for $\wh z_1$ and $z_1^-$ in the proof of Lemma~\ref{lemma:step 6_2}. Assume $\lambda_1 < \wh E_1 - M^{-1/2 + 3\phi}$, then $\im m(E + \ii \eta_0)$ is a decreasing function of $E$ on the interval $(\wh E_1 - M^{-1/2 + 3\phi}, \wh E_1 + M^{-1/2 + 3\phi})$. However, we already have shown in Lemma~\ref{lemma:step 5} and Lemma~\ref{lemma:step 6_2} that with high probability, $\im m(\wt z_1) \gg \eta_0$, $\im m(z_1^-) \sim \eta_0$, and $\re \wt z_1 > \re z_1^-$. It contradicts to previous assumption, so $\lambda_1 \ge \wh E_1 - M^{-1/2 + 3\phi}$. Now Lemma~\ref{mfc estimate} and Lemma~\ref{hat bound}, together with Lemma~\ref{lemma:step 1} conclude that
\begin{align}
	\frac{1}{\whmfc (\lambda_1 + \ii \eta_0)} = \frac{1}{\whmfc (\wh z_1)} + O (M^{-1/2 + 3\phi}) = -\sigma_1 + O (M^{-1/2 + 3\phi})\,,
\end{align}
which proves the proposition for the special choice $\gamma=1$.

Next, we consider the case $\gamma=2$; with induction, the other cases can be shown by similar manner. Consider~$H^{(1)}$, the minor of $H$ obtained by removing the first row and column and denote the largest eigenvalue of $H^{(1)}$ by $\lambda_1^{(1)}$. The Cauchy's interlacing property implies $\lambda_2 \le \lambda_1^{(1)}$. In order to estimate $\lambda_1^{(1)}$, we follow the first part of the proof which yields
\begin{align}
	\wh E_2 - M^{-1/2 + 3\phi} \le \lambda_1^{(1)} \le \wh E_2 + M^{-1/2 + 3\phi}\,,
\end{align}
where we let $\wh z_2 = \wh E_2 + \ii \eta_0$ be a solution to the equation
\begin{align}
	\sigma_2+\re \frac{1}{\whmfc(\wh z_2)}=0.
\end{align}
This shows that
\begin{align}
	\lambda_2 \le \wh E_2 + M^{-1/2 + 3\phi}\,.
\end{align}

To prove the lower bound, we may argue as in the first part of the proof. Recall that we have proved in Lemma~\ref{lemma:step 5} and Lemma~\ref{lemma:step 6_2} that with high probability on $\Omega$,
\begin{enumerate}
	\item[(1)] For $z = \wh z_2 - M^{-1/2 + 3\phi}$, we have $\im m(z) \le C \eta_0 \,.$
	\item[(2)] There exists $\wt z_2 = \wt E_2 + \ii \eta_0$, satisfying $|\wt z_2 - \wh z_2| \le M^{-1/2 + 3\phi}$, such that $\im m(\wt z_2) \gg \eta_0$.
\end{enumerate}
If $\lambda_2 < \wh E_2 - M^{-1/2 + 3\phi}$, then 
\begin{align}
	\im m(E + \ii \eta_0) - \frac{1}{N} \frac{\eta_0}{(\lambda_1 - E)^2 + \eta_0^2} = \frac{1}{N} \sum_{i=2}^N \frac{\eta_0}{(\lambda_i - E)^2 + \eta_0^2}
\end{align}
is a decreasing function of $E$. Since we know that with high probability on $\Omega$,
\begin{align}
	\frac{1}{N} \frac{\eta_0}{(\lambda_1 - \wh E_2)^2 + \eta_0^2} \le \frac{1}{N}\frac{C \eta_0}{  M^{-2\phi} \kappa_0^2} \ll \eta_0\,,
\end{align}
we have $\im m(\wt z_2) \le C \eta_0$, which contradicts to the definition of $\wt z_2$. Thus, we find that $\lambda_2 \ge \wh E_2 - M^{-1/2 + 3\phi}$ with high probability on $\Omega$.

We now proceed as above to conclude that, with high probability on $\Omega$,
\begin{align}
	\frac{1}{\whmfc (\lambda_2 + \ii \eta_0)} = \frac{1}{\whmfc (\wh z_2)} + O (M^{-1/2 + 3\phi}) = -\sigma_2 + O (M^{-1/2 + 3\phi})\,,
\end{align}
which proves the proposition for $\gamma=2$. The general case is proven in the same way.
\end{proof}

\section{Fluctuation averaging lemma}\label{sec:Zlemma}
In this section we prove Lemma~\ref{lemma:FAZA}, Lemma~\ref{lemma:step 4} and Corollary~\ref{corollary:step 4}. Recall that we denote by $\E_i$ the partial expectation with respect to the $i$-th column/row of $X$. Set $Q_i\deq\lone-\E_i$. 

We are interested in bounding the fluctuation averages
\begin{align}\label{donkey}
\frac{1}{N}\sum_{a=1}^NZ_a(z),\quad \frac{1}{N}\sum_{\alpha=n_0}^{M} Z_\alpha(z) \,,
\end{align}
where $n_0$ is a $M$-independent fixed integer. By Schur's complement formula,
\begin{align}
\frac{1}{N}\sum_{a=1}^{N}Q_a\left(\frac{1}{G_{aa}}\right)&=\frac{1}{N}\sum_{a=1}^N Q_a\left( -z-\sum_{\alpha,\beta}x_{\alpha a}G_{\alpha\beta}^{(a)}x_{\beta a}\right)\nonumber\\
&=-\frac{1}{N}\sum_{a=1}^N Z_a\,,\label{donkey a}
\end{align}
and
\begin{align}
\frac{1}{N}\sum_{\alpha=n_0}^{M}Q_\alpha\left(\frac{1}{G_{\alpha\alpha}}\right)&=\frac{1}{N}\sum_{\alpha=n_0}^{M} Q_\alpha\left( -\sigma_\alpha^{-1}-\sum_{a,b}x_{\alpha a}G_{ab}^{(\alpha)}x_{\alpha b}\right)\nonumber\\
&=-\frac{1}{N}\sum_{\alpha=n_0}^{M} Z_\alpha\,,\label{donkey alpha}
\end{align}
where we have used the concentration estimate~\eqref{LDE1}. The first main result of this section asserts that
\begin{align}
\left|\frac{1}{N}\sum_{a=1}^{N}Q_a\left(\frac{1}{G_{aa}}\right)\right|\prec  (M\eta_0)^{-2}\,,
\end{align}
and the second one implies that
\begin{align}
\left|\frac{1}{N}\sum_{\alpha=n_0}^{M}Q_\alpha\left(\frac{1}{G_{\alpha\alpha}}\right)\right|\prec M^{-1/2-\fb/2+2\phi}\,, 
\end{align}
with $z$ satisfying $|1+\re\frac{1}{\sigma_\alpha m_{fc}(z)}|\ge\frac{1}{2} M^{-1/(b+1)+\phi}$, for all $\alpha\ge n_0$.

Fluctuation average lemma or abstract decoupling lemma was used in~\cite{sce,p}. For sample covariance matrix model with general population, the lemma was used in~\cite{zb} to obtain stronger local law from a weaker one. In these works, the LSD show square-root behavior at the edge. On the other hand, due to the lack of such behavior in our model, we need different approach to prove the lemmas, which was considered in~\cite{eejl}. When the square root behavior appears, it was proved that there exists a deterministic control parameter $\Lambda_o(z)$ such that $\Lambda_o \ll 1$ with $\im z \gg M^{-1}$ and $\Lambda_o$ bounds the off-diagonal entries of the Green function and $Z_a$'s. Moreover, the diagonal entries of the Green function is bounded below.

In our circumstance, under the assumption of Lemma~\ref{lemma:step 4}, the Green function entries with the Greek indices, $(G_{\alpha\beta}(z))$, can become large, i.e., $|G_{\alpha\beta}(z)|\gg 1$ when $\im \eta \sim M^{-1/2}$, for certain choices of the spectral parameter $z$ (close to the spectral edge) and certain choice of indices $\alpha,\beta$.
However, resolvent fractions of the form $G_{\alpha\beta}(z)/G_{\beta\beta}(z)$ and $G_{\alpha\beta}(z)/G_{\alpha\alpha}(z)G_{\beta\beta}(z)$ ($\alpha,\beta\ge n_0$) are small (see Lemma~\ref{lemma: 7.1} below for a precise statement). Using this observation, we adapt the methods of~\cite{eejl} to control the fluctuation average~\eqref{donkey}.

On the other hand, the Green function entries, $(G_{ab})$, are in a different situation. Roughly speaking, Once we have the local law, $G_{aa}$ are close to $m$ which is close to $\whmfc$ so that it is bounded below and above. By this property, we can find a control parameter, $\Lambda_o$, which satisfies $|G_{ab}|\ll \Lambda_o \ll 1$ for $\im{z}\gg M^{-1}$. This is the reason why the orders of the right hand side of Lemma~\ref{lemma:FAZA} and Lemma~\ref{lemma:step 4} are different. Thus we do not have such difficulty from the formal case and we can apply the method from~\cite{p}.

\subsection{Preliminaries}
In this subsection, we introduce some notion from~\cite{eejl} which are useful to estimate the fraction of green function entries.\\
Let $a,b\in\llbracket 1,M\rrbracket$ and $\T,\T'\subset\llbracket 1,M\rrbracket$, with $\alpha,\beta \not\in\T$, $\beta \not\in \T'$, $\alpha\not=\beta$,  then we set
\begin{align}
F_{\alpha\beta}^{(\T,\T')}(z)\deq\frac{G_{\alpha \beta}^{(\T)}(z)}{G_{\beta\beta}^{(\T')}(z)}\,,\qquad \quad(z\in\C^+)\,,
\end{align}
and we often abbreviate $F_{\alpha\beta}^{(\T,\T')}\equiv F_{\alpha\beta}^{(\T,\T')}(z)$. In case $\T=\T'=\emptyset$, we simply write $F_{\alpha\beta}\equiv F_{\alpha\beta}^{(\T,\T')}$. Below we will always implicitly assume that $\{\alpha,\beta\}$ and $\T,\T'$ are compatible in the sense that $\alpha\not=\beta$, $\alpha,\beta\not\in\T$, $\beta\not\in\T'$.

Starting from~\eqref{basic resolvent}, simple algebra yields the following relations among the $\lbrace F_{\alpha\beta}^{(\T,\T')}\rbrace$.
\begin{lemma}\label{lemma: 7.1}
Let $a,b,c\in\llbracket1,M\rrbracket$, all distinct, and let $\T,\T'\subset\llbracket 1,M\rrbracket$. Then,
\begin{itemize}
	\item[(1)] for $\gamma\not\in\T\cup \T'$,
	\begin{align}\label{Zlemma expand 1}
		F_{\alpha\beta}^{(\T,\T')}=F_{\alpha\beta}^{(\T \gamma,\T')}+F_{\alpha\gamma}^{(\T,\T')}F_{\gamma\beta}^{(\T,\T')}\,;
	\end{align}
	\item[(2)] for $\gamma\not\in\T\cup \T'$,
	\begin{align}\label{Zlemma expand 2}
		F_{\alpha\beta}^{(\T,\T')}=F_{\alpha\beta}^{(\T,\T'\gamma)}- F_{\alpha\beta}^{(\T,\T'\gamma)}F_{\beta\gamma}^{(\T,\T')}F_{\gamma\beta}^{(\T,\T')}\,;
	\end{align}
	\item[(3)] for $\gamma\not\in\T$,
	\begin{align}\label{Zlemma expand basic}
		\frac{1}{G_{\alpha\alpha}^{(\T)}}=\frac{1}{G_{\alpha\alpha}^{(\T \gamma)}}\left(1-F_{\alpha\gamma}^{(\T,\T)}F_{\gamma \alpha}^{(\T,\T)}\right)\,.
	\end{align}
	
\end{itemize}

\end{lemma}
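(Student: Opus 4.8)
All three identities are purely algebraic consequences of the basic resolvent identity~\eqref{basic resolvent}, so no probabilistic input is needed. The one structural fact I would use repeatedly is that, for any index set $\T$, the minor $G^{(\T)}$ is itself the resolvent of $H^{(\T)}$; hence~\eqref{basic resolvent} applies verbatim to $G^{(\T)}$, expanding at any index $C\notin\T$. I would prove part (3) first, as it is the elementary building block, and then read off (1) and (2) from it together with~\eqref{basic resolvent}.

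For (3), I would apply~\eqref{basic resolvent} inside $H^{(\T)}$ with $C=\gamma$ to get $G_{\alpha\alpha}^{(\T)}=G_{\alpha\alpha}^{(\T\gamma)}+G_{\alpha\gamma}^{(\T)}G_{\gamma\alpha}^{(\T)}/G_{\gamma\gamma}^{(\T)}$, then divide by $G_{\alpha\alpha}^{(\T)}$ and recognize $\big(G_{\alpha\gamma}^{(\T)}/G_{\gamma\gamma}^{(\T)}\big)\big(G_{\gamma\alpha}^{(\T)}/G_{\alpha\alpha}^{(\T)}\big)=F_{\alpha\gamma}^{(\T,\T)}F_{\gamma\alpha}^{(\T,\T)}$; dividing through once more by $G_{\alpha\alpha}^{(\T\gamma)}$ gives~\eqref{Zlemma expand basic}. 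This step also records the (harmless) fact that the relevant diagonal entries $G_{\alpha\alpha}^{(\T)}$ and $G_{\gamma\gamma}^{(\T)}$ are nonzero for $z\in\C^+$, which follows from the Schur complement formulas in Lemma~\ref{res id} together with $\im z>0$.

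For (1), I would expand the \emph{numerator} of $F_{\alpha\beta}^{(\T,\T')}=G_{\alpha\beta}^{(\T)}/G_{\beta\beta}^{(\T')}$ via~\eqref{basic resolvent} inside $H^{(\T)}$ at $C=\gamma$, obtaining $G_{\alpha\beta}^{(\T)}=G_{\alpha\beta}^{(\T\gamma)}+G_{\alpha\gamma}^{(\T)}G_{\gamma\beta}^{(\T)}/G_{\gamma\gamma}^{(\T)}$, and then divide by $G_{\beta\beta}^{(\T')}$; the first term becomes $F_{\alpha\beta}^{(\T\gamma,\T')}$, and the cross term regroups into the product of two $F$'s claimed in~\eqref{Zlemma expand 1}. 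For (2), I would instead expand the \emph{denominator}: using part (3) to rewrite $1/G_{\beta\beta}^{(\T')}$ in terms of $1/G_{\beta\beta}^{(\T'\gamma)}$, and then multiplying by $G_{\alpha\beta}^{(\T)}$, produces~\eqref{Zlemma expand 2}.

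There is no real analytic obstacle here — the content is a short computation. The only point demanding attention is the bookkeeping of which index set sits in each superscript slot of the $F$'s after the cross terms are regrouped, and, correspondingly, checking that the identities are stated with exactly the index sets produced by these manipulations (so that one must be slightly careful when the same index set is reused in both the numerator and denominator slots). It is worth stating the lemma in this general minor form precisely because it will be applied iteratively, removing one index $\gamma$ at a time, in the fluctuation-averaging estimates (Lemma~\ref{lemma:FAZA}, Lemma~\ref{lemma:step 4}, Corollary~\ref{corollary:step 4}); the genuine difficulty lies there, in bounding the resulting sums of products of resolvent fractions, not in establishing these algebraic relations.
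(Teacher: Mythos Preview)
Your proposal is correct and matches the paper's approach exactly: the paper does not spell out a proof at all, stating only that ``starting from~\eqref{basic resolvent}, simple algebra yields the following relations,'' and your plan---expand the numerator for~(1), the denominator via~(3) for~(2), and derive~(3) directly from~\eqref{basic resolvent}---is precisely that simple algebra. Your caveat about carefully tracking which index set lands in each superscript slot is well taken; indeed, the cross terms one obtains carry slightly different superscripts than those displayed in the lemma (e.g.\ $F_{\alpha\gamma}^{(\T,\T)}$ rather than $F_{\alpha\gamma}^{(\T,\T')}$ in~(1)), which is harmless for the applications since the lemma is only ever invoked with $\T=\T'$ or in iterated form, but is worth noting.
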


\subsection{The fluctuation averaging lemma for $Z_a$}
From section~\ref{location}, we have local law, $|m-\whmfc|\prec M^{-1/2+2\phi}$, which induces that $m\sim 1$ so that $G_{aa}\sim 1$ and $G_{aa}-G_{bb}=o(1)$. It is quite interesting that once we have local law, $G_{aa}$ are asymptotically identical and bounded below and above. This is because of the structure of $G_{aa}$. When the local law holds, the summation part of its denominiator is well averaged so that the estimates above are staisfied. This property leads us to prove the ``fluctuation average lemma" or ``abstract decoupling lemma" via mehod from~\cite{p} . Therefore, it is sufficient to prove essential bounds from~\cite{sce} or~\cite{p} to prove Lemma~\ref{lemma:FAZA}.
\begin{lemma}\label{gabound}
For any $z=E+\ii\eta\in \mathcal{D}_\phi'$ and $a,b\in \llbracket 1,N \rrbracket$, we have $|G_{aa}-G_{bb}|=o(1)$ and $ |m-G_{aa}|=o(1)$ so that $G_{aa}\sim 1$ with high probability on $\Omega$. Furthermore, for any $a\in \llbracket 1, N \rrbracket$, we have $|m-G_{aa}|\prec (M\eta_0)^{-1}$.
\end{lemma}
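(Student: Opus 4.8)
\textbf{Proof proposal for Lemma~\ref{gabound}.}

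The plan is to combine the local law of Proposition~\ref{proposition:step 2_4} with the Schur complement representation of $G_{aa}$ to transfer the smallness of $|m-\whmfc|$ to an entrywise statement. First I would recall from Section~\ref{linearization} that $G_{aa}^{-1}=-z-d^{-1}\wt m^{(a)}-Z_a$. By Lemma~\ref{cauchy interlacing} we have $|\wt m-\wt m^{(a)}|\le C(M\eta_0)^{-1}$, and by Corollary~\ref{corollary:step 2_1} we have $|Z_a|\prec (M\eta_0)^{-1}=M^{-1/2+\phi}$ uniformly in $a$ for $z\in\caD_\phi'$. Hence
\begin{align}
	\frac{1}{G_{aa}} = -z-d^{-1}\wt m + \caO_\prec(M^{-1/2+\phi})\,.
\end{align}
On the other hand, Proposition~\ref{proposition:step 2_4} gives $|m^{-1}-\whmfc^{-1}|\prec (M\eta_0)^{-1}$, and since $\whmfc\sim 1$ on $\Omega$ (a property of the self-consistent equation~\eqref{eq:hat mfc} recorded in Section~\ref{subsec:properties}, together with Lemma~\ref{hat bound}), we also have $m\sim 1$; then from the relation~\eqref{minverse}, which in the present regime reads $z+m^{-1}=-d^{-1}\wt m+o(\eta_0)$ after the estimates already established (or, more directly, from the exact identity $-z-d^{-1}\wt m = m^{-1}$ that follows from~\eqref{rel between m and wt m} and the Schur formula), we obtain $-z-d^{-1}\wt m=m^{-1}+\caO_\prec(M^{-1/2+\phi})$. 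Substituting into the display above yields
\begin{align}
	\frac{1}{G_{aa}}=\frac{1}{m}+\caO_\prec(M^{-1/2+\phi})\,.
\end{align}

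Next, since $m\sim 1$ and the error term $\caO_\prec(M^{-1/2+\phi})$ is $o(1)$, the quantity $G_{aa}^{-1}$ is bounded away from $0$ and $\infty$ with high probability on $\Omega$, so $G_{aa}\sim 1$ and, after inverting, $G_{aa}=m+\caO_\prec(M^{-1/2+\phi})=m+o(1)$. This gives $|m-G_{aa}|=o(1)$, and consequently $|G_{aa}-G_{bb}|\le |G_{aa}-m|+|m-G_{bb}|=o(1)$ for all $a,b$.

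For the last, sharper claim, I would use the identity $G_{aa}^{-1}=-z-d^{-1}\wt m^{(a)}-Z_a$ together with the definition of $\whmfc$, writing
\begin{align}
	\frac{1}{G_{aa}}-\frac{1}{\whmfc} = \Big(\frac{1}{m}-\frac{1}{\whmfc}\Big) + (d^{-1}\wt m - d^{-1}\wt m^{(a)}) - Z_a\,,
\end{align}
and bound the three terms by Proposition~\ref{proposition:step 2_4}, Lemma~\ref{cauchy interlacing}, and Corollary~\ref{corollary:step 2_1} respectively, all of which are $\prec (M\eta_0)^{-1}$. Since $\whmfc\sim 1$ and $G_{aa}\sim 1$, multiplying through by $|G_{aa}\whmfc|\sim 1$ converts this into $|G_{aa}-\whmfc|\prec (M\eta_0)^{-1}$, and combining with $|\whmfc-m|\prec(M\eta_0)^{-1}$ from Proposition~\ref{proposition:step 2_4} gives $|m-G_{aa}|\prec (M\eta_0)^{-1}$, which is what we want since $(M\eta_0)^{-1}=M^{-1/2+\phi}$. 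The only mild subtlety — and the one step requiring a little care — is making sure that the passage from a bound on reciprocals to a bound on the quantities themselves is legitimate, i.e.\ that $G_{aa}$ is uniformly bounded below on $\Omega$ with high probability; this is exactly what the first part establishes, so the argument should be organized to prove the $o(1)$ statements first and then bootstrap to the $\prec(M\eta_0)^{-1}$ bound. Uniformity in $a$ is automatic since every input estimate is already uniform in the relevant index.
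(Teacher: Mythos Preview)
Your proposal is correct and follows essentially the same route as the paper, which simply points to equation~\eqref{entrywise} in the proof of Lemma~\ref{lemma:step 5}. One small correction: there is no exact identity $-z-d^{-1}\wt m=m^{-1}$ from~\eqref{rel between m and wt m} and Schur; this relation holds only approximately, and the clean justification is the one in~\eqref{dinvwhmfc} --- use Proposition~\ref{proposition:step 2_4} to write $G_{\alpha\alpha}=-(\sigma_\alpha^{-1}+\whmfc+\caO_\prec((M\eta_0)^{-1}))^{-1}$, sum, and invoke the self-consistent equation~\eqref{eq:hat mfc} to get $d^{-1}\wt m=-z-\whmfc^{-1}+\caO_\prec((M\eta_0)^{-1})=-z-m^{-1}+\caO_\prec((M\eta_0)^{-1})$. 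Citing~\eqref{minverse} directly is not quite right either, since that display was derived inside a proof by contradiction under the hypothesis $\im m>M^\epsilon(M\eta)^{-1}$.
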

\begin{proof}
The proof of this lemma is contained in the proof of Lemma~\ref{lemma:step 5}. (See \eqref{entrywise} in the sequel.)
\end{proof}
Now we prove the boundedness of off diagonal entries of $G$.
\begin{lemma}
For $z\in \mathcal{D}_\phi'$ and $a,b \in \llbracket 1,N\rrbracket$, we have 
\begin{align}
	|G_{ab}|\prec \frac{1}{M\eta_0}\,.
\end{align}

\end{lemma}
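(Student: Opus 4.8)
We may assume $a\neq b$; for $a=b$ one has $|G_{aa}|\sim 1$ by Lemma~\ref{gabound}, so the content is the off-diagonal decay. The plan is to run the standard off-diagonal Green-function estimate (as in~\cite{p,sce}), with the smallness of $(N\eta_0)^{-1}$ playing the role usually taken by the smallness of the off-diagonal control parameter in the square-root regime. Set $\Lambda_o\deq\max_{a\neq b,\ a,b\in\llbracket 1,N\rrbracket}|G_{ab}(z)|$ and record the deterministic a priori bound $\Lambda_o\le\eta_0^{-1}$.

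First I would expand $G_{ab}$ via the resolvent identity~\eqref{roman}, $G_{ab}=-G_{aa}\sum_\alpha x_{\alpha a}G^{(a)}_{\alpha b}$. Since $(x_{\alpha a})_\alpha$ is independent of $H^{(a)}$, hence of $G^{(a)}$, the large-deviation bound~\eqref{LDE1} (applied to the unit-variance variables $\sqrt N\,x_{\alpha a}$) gives $\bigl|\sum_\alpha x_{\alpha a}G^{(a)}_{\alpha b}\bigr|\prec N^{-1/2}\bigl(\sum_\alpha|G^{(a)}_{\alpha b}|^2\bigr)^{1/2}$. By the symmetry $G^{(a)}_{\alpha b}=G^{(a)}_{b\alpha}$ and the Ward identity $\sum_{B}|G^{(a)}_{bB}|^2=\eta_0^{-1}\im G^{(a)}_{bb}$ (sum over all indices $B$; cf.~\eqref{ward}), we get $\sum_\alpha|G^{(a)}_{\alpha b}|^2\le\eta_0^{-1}\im G^{(a)}_{bb}$, so it remains to bound $\im G^{(a)}_{bb}$.

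For the latter I would use~\eqref{basic resolvent}, $G^{(a)}_{bb}=G_{bb}-G_{ab}^2/G_{aa}$, together with $G_{aa}\sim 1$ and $\im G_{bb}\prec(M\eta_0)^{-1}$ — the first from Lemma~\ref{gabound}, the second from $|G_{bb}-m|\prec(M\eta_0)^{-1}$ (Lemma~\ref{gabound}) and $\im m\prec(M\eta_0)^{-1}$ (Lemma~\ref{lemma:step 2_1}). This yields $\im G^{(a)}_{bb}\prec(M\eta_0)^{-1}+\Lambda_o^2$. Inserting everything back, using $|G_{aa}|\sim 1$ and $N\sim M$, and taking the maximum over $a\neq b$, produces the self-improving inequality
\begin{align}
\Lambda_o^2\ \prec\ \frac{1}{(M\eta_0)^2}\ +\ \frac{\Lambda_o^2}{N\eta_0}\,.
\end{align}
Since $(N\eta_0)^{-1}\sim M^{-1/2+\phi}$ is deterministic and $o(1)$ (by the choice of $\eta_0$ in~\eqref{definition of kappa0} and the smallness of $\phi$ in~\eqref{phi condition}), the term $\Lambda_o^2/(N\eta_0)$ is absorbed into the left-hand side, leaving $\Lambda_o\prec(M\eta_0)^{-1}$. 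Uniformity over $z\in\caD_{\phi}'$ then follows from the lattice argument used repeatedly above.

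The large-deviation and Ward bookkeeping is routine; the one step to get right is this self-improvement, i.e.\ checking that the coefficient of $\Lambda_o^2$ on the right is genuinely $o(1)$ — which is precisely $(N\eta_0)^{-1}=M^{-1/2+\phi}\to 0$. One can also bypass the bootstrap: the local-law argument behind Lemma~\ref{gabound} (contained in the proof of Lemma~\ref{lemma:step 5}) applies verbatim to the minor $H^{(a)}$, giving $\im G^{(a)}_{bb}\prec(M\eta_0)^{-1}$ directly, and then $|G_{ab}|\prec|G_{aa}|(MN)^{-1/2}\eta_0^{-1}\sim(M\eta_0)^{-1}$ with no iteration.
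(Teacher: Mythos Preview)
Your argument has a genuine gap at the Ward step. For the linearization~\eqref{linear} one has $H-H^*=-2\ii\eta\,(I_N\oplus 0_M)$, since only the upper-left block carries $z$. Consequently $G-G^*=2\ii\eta\,GP_NG^*$ with $P_N$ the projection onto the first $N$ coordinates, and the Ward identity reads
\[
\sum_{c\in\llbracket 1,N\rrbracket\setminus\{a\}}\bigl|G^{(a)}_{bc}\bigr|^2\;=\;\frac{\im G^{(a)}_{bb}}{\eta_0}\,,
\]
\emph{not} the full sum $\sum_B|G^{(a)}_{bB}|^2$ over all indices. In particular $\sum_\alpha|G^{(a)}_{\alpha b}|^2$ is \emph{not} bounded by $\eta_0^{-1}\im G^{(a)}_{bb}$ via~\eqref{ward}, and your self-improving inequality is not yet justified.

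The paper avoids this by expanding one step further. Applying~\eqref{roman} and then~\eqref{intersect} gives $G_{ab}=G_{aa}G_{bb}\sum_{\alpha,\beta}x_{\alpha a}G^{(ab)}_{\alpha\beta}x_{\beta b}$, and now the double sum sits entirely in the lower-right block. The large-deviation bound~\eqref{LDE2} together with a Ward-type identity for the $\caQ$-resolvent yields $|G_{ab}|\prec\sqrt{\im\wt m^{(ab)}/(M\eta)}$. The point is then to control $\im\wt m^{(ab)}$ rather than $\im G^{(a)}_{bb}$: from the local law (Proposition~\ref{proposition:step 2_4}) one gets $d^{-1}\wt m^{(ab)}=-z-\whmfc^{-1}+\caO_\prec((M\eta_0)^{-1})$, hence $\im\wt m^{(ab)}\prec\im\whmfc+(M\eta_0)^{-1}\prec(M\eta_0)^{-1}$ by Lemma~\ref{weakboundwhmfc}, and the claim follows with no bootstrap needed.

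Your route can be repaired, but it requires the correct mixed-block estimate. Writing $G_{b\alpha}=(G_QX^*\Sigma)_{b\alpha}$ and using $G_QQG_Q^*=G_Q^*+zG_QG_Q^*$ together with $\Sigma\le I$ gives
\[
\sum_{\alpha}|G_{b\alpha}|^2\;\le\;C\Bigl(|G_{bb}|+|z|\,\frac{\im G_{bb}}{\eta_0}\Bigr)\,,
\]
and the same for the minor $G^{(a)}$. The extra term $|G^{(a)}_{bb}|\sim 1$ contributes $O(N^{-1})\ll(M\eta_0)^{-2}$ after the $N^{-1}$ from~\eqref{LDE1}, so your bootstrap $\Lambda_o^2\prec(M\eta_0)^{-2}+(N\eta_0)^{-1}\Lambda_o^2$ survives. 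Either way, the key input you were missing is that the Ward identity for this $H$ only sums over Latin indices.
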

\begin{proof}
By resolvent identities~\eqref{roman} and concentration estimate, Lemma~\ref{lemma.LDE}, we have
\begin{align}\begin{split}
		|G_{ab}|&=\left| G_{bb}\sum_\beta  G_{a\beta}^{(b)}x_{\beta b} \right|
		=\left| G_{aa}G_{bb}\sum_{\alpha,\beta} x_{a\alpha } G_{\alpha \beta}^{(ab)}x_{\beta b} \right|\\
		&\le C\left| \sum_{\alpha,\beta} x_{a\alpha } G_{\alpha \beta}^{(ab)}x_{\beta b} \right|\prec \sqrt{\frac{\im \wt{m}^{(ab)}}{M\eta}}\,.
	\end{split}
\end{align}

Note that by Proposition~\ref{proposition:step 2_4}, we have 
\begin{align}
	G_{\alpha\alpha}=\frac{-1}{\sigma_\alpha^{-1}+m^{(\alpha)}+Z_\alpha}=\frac{-1}{\sigma_\alpha^{-1}+\whmfc+\caO_\prec((M\eta_0)^{-1})} \,.
\end{align}
Hence we have
\begin{align}\label{dinvwhmfc}
	\begin{split}
		z+\frac{1}{m}=z+\frac{1}{\whmfc}+\caO_\prec((M\eta_0)^{-1})&=
		\frac{1}{N}\sum\frac{1}{\sigma_\alpha^{-1}+\whmfc}+\caO_\prec((M\eta_0)^{-1})\\
		&=-d^{-1}\wt{m}+\caO_\prec((M\eta_0)^{-1}) \,.
	\end{split}
\end{align}

Considering 
\begin{align}\begin{split}
		d^{-1} \wt{m}^{(ab)}&=\frac{1}{N}\sum \frac{-1}{\sigma_\alpha^{-1}+m^{(ab)}+Z_\alpha^{(ab)}}=\frac{1}{N}\sum \frac{-1}{\sigma_\alpha^{-1}+\whmfc+\caO_\prec((M\eta_0)^{-1})}\\
		&=\frac{1}{N}\sum \frac{-1}{\sigma_\alpha^{-1}+\whmfc}+\caO_\prec((M\eta_0)^{-1})=-\frac{1}{\whmfc}-z+\caO_\prec((M\eta_0)^{-1})\,,
	\end{split}
\end{align}
we have that
\begin{align}
	\sqrt{\frac{\im\wt{m}^{(ab)}}{M\eta}}\le C\sqrt{\frac{\im \whmfc}{M\eta}}+\caO_\prec((M\eta_0)^{-1})=\caO_\prec((M\eta_0)^{-1})\,,
\end{align}
where we have used~\eqref{dinvwhmfc}, $\whmfc\sim 1$ and Lemma~\ref{weakboundwhmfc}. Hence we have the desired lemma.

\end{proof}

From above lemmas, we have a rough bound for fraction of the green function entries.
\begin{corollary}
For $z\in \mathcal{D}_\phi'$ and $a,b \in \llbracket 1,N\rrbracket$, we have
\begin{align}
	\left|\frac{G_{ab}}{G_{aa}}\right|\prec \frac{1}{M\eta_0} \,.
\end{align}

\end{corollary}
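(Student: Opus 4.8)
The statement is an immediate consequence of the two lemmas that precede it, and the plan is simply to combine them carefully. Recall from Lemma~\ref{gabound} that $G_{aa}\sim 1$ with high probability on $\Omega$; unpacking the notation $\sim$, this means there is a constant $c>0$, independent of $N$, and for every $D>0$ a threshold $N_0(D)$ such that for $N\ge N_0(D)$ the event
\begin{align}
	\Omega_{\mathrm{low}}\deq\bigcap_{a\in\llbracket1,N\rrbracket}\bigcap_{z\in\caD_\phi'}\left\{\,|G_{aa}(z)|\ge c\,\right\}
\end{align}
has probability at least $1-N^{-D}$ (the intersection over $z$ being made rigorous, as elsewhere in the paper, by the lattice/Lipschitz argument together with $|G_{aa}(z)-G_{aa}(z')|\le \eta_0^{-2}|z-z'|$). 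Recall also from the lemma directly above that $|G_{ab}(z)|\prec (M\eta_0)^{-1}$, uniformly in $a,b\in\llbracket1,N\rrbracket$ and $z\in\caD_\phi'$.

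The main step is then the elementary deterministic bound: on $\Omega_{\mathrm{low}}$ we have, for all $a,b$ and all $z\in\caD_\phi'$,
\begin{align}
	\left|\frac{G_{ab}(z)}{G_{aa}(z)}\right|=\frac{|G_{ab}(z)|}{|G_{aa}(z)|}\le c^{-1}|G_{ab}(z)|\,.
\end{align}
Hence for any $\epsilon>0$ and $D>0$, writing $\Omega_{\mathrm{off}}$ for the high-probability event on which $|G_{ab}(z)|\le N^{\epsilon}(M\eta_0)^{-1}$ holds simultaneously for all relevant $a,b,z$ (available by the uniformity in the previous lemma), we get on $\Omega_{\mathrm{low}}\cap\Omega_{\mathrm{off}}$ that $|G_{ab}(z)/G_{aa}(z)|\le c^{-1}N^{\epsilon}(M\eta_0)^{-1}$. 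Since $\p(\Omega_{\mathrm{low}}\cap\Omega_{\mathrm{off}})\ge 1-2N^{-D}$ and $c^{-1}N^{\epsilon}\le N^{2\epsilon}$ for $N$ large, this is exactly the statement $|G_{ab}/G_{aa}|\prec (M\eta_0)^{-1}$, uniformly in $a,b$.

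I do not expect any genuine obstacle here: the only point requiring a little care is the standard one of passing from a ``$\sim$ with high probability'' statement to a ``$\prec$'' statement, which amounts to noting that dividing by $G_{aa}$ is harmless off an event of probability $\le N^{-D}$, and that the constant $c^{-1}$ is absorbed into the $N^{\epsilon}$ factor in the definition of stochastic domination. All uniformity (in the indices and, via the lattice argument, in the spectral parameter) is inherited directly from the two input estimates, so no new argument is needed.
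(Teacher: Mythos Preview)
Your proposal is correct and matches the paper's approach exactly: the paper simply states that the corollary follows ``from the above lemmas,'' i.e., from $G_{aa}\sim 1$ (Lemma~\ref{gabound}) and $|G_{ab}|\prec (M\eta_0)^{-1}$, which is precisely the combination you write out in detail.
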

Through those three bounds, we can apply the method from appendix B of~\cite{sce} so that we have the proof of the Lemma~\ref{lemma:FAZA}.

\subsection{The fluctuation averaging lemma for $Z_{\alpha}$}
Proof of the fluctuation average lemma for $Z_{\alpha}$ is more complicate than that of $Z_a$. Eventhough the local law yields the well boundedness of $G_{ab}$'s, $G_{\alpha\beta}$ might be extremely large. 
We use the technique from~\cite{eejl}. Therefore, we only need to check the core estimates which have been used in~\cite{eejl} to prove fluctuation average lemma.
\begin{remark}
Since in~\cite{eejl}, the authors used the $(\xi,\nu)$-high probability concept rather than stochastic dominance, one can also check~\cite{sce} to handle the stochastic dominance version of proof of fluctuation averaging lemma. The both proofs are identical in some degrees. 
\end{remark}

Recall the definition of the domain $\caD_{\phi}'$ of the spectral parameter in~\eqref{a index assumption} and of the constant $\fb>0$ in~\eqref{fb}. Set $A\deq\llbracket n_0,M\rrbracket$. To start with, we bound $F_{\alpha\beta}$ and $F_{\alpha\beta}^{(\emptyset,\alpha)}/G_{\alpha\alpha}$ on the domain~$\caD_{\phi}'$.  
\begin{lemma}\label{firstbound}
Assume that, for all $z\in\caD_{\phi}'$, the estimates
\begin{align}\label{weaklocallaw}
	|m(z)-\whmfc(z)|\prec \frac{1}{M\eta_0}\,,\qquad \im m(z)\prec \frac{1}{M\eta_0}\,,
\end{align}
hold on $\Omega$.

Then for all $z\in\caD_{\phi}'$,
\begin{align}\label{F bound 1}
	\max_{\substack{\alpha,\beta\in A\\ \alpha\not=\beta}}|F_{\alpha\beta}(z)|\prec  	M^{-\fb/2+\phi} \,,\qquad\quad( z\in\caD_{\phi}')\,,
\end{align}
and
\begin{align}\label{F bound 2}
	\max_{\substack{\alpha,\beta\in A\\ \alpha\not=\beta}}\left|\frac{F_{\alpha\beta}^{(\emptyset,\alpha)}(z)}{G_{\alpha\alpha}(z)}\right|\prec \frac{1}{M\eta_0}\,,\qquad\quad( z\in\caD_{\phi}')\,,
\end{align}
on $\Omega$.
\end{lemma}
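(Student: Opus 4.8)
The plan is to prove the two bounds \eqref{F bound 1} and \eqref{F bound 2} via the resolvent expansion identities of Lemma~\ref{lemma: 7.1}, exactly as in Section~5 of~\cite{eejl}, using the a~priori local law~\eqref{weaklocallaw} as input. First I would record the basic size estimates for the individual Green function entries on $\caD_{\phi}'$: from Schur's formula $G_{\alpha\alpha}^{-1} = -\sigma_\alpha^{-1} - m^{(\alpha)} - Z_\alpha$, together with $|m - \whmfc| \prec (M\eta_0)^{-1}$, the Cauchy interlacing bound of Lemma~\ref{cauchy interlacing}, and the concentration estimate $Z_\alpha \prec \Psi$, one gets $G_{\alpha\alpha} = -(\sigma_\alpha^{-1} + \whmfc)^{-1}(1 + o(1))$, so that $|G_{\alpha\alpha}| \sim |1 + 1/(\sigma_\alpha\whmfc)|^{-1}$, which by the definition of $\caD_{\phi}'$ in~\eqref{a index assumption} and~\eqref{farfromhome} is at most $CM^{1/(\b+1)+\phi}$ for $\alpha \in A$. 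The key point is that $|G_{\alpha\alpha}|$ can be large (of order $\kappa_0^{-1}$), but not larger than $M^{1/(\b+1)+\phi}$.

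Next I would estimate $F_{\alpha\beta} = G_{\alpha\beta}/G_{\beta\beta}$. Using the resolvent identity~\eqref{greek}, $G_{\alpha\beta} = -G_{\alpha\alpha}\sum_a x_{\alpha a} G_{a\beta}^{(\alpha)}$, and then~\eqref{intersect} to write $G_{a\beta}^{(\alpha)} = -G_{\beta\beta}^{(\alpha)}\sum_b G_{ab}^{(\alpha\beta)}x_{\beta b}$, so that
\begin{align}
	F_{\alpha\beta} = \frac{G_{\alpha\beta}}{G_{\beta\beta}} = \frac{G_{\alpha\alpha}G_{\beta\beta}^{(\alpha)}}{G_{\beta\beta}}\sum_{a,b} x_{\alpha a} G_{ab}^{(\alpha\beta)} x_{\beta b}\,.
\end{align}
The ratio $G_{\beta\beta}^{(\alpha)}/G_{\beta\beta} = 1 + O_\prec((M\eta_0)^{-1}|G_{\alpha\alpha}|)$ by~\eqref{Zlemma expand basic}, which is $1 + o(1)$ on $\caD_\phi'$; the quadratic form concentrates by~\eqref{LDE2} around $\big(\sum_{a,b}|G_{ab}^{(\alpha\beta)}|^2\big)^{1/2}$, and by the Ward identity~\eqref{ward} applied to $G^{(\alpha\beta)}$ this is $\big(N^{-1}\sum_a \im G_{aa}^{(\alpha\beta)}/\eta\big)^{1/2} \prec \big(\im m /(N\eta)\big)^{1/2} \prec (M\eta_0)^{-1}$ using~\eqref{weaklocallaw} and $\eta = \eta_0$. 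Hence $|F_{\alpha\beta}| \prec |G_{\alpha\alpha}| (M\eta_0)^{-1} \le C M^{1/(\b+1)+\phi} M^{-1/2+\phi} = CM^{-\fb+2\phi}$ — but actually one does slightly better by exploiting that $|G_{\alpha\alpha}|$ and $|G_{\beta\beta}|$ enter more symmetrically after one more expansion, giving the claimed $M^{-\fb/2+\phi}$; this is where I would mirror the precise bookkeeping of~\cite{eejl} to land on the square-root exponent. For~\eqref{F bound 2}, note $F_{\alpha\beta}^{(\emptyset,\alpha)}/G_{\alpha\alpha} = G_{\alpha\beta}/(G_{\alpha\alpha}G_{\beta\beta}^{(\alpha)})$; from the expansion above this equals $(G_{\beta\beta}^{(\alpha)})^{-1}\cdot G_{\beta\beta}^{(\alpha)}\sum_{a,b}x_{\alpha a}G_{ab}^{(\alpha\beta)}x_{\beta b}/1$, i.e.\ simply the quadratic form, which we have just shown is $\prec (M\eta_0)^{-1}$, with no factor of $|G_{\alpha\alpha}|$. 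Finally, the maximum over $\alpha,\beta \in A$ is handled by the uniformity built into the $\prec$ relation (the thresholds in Lemma~\ref{lemma.LDE} are uniform in the coefficients), so no union bound over the polynomially many pairs costs anything.

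The main obstacle I anticipate is getting the sharp exponent $\fb/2$ rather than the cruder $\fb - 2\phi$ in~\eqref{F bound 1}: a naive single-step expansion pays the full factor $|G_{\alpha\alpha}| \sim \kappa_0^{-1}$, which is too lossy. The resolution, following~\cite{eejl}, is to expand $F_{\alpha\beta}$ one further level via~\eqref{Zlemma expand 1}, writing $F_{\alpha\beta} = F_{\alpha\beta}^{(\beta,\emptyset)} + F_{\alpha\gamma}\cdots$ type relations so that the large diagonal factor is effectively shared symmetrically between the $\alpha$ and $\beta$ slots, each contributing only a half-power of $\kappa_0^{-1}$; combined with the Ward-identity gain $(M\eta_0)^{-1}$ this yields $\kappa_0^{-1/2}(M\eta_0)^{-1/2}\cdot M^{O(\phi)}$-type bounds, and $\kappa_0^{-1/2}M^{-1/4}M^{O(\phi)} = M^{1/(2(\b+1)) - 1/4 + O(\phi)} = M^{-\fb/2 + O(\phi)}$ by the definition~\eqref{fb} of $\fb$. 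I would carry this out carefully, tracking that all the error terms from replacing $G^{(\T)}$ by $G$ (via~\eqref{Zlemma expand basic}) are genuinely lower order on $\caD_\phi'$, and then state that the remainder of the fluctuation-averaging argument (the higher-cumulant expansion producing the final bounds in Lemma~\ref{lemma:step 4} and Corollary~\ref{corollary:step 4}) proceeds verbatim as in~\cite{eejl}.
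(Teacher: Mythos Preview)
Your treatment of \eqref{F bound 2} is correct and matches the paper exactly: writing $F_{\alpha\beta}^{(\emptyset,\alpha)}/G_{\alpha\alpha} = (XG^{(\alpha\beta)}X^*)_{\alpha\beta}$ and applying the concentration estimate~\eqref{LDE2} together with the Ward identity yields $\prec (\im m^{(\alpha\beta)}/(M\eta))^{1/2} \prec (M\eta_0)^{-1}$.

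For \eqref{F bound 1}, however, you take a detour that the paper avoids. You expand $F_{\alpha\beta}$ twice, all the way down to the Roman block $G_{ab}^{(\alpha\beta)}$, which forces you to absorb a full factor $|G_{\alpha\alpha}| \lesssim M^{1/(\b+1)+\phi}$ and leaves you with the suboptimal $M^{-\fb+2\phi}$. Your proposed repair via a further expansion through~\eqref{Zlemma expand 1} is vague (and the formula $F_{\alpha\beta}^{(\beta,\emptyset)}$ you write is ill-defined, since $\beta$ cannot be added to the upper index set of $G_{\alpha\beta}$). The paper's route is much more direct: use only the \emph{second} equality in~\eqref{greek}, namely $G_{\alpha\beta} = -G_{\beta\beta}\sum_b G_{\alpha b}^{(\beta)}x_{\beta b}$, so that
\[
F_{\alpha\beta} = -\sum_b G_{\alpha b}^{(\beta)} x_{\beta b}\,,
\]
and then apply~\eqref{LDE1} together with the Ward-type identity for the \emph{mixed-index} entries (Lemma~4.6 of~\cite{ky}), which gives $\sum_b |G_{\alpha b}^{(\beta)}|^2 \le \im G_{\alpha\alpha}^{(\beta)}/\eta + C$. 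This yields directly
\[
|F_{\alpha\beta}| \prec \Bigl(\frac{\im G_{\alpha\alpha}^{(\beta)}}{M\eta}\Bigr)^{1/2} \le \Bigl(\frac{M^{1/(\b+1)+\phi}}{M\eta_0}\Bigr)^{1/2} = M^{-\fb/2+\phi}\,.
\]
The square-root exponent $\fb/2$ thus comes for free from the concentration estimate, not from any symmetrization or iterated expansion. What your approach misses is that stopping at the mixed level $G_{\alpha b}^{(\beta)}$ and invoking the Ward identity there already captures the right balance: only $\im G_{\alpha\alpha}^{(\beta)}$ (bounded by $|G_{\alpha\alpha}^{(\beta)}|$) appears under the square root, rather than the full modulus appearing as an external factor.
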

\begin{proof}
Dropping the $z$-dependence from the notation, we first note that by Schur's complement formula~\eqref{schur} and inequality~\eqref{weaklocallaw}, we have with high probability on $\Omega$, for $z\in\caD_{\phi}'$,
\begin{align}\begin{split}
		\frac{1}{G^{(\beta)}_{\alpha\alpha}}&=-\sigma_\alpha^{-1}-\sum_{a,b}x_{\alpha a}G_{ab}^{(\alpha\beta)}x_{b\alpha}\\
		&=-\sigma_{\alpha}^{-1}+\whmfc-\whmfc+m-m+m^{(\alpha\beta)}-m^{(\alpha\beta)}-\sum_{a,b}x_{\alpha a}G_{ab}^{(\alpha\beta)}x_{b\alpha}\\
		&=-\sigma_{\alpha}^{-1}-\whmfc+\caO_{\prec}((M\eta_0)^{-1})
	\end{split} 
\end{align}
for all $\alpha \in A$, $\beta\in\llbracket 1,M\rrbracket$, $\alpha\not=\beta$. Thus, for $z\in\caD_{\phi}'$, Lemma~\ref{cauchy interlacing} yields
\begin{align}\label{consequence of index assumption}
	|G^{(\beta)}_{\alpha\alpha}|\leq CM^{\phi}\kappa_0^{-1}=M^{1/(b+1)+\phi}\,.
\end{align}
Further, from the resolvent formula~\eqref{greek} we obtain
\begin{align}
	F_{\alpha\beta}=-\sum_{b} G_{\alpha b}^{( \beta)}x_{\alpha b}\,,
\end{align}
for $\alpha,\beta\in A$, $\alpha\not= \beta$.
From the concentration estimate~\eqref{LDE1} and by~\eqref{consequence of index assumption} we infer that
\begin{align}\begin{split}
		\left|\sum_{b} G_{\alpha b}^{( \beta)}x_{\alpha b}\right|&\prec \left(\frac{\sum_{b}|G_{\alpha b}^{(\beta)}|^2}{M}\right)^{1/2}\\&\prec \left| \frac{\im G_{\alpha\alpha}^{(\beta)}}{M\eta} + \frac{1}{M} \right|^{1/2} \prec \left|   M^{-\fb/2+2\phi} +\frac{1}{M} \right|^{1/2} \,,
	\end{split}
\end{align}
with high probability, where we have used Lemma 4.6 of~\cite{ky}. Since $0<\fb<1/2$ so that $M^{-1} \ll M^{-\fb}$, hence we conclude  that
\begin{align}
	|F_{\alpha\beta}|\prec M^{-\fb/2+\phi}\,,
\end{align}
on $\Omega$.

To prove the second claim, we recall that, for $\alpha\not=\beta$, the resolvent formula~\eqref{greek}. Then we get
\begin{align}
	\frac{F_{\alpha\beta}^{(\emptyset, \alpha)}}{G_{\alpha\alpha}}= \frac{G_{\alpha\alpha}G_{\beta\beta}^{(\alpha)}(XG^{(\alpha\beta)}X^*)_{\alpha\beta}}{G_{\alpha\alpha}G_{\beta\beta}^{(\alpha)}}=(XG^{(\alpha\beta)}X^*)_{\alpha\beta}\,,
\end{align}
and the concentration estimates~\eqref{LDE2} and~\eqref{bound on xij} imply that
\begin{align}
	\left|\frac{F_{\alpha\beta}^{(\emptyset, \alpha)}}{G_{\alpha\alpha}}\right|\prec \sqrt{\frac{\im m^{(\alpha\beta)}}{M\eta}}\,,
\end{align}
with high probability. Since $|m-m^{(\alpha\beta)}|\le C(M\eta_0)^{-1}$ on $\caD_{\phi}'$, by Lemma~\ref{cauchy interlacing} and~\eqref{weaklocallaw} we have
\begin{align}
	\left|\frac{F_{\alpha\beta}^{(\emptyset,\alpha)}}{G_{\alpha\alpha}}\right|\prec \frac{1}{M\eta_0}\,,
\end{align}
on $\Omega$.
\end{proof}

We define an event which holds with high probability on $\Omega$ which is useful to estimate some inequalities.
\begin{definition}\label{definition of xi}
Let $\epsilon>0$ be fixed and let $\Xi_\epsilon$ be an event defined by requiring that the following holds on it:  $(1)$ for all $z\in\caD_{\phi}'$,~\eqref{weaklocallaw},~\eqref{F bound 1} and~\eqref{F bound 2} hold; $(2)$ for all~$z\in\caD_{\phi}'$ and~$\alpha\in A$,
\begin{align}
	\left|Q_{\alpha}\left(\frac{1}{G_{\alpha\alpha}}\right)\right|\le M^\epsilon\frac{1}{M\eta_0}\,;
\end{align}
and $(3)$, for all $a\in\llbracket 1,M\rrbracket$ and $\gamma\in \llbracket1,N \rrbracket$,
\begin{align}
	\max_{a,\gamma}|x_{a\gamma}|\le \frac{M^{\epsilon}}{\sqrt{M}}\,.
\end{align}
\end{definition}
By moment condition of $x_{ij}$, Lemma~\ref{lemma:step 2_3}, Corollary~\ref{corollary:step 2_1}, Lemma~\ref{lemma:step 2_1} and inequality~\eqref{bound on xij}, we know that $\Xi_\epsilon$ holds with high probability on $\Omega$.

\begin{corollary}\label{Zlemma 1}
For fixed $p\in \llbracket 1, N \rrbracket$, there exists a constant $c$, such that the following holds. For all $\T,\T',\T''\subset A$, with $|\T|\,,|\T'|\,,|\T''|\le p$, for all $\alpha,\beta\in A $, $\alpha\not=\beta$, and, for all $z\in\caD_{\phi}'$, we have
\begin{align}\label{Zlemma bound 1}
	\lone(\Xi_\epsilon)\left|{F^{(\T,\T')}_{\alpha\beta}(z)}\right|\le M^\epsilon M^{-\fb/2+\phi}\,,
\end{align}
\begin{align}\label{Zlemma bound 2}
	\lone(\Xi_\epsilon)\left|\frac{F_{\alpha\beta}^{(\T',\T'')}(z)}{G_{\alpha\alpha}^{(\T)}(z)} \right|\le \frac{M^\epsilon}{M\eta_0} \,,
\end{align}
and
\begin{align}\label{initial estimate on Q_a}
	\lone(\Xi_\epsilon)\left|Q_{\alpha}\left(\frac{1}{G_{\alpha\alpha}^{(\T)} }\right) \right|\le \frac{M^\epsilon}{M\eta_0} \,,
\end{align}
on $\Omega$, for $N$ sufficiently large.
\end{corollary}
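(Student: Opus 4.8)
The plan is to establish the three estimates together by a finite induction on $k\deq|\T|+|\T'|+|\T''|$, peeling one index at a time off the upper index sets by means of the resolvent expansion identities of Lemma~\ref{lemma: 7.1} (supplemented by \eqref{basic resolvent} for the minors). The base case $k\le 1$ is exactly what has been built into the event $\Xi_\epsilon$: on $\Xi_\epsilon$ one has $|F_{\alpha\beta}|\le M^\epsilon M^{-\fb/2+\phi}$, $|F_{\alpha\beta}^{(\emptyset,\alpha)}/G_{\alpha\alpha}|\le M^\epsilon(M\eta_0)^{-1}$ and $|Q_\alpha(1/G_{\alpha\alpha})|\le M^\epsilon(M\eta_0)^{-1}$ by Lemma~\ref{firstbound} and Definition~\ref{definition of xi}, together with the weak local law \eqref{weaklocallaw} and the entrywise bound $|x_{a\gamma}|\le M^\epsilon M^{-1/2}$. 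Throughout we use that $\phi$ is small, in particular $\phi<\fb/2$, so that the quadratic corrections that appear, of size $M^{-\fb+2\phi}$, are much smaller than $M^{-\fb/2+\phi}$ and than $1$.

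For the inductive step of \eqref{Zlemma bound 1}, given $F_{\alpha\beta}^{(\T,\T')}$ with total upper size $k\ge 1$, we choose an index $\gamma$ belonging to $\T$ alone (or to $\T'$ alone, or, using \eqref{basic resolvent} to first remove it from the rightmost index set, an index shared by $\T$ and $\T'$), write $\T=\T_0\cup\{\gamma\}$, and rearrange \eqref{Zlemma expand 1} into $F_{\alpha\beta}^{(\T,\T')}=F_{\alpha\beta}^{(\T_0,\T')}-F_{\alpha\gamma}^{(\T_0,\T')}F_{\gamma\beta}^{(\T_0,\T')}$; when $\gamma$ is stripped off $\T'$ instead, \eqref{Zlemma expand 2} gives $F_{\alpha\beta}^{(\T,\T')}=F_{\alpha\beta}^{(\T,\T'_0)}\bigl(1-F_{\beta\gamma}^{(\T,\T'_0)}F_{\gamma\beta}^{(\T,\T'_0)}\bigr)^{-1}$. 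Every $F$-quantity on the right has total upper size $<k$ and lower indices in $A$, so the induction hypothesis applies: each factor is $\le M^\epsilon M^{-\fb/2+\phi}$, whence the products are $O(M^{2\epsilon}M^{-\fb+2\phi})\ll 1$, the denominator in the second identity is $\ge\tfrac12$, and in either case $|F_{\alpha\beta}^{(\T,\T')}|\le 2M^\epsilon M^{-\fb/2+\phi}$. Estimate \eqref{Zlemma bound 2} is reduced in the same way, using in addition \eqref{Zlemma expand basic} written as $1/G_{\alpha\alpha}^{(\T_0\gamma)}=(1/G_{\alpha\alpha}^{(\T_0)})(1-F_{\alpha\gamma}^{(\T_0,\T_0)}F_{\gamma\alpha}^{(\T_0,\T_0)})^{-1}$ to bring $\T$ down to a single index or to $\emptyset$ at the cost of factors $1+O(M^{-\fb+2\phi})$, the base case being \eqref{F bound 2}. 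Since $|\T|,|\T'|,|\T''|\le p$ with $p$ fixed, the recursion stops after a bounded number of steps, produces a number of terms bounded in $p$, and accumulates only a benign factor $M^{O(p\epsilon)}$, which is absorbed into $M^\epsilon$ by running the argument with a slightly smaller exponent in the definition of $\Xi_\epsilon$.

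Estimate \eqref{initial estimate on Q_a} does not need the induction. Schur's complement formula for the minor $H^{(\T)}$ gives $1/G_{\alpha\alpha}^{(\T)}=-\sigma_\alpha^{-1}-(XG^{(\T\alpha)}X^*)_{\alpha\alpha}$, hence $Q_\alpha(1/G_{\alpha\alpha}^{(\T)})=-(1-\E_\alpha)(XG^{(\T\alpha)}X^*)_{\alpha\alpha}$; since $G^{(\T\alpha)}$ is independent of the $\alpha$-th row of $X$, the large deviation estimate \eqref{LDE3} yields $|Q_\alpha(1/G_{\alpha\alpha}^{(\T)})|\prec N^{-1}\bigl(\sum_{a,b}|G_{ab}^{(\T\alpha)}|^2\bigr)^{1/2}$. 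The Ward identity \eqref{ward} applied to $H^{(\T\alpha)}$ turns the double sum into $\eta^{-1}\sum_a\im G_{aa}^{(\T\alpha)}$, which is $O(N\eta^{-1}\im m^{(\T\alpha)})$ up to a negligible correction, and iterating Cauchy interlacing (Lemma~\ref{cauchy interlacing}) at most $p+1$ times together with the weak local law $\im m\prec(M\eta_0)^{-1}$ contained in $\Xi_\epsilon$ gives $\im m^{(\T\alpha)}\prec(M\eta_0)^{-1}$; since $N\sim M$ and $\eta\ge\eta_0$ this is $|Q_\alpha(1/G_{\alpha\alpha}^{(\T)})|\prec(M\eta_0)^{-1}$, i.e.\ \eqref{initial estimate on Q_a} on $\Xi_\epsilon$ (enlarged if necessary by the high-probability event on which this single large deviation estimate holds).

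The only genuinely delicate point is the bookkeeping. At each step one must check that the index sets appearing remain compatible so that Lemma~\ref{lemma: 7.1} applies, that the lower-index pairs created stay in $A$ so that the induction hypothesis is available, and that the number of generated terms and the accumulated power of $M^\epsilon$ stay uniformly bounded in $z\in\caD_{\phi}'$ — all routine because $p$ is fixed and $\phi$ is small. No probabilistic input is required beyond the already recorded fact that $\Xi_\epsilon$ holds with high probability on $\Omega$.
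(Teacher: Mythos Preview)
Your proposal is correct and follows exactly the approach the paper has in mind: the paper simply defers to Appendix~B of~\cite{eejl}, whose argument is precisely the finite induction on the total upper-index size via the identities of Lemma~\ref{lemma: 7.1} that you outline, with the base case supplied by the bounds encoded in $\Xi_\epsilon$. Your direct treatment of~\eqref{initial estimate on Q_a} through Schur's formula for the minor, \eqref{LDE3}, the Ward identity and iterated Cauchy interlacing is also the standard route; the only caveat is that this large-deviation estimate for $Z_\alpha^{(\T)}$ is not literally part of $\Xi_\epsilon$ for $\T\neq\emptyset$, so one either enlarges $\Xi_\epsilon$ by these finitely many (in $p$) high-probability events, as you note, or deduces it inductively from~\eqref{Zlemma expand basic} together with the already-established bound~\eqref{Zlemma bound 1}.
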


The proof of this corollary is exactly identical with that of appendix B in~\cite{eejl}. See~\cite{eejl} for more detail.

\begin{lemma}\label{Jensen lemma}
Let $p\in\N$. Let $q\in\llbracket 0,p\rrbracket$ and consider random variables $(\caX_{\alpha})\equiv(\caX_{\alpha}(Q))$  and $(\caY_{\alpha})\equiv(\caY_{\alpha}(Q))$, $\alpha\in\llbracket 1,p\rrbracket$, satisfying
\begin{align}\label{estimates on good event}
	|\caX_{\alpha}|\prec \frac{1}{M\eta_0}\left(M^{-\fb/2+\phi}\right)^{(d_{\alpha}-1)}\,,\qquad |Q_{\alpha}\caY_{\alpha}|\prec\frac{1}{M\eta_0}\,,
\end{align}
where $d_{\alpha}\in\N_0$ satisfy $0\le s=\sum_{i=\alpha}^q (d_{\alpha}-1)\le p+2$. Assume moreover that there is a constant $K$, such that for any $r\in\N$, with $r\le 10 p$, 
\begin{align}\label{estimates on expectation}
	\E^X |\caX_{\alpha}|^{r}\prec M^{K(d_{\alpha}+1) r}\,,\qquad\E^X|\caY_{\alpha}|^{r}\prec M^{Kr}\,,\end{align}
where the $\E^X$ denote the partial expectation with respect to the random variables $(x_{ij})$ with $(\sigma_\alpha)$ kept fixed.

Then we have 
\begin{align}\label{jensen lemma bound}
	\left|\E^X\prod_{i=\alpha}^q Q_{\alpha}(\caX_{\alpha})\prod_{\alpha=q+1}^p Q_{\alpha}(\caY_{\alpha})\right|\prec \left(\frac{1}{M\eta_0}\right)^p\left(M^{-\fb/2+\phi}\right)^{s} \,.
\end{align}
(Here, we use the convention that, for $q=0$, the first product is set to one, and, similarly, for $q=p$, the second product is set to one.)
\end{lemma}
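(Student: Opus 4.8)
The plan is to prove the fluctuation‐averaging bound \eqref{jensen lemma bound} by expanding every $Q_\alpha$-decoupled factor into resolvent minors, tracking how many ``off‑diagonal'' factors $F_{\alpha\beta}^{(\T,\T')}$ appear, and then using the maximal estimates of Corollary~\ref{Zlemma 1} together with a high‑moment (Markov) argument, exactly along the lines of Appendix B of~\cite{eejl}. First I would treat the $\caY_\alpha$-factors: by \eqref{estimates on good event} each $Q_\alpha\caY_\alpha$ is already $\caO_\prec((M\eta_0)^{-1})$, and since they are $Q_\alpha$-decoupled the partial expectation forces each index $\alpha\in\llbracket q+1,p\rrbracket$ to be ``hit'' at least twice when one expands the product (using \eqref{basic resolvent}, \eqref{Zlemma expand 1}, \eqref{Zlemma expand 2}, \eqref{Zlemma expand basic}); this produces the factor $(M\eta_0)^{-(p-q)}$. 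For the $\caX_\alpha$-factors I would use the stronger a~priori bound $|\caX_\alpha|\prec (M\eta_0)^{-1}(M^{-\fb/2+\phi})^{d_\alpha-1}$, so that the ``extra'' small gain $(M^{-\fb/2+\phi})^{s}$ with $s=\sum(d_\alpha-1)$ is built in from the start, and the $Q_\alpha$-decoupling again buys a further $(M\eta_0)^{-1}$ per index.

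The core combinatorial step is the following: after fully expanding $\prod_{\alpha} Q_\alpha(\caX_\alpha)\prod_{\alpha}Q_\alpha(\caY_\alpha)$ using the resolvent expansion identities, every surviving term (i.e.\ every term not annihilated by some $Q_\alpha$) must contain, for each of the $p$ decoupling indices, at least one additional off‑diagonal resolvent fraction $F^{(\T,\T')}_{\bullet\alpha}$ or $F^{(\T,\T')}_{\alpha\bullet}$ beyond what is already present. By \eqref{Zlemma bound 1} each such factor is $\le M^\epsilon M^{-\fb/2+\phi}$, and by \eqref{Zlemma bound 2} each fraction of the form $F^{(\T',\T'')}_{\alpha\beta}/G^{(\T)}_{\alpha\alpha}$ is $\le M^\epsilon (M\eta_0)^{-1}$; combining these over all $p$ indices on the event $\Xi_\epsilon$ gives, deterministically, the bound $(M\eta_0)^{-p}(M^{-\fb/2+\phi})^{s}$ up to $M^{C\epsilon}$. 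The number of terms generated by the expansion is bounded by a constant $C_p$ depending only on $p$ (not on $M$), so summing over them changes nothing. On the complement $\Xi_\epsilon^c$, which has probability $\le M^{-D}$ for any $D$, I would use the crude polynomial moment bounds \eqref{estimates on expectation} and Cauchy--Schwarz to show the contribution is negligible. Since $\epsilon>0$ is arbitrary, this yields \eqref{jensen lemma bound}.

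Concretely, the steps in order are: (i) expand each $Q_\alpha$-decoupled factor using \eqref{basic resolvent}, \eqref{Zlemma expand 1}--\eqref{Zlemma expand basic}, keeping the expansion ``minimal'' so that further expansion is only needed when a term would otherwise vanish under $\E^X$; (ii) show that every non‑vanishing term acquires, for each decoupling index, at least one extra $F$-type factor or $(G^{(\T)}_{\alpha\alpha})^{-1}$-type factor, so that the total power of small quantities is at least $(M\eta_0)^{-p}(M^{-\fb/2+\phi})^{s}$ on $\Xi_\epsilon$; (iii) invoke Corollary~\ref{Zlemma 1} to convert this into a deterministic bound on $\Xi_\epsilon$; (iv) control the $\Xi_\epsilon^c$ part with \eqref{estimates on expectation}; (v) take $\epsilon\downarrow0$ and appeal to the definition of $\prec$. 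The same bookkeeping as in~\cite{eejl,sce} shows the implicit constants are uniform in $z\in\caD_\phi'$.

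The main obstacle, as in~\cite{eejl}, is step~(ii): unlike in the square‑root‑edge setting of~\cite{sce,p}, the individual Green‑function entries $G_{\alpha\beta}$ with Greek indices are \emph{not} small near the edge, so one cannot simply bound each factor of $G_{\alpha\beta}$ by a small control parameter. The resolution is to only ever estimate the \emph{fractions} $F^{(\T,\T')}_{\alpha\beta}=G^{(\T)}_{\alpha\beta}/G^{(\T')}_{\beta\beta}$ and $F^{(\T',\T'')}_{\alpha\beta}/G^{(\T)}_{\alpha\alpha}$, which \emph{are} small by Lemma~\ref{firstbound} and Corollary~\ref{Zlemma 1}; this forces one to organize the expansion so that every $G_{\alpha\beta}$ appears paired with compensating diagonal factors, and to verify that the counting of off‑diagonal factors is not spoiled when passing between $H$ and its minors. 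Carrying out this bookkeeping carefully — in particular checking that the ``minimality'' of the expansion is preserved and that the exponent $s$ of $M^{-\fb/2+\phi}$ is exactly what is claimed — is the technical heart of the argument, but it is a routine (if lengthy) adaptation of~\cite[Appendix~B]{eejl}, so I would state it as such and refer the reader there for the full combinatorial details.
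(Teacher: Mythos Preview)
You have confused this lemma with the main fluctuation averaging lemma (Lemma~\ref{the Zlemma}). The Jensen lemma as stated is a purely analytic statement about \emph{abstract} random variables $\caX_\alpha,\caY_\alpha$ for which only the stochastic dominance bounds~\eqref{estimates on good event} and the crude moment bounds~\eqref{estimates on expectation} are assumed; there is no resolvent structure available to ``expand into minors'', no identity~\eqref{basic resolvent} to apply, and no $F$-factor bookkeeping possible. Your steps (i)--(iii), which rely on the algebraic identities~\eqref{Zlemma expand 1}--\eqref{Zlemma expand basic} and Corollary~\ref{Zlemma 1}, simply do not make sense at this level of generality. In the paper's architecture, the Jensen lemma is a \emph{tool} invoked inside the proof of Lemma~\ref{the Zlemma}, not the combinatorial expansion itself.

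The paper's actual proof is short and completely different: first use Jensen's inequality to bound $\E^X|Q_\alpha\caZ|^r\le 2^r\E^X|\caZ|^r$, then apply H\"older's inequality with exponents $h_\alpha=2\lceil(2+p)/(1+d_\alpha)\rceil$ to separate the product into factors $(\E^X|\caX_\alpha|^{h_\alpha})^{1/h_\alpha}$ and $(\E^X|\caY_\alpha|^{h_\alpha})^{1/h_\alpha}$. The key remaining step is to upgrade the stochastic dominance~\eqref{estimates on good event} to a genuine moment bound: split $\E^X|\caX_\alpha|^n$ according to whether $|\caX_\alpha|$ is below or above its high-probability threshold, bound the first part trivially, and bound the second part by Cauchy--Schwarz using the polynomial moment bound~\eqref{estimates on expectation} against the $M^{-D}$ probability of the bad event. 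This yields $\E^X|\caX_\alpha|^n\prec\big((M\eta_0)^{-1}(M^{-\fb/2+\phi})^{d_\alpha-1}\big)^n$ and similarly for $\caY_\alpha$, after which multiplying out gives~\eqref{jensen lemma bound}. Your step~(iv) contains the germ of this truncation idea, but it is the \emph{entire} argument here, not a peripheral cleanup on $\Xi_\epsilon^c$.
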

\begin{proof}
Let $h_\alpha\deq 2\lceil \frac{2+p}{1+d_\alpha} \rceil$, $\alpha\in\llbracket 1, p \rrbracket$. Fix $\phi>0$. Note that 
\begin{align}
	\E^X|Q_\beta \caX|^p\leq 2^{p-1}\E^X|\caX|^p+2^{p-1}\E^X|\E_\beta\caX|^p \,.
\end{align}
By Jensen's inequality, we also have 
\begin{align}
	\E^X|Q_\beta\caX|^p \leq 2^p\E^X|\caX|^p \,.
\end{align}
The H\"older's inequality implies that 
\begin{align}
	\left| \E^X\prod_{\alpha=1}^{q} Q_\alpha\caX_\alpha \prod_{\alpha=q+1}^{p} Q_\alpha\caY_\alpha \right|\leq 2^p \prod_{\alpha=1}^{q} (\E^X|\caX_\alpha|^{h_\alpha})^{1/h_\alpha}\prod_{\alpha=q+1}^{p}(\E^X | \caY_\alpha|^{h_\alpha})^{1/h_\alpha} \,.
\end{align}
Considering that for any $\epsilon>0$ and $D>0$, we have
\begin{align}
	\begin{split}
		\E^X[|\caX| ] &=\E^X[ |\caX| \lone(|\caX| \leq  M^\epsilon (M\eta_0)^{-1}M^{-(d_\alpha-1)(\fb/2+\phi)}) ]\\
		&+\E^X [ |\caX| \lone(|\caX| >  M^\epsilon (M\eta_0)^{-1}M^{-(d_\alpha-1)(\fb/2+\phi)})] \\
		&\leq   M^\epsilon(M\eta_0)^{-1}M^{-(d_\alpha-1)(\fb/2+\phi)}+\sqrt{\E^X|\caX|^2}\sqrt{\mathbb{P}(|\caX|> M^{\epsilon}(M\eta_0)^{-1}M^{-(d_\alpha-1)(\fb/2+\phi)})} \\ &\leq
		M^{\epsilon}(M\eta_0)^{-1}M^{-(d_\alpha-1)(\fb/2+\phi)}+M^{2K(d_\alpha+1)-D/2} 
	\end{split}
\end{align}
for large enough $N\geq N_0(\epsilon,D)$. Hence we obtain that 
\begin{align}
	\E^X |\caX| \prec  (M\eta_0)^{-1}M^{-(d_\alpha-1)(\fb/2+\phi)} \,.
\end{align}
Furthermore, by the property of stochastic dominance, 
\begin{align}
	\E^X|\caX|^n \prec  ( (M\eta_0)^{-1}M^{-(d_\alpha-1)(\fb/2+\phi)} )^n \,.
\end{align}
Similarly, we can obtain 
\begin{align}
	\E^X |\caY|^n \prec ((M\eta_0)^{-1})^n \,.
\end{align}
Then it is easy to show the desired lemma.
\end{proof}

In order to prove the fluctuation average lemma, we need to consider the random variables of the form
\begin{align}\frac{F^{\#}_{\alpha_i\beta_1}}{G_{\alpha_i\alpha_i}^{\#}}\cdot F^{\#}_{\beta_1\beta_2}F^{\#}_{\beta_2\beta_3}\cdots F^{\#}_{\beta_n\alpha_i}
\end{align}
where $\#$ stands for som appropriate $(\mathbb{T},\mathbb{T}')$ with $p\in 2\N$, $|\mathbb{T}|\leq p-2$,$|\mathbb{T}|'\leq p-1$. Moreover, $\beta_1\neq \alpha_i$, $\beta_k\leq \beta_{k+1}$, $(k\in \llbracket 1,n-1 \rrbracket)$, $\beta_n\neq \alpha_1$. \\
By using Lemma~\ref{Zlemma 1} $n$ times, we obtain an upper bound of the form that of $ \caX$ from Lemma~\ref{Jensen lemma}. In addition, in order to apply Lemma~\ref{Jensen lemma}, we also need an upper bound of $r$-th moment of the variables.

\begin{lemma}\label{remark about easy estimate1}
For any fixed even integer $p\in 2\N$, let $\#$ stands for some appropriate $(\mathbb{T},\mathbb{T}')$ with $|\mathbb{T}|\leq p-2$,$|\mathbb{T}|'\leq p-1$.  If $\beta_1\neq \alpha_i$, $\beta_k\leq \beta_{k+1}$, $(k\in \llbracket 1,n-1 \rrbracket)$, $\beta_n\neq \alpha_1$, then we have
\begin{align}\label{the easy estimate}
	\E^X\left|\frac{F^{\#}_{\alpha_i\beta_1}}{G_{\alpha_i\alpha_i}^{\#}}\cdot F^{\#}_{\beta_1\beta_2}F^{\#}_{\beta_2\beta_3}\cdots F^{\#}_{\beta_n\alpha_i}\right|^r\prec M^{Kr(n+1)}\,,
\end{align}
for some constants $K$, for all $r\le 10 p$ and $1\leq n \leq p+1$.	
\end{lemma}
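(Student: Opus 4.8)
The plan is to reduce the $r$-th moment bound~\eqref{the easy estimate} to crude deterministic estimates on each of the Green function fractions appearing in the product, valid on the high-probability event $\Xi_\epsilon$, together with a trivial a priori bound valid off that event. First I would observe that every factor is either of the form $F^{\#}_{\alpha_i\beta_1}/G^{\#}_{\alpha_i\alpha_i}$, which is exactly $(XG^{(\#)}X^*)_{\alpha_i\beta_1}$ type after applying~\eqref{greek} and hence bounded by~\eqref{Zlemma bound 2} by $M^\epsilon/(M\eta_0)$ on $\Xi_\epsilon$, or of the form $F^{\#}_{\beta_k\beta_{k+1}}$, which by~\eqref{Zlemma bound 1} is bounded by $M^\epsilon M^{-\fb/2+\phi}$ on $\Xi_\epsilon$. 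Since there are at most $n\le p+1$ factors of the second kind and one of the first kind, on $\Xi_\epsilon$ the modulus of the product is at most
\begin{align}
	\lone(\Xi_\epsilon)\left|\frac{F^{\#}_{\alpha_i\beta_1}}{G^{\#}_{\alpha_i\alpha_i}}\prod_{k} F^{\#}_{\beta_k\beta_{k+1}}\right| \le \frac{M^\epsilon}{M\eta_0}\left(M^\epsilon M^{-\fb/2+\phi}\right)^{n} \le M^{C\epsilon}\,,
\end{align}
for $N$ large, so this term contributes at most $M^{C\epsilon r}$ to the $r$-th moment.

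Next I would handle the complementary event $\Xi_\epsilon^c$. Here I cannot use the refined bounds, but I can use the trivial bounds $|G^{(\T)}_{AB}(z)| \le \eta^{-1} \le \eta_0^{-1} = M^{1/2+\phi}$ and $|x_{ij}| \le 1$ (or a polynomial bound following from~\eqref{p moment bound} after a union bound, but in any case polynomially bounded), so that each $F^{\#}_{\alpha\beta} = G^{\#}_{\alpha\beta}/G^{\#}_{\beta\beta}$ — wait, the denominator could be small; instead I would go back to the representation via~\eqref{greek}, $F_{\alpha\beta}^{\#} = -\sum_b G^{(\#\,\beta)}_{\alpha b} x_{\beta b}$, which is a sum of at most $N$ terms each bounded by $\eta_0^{-1}\cdot\|X\|_{\max}$, hence polynomially bounded in $M$, say by $M^{C_0}$ for some fixed $C_0$; similarly $|F^{\#}_{\alpha_i\beta_1}/G^{\#}_{\alpha_i\alpha_i}| = |(XG^{(\#)}X^*)_{\alpha_i\beta_1}| \le M^{C_0}$. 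Thus on $\Xi_\epsilon^c$ the product is bounded deterministically by $M^{C_0(n+1)}$, and its $r$-th moment is bounded by $M^{C_0(n+1)r}\,\p(\Xi_\epsilon^c)$. Since $\Xi_\epsilon$ holds with high probability on $\Omega$, $\p(\Xi_\epsilon^c) \le M^{-D}$ for any $D$, and choosing $D$ large enough (depending on $C_0$, $n$, $r$, all of which are bounded in terms of $p$) makes this contribution negligible, in particular $\prec M^{Kr(n+1)}$ for any fixed $K>0$.

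Combining the two pieces, $\E^X|\,\cdot\,|^r \le M^{C\epsilon r} + M^{-D/2} \prec M^{Kr(n+1)}$ with, say, $K = 1$ (any positive $K$ works since the actual growth is subpolynomial on the good event and superpolynomially suppressed off it), which is~\eqref{the easy estimate}. The only mild subtlety is bookkeeping: one must track that the index sets $\#$ involved in the nested fractions remain of bounded cardinality ($|\T|\le p-2$, $|\T'|\le p-1$) so that the hypotheses of Corollary~\ref{Zlemma 1} apply to every factor simultaneously — this is guaranteed by the structure of the product, where $\beta_1\neq\alpha_i$ and $\beta_n\neq\alpha_1$ and the telescoping indices ensure compatibility. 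I do not expect a serious obstacle here; the estimate is deliberately very lossy (the exponent $K(n+1)r$ is far larger than what the good-event bound actually gives), precisely so that it can be fed into Lemma~\ref{Jensen lemma} where only a polynomial-in-$M$ second-moment-type control is needed to absorb the small-probability bad event. The one place to be careful is to use the representation~\eqref{greek} rather than the literal ratio $G^{\#}_{\alpha\beta}/G^{\#}_{\beta\beta}$ when bounding off $\Xi_\epsilon$, since the denominator $G^{\#}_{\beta\beta}$ has no deterministic lower bound, whereas the sum $\sum_b G^{(\#\,\beta)}_{\alpha b}x_{\beta b}$ does have a trivial deterministic upper bound.
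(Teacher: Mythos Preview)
Your approach has a genuine gap in the handling of the bad event $\Xi_\epsilon^c$. You claim that on $\Xi_\epsilon^c$ the product is bounded \emph{deterministically} by $M^{C_0(n+1)}$, yielding $\E^X[|\cdot|^r\lone(\Xi_\epsilon^c)]\le M^{C_0(n+1)r}\p(\Xi_\epsilon^c)$. But this is false: using the representation $F_{\alpha\beta}^{\#}=-\sum_b G^{(\#\beta)}_{\alpha b}x_{\beta b}$, the Green function factors are indeed deterministically $\le \eta_0^{-1}$, yet the entries $x_{\beta b}$ have only the moment condition~\eqref{p moment bound} and no pointwise bound whatsoever. The event $\{\max|x_{ij}|\le M^\epsilon/\sqrt M\}$ is precisely part of $\Xi_\epsilon$, so on $\Xi_\epsilon^c$ the $x_{ij}$ may be arbitrarily large. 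Your parenthetical ``or a polynomial bound following from~\eqref{p moment bound} after a union bound'' does not help: that union bound produces another high-probability event, and on \emph{its} complement you are back to the same problem. If instead you try to use Cauchy--Schwarz, $\E^X[|\cdot|^r\lone(\Xi_\epsilon^c)]\le (\E^X|\cdot|^{2r})^{1/2}\p(\Xi_\epsilon^c)^{1/2}$, you need an a priori polynomial bound on $\E^X|\cdot|^{2r}$ --- which is exactly the content of the lemma, so the argument becomes circular.

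The paper's proof sidesteps the event split entirely and works directly in $L^{r(n+1)}$: by H\"older with $n+1$ factors,
\[
\E^X\Big|\tfrac{F^{\#}_{\alpha_i\beta_1}}{G^{\#}_{\alpha_i\alpha_i}}\prod_k F^{\#}_{\beta_k\beta_{k+1}}\Big|^r
\le \Big\|\tfrac{F^{\#}_{\alpha_i\beta_1}}{G^{\#}_{\alpha_i\alpha_i}}\Big\|_{r(n+1)}^{r}\prod_k\|F^{\#}_{\beta_k\beta_{k+1}}\|_{r(n+1)}^{r}\,,
\]
and then bounds each norm unconditionally. For $\|F^{\#}_{\alpha\beta}\|_q$ one writes $|F^{\#}_{\alpha\beta}|=|G^{\#}_{\alpha\beta}|\cdot|1/G^{\#}_{\beta\beta}|\le \eta^{-1}|1/G^{\#}_{\beta\beta}|$, and then Schur's formula $1/G^{\#}_{\beta\beta}=-\sigma_\beta^{-1}-\sum_{k,l}x_{\beta k}G^{(\#\beta)}_{kl}x_{l\beta}$ combined with Minkowski and the moment bounds~\eqref{p moment bound} gives $\|1/G^{\#}_{\beta\beta}\|_q\le C+CM^2\cdot M^{-1}\cdot \eta^{-1}\le CM^2$, hence $\|F^{\#}\|_q\le CM^3$; similarly $\|F^{\#}/G^{\#}\|_q\le CM^4$. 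This yields $\E^X|\cdot|^r\prec M^{(4+3n)r}$, and one takes $K=4$. The point is that the moment condition on $x_{ij}$ is used \emph{inside} the norm computation, not to carve out a good event. Your good-event bound is correct but irrelevant here --- the lemma is deliberately crude and only asks for a polynomial moment bound, which must hold unconditionally.
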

\begin{proof} Starting from Schur's formula
\begin{align*}
	\frac{1}{G_{\alpha\alpha}^{(\T)}}=-\sigma_\alpha^{-1}-\sum_{k,l}^{(\mathbb{T}\alpha)}x_{\alpha k}G_{kl}^{(\mathbb{T}\alpha)}x_{l\alpha},\quad\qquad (a\not\in\T)\,,
\end{align*}
and recall the trivial bounds $|G_{\alpha\alpha}^{(\T)}|\le\eta^{-1}\le M$, $\E^X |x_{ij}|^{q}\le C_qM^{-q/2}$ and $ |\sigma_\alpha^{-1}|^q\le C^q$, which holds since $\sigma_\alpha\in [l,1]$, and the boundedness of~$\caD_\phi'$.
Then we get
\begin{align}
	\begin{split}
		|| F_{\beta_i,\beta_{i+1}}^{\#}||_{r(n+1)} \le \frac{1}{\eta} \left\| \frac{1}{G_{\beta_1,\beta_{i+1}}^{\#}}\right\|
		&\le M \left( C+\sum_{k,l}^{(\#\beta_{i+1})} ||x_{\alpha k}G_{kl}^{(\mathbb{T}\alpha)}x_{l\alpha}  ||_{r(n+1)} \right)\\
		&\leq M\left( C+M^2C'(r(n+1))\leq M^3C''r(n+1)     \right)\,,
	\end{split}
\end{align}
which implies
\begin{align}
	||  F_{\beta_i,\beta_{i+1}}^{\#}||_{r(n+1)} \prec M^3 \,.
\end{align}
Furthermore, we have
\begin{align}
	\left|\left| \frac{F_{\alpha_i\beta_1}^{\#}}{G_{\alpha_i,\alpha_i}^{\#}} \right|\right|_{r(n+1)}\prec M^4\,.
\end{align}
By H\"older's inequality,
\begin{align}\label{the easy estimate 2}
	\E^X\left|\frac{F^{\#}_{\alpha_i\beta_1}}{G_{\alpha_i\alpha_i}^{\#}}\cdot F^{\#}_{\beta_1\beta_2}F^{\#}_{\beta_2\beta_3}\cdots F^{\#}_{\beta_n\alpha_i}\right|^r\le \left|\left| \frac{F_{\alpha_i\beta_1}^{\#}}{G_{\alpha_i,\alpha_i}^{\#}} \right|\right|_{r(n+1)}^r\prod_{i=1}^n \left|\left| F_{\beta_i\beta_{i+1}}^{\#} \right|\right|_{r(n+1)}^r \,,
\end{align}
where we set $\beta_{n+1}\deq\alpha_i$.
Then we obtain
\begin{align}
	\left|\left| \frac{F_{\alpha_i\beta_1}^{\#}}{G_{\alpha_i,\alpha_i}^{\#}} \right|\right|_{r(n+1)}^r\prod_{i=1}^n \left|\left| F_{\beta_i\beta_{i+1}}^{\#} \right|\right|_{r(n+1)}^r\prec M^{4r+3rn}\,.
\end{align}
Choosing $K=4$, we obtain desired lemma.
\end{proof}
From the previous lemmas, we can derive the following significant lemma.
\begin{lemma}\label{the Zlemma}\emph{[Fluctuation Average Lemma]}
Let $A\deq\llbracket n_0,M\rrbracket$. Recall the definition of the domain $\caD_{\phi}'$ in~\eqref{a index assumption}. Let $\Xi$ denote the event in Definition~\ref{definition of xi} and assume it holds with high probability. Then there exist constants $C$, $c$, $c_0$, such that for fixed $p\in 2\N$, $p=2r$, $r\in\N$, , we have
\begin{align}\label{estimate in Zlemma}
	\E^X\left|\frac{1}{M}\sum_{\alpha\in A}Q_{\alpha}\left(\frac{1}{G_{\alpha\alpha}(z)}\right) \right|^{p}\prec M^{-p/2-p\fb/2+2p\phi}\,,
\end{align}
for all $z\in\caD_{\phi}'$, on $\Omega$.
\end{lemma}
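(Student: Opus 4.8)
The plan is to carry out the fluctuation averaging (``abstract decoupling'') scheme in the form of Appendix~B of~\cite{eejl}. The essential difficulty is that on $\caD_\phi'$ the diagonal entries $G_{\alpha\alpha}$ may be as large as $M^{1/(\b+1)+\phi}$, so one cannot work with the $Z_\alpha$ directly as in~\cite{sce,p}; instead one exposes the resolvent fractions $F_{\alpha\beta}^{(\T,\T')}$ and $F_{\alpha\beta}^{(\emptyset,\alpha)}/G_{\alpha\alpha}$, which remain small by Lemma~\ref{firstbound} and Corollary~\ref{Zlemma 1}. Concretely, I would first expand the $p$-th moment ($p=2r$) as
\begin{align}
	\E^X\Bigl|\frac1M\sum_{\alpha\in A}Q_\alpha\Bigl(\tfrac{1}{G_{\alpha\alpha}}\Bigr)\Bigr|^{p}
	=\frac{1}{M^{p}}\sum_{\alpha_1,\dots,\alpha_{p}\in A} \E^X \prod_{i=1}^{r}Q_{\alpha_i}\Bigl(\tfrac{1}{G_{\alpha_i\alpha_i}}\Bigr)\prod_{i=r+1}^{p}\overline{Q_{\alpha_i}\Bigl(\tfrac{1}{G_{\alpha_i\alpha_i}}\Bigr)}\,,
\end{align}
and use the standard observation that an index tuple $(\alpha_1,\dots,\alpha_p)$ in which some value occurs exactly once is negligible: after expanding the other factors so as to remove their dependence on the $\alpha_i$-th row of $X$, the $\alpha_i$-th factor can be isolated and killed using $\E_{\alpha_i}Q_{\alpha_i}=0$, leaving only terms carrying extra off-diagonal factors. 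Hence it suffices to treat tuples in which every distinct value is repeated at least twice (the remaining ones being even smaller), so that each such tuple has $u\le r$ distinct indices.

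For such a tuple I would expand every factor $Q_{\alpha_i}(1/G_{\alpha_i\alpha_i})$ by iterating the resolvent identity~\eqref{Zlemma expand basic},
\begin{align}
	\frac{1}{G_{\alpha\alpha}^{(\T)}}=\frac{1}{G_{\alpha\alpha}^{(\T\gamma)}}\bigl(1-F_{\alpha\gamma}^{(\T,\T)}F_{\gamma\alpha}^{(\T,\T)}\bigr)\,,
\end{align}
together with~\eqref{Zlemma expand 1} and~\eqref{Zlemma expand 2}, to strip off the dependence on the remaining summation indices one at a time. After finitely many steps every resolvent entry in the $i$-th factor is independent of the rows $X_{\alpha_j}$, $j\neq i$, and each expansion step exposes off-diagonal fractions $F^{\#}$ of size $\prec M^{-\fb/2+\phi}$ (Corollary~\ref{Zlemma 1}, cf.~\eqref{F bound 1}). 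The resulting monomials are exactly those treated by Lemma~\ref{Jensen lemma}, whose hypotheses~\eqref{estimates on good event} and~\eqref{estimates on expectation} are supplied by the deterministic bounds valid on $\Xi_\epsilon$ (Corollary~\ref{Zlemma 1}) together with the crude $\E^X$-moment bounds of Lemma~\ref{remark about easy estimate1} (which use only $|G_{\alpha\alpha}^{(\T)}|\le\eta^{-1}\le M$, $\E^X|x_{ij}|^q\le C_q M^{-q/2}$, and the boundedness of $\caD_\phi'$). Lemma~\ref{Jensen lemma} then bounds each surviving monomial by $(M\eta_0)^{-p}(M^{-\fb/2+\phi})^{s}$, with $s$ the number of exposed off-diagonal factors.

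It then remains to collect exponents. Using $(M\eta_0)^{-1}=M^{-1/2+\phi}$ (recall $\eta_0=M^{-\phi}/\sqrt M$ from~\eqref{definition of kappa0}), a single monomial contributes $M^{-p/2+p\phi}M^{(-\fb/2+\phi)s}$, the index sum over a fixed coincidence pattern with $u$ distinct values contributes a further $M^{u}$, and there is the overall prefactor $M^{-p}$. The combinatorial heart of the proof, run exactly as in~\cite{eejl}, is to verify that for every surviving pattern the number $s$ of exposed off-diagonal factors is, in balance with $u$, large enough that
\begin{align}
	M^{u-p}\,M^{-p/2+p\phi}\,M^{(-\fb/2+\phi)s}\ \le\ M^{-p/2-p\fb/2+2p\phi}\,;
\end{align}
granted the lower bound on $s$, this is elementary using $u\le r=p/2$, $0<\fb<1/2$, and $\phi$ sufficiently small (as in~\eqref{phi condition}). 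Summing over the $O(1)$ coincidence patterns gives~\eqref{estimate in Zlemma}.

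The main obstacle I expect is precisely this last counting: making rigorous, for an arbitrary coincidence pattern, the claim that each decoupling of one index from another forces enough off-diagonal factors (each contributing the gain $M^{-\fb/2}$), while tracking that the $M^{\phi}$ losses — which appear here, unlike in the square-root-edge setting of~\cite{sce,p}, because only the weaker a priori bounds~\eqref{F bound 1} and~\eqref{F bound 2} are available near the edge — accumulate to no more than $M^{2p\phi}$. The remaining ingredients are routine: the trivial bounds $|G_{\alpha\alpha}^{(\T)}|\le\eta^{-1}$ used inside Lemma~\ref{remark about easy estimate1} are harmless since they enter only to bounded powers and are absorbed by the $N^{\epsilon}$ slack built into $\prec$, and uniformity in $z\in\caD_\phi'$ follows from the lattice argument already used in Section~\ref{location}.
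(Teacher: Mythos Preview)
Your proposal is correct and follows exactly the approach the paper takes: the paper's own proof simply states that the argument relies on the identity~\eqref{basic resolvent} together with the a priori estimates (Corollary~\ref{Zlemma 1}, Lemma~\ref{Jensen lemma}, Lemma~\ref{remark about easy estimate1}) and then refers to~\cite{eejl} for the details. Your outline is in fact more detailed than what the paper provides, and the ``main obstacle'' you flag --- the combinatorial counting that every surviving coincidence pattern forces enough off-diagonal $F$-factors --- is precisely the part that both you and the paper delegate to the machinery of~\cite{eejl}.
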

\begin{proof}
The proof of this lemma is only rely on the identity~\eqref{basic resolvent} and the estimates. Therefore, we can follow the method from~\cite{eejl} or~\cite{sce} to prove our lemma. Check~\cite{eejl} for more detail of the proof. 
\end{proof}

\begin{proof}[Proof of Lemma~\ref{lemma:step 4}]
From Lemma~\ref{the Zlemma}, by the Chebyshev's inequality, for any fixed $\phi>0$ and $D>0$, we have
\begin{align}
	\mathbb{P}\left( \left| \frac{1}{M}\sum_{\alpha\in A} Q_\alpha\left( \frac{1}{G_{\alpha\alpha}} \right) \right| > M^\epsilon M^{-1/2-\fb/2+2\phi} \right)\leq M^{1-\epsilon p}
\end{align}
for large enough $M>M_0(\epsilon,p)$ where $p\in2\N$. If we choose $p\geq (1+D)/\epsilon$, we obtain the desired lemma.
\end{proof}
\begin{proof}[Proof of Corollary~\ref{corollary:step 4}]
Since the proof of Corollary~\ref{corollary:step 4} is the same with that of~\cite{eejl}, we omit the detail in this paper.
\end{proof}

\section*{Acknowledgements}
We thank Paul Jung for helpful discussions. The work of J. Kwak was partially supported by National Research Foundation of Korea under grant number NRF-2017R1A2B2001952. The work of J. O. Lee and J. Park was partially supported by National Research Foundation of Korea under grant number NRF-2019R1A5A1028324.

\begin{appendices}
\section{Probability of ``good configuration" $\Omega$}\label{app:omega}
In this appendix, we estimate the probabilities for the events $1.$-$3.$ in the definition of $\Omega$; see Definition~\ref{sigma assumptions}. Recall the definition of the constants $\phi$ in~\eqref{phi condition} and $\kappa_0$ in~\eqref{definition of kappa0}. In the following, we denote by $(\sigma_\alpha)_{\alpha=1}^M$ the (unordered) sample points distributed according to the measure $\nu$ with $\b>1$. We denote by $(\sigma_{(\alpha)})$ the order statistics of $(\sigma_\alpha)$, i.e., $\sigma_{(1)}\ge \sigma_{(2)}\ge\ldots\ge \sigma_{(M)}$. 

\begin{lemma} \label{lemma:app_1}
Let $(\sigma_{(\alpha)})$ be the order statisctics of sample points $(\sigma_\alpha)$ under the probability distribution $\nu$ with $\b>1$. Let $n_0 > 10$ be a fixed positive integer independent of $M$. Then, for any $\gamma \in \llbracket 1, n_0-1 \rrbracket$, we have
\begin{align}
	\p \left( M^{-\phi} \kappa_0 < |
	\sigma_{(\gamma)} - \sigma_{(\beta)}| < (\log M) \kappa_0\,, \forall \beta \neq \gamma \right) \ge 1 - C (\log M)^{1+2\b} M^{-\phi}\,.
\end{align}
In addition, for $\gamma=1$, we have
\begin{align}
	\p \left( M^{-\phi} \kappa_0 < |1 - \sigma_{(1)}| < (\log M) \kappa_0 \right) \ge 1 - C M^{-\phi (\b+1)}\,.
\end{align}
\end{lemma}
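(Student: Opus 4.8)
The plan is to exploit the edge behaviour of $\nu$. Writing $F$ for the cumulative distribution function of $\nu$, the substitution $u=1-s$ together with $f\in C^1[l,1]$ and $f(1)>0$ gives, uniformly for $0\le x\le 1-l$,
$1-F(1-x)=\int_{1-x}^{1}\rho_\nu(s)\,\dd s=c_\nu\,x^{\b+1}\bigl(1+O(x)\bigr)$, where $c_\nu\deq\frac{1}{\b+1}\lim_{t\to1}\rho_\nu(t)(1-t)^{-\b}>0$; and also $\rho_\nu(s)\le C(1-s)^{\b}$ on $[l,1]$. Since $\kappa_0^{\b+1}=M^{-1}$, a single sample lies within distance $s\kappa_0$ of $1$ with probability $c_\nu s^{\b+1}M^{-1}(1+o(1))$, so the expected number of samples at distance $\le s\kappa_0$ from $1$ is of order $s^{\b+1}$. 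Hence $\sigma_{(1)},\dots,\sigma_{(n_0)}$ all sit at distance of order $\kappa_0$ from $1$, and both claims reduce to quantitative binomial/extreme-value tail estimates.

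\textbf{The case $\gamma=1$.} First I would treat the lower tail by a union bound over the $M$ samples: $\p(\sigma_{(1)}>1-M^{-\phi}\kappa_0)\le M\bigl(1-F(1-M^{-\phi}\kappa_0)\bigr)=c_\nu M\cdot M^{-\phi(\b+1)-1}(1+o(1))=c_\nu M^{-\phi(\b+1)}(1+o(1))$, using $(M^{-\phi}\kappa_0)^{\b+1}=M^{-\phi(\b+1)-1}$. For the upper tail I would use independence: $\p(\sigma_{(1)}\le 1-(\log M)\kappa_0)=F(1-(\log M)\kappa_0)^{M}\le\exp\bigl(-c_\nu(\log M)^{\b+1}(1+o(1))\bigr)$, and since $\b>1$ we have $(\log M)^{\b+1}\gg\log M$, so this is $\le M^{-\phi(\b+1)}$ for $M$ large. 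Adding the two contributions yields the claimed probability $1-CM^{-\phi(\b+1)}$.

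\textbf{The case $\gamma\in\llbracket1,n_0-1\rrbracket$.} Here the plan has two ingredients. (i) \emph{Localization of the top order statistics:} set $K\deq\#\{\alpha:\sigma_\alpha\ge 1-(\log M)\kappa_0\}$, a $\mathrm{Bin}(M,p)$ variable with $Mp=c_\nu(\log M)^{\b+1}(1+o(1))\to\infty$; estimating $\p(K=k)\le\frac{(Mp)^k}{k!}e^{-p(M-k)}$ and summing over $k\le n_0-1$ shows $\p(K\le n_0-1)\le M^{-D}$ for every $D$ (the factor $e^{-Mp}$ beats every power of $M$ because $\b+1>1$). On $\{K\ge n_0\}$ one has $\sigma_{(1)},\dots,\sigma_{(n_0)}\in[1-(\log M)\kappa_0,1]$, which already delivers all the required upper bounds $|\sigma_{(\gamma)}-\sigma_{(\beta)}|<(\log M)\kappa_0$ for $\beta\in\llbracket1,n_0\rrbracket$. (ii) \emph{No small gaps near the edge:} with $W\deq[1-2(\log M)\kappa_0,1]$, a union bound over ordered pairs gives $\p\bigl(\exists\,i\ne j:\sigma_i,\sigma_j\in W,\ |\sigma_i-\sigma_j|\le M^{-\phi}\kappa_0\bigr)\le\binom{M}{2}\bigl(\int_W\rho_\nu\bigr)\sup_{t\in W}\int_{|s-t|\le M^{-\phi}\kappa_0}\rho_\nu(s)\,\dd s$. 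Using $\int_W\rho_\nu\le C(\log M)^{\b+1}M^{-1}$, and, on the relevant range, $1-s\le 3(\log M)\kappa_0$ so $\rho_\nu(s)\le C(\log M)^{\b}\kappa_0^{\b}$ and the inner integral is $\le C(\log M)^{\b}M^{-\phi-1}$, multiplying by $\binom{M}{2}\le M^2$ produces exactly the bound $C(\log M)^{1+2\b}M^{-\phi}$. On the intersection of the events in (i) and (ii), the lower bound $|\sigma_{(\gamma)}-\sigma_{(\beta)}|>M^{-\phi}\kappa_0$ holds for all $\beta\ne\gamma$: if $\sigma_{(\beta)}\in W$ it follows from (ii); if $\sigma_{(\beta)}\notin W$ then $1-\sigma_{(\beta)}>2(\log M)\kappa_0$ while $1-\sigma_{(\gamma)}\le(\log M)\kappa_0$, so the gap is $\ge(\log M)\kappa_0>M^{-\phi}\kappa_0$. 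Since there are only $n_0-1$ values of $\gamma$, the union bound is absorbed into $C$.

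\textbf{Main obstacle.} The delicate point is the pair-counting estimate in (ii): one must first localize the dangerous pairs inside the window $W$ of width $\sim(\log M)\kappa_0$ (a naive union bound over all $\binom{M}{2}$ pairs, with the density evaluated on all of $[l,1]$, would be hopelessly lossy), and then carefully track the two density factors so that the powers of $\log M$ combine to precisely $(\log M)^{1+2\b}$. The binomial localization step (i), the extreme-value estimates for $\gamma=1$, and the expansion of $F$ at the edge are all routine once the edge asymptotics are in hand.
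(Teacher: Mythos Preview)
Your argument is correct and complete. The paper does not actually prove this lemma in-text: it simply cites Theorem~8.1 of~\cite{eejl} and records the tail bound $C^{-1}x^{\b+1}\le\p(1-\sigma\le x)\le Cx^{\b+1}$ as the key ingredient. Your proof supplies exactly the details one would expect that reference to contain, and the approach---extreme-value tail bounds for $\sigma_{(1)}$, binomial localization of the top $n_0$ order statistics into a window of width $(\log M)\kappa_0$, and a pair-counting union bound for small gaps inside that window---is the standard one and almost certainly matches~\cite{eejl}. The arithmetic in step~(ii) is the decisive calculation and you have it right: $\binom{M}{2}\cdot C(\log M)^{\b+1}M^{-1}\cdot C(\log M)^{\b}M^{-\phi-1}=C(\log M)^{1+2\b}M^{-\phi}$.

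One small remark on the statement itself rather than your proof: as written, the condition ``$\forall\beta\ne\gamma$'' would require $|\sigma_{(\gamma)}-\sigma_{(\beta)}|<(\log M)\kappa_0$ even for $\beta$ near $M$, which is plainly false. Comparing with Definition~\ref{sigma assumptions}, the upper bound is only meant for $\beta\in\llbracket1,n_0\rrbracket$, while the lower bound is for all $\beta$; your proof is consistent with this intended reading.
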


For a proof, we refer to Theorem 8.1 of~\cite{eejl}. Here, we state the key part of the proof as a following remark.

\begin{remark}
For a random variables $\sigma$ with law $\nu$ as in~\eqref{jacobi measure}, we have for any $x \ge 0$,  
\begin{align} \label{sigma tail}
	C^{-1} x^{\b+1} \le \p (1 - \sigma \le x) \le C x^{\b+1}\,,
\end{align}
for some constant $C > 1$.
\end{remark}

Next, we estimate the probability of condition $(2)$ in Definition~\ref{sigma assumptions} to hold.

\begin{lemma} \label{lemma:app_2}
Assume the conditions in Lemma~\ref{lemma:app_1}. Recall the definition of $\caD_{\phi}$ in~\eqref{domain}. Then, for any fixed (small) $\epsilon>0$, $D>0$, there exists $M_0(\epsilon,D)$ such that if $M\geq M_0$, then
\begin{align} \label{eq:CLT_fixed}
	\p \left( \bigcup_{z \in \caD_{\phi}} \ \left \{ \left| \frac{1}{N} \sum_{\alpha=1}^M \frac{\sigma_\alpha}{\sigma_\alpha\mfc(z)+1} - d^{-1} \int \frac{t \dd\nu(t)}{t\mfc(z)+1} \right| > \frac{M^{\phi+\epsilon}}{\sqrt M} \right \} \right) \le   M^{-D}.
\end{align}
\end{lemma}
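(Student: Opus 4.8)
The statement is a uniform (in the spectral parameter $z\in\caD_\phi$) concentration estimate for the Stieltjes-transform type functional $z\mapsto \frac1N\sum_\alpha \frac{\sigma_\alpha}{\sigma_\alpha m_{fc}(z)+1}$ around its mean $d^{-1}\int\frac{t\,\dd\nu(t)}{t m_{fc}(z)+1}$, when the $\sigma_\alpha$ are i.i.d. samples from $\nu$. The plan is to first fix $z$ and prove the pointwise bound with overwhelming probability, then upgrade to uniformity in $z$ by a net (lattice) argument using Lipschitz continuity of the functional in $z$. For fixed $z$, write $g_z(t)\deq \frac{t}{t m_{fc}(z)+1}$, so that we must control $\big|\frac1N\sum_{\alpha=1}^M g_z(\sigma_\alpha)-\E g_z(\sigma)\big|$; note $\frac1N=\frac{1}{\wh d M}$ and $\wh d\to d$, so up to a harmless factor this is $\frac{1}{M}\sum_\alpha (g_z(\sigma_\alpha)-\E g_z(\sigma))$ times $d^{-1}+o(1)$. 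The key analytic input, available from Section~\ref{subsec:properties} and Lemma~\ref{mfc estimate} together with the argument that $1+t m_{fc}(z)\sim 1$ on the relevant domain (the proof of Lemma~A.4 in~\cite{dsjl} as invoked after Theorem~\ref{lemma:gaussian}), is that $|1+t m_{fc}(z)|$ is bounded below by a constant uniformly for $t\in\supp\nu$ and $z\in\caD_\phi$; consequently $g_z$ is a bounded function with $\|g_z\|_\infty\le C$ and, since $\sigma_\alpha\in[l,1]$, the summands $g_z(\sigma_\alpha)$ are uniformly bounded complex random variables.

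Once we have bounded, i.i.d. summands, the pointwise estimate is a standard large-deviation bound: by Hoeffding's (or Bernstein's) inequality applied separately to real and imaginary parts,
\begin{align}
\p\left(\left|\frac{1}{M}\sum_{\alpha=1}^M\big(g_z(\sigma_\alpha)-\E g_z(\sigma)\big)\right|> \frac{t}{\sqrt M}\right)\le 4\exp\!\left(-c\,t^2\right),
\end{align}
so choosing $t=M^{\phi+\epsilon/2}$ gives a bound $\le \exp(-c M^{2\phi+\epsilon})$, which is $\le M^{-D-1}$ for $M$ large. Multiplying the random sum by $d^{-1}+o(1)=\wh d^{-1}$ and absorbing the $o(1)$ into the exponent $M^\epsilon$ versus $M^{\epsilon/2}$ gives the fixed-$z$ version of~\eqref{eq:CLT_fixed} with $M^{-D-1}$ on the right.

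For the uniformity in $z\in\caD_\phi$, I would place a lattice $\caL\subset\caD_\phi$ of spacing $M^{-3}$, so $|\caL|\le C M^{8}$ (the domain $\caD_\phi$ being contained in a fixed compact rectangle of $\C^+$ scaled only polynomially in $M$). A union bound over $\caL$ using the fixed-$z$ estimate with $D$ replaced by $D+9$ shows~\eqref{eq:CLT_fixed} holds simultaneously on $\caL$ with probability $\ge 1-M^{-D}$. For general $z\in\caD_\phi$, pick the nearest lattice point $z'$ with $|z-z'|\le M^{-3}$; since $\eta\ge M^{-1/2-\phi}$ on $\caD_\phi$ and $|m_{fc}'(z)|\le \eta^{-2}$, while $t\mapsto \frac{t}{tw+1}$ is Lipschitz in $w$ on the region where $|tw+1|\ge c$, both $\frac1N\sum_\alpha g_z(\sigma_\alpha)$ and $d^{-1}\int g_z\,\dd\nu$ change by at most $C\eta^{-2}|z-z'|\le C M^{1+2\phi}\cdot M^{-3}=C M^{-2+2\phi}\ll M^{\phi+\epsilon}/\sqrt M$; hence the bound transfers from $z'$ to $z$.

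\textbf{Main obstacle.} The only genuinely non-routine point is establishing the lower bound $|1+t m_{fc}(z)|\ge c>0$ uniformly for $t\in[l,1]$ and $z\in\caD_\phi$ — this is what makes $g_z$ bounded and Lipschitz and is what licenses both the concentration inequality and the lattice step. This requires knowing that $m_{fc}(z)$ stays in a region bounded away from the real values $-1/t$, $t\in[l,1]$, which follows from $\re(1/m_{fc})\approx -1$ near the edge (Lemma~\ref{mfc estimate}) together with the fact that for $z$ deeper in $\caD_\phi$ (larger $\eta$) $\im m_{fc}$ is bounded below; I would extract this from the analysis of the self-consistent equation in Section~\ref{pf edge} and the properties collected in Section~\ref{subsec:properties}, exactly as the analogue is done in~\cite{dsjl}. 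Everything else — Hoeffding, union bound over a polynomial-size net, Lipschitz transfer — is entirely standard.
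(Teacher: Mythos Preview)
Your approach has a genuine gap at the point you yourself flag as the main obstacle. The claimed uniform lower bound $|1+t\,m_{fc}(z)|\ge c>0$ for $t\in[l,1]$ and $z\in\caD_\phi$ is \emph{false} in the supercritical regime $d>d_+$ that the appendix is treating. Indeed, Theorem~\ref{general case - large d} gives $m_{fc}(L_+)=-1$, and Lemma~\ref{mfc estimate} shows $|1+m_{fc}(z)^{-1}|\sim |L_+-z|$; hence for $t$ close to $1$ and $z=L_++\ii\eta_0\in\caD_\phi$ one has $|1+t\,m_{fc}(z)|\sim |t-1|+\eta_0$, which can be as small as $M^{-1/2-\phi}$. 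The passage you cite after Theorem~\ref{lemma:gaussian} (invoking Lemma~A.4 of~\cite{dsjl}) concerns the \emph{subcritical} case $d<d_+$, where $m_{fc}(L_+)\ne -1$ and the bound you want does hold; it does not apply here. Consequently $g_z$ is not uniformly bounded, and Hoeffding's inequality with a fixed range is not available.

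The paper circumvents this by working with moments rather than uniform bounds. Writing $X_\alpha=g_z(\sigma_\alpha)-\E g_z(\sigma)$, one has the structural identity
\[
\E|X_\alpha|^2\ \le\ \int\frac{t^2\,\dd\nu(t)}{|1+t\,m_{fc}(z)|^2}\ =\ \frac{d\,R_2(z)}{|m_{fc}(z)|^2}\ <\ C,
\]
uniformly on $\caD_\phi$, because $R_2(z)<1$ always (see~\eqref{definition of R2 without hat} and the line following it) and $|m_{fc}|\sim 1$. So although individual summands can be of size $\sim\eta^{-1}$, their second moment is uniformly $O(1)$; higher moments are then controlled by $\E|X_\alpha|^p\le (C/\eta)^{p-2}\E|X_\alpha|^2\le C M^{(1/2+\phi)(p-2)}$. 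With these moment bounds one runs a high-moment (Rosenthal-type) argument exactly as in Theorem~8.2 of~\cite{eejl}, yielding the pointwise estimate with probability $\ge 1-M^{-D-10}$; your lattice/Lipschitz step for uniformity is fine and is what the paper does as well. The fix is thus to replace Hoeffding by a moment computation exploiting $R_2<1$.
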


\begin{proof}
Note that 
\begin{align}
	\frac{1}{N} \sum_{\alpha=1}^M \frac{\sigma_\alpha}{\sigma_\alpha\mfc(z)+1} - d^{-1} \int \frac{t \dd\nu(t)}{t\mfc(z)+1}=d^{-1}\left(\frac{1}{M} \sum_{\alpha=1}^M \frac{\sigma_\alpha}{\sigma_\alpha\mfc(z)+1} - \int \frac{t \dd\nu(t)}{t\mfc(z)+1}\right).\end{align}
Fix $z \in \caD_{\phi}$. For $\alpha\in\llbracket 1,M\rrbracket$, let $X_\alpha \equiv X_\alpha(z)$ be the random variable
\begin{align}
	X_\alpha \deq \frac{\sigma_\alpha}{\sigma_\alpha\mfc(z)+1} - \int \frac{t \dd \nu(t)}{t \mfc(z)+1}\,.
\end{align}
By definition, $\E X_\alpha = 0$. Moreover, we have
\begin{align}
	\E |X_\alpha|^2 \le \int \frac{t^2\dd \nu(t)}{|1+t\mfc(z)|^2} = dR_2(z) < d\,,\quad\qquad (z\in\C^+) \,,
\end{align}
and, for any positive integer $p \ge 2$,
\begin{align}
	\E |X_\alpha|^{p} \le \frac{1}{\eta^{p-2}} \E |X_\alpha|^2 \le CM^{(1/2 +\phi) (p-2)}\,,\qquad\quad (z\in\caD_{\phi})\,.
\end{align}
The proof of left parts are analogous to the Theorem 8.2 of~\cite{eejl}.

\end{proof}

To estimate the probability for the third condition in Definition~\ref{sigma assumptions}, we need the following two auxiliary lemmas. Recall the definition of $R_2$ in~\eqref{definition of R2 without hat}.

\begin{lemma} \label{R2 estimate}
If $0 < C^{-1} \eta \le \im \mfc(z) \le C\,\eta$, $z=E+\ii\eta$, for some constant $C \ge 1$, then we have
\begin{align}
	0 \le R_2 (z) \le 1-\frac{1}{C}\,.
\end{align}
\end{lemma}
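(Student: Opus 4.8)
Recall from the discussion in Section~\ref{subsec:properties} that taking the imaginary part of the self-consistent equation~\eqref{mfc equation} and rearranging yields the identity
\begin{align} \label{eq:proposal_imaginary}
	1 = \eta \cdot \frac{|\mfc(z)|^2}{\im \mfc(z)} + R_2(z)\,,
\end{align}
which is exactly where $0 \le R_2(z) < 1$ came from. The plan is simply to make this identity quantitative under the hypothesis $C^{-1}\eta \le \im\mfc(z) \le C\eta$. From~\eqref{eq:proposal_imaginary} we have $R_2(z) = 1 - \eta |\mfc(z)|^2 / \im\mfc(z)$, so it suffices to bound the term $\eta|\mfc(z)|^2/\im\mfc(z)$ from below by a positive constant depending only on $C$ (the upper bound $R_2 \le 1$ being already known, since that term is nonnegative).

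First I would record that $|\mfc(z)| \sim 1$: this is stated in Section~\ref{subsec:properties} as a consequence of the self-consistent equation~\eqref{mfc equation}, so there is a constant $C_0 > 1$ with $C_0^{-1} \le |\mfc(z)| \le C_0$. Combining this with the hypothesis $\im\mfc(z) \le C\eta$, I get
\begin{align}
	\eta \cdot \frac{|\mfc(z)|^2}{\im \mfc(z)} \ge \eta \cdot \frac{C_0^{-2}}{C\eta} = \frac{1}{C\,C_0^{2}}\,,
\end{align}
and therefore $R_2(z) \le 1 - (C C_0^2)^{-1}$. Renaming the constant in the statement (it is the $C$ appearing in the conclusion, which we are free to enlarge so that $1/C \le (C_{\mathrm{hyp}} C_0^2)^{-1}$ where $C_{\mathrm{hyp}}$ is the constant from the hypothesis), this is precisely the asserted bound $R_2(z) \le 1 - 1/C$. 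The lower bound $R_2(z) \ge 0$ is immediate since $R_2$ is an integral of a nonnegative integrand, as noted already in~\eqref{definition of R2 without hat}.

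There is essentially no obstacle here; the only point requiring any care is to make sure the constant bookkeeping is consistent, i.e.\ that the constant asserted in the conclusion can be taken to depend only on the constant $C$ in the hypothesis (through the absolute constant $C_0$ controlling $|\mfc|\sim 1$, which depends only on $d$ and $\nu$). One should also note that the hypothesis only uses the upper bound $\im\mfc(z) \le C\eta$; the lower bound $\im\mfc(z) \ge C^{-1}\eta$ is not needed for this particular estimate, though it is natural to keep it in the statement since it is how the lemma will be applied (via Lemma~\ref{lemma:step 1} or Lemma~\ref{lemma:step 5}, where both inequalities on $\im\mfc$ or $\im m$ hold simultaneously near the edge). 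The analogue for $\wh R_2$ follows verbatim from~\eqref{R2 hat less than 1}, using $\whmfc \sim 1$ from Lemma~\ref{hat bound}.
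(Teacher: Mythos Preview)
Your proposal is correct and follows essentially the same approach as the paper: both use the identity $R_2(z)=1-\eta\,|\mfc|^2/\im\mfc$ obtained from the imaginary part of the self-consistent equation, and bound the term $\eta\,|\mfc|^2/\im\mfc$ from below using the hypothesis $\im\mfc\le C\eta$. You are in fact slightly more careful than the paper, which rewrites this term as $\eta\,\{-\im(1/\mfc)\}^{-1}$ and passes directly to $1-1/C$, leaving the factor $|\mfc|^2\sim 1$ implicit; your explicit tracking of the constant $C_0$ is the right way to make that step rigorous.
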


\begin{proof}
We have 
\begin{align}
	1-C\le R_2 (z) = 1-\eta \frac{|\mfc(z)|^2}{\im \mfc(z)}=1-\eta \left\{-\im \frac{1}{\mfc(z)}\right\}^{-1} \le 1-\frac{1}{C}\,.
\end{align}
and by definition, $R_2(z)\ge0$.
\end{proof}

The imaginary part of $\mfc(z)$ can be estimated using the following lemma. We refer Lemma 8.4 of~\cite{eejl} to proof.

\begin{lemma} \label{im mfc bound}
Assume that $\mu_{fc}$ has support $[L_-, L_+]$ and there exists a constant $C>1$ such that
\begin{equation}
	C^{-1} \kappa^{\b} \le \mu_{fc} (z) \le C \kappa^{\b}\,,
\end{equation}
for any $0 \le \kappa \le L_+$. Then,
\begin{itemize}
	\item[$(1)$] for $z = L_+ - \kappa + i \eta$ with $0 \le \kappa \le L_+$ and $0 < \eta \le 3$, there exists a constant $C>1$ such that
	\begin{equation}
		C^{-1} (\kappa^{\b} + \eta) \le \im \mfc (z) \le C (\kappa^{\b} + \eta)\,;
	\end{equation}
	\item[$(2)$] for $z = L_+ + \kappa + i \eta$ with $0 \le \kappa \le 1$ and $0 < \eta \le 3$, there exists a constant $C>1$ such that
	\begin{equation}
		C^{-1} \eta \le \im \mfc (z) \le C \eta\,.
	\end{equation}
\end{itemize}
\end{lemma}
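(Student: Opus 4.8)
The plan is to argue directly from the Stieltjes--Perron inversion formula. Writing $z = E + \ii\eta$ and recalling from~\eqref{stieltjes inversion} that $\mu_{fc}$ is absolutely continuous with density $\rho_{fc}$ and compact support $[L_-, L_+]$, we have
\begin{align}
	\im \mfc(z) = \int_{L_-}^{L_+} \frac{\eta\,\rho_{fc}(x)}{(x-E)^2 + \eta^2}\,\dd x\,.
\end{align}
The hypothesis is exactly the two-sided pointwise bound $\rho_{fc}(x) \sim (L_+ - x)^{\b}$ on $[L_-, L_+]$, which in particular gives $\rho_{fc} \sim c^{\b}$ on the fixed interval $I \deq [L_+ - 2c, L_+ - c]$ for a small constant $c$, and $\rho_{fc}$ bounded on $[L_-, L_+]$. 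Since in both parts $E = L_+ \mp \kappa$ with $0 \le \kappa \le L_+$ and $0 < \eta \le 3$, the point $E$ lies at distance $O(1)$ from $I$, so restricting the integral to $I$ already gives $\im\mfc(z) \gtrsim \eta$; this is the lower bound needed in part~$(2)$ and the ``$+\eta$'' half of the lower bound in part~$(1)$.

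For the upper bounds, and for the remaining lower bound $\im\mfc(z) \gtrsim \kappa^{\b}$ in part~$(1)$, I would split the integral into the three regions $\{|x-E| \le \eta\}$, $\{\eta < |x-E| \le c_0\}$, and $\{|x-E| > c_0\}$, with $c_0$ a small constant, and compare $\eta$ with $\kappa$. In part~$(1)$, where $E = L_+ - \kappa$ lies inside the support, the near region has length $\lesssim \eta$, there $L_+ - x \lesssim \kappa + \eta$ and the Poisson kernel is $\lesssim \eta^{-1}$, so its contribution is $\lesssim (\kappa+\eta)^{\b}$; the middle region contributes $\lesssim \eta\int_{\eta}^{c_0}(\kappa+s)^{\b}\,s^{-2}\,\dd s \lesssim \kappa^{\b} + \eta$, where the hypothesis $\b > 1$ is precisely what makes $\int s^{\b-2}\,\dd s$ converge at its upper limit against the extra factor $\eta$; the far region contributes $\lesssim \eta$. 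The matching lower bound $\gtrsim \kappa^{\b}$ in the regime $\eta \le \kappa$ comes from restricting to $x \in [E-\eta, E]$, on which $L_+ - x \in [\kappa, 2\kappa]$ so $\rho_{fc}(x) \gtrsim \kappa^{\b}$, while the kernel is $\gtrsim \eta^{-1}$ on a set of length $\eta$. Combining these, and using that $\eta^{\b} \lesssim \eta$ for $0 < \eta \le 3$ (again since $\b > 1$), so that $(\kappa+\eta)^{\b} + \eta$ and $\kappa^{\b} + \eta$ are comparable, yields $C^{-1}(\kappa^{\b}+\eta) \le \im\mfc(z) \le C(\kappa^{\b}+\eta)$. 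In part~$(2)$, where $E = L_+ + \kappa \ge L_+$ sits outside the support, one has $E - x \ge \max\{L_+ - x, \kappa\} \ge 0$ for every $x$ in the support, and the same three-region estimate gives a total of $\lesssim \eta^{\b} + \eta\int_{\eta}^{c_0} s^{\b-2}\,\dd s + \eta \lesssim \eta$, which finishes part~$(2)$.

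This is the direct analogue, with $\rho_{fc}$ in place of the deformed semicircle density, of Lemma~8.4 of~\cite{eejl}, and the computation is essentially verbatim once the edge behavior is available; the only model-specific input is the two-sided edge estimate of Theorem~\ref{general case - large d}. I do not expect a genuine obstacle here: the one mildly delicate point is keeping the implicit constants uniform as $\kappa$ ranges over all of $[0, L_+]$ and $\eta$ over all of $(0, 3]$, in particular across the crossover $\eta \sim \kappa$, which is exactly what the case distinction above handles. For this reason I would record the statement with a reference to~\cite{eejl} together with the remark that only the edge exponent $\b$ enters, rather than reproducing these elementary Poisson-kernel estimates in full.
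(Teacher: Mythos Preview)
Your proposal is correct and matches the paper's approach exactly: the paper does not give a proof but simply writes ``We refer Lemma 8.4 of~\cite{eejl} to proof,'' which is precisely what you recommend in your final paragraph. Your sketch of the Poisson-kernel estimates is accurate and would serve as a self-contained proof if one were wanted; the case split at $\eta \sim \kappa$ and the use of $\b>1$ to control $\int s^{\b-2}\,\dd s$ are the only points requiring care, and you handle both correctly.
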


\begin{remark}
Lemma~\ref{im mfc bound} shows that for any sufficiently small $\phi>0$, there exists a constant $C_{\b} > 1$ such that
\begin{align}
	C_{\b}^{-1} \eta \le \im \mfc (z) \le C_{\b} \eta\,,
\end{align}
for all $z \in \caD_{\phi}$ satisfying $L_+ - \re z \le M^{\phi} \kappa_0$.
\end{remark}

Assuming Lemma~\ref{im mfc bound}, we have the following estimate. Recall that $\caD_{\phi}$ is defined in~\eqref{domain}.

\begin{lemma} \label{lemma:app_3}
Assume the conditions in Lemma~\ref{lemma:app_1}. Then, there exist constants $\mathfrak{c} < 1$ and $C > 0$, independent of~$N$, such that, for any $z = E + \ii \eta \in \caD_{\phi}$ satisfying
\begin{align} \label{condition app3}
	\min_{\alpha \in \llbracket 1, M \rrbracket} \left|\re \left(1+\frac{1}{\sigma_{(\alpha)} \mfc}\right)\right| = \left|\re \left(1+\frac{1}{\sigma_{(\gamma)} \mfc}\right)\right|\,,
\end{align}
for some $\gamma \in \llbracket 1, n_0 -1 \rrbracket$, we have
\begin{align}
	\p \left( \frac{1}{N} \sum_{\alpha:\alpha\ne \gamma}^{M} \frac{\sigma_{(\alpha)}^2 |\mfc|^2}{|1+\sigma_{(\alpha)} \mfc|^2} <\mathfrak{c} \right) \ge 1 - C (\log M)^{1+2\b} M^{-\phi}.
\end{align}
\end{lemma}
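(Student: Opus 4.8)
The high‑probability factor $1-C(\log M)^{1+2\b}M^{-\phi}$ in the statement will come entirely from the event $\caE$ of Lemma~\ref{lemma:app_1}, on which the order statistics (which I abbreviate $\sigma_\alpha\equiv\sigma_{(\alpha)}$) obey the gap bounds \eqref{eq4.3}--\eqref{eq4.4}; on $\caE$, intersected with a further event of probability $\ge 1-N^{-D}$ coming from a large‑deviation estimate (which does not affect the stated bound after a union over a polynomial net in $z$), I would prove the inequality for all admissible $z$ with one fixed $\mathfrak c<1$. Write $g_z(t)\deq t^2|\mfc(z)|^2/|1+t\mfc(z)|^2$ and $\sigma_*(z)\deq-\re(1/\mfc(z))$, so that $|\re(1+1/(\sigma_\alpha\mfc))|=|\sigma_\alpha-\sigma_*|/\sigma_\alpha$; thus \eqref{condition app3} says $\sigma_\gamma$ is the atom closest to $\sigma_*$, and \eqref{eq4.3}--\eqref{eq4.4} together with \eqref{condition app3} give, exactly as in \eqref{farfromhome}, that $|\sigma_\alpha-\sigma_*|\ge\delta\deq\tfrac l2 M^{-\phi}\kappa_0$ for every $\alpha\ne\gamma$, i.e.\ $\{\alpha\ne\gamma\}\subseteq\{|\sigma_\alpha-\sigma_*|\ge\delta\}$. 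Taking imaginary parts in \eqref{mfc equation} gives the exact identity $1=\eta|\mfc|^2/\im\mfc+d^{-1}\int g_z\,\dd\nu$ (cf.\ \eqref{definition of R2 without hat}), and since $|\mfc|\sim1$ the task is to show that dropping the single $\gamma$‑term from the empirical sum still leaves a definite deficit below $1$.

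First I would use $\gamma\le n_0-1$ to localize $z$ near $L_+$. On $\caE$ the top $n_0$ atoms lie within $n_0(\log M)\kappa_0$ of $1$, so $\sigma_\gamma$, hence $\sigma_*$, lies within $C(\log M)\kappa_0$ of $1$ from below; by Lemma~\ref{mfc estimate} (applied near the edge; the case $\re z$ well inside the bulk is excluded because there the atom nearest $\sigma_*$ is not among the top $n_0$, contradicting $\gamma\le n_0-1$) this forces $L_+-\re z\le M^\phi\kappa_0$. The complementary possibility $\sigma_*\gg1$ — which occurs when $\re z$ lies well to the right of $L_+$ — is disposed of directly: then $|\mfc(z)|$ is small, so $g_z\ll1$ uniformly on $[l,1]$ and the empirical average is trivially $\ll1$. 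So I may assume $-M^\phi\kappa_0\le L_+-\re z\le M^\phi\kappa_0$, placing us in the scope of Lemma~\ref{im mfc bound} and the remark after it.

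Next comes the empirical‑to‑integral comparison, which I regard as the crux. The function $g_z$ has a single minimum of its denominator at $t=\sigma_*$ and is monotone on each of $\{t<\sigma_*-\delta\}$ and $\{t>\sigma_*+\delta\}$; moreover $h_z\deq g_z\,\lone_{\{|t-\sigma_*|\ge\delta\}}$ is bounded by $C\delta^{-2}$ with $\int h_z\,\dd\nu\le d$. A Bernstein inequality for the i.i.d.\ family $(\sigma_\alpha)$ gives $\bigl|\tfrac1N\sum_\alpha h_z(\sigma_\alpha)-d^{-1}\int h_z\,\dd\nu\bigr|\prec(\delta\sqrt N)^{-1}+(\delta^2N)^{-1}$, and since $\delta\sqrt N$ and $\delta^2N$ are of order $M^{\fb-\phi}$ and $M^{2(\fb-\phi)}$, both tend to infinity by the smallness \eqref{phi condition} of $\phi$; the containment from the previous paragraph then yields
\[
\frac1N\sum_{\alpha\ne\gamma}g_z(\sigma_\alpha)\;\le\; d^{-1}\int_{\{|t-\sigma_*|\ge\delta\}} g_z\,\dd\nu + o(1)\;=\;1-\frac{\eta|\mfc|^2}{\im\mfc}-d^{-1}I_\delta+o(1)\,,
\]
where $I_\delta\deq\int_{\{|t-\sigma_*|<\delta\}}g_z\,\dd\nu\ge0$.

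It remains to bound $\eta|\mfc|^2/\im\mfc+d^{-1}I_\delta$ below by a positive constant, which I would do by a dichotomy on $\im\mfc$. If $\im\mfc(z)\le C_\b\eta$ then $\eta|\mfc|^2/\im\mfc\ge c_0>0$ by $|\mfc|\sim1$ and Lemma~\ref{R2 estimate}. If instead $\im\mfc(z)>C_\b\eta$, then by Lemma~\ref{im mfc bound} we are strictly inside the support, $\im\mfc\sim\kappa^\b$ with $\kappa\deq L_+-\re z>0$, and by Lemma~\ref{mfc estimate} $1-\sigma_*\sim\kappa$; one checks from the smallness of $\phi$ that $\im\mfc\ll\delta\ll1-\sigma_*$, so splitting $I_\delta$ at $|t-\sigma_*|=\im\mfc$ and using $g_z\sim(\im\mfc)^{-2}$ on the inner part, $g_z\sim|t-\sigma_*|^{-2}$ on the outer part, and $\nu$‑density $\sim(1-\sigma_*)^\b$ throughout, gives $I_\delta\sim(1-\sigma_*)^\b/\im\mfc\sim1$, hence $d^{-1}I_\delta\ge c_1>0$. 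Either way $\tfrac1N\sum_{\alpha\ne\gamma}g_z(\sigma_\alpha)\le1-\min(c_0,c_1)+o(1)$, which is $<\mathfrak c$ for any fixed $\mathfrak c\in(1-\min(c_0,c_1),1)$ and $M$ large; since all estimates hold on $\caE$ uniformly after discretizing $z$ over a polynomial net of $\caD_\phi$ and using Lipschitz continuity of $\mfc$, this finishes the proof. The delicate points — and the only places where the precise smallness of $\phi$ in \eqref{phi condition} really matters — are the quantitative comparison of the truncated empirical sum with the integral (where the monotonicity of $g_z$ off its peak and the exact gaps \eqref{eq4.3}--\eqref{eq4.4} enter) and the inequality $\delta\gg\im\mfc$ in the dichotomy; the binding requirement is $\phi<\fb$ up to logarithmic factors, comfortably inside \eqref{phi condition}. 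This is the analogue for $\nu\boxtimes\mu_{MP}$ of the corresponding step in~\cite{eejl} for deformed Wigner matrices, and could alternatively be deduced from the argument there.
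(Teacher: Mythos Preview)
Your approach is correct in outline and genuinely different from the paper's. Both arguments begin by using the condition $\gamma\le n_0-1$ together with the gap bounds of Lemma~\ref{lemma:app_1} to localize $z$ to a window of width $O(M^\phi\kappa_0)$ around $L_+$ (and to dispose of $\re z$ far to the right by $|\mfc|$ being small). From there the two proofs diverge. The paper anchors the concentration step at the single point $E=L_+$: it truncates $Y_\alpha\deq d^{-1}g_z(\sigma_\alpha)$ \emph{by value} at level $M^{2\phi}\kappa_0^{-2}$, uses a second--moment (Chebyshev) bound to show the empirical average is close to $R_2(L_++\ii\eta)<c<1$, and then extends to general $E$ by a pointwise perturbation/monotonicity comparison of $|1+1/(\sigma_\alpha\mfc(z))|$ with $|1+1/(\sigma_\alpha\mfc(L_++\ii\eta))|$, using Lemma~\ref{mfc estimate} and a crude count of the atoms in a window $\Sigma_\phi$ near~$1$. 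You instead truncate \emph{by location}, setting $h_z=g_z\lone_{\{|t-\sigma_*|\ge\delta\}}$, and run a Bernstein concentration directly at each $z$; this is cleaner and gives an exponential tail (so the union over a polynomial net in $z$ is free), at the cost of carrying the $z$--dependence through the concentration.

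One remark on your dichotomy. Case~2 is in fact not needed: once you have localized to $L_+-\re z\le M^\phi\kappa_0$, the Remark after Lemma~\ref{im mfc bound} (which, like the paper, implicitly assumes $\phi$ small enough) gives $\im\mfc\sim\eta$, so Lemma~\ref{R2 estimate} already yields $1-R_2\ge c_0>0$ and your Case~1 applies throughout; then $\frac1N\sum_{\alpha\ne\gamma}g_z(\sigma_\alpha)\le d^{-1}\!\int h_z\,\dd\nu+o(1)\le R_2+o(1)<\mathfrak c$. If you do want to keep Case~2 as a self--contained alternative, note that the estimate $I_\delta\sim1$ relies on $\im\mfc\ll\delta$, i.e.\ $(M^\phi\kappa_0)^\b\ll M^{-\phi}\kappa_0$, which is $(\b+1)\phi<2\fb$, a constraint slightly stronger than your stated ``$\phi<\fb$'' (the inequality $\delta\ll1-\sigma_*$ is automatic for $\b>1$). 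This is harmless since $\phi$ is taken small anyway, but worth recording.
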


\begin{proof}
We only prove the case $\gamma=1$; the general case can be shown by the same argument. In the following, we assume that $ M^{-\phi}\kappa_0 < |1 - \sigma_{(1)}| < (\log M) \kappa_0$, and $|\sigma_{(1)} - \sigma_{(2)}| >  M^{-\phi}\kappa_0$. 

Recall the definition of $R_2$ in~\eqref{definition of R2 without hat}. For $\alpha\in\llbracket 1,M\rrbracket$, let $Y_\alpha \equiv Y_\alpha(z)$ be the random variable
\begin{align}
	Y_\alpha(z) \deq d^{-1} \left|\frac{\sigma_\alpha \mfc(z)}{1+\sigma_\alpha\mfc(z)}\right|^2\,, \qquad\quad(z\in\C^+).
\end{align}
Observe that $\E Y_\alpha = R_2 < 1$ for $z\in\C^+$. Moreover, we find that there exists a constant $c < 1$ independent of $N$, such that $R_2 (z) < c$ uniformly for all $z \in \caD_{\phi}$ satisfying~\eqref{condition app3}, where we combined Lemma~\ref{R2 estimate} and Lemma~\ref{im mfc bound}. We also have that $Y_\alpha(z) \le C\eta^{-2}$.

We first consider the special choice $E = L_+$. Let $\wt Y_\alpha$ be the truncated random variable defined by
\begin{align}
	\wt Y_\alpha \deq
	\begin{cases}
		Y_\alpha\,, & \text{ if  } Y_\alpha <  M^{2\phi}\kappa_0^{-2}\,, \\
		M^{2\phi} \kappa_0^{-2}\,, & \text{ if  } Y_\alpha \ge M^{2\phi}\kappa_0^{-2}\,.
	\end{cases}
\end{align}
Notice that using the estimate~\eqref{sigma tail}, we have for $z = L_+ + \ii \eta \in \caD_{\phi}$ that
\begin{align}
	\p ( Y_\alpha \neq \wt Y_\alpha ) \le C M^{-1 - (\b+1)\phi}\,.
\end{align}
Let us define
\begin{align}
	S_M \deq \sum_{\alpha=1}^M Y_\alpha\,, \qquad \wt S_M \deq \sum_{\alpha=1}^M \wt Y_\alpha\,,
\end{align}
then it follows that
\begin{align}
	\p (S_M \neq \wt S_M) \le C M^{- (\b+1)\phi}.
\end{align}

Now, we estimate the mean and variance of $\wt Y_i$. From the trivial estimate $\p (Y_\alpha \ge x) \le \p (Y_\alpha \neq \wt Y_\alpha)$ for $x \ge  M^{2\phi}\kappa_0^{-2}$, we find that 
\begin{align}
	\E Y_\alpha - \E \wt Y_\alpha \le \int_{M^{2\phi} \kappa_0^{-2}}^{C\eta^{-2}} \p ( Y_\alpha \neq \wt Y_\alpha ) \dd x \le C' M^{-(\b-1)\phi}\,,
\end{align}
for some $C' > 0$. As a consequence, we get
\begin{align}
	\E \wt Y_\alpha^2 \le M^{2\phi} \kappa_0^{-2} \E \wt Y_\alpha \le  M^{2\phi}\kappa_0^{-2} \E Y_\alpha \le M^{2\phi}\kappa_0^{-2}.
\end{align}
We thus obtain that
\begin{align} \begin{split}
		&\p \left( \left| \frac{S_M}{M} - \E Y_\alpha \right| \ge C' M^{-(\b-1)\phi} + M^{-\phi} \right) \le \p \left( \left| \frac{\wt S_M}{M} - \E \wt Y_\alpha \right| \ge M^{-\phi} \right) + \p (S_M \neq \wt S_M) \\
		&\qquad\le \frac{M^{2\phi} \E \wt Y_\alpha^2}{M} + C M^{- (\b+1)\phi} \le C M^{- (\b+1)\phi},
\end{split} \end{align}
hence, for a constant $c$ satisfying $R_2 + C' M^{-(\b-1)\phi} + M^{-\phi} < c < 1$,
\begin{align*}
	\begin{split}
		\p \left( \frac{1}{N} \sum_{\alpha=1}^M \left|\frac{\sigma_\alpha\mfc(z)}{1+\sigma_\alpha\mfc(z)}\right|^2 < c \right) &\ge 1 - \p \left( \left| \frac{S_M}{M} - \E Y_\alpha \right| \ge C' M^{-(\b-1)\phi} + M^{-\phi} \right) \\
		&\ge 1 - C M^{-(\b+1)\phi}.
	\end{split}
\end{align*}
This proves the desired lemma for $E = L_+$.

Before we extend the result to general $z \in \caD_{\phi}$, we estimate the probabilities for some typical events we want to assume. Consider the set
\begin{align}
	\Sigma_{\phi} \deq \{ \sigma_\alpha : |1 - \sigma_\alpha| \le M^{3\phi} \kappa_0 \}\,,
\end{align}
and the event
\begin{align}
	\Omega_{\phi} \deq \{ |\Sigma_{\phi}| < M^{3\phi(\b+2)} \}\,.
\end{align}
From the estimate~\eqref{sigma tail}, we have
\begin{align}
	\p (|1 - \sigma_\alpha| \ge M^{3\phi} \kappa_0) \le C M^{-1 + 3(\b+1)\phi}\,,
\end{align}
so using a Chernoff bound, we find that
\begin{align}
	\p (\Omega_{\phi}^c) \le \exp \left( -C (\log M) M^{3\phi} M^{3(\b+1) \phi} \right)\,,
\end{align}
for some constant $C$. Notice that we have, for $\sigma_\alpha \notin \Sigma_{\phi}$,
\begin{align} \label{eq:sigma_i notin Sigma}
	-1-\re\frac{1}{\sigma_\alpha\mfc(L_++\ii\eta)}> M^{3\phi} \kappa_0 \gg -\im \frac{1}{\sigma_\alpha\mfc(L_+ + \ii \eta)}\,,
\end{align}
where we have used Lemma~\ref{mfc estimate}, i.e., $|1+\mfc^{-1}(L_+ +\ii\eta)| = \caO (\eta)$. We now assume that~$\Omega_{\phi}$ holds and
\begin{align}
	\frac{1}{N} \sum_{\alpha=1}^M \left|\frac{\sigma_\alpha\mfc(L_++\ii\eta)}{1+\sigma_\alpha\mfc(L_++\ii\eta)}\right|^2 < c < 1\,.
\end{align}
Further, we recall that the condition~\eqref{condition app3} implies
\begin{align}
	-\re \mfc^{-1}(z) \ge \sigma_{(n_0)},
\end{align}
which yields, together with Lemma~\ref{mfc estimate} and Lemma~\ref{lemma:app_1} that $E \ge L_+ - M^{\phi} \kappa_0$ with probability higher than $1 - C (\log M)^{1+2\b} M^{-\phi}$. Thus we assume in the following that $E \ge L_+-M^{\phi}\kappa_0$.

Consider the following two choices for such $E$:
\begin{enumerate}
	\item[$(1)$] When $L_+ - M^{\phi} \kappa_0 \le E \le L_+ + M^{2\phi} \kappa_0$, we have that
	\begin{align}
		\left|1+\frac{1}{\sigma_\alpha\mfc(z)}\right| = \left|1+\frac{1}{\sigma_\alpha\mfc(L_++\ii\eta)}\right| + O(M^{2\phi} \kappa_0)\,,
	\end{align}
	where we used Lemma~\ref{mfc estimate}. Hence, using~\eqref{eq:sigma_i notin Sigma}, we obtain for $\sigma_\alpha \notin \Sigma_{\phi}$ that
	\begin{align} \begin{split}
			\left|\frac{\sigma_\alpha\mfc(z)}{1+\sigma_\alpha\mfc(z)}\right|^2 &\le \left|\frac{\sigma_\alpha\mfc(L_++\ii \eta)}{1+\sigma_\alpha\mfc(L_+ + \ii \eta)}\right|^2 + CM^{2\phi }\kappa_0\left|\frac{\sigma_\alpha\mfc(L_+ + \ii \eta)}{1+\sigma_\alpha\mfc(L_+ + \ii \eta)}\right|^3 \\
			&\le (1 + C M^{-\phi})\left|\frac{\sigma_\alpha\mfc(L_+ + \ii \eta)}{1+\sigma_\alpha\mfc(L_+ + \ii \eta)}\right|^2\,.
	\end{split} \end{align}
	We thus have that
	\begin{align} \begin{split}
			\frac{1}{N} \sum_{\alpha=2}^M \left|\frac{\sigma_\alpha\mfc(z)}{1+\sigma_\alpha\mfc(z)}\right|^2 &\le \frac{M^{4\phi(\b+1)}}{N} \frac{1}{ \kappa_0^2} + \frac{1+CM^{-\phi}}{N} \sum_{\alpha: \sigma_\alpha \notin \Sigma_{\phi}} \left|\frac{\sigma_\alpha\mfc(L_++\ii\eta)}{1+\sigma_\alpha\mfc(L_++\ii\eta)}\right|^2 \\
			&\le C'M^{-\phi} + \frac{1+CM^{-\phi}}{N} \sum_{\alpha=1}^M \left|\frac{\sigma_\alpha\mfc(L_++\ii\eta)}{1+\sigma_\alpha\mfc(L_++\ii\eta)}\right|^2 < c < 1\,,
	\end{split} \end{align}
	where we also used the assumption that $|\sigma_{(2)}-\sigma_{(1)}|\ge  \kappa_0$. 
	
	\item[$(2)$] When $E > L_+ + M^{2\phi} \kappa_0$, we have
	\begin{align}
		\re \mfc^{-1}(L_+ + \ii \eta)-\re \mfc^{-1}(E + \ii \eta) \gg -\im \mfc^{-1}(E + \ii \eta)\,,
	\end{align}
	where we again used Lemma~\ref{mfc estimate}, hence, from~\eqref{eq:sigma_i notin Sigma} we obtain that
	\begin{align}
		\left|1+\frac{1}{\sigma_\alpha\mfc(z)}\right| \ge \left|1+\frac{1}{\sigma_\alpha\mfc(L_++\ii\eta)}\right|
		\,.
	\end{align}
	We may now proceed as in $(1)$ to find that
	\begin{align} \begin{split}
			\frac{1}{N} \sum_{\alpha=2}^M \left|\frac{\sigma_\alpha\mfc(z)}{1+\sigma_\alpha\mfc(z)}\right|^2\le M^{-\epsilon} + \frac{1}{N} \sum_{\alpha=1}^M \left|\frac{\sigma_\alpha\mfc(L_++\ii\eta)}{1+\sigma_\alpha\mfc(L_++\ii\eta)}\right|^2 < c < 1\,,
	\end{split} \end{align}
	
\end{enumerate}

Since we proved in Lemma~\ref{lemma:app_1} that the assumptions $ M^{-\phi}\kappa_0 < |1 - \sigma_{(1)}| < (\log M) \kappa_0$ and $|\sigma_{(1)} - \sigma_{(2)}| > M^{-\phi} \kappa_0$ hold with probability higher than $1 - C (\log M)^{1+2\b} M^{-\phi}$, we find that the desired lemma holds for any $z\in\caD_{\phi}'$.
\end{proof}
\end{appendices}






\end{document}